\def\11{\mathbbm{1}}
\def\ER{Erd\H{o}s-R\'enyi\ }
\newcommand{\Pb}{\mathbb P}
\newcommand{\Qb}{\mathbb Q}
\newtheorem*{oldthm}{Theorem}
\newtheorem{thm}{Theorem}[section]
\newtheorem{proposition}[thm]{Proposition}
\newtheorem{question}[thm]{Question}
\newtheorem{lemma}[thm]{Lemma}
\newtheorem{claim}[thm]{Claim}
\newtheorem{remark}[thm]{Remark}
\newtheorem{DEF}[thm]{Definition}
\newtheorem{assum}[thm]{Assumption}
\numberwithin{equation}{section}
\newenvironment{breakablealgorithm}
{
		\begin{center}
			\refstepcounter{algorithm}
			\hrule height.8pt depth0pt \kern2pt
			\renewcommand{\caption}[2][\relax]{
				{\raggedright\textbf{\ALG@name~\thealgorithm} ##2\par}%
				\ifx\relax##1\relax 
				\addcontentsline{loa}{algorithm}{\protect\numberline{\thealgorithm}##2}%
				\else 
				\addcontentsline{loa}{algorithm}{\protect\numberline{\thealgorithm}##1}%
				\fi
				\kern2pt\hrule\kern2pt
			}
		}{
		\kern2pt\hrule\relax
	\end{center}
}
\title{The Algorithmic Phase Transition in Correlated Spiked Models}
\author[1]{Zhangsong Li\thanks{Email: \textit{ramblerlzs@pku.edu.cn}. Partially supported by the National Key R$\&$D program of China (Project No. 2023YFA1010103) and the NSFC Key Program (Project No. 12231002).}}
\affil[1]{School of Mathematical Sciences, Peking University}
\date{\today}
\begin{document}
\maketitle

\begin{abstract}
    We study the computational task of detecting and estimating correlated signals in a pair of spiked matrices. Specifically, we consider two fundamental models: 

    (1) \emph{Correlated spiked Wigner model.} We observe
    $$
        \bm{X} = \tfrac{\lambda}{\sqrt{n}} xx^{\top} + \bm{W} \,, \quad \bm{Y} = \tfrac{\mu}{\sqrt{n}} yy^{\top} + \bm{Z} \,.
    $$
    Here $x,y \in \mathbb R^n$ are signal vectors with norm $\|x\|,\|y\| \approx\sqrt{n}$ and correlation $\langle x,y \rangle \approx \rho\|x\|\|y\|$, while $\bm{W},\bm{Z}$ are independent Wigner matrices. 

    (2) \emph{Correlated spiked Wishart (covariance) model.} We observe
    $$
        \bm{X}= \tfrac{\sqrt{\lambda}}{\sqrt{n}} x \bm{u}^{\top} + \bm W \,, \quad \bm{Y} = \tfrac{\sqrt{\mu}}{\sqrt{n}} y \bm{v}^{\top} + \bm{Z} \,.
    $$
    Here $x,y \in \mathbb R^n$ are signal vectors with norm $\|x\|,\|y\| \approx\sqrt{n}$ and correlation $\langle x,y \rangle \approx \rho\|x\|\|y\|$, $\bm{u},\bm{v} \in \mathbb R^N$ are independent Gaussian vectors sampled from $\mathcal N(0,\mathbb I_N)$, and $\bm W,\bm Z \in \mathbb R^{n*N}$ are independent Gaussian noise matrices with $N=\Theta(n)$. 
    
    We propose an efficient detection and estimation algorithm based on counting a specific family of edge-decorated cycles. The algorithm's performance is governed by the function
    $$
        F(\lambda,\mu,\rho,\gamma)=\max\Big\{ \frac{ \lambda^2 }{ \gamma }, \frac{ \mu^2 }{ \gamma }, \frac{ \lambda^2 \rho^2 }{ \gamma-\lambda^2+\lambda^2 \rho^2 } + \frac{ \mu^2 \rho^2 }{ \gamma-\mu^2+\mu^2 \rho^2 } \Big\} \,.
    $$
    We prove our algorithm succeeds for the correlated spiked Wigner model whenever $F(\lambda,\mu,\rho,1)>1$, and succeeds for the correlated spiked Wishart model whenever $F(\lambda,\mu,\rho,\tfrac{n}{N})>1$. Our result shows that an algorithm can leverage the correlation between the spikes to detect and estimate the signals even in regimes where efficiently recovering either $x$ from $\bm{X}$ alone or $y$ from $\bm{Y}$ alone is believed to be computationally infeasible.

    We complement our algorithmic results with evidence for a matching computational lower bound. In particular, we prove that when $F(\lambda,\mu,\rho,1)<1$ for the correlated spiked Wigner model and when $F(\lambda,\mu,\rho,\tfrac{n}{N})<1$ for the spiked Wishart model, all algorithms based on {\em low-degree polynomials} fails to distinguish $(\bm{X},\bm{Y})$ with two independent noise matrices. This strongly suggests that $F(\lambda,\mu,\rho,1)=1$ (respectively, $F(\lambda,\mu,\rho,\tfrac{n}{N})=1$) is the precise computation threshold for the correlated spiked Wigner (respectively, Wishart) model.
\end{abstract}

\tableofcontents

\newpage

\section{Introduction}{\label{sec:intro}}

\subsection{Spiked matrix models}{\label{subsec:spiked-model}}

The challenge of recovering a low-dimensional structure from high-dimensional noise is a central theme in high-dimensional statistics and machine learning. Spiked matrix models \cite{BBP05, ZHT06, JL09, LM19} provide a canonical framework for studying this problem, exhibiting important phenomena such as sharp phase transitions and statistical-to-computational gaps. Two fundamental examples are the {\em spiked Wigner model} and the {\em spiked Wishart (covariance) model}, defined as follows.

\begin{DEF}[Spiked Wigner model]{\label{def-spiked-Wigner}}
    We say $\bm Y \in \mathbb R^{n*n}$ is a spiked Wigner matrix, if for a signal vector (also known as the ``spike'') $x \sim \pi$ (here $\pi=\pi_n$ is some probability measure over $\mathbb R^n$ normalized such that $\|x\|\approx\sqrt{n}$), we have
    \begin{align*}
        \bm Y = \tfrac{\lambda}{\sqrt{n}} xx^{\top} + \bm Z \,.  
    \end{align*}
    Here $\bm Z$ is an $n*n$ Wigner matrix\footnote{That is, $\bm{W},\bm{Z}$ are symmetric matrices with $\{ \bm{W}_{i,j} \}_{i \leq j}, \{ \bm{Z}_{i,j} \}_{i \leq j}$ are independently sampled from normal distribution with mean $0$ and variance $1+\mathbf 1_{i=j}$.} and $\lambda>0$ is the signal-to-noise ratio. 
\end{DEF} 
\begin{DEF}[Spiked Wishart model]{\label{def-spiked-covariance}}
    We say $\bm Y \in \mathbb R^{n*N}$ is a spiked Wishart matrix, if for a signal vector $x \sim \pi$ (here again $\pi=\pi_n$ is some probability measure over $\mathbb R^n$ normalized such that $\|x\|\approx\sqrt{n}$), we have
    \begin{align*}
        \bm Y = \tfrac{\sqrt{\lambda}}{\sqrt{n}} x \bm{u}^{\top} + \bm Z \,.
    \end{align*}
    Here $\bm u \in \mathbb R^{N}$ is a Gaussian vector sampled independently from $\mathcal N(0,\mathbb I_N)$, $\bm Z$ is an $n*N$ matrix with i.i.d.\ standard normal entries and $\lambda>0$ is the signal-to-noise ratio. Note that equivalently we can write
    \begin{align*}
        \bm Y=\big( \bm Y_1,\ldots, \bm Y_N \big), \mbox{ where } \bm Y_1,\ldots, \bm Y_N \overset{i.i.d.}{\sim} \mathcal N\big( 0,\mathbb I_n+\tfrac{\lambda}{n} xx^{\top} \big) \mbox{ given } x \,.
    \end{align*}
\end{DEF} 

A natural approach is to use the top eigenpair $(\varsigma_1(\bm Y\bm Y^{\top}),v_1(\bm Y \bm Y^{\top}))$ for both detection (i.e., distinguishing the model from a pure noise matrix) and recovery (i.e., estimating the spike $x$), since for large $\lambda$ one expects the rank-one deformation above to create an outlier eigenvalue in $\bm Y\bm Y^{\top}$. This intuition motivates the study of the spiked matrix models in the lens of random matrix theory \cite{AGZ10, Tao12}, and the performance of the spectral method for the spiked Wishart model is characterized by the celebrated {\em Baik–Ben Arous–P\'ech\'e (BBP) transition} \cite{Joh01, BBP05, BS06}.

\begin{oldthm}
    Suppose that $n=\gamma N$ for some $\gamma=\Theta(1)$ and $\bm Y$ are defined in Definition~\ref{def-spiked-covariance}. When $\lambda^2 \leq \gamma$, the top eigenvalue $\varsigma_1(\frac{1}{N}\bm Y\bm Y^{\top})$ remains within the bulk of the noise spectrum, concentrating at $(1+\sqrt{\gamma})^2$ (matching the top eigenvalue of $\bm Z\bm Z^{\top}$) and the top eigenvector $v_1(\frac{1}{N}\bm Y\bm Y^{\top})$ is asymptotically orthogonal with the spike $x$. 
    
    In contrast, when $\lambda^2>\gamma$, the top eigenvalue $\varsigma_1(\frac{1}{N}\bm Y\bm Y^{\top})$ detaches from the bulk, concentrating at $(1+\lambda)(1+ \tfrac{\gamma}{\lambda})>(1+\sqrt{\gamma})^2$, and the top eigenvector $v_1(\frac{1}{N}\bm Y\bm Y^{\top})$ achieves a non-vanishing correlation with the spike $x$. 
\end{oldthm}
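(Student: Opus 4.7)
The plan is to prove this via the standard resolvent method coupled with the Marchenko--Pastur law and a rank-two secular equation, which is the workhorse for BBP-type transitions in the Wishart setting. Throughout, write $S=\tfrac{1}{N}\bm Y\bm Y^\top$ and expand
\[
 S = \tfrac{1}{N}\bm Z\bm Z^\top + \tfrac{\sqrt{\lambda}}{N\sqrt{n}}\bigl(x\bm u^\top\bm Z^\top+\bm Z\bm u\,x^\top\bigr) + \tfrac{\lambda\|\bm u\|^2}{nN}\,xx^\top \, ,
\]
so that $S$ differs from the pure Wishart matrix $S_0:=\tfrac{1}{N}\bm Z\bm Z^\top$ by a symmetric perturbation of rank at most two, supported on the subspace spanned by $x$ and $\bm Z\bm u$.

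First I would treat the bulk. Concentration of $\|\bm u\|^2$ and the trivial norm bounds on $\bm Z\bm u$ show that the perturbation has operator norm $O(1)$. Since $S_0$ has empirical spectral distribution converging to the Marchenko--Pastur law with parameter $\gamma$ and its top eigenvalue concentrates at $(1+\sqrt\gamma)^2$ (by Bai--Yin), Weyl's interlacing and the rank-two nature of the perturbation force the empirical spectrum of $S$ to still converge to MP$(\gamma)$, with at most two eigenvalues possibly separating from the bulk.

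The main step is to locate any outlier. For $z$ outside the MP support, $z\in\mathrm{spec}(S)$ if and only if $\det\bigl(z\,\mathbb I-S\bigr)=0$; factoring out $z\,\mathbb I-S_0$ and using the matrix determinant lemma on the rank-two update reduces this to a $2\times 2$ secular equation
\[
 \det\!\Bigl(\mathbb I_2 - A(z)\Bigr)=0, \qquad A(z)=A\bigl(z;\,x^\top R(z)x,\ \bm u^\top\bm Z^\top R(z)\bm Z\bm u,\ x^\top R(z)\bm Z\bm u\bigr),
\]
where $R(z)=(z\,\mathbb I-S_0)^{-1}$. Using the well-known deterministic equivalents for Wishart resolvents---namely $\tfrac{1}{n}x^\top R(z)x\to m(z)$ and $\tfrac{1}{N}\bm u^\top\bm Z^\top R(z)\bm Z\bm u\to 1+z\,m(z)$, together with vanishing of the off-diagonal $x^\top R(z)\bm Z\bm u/\sqrt{nN}$ by isotropy of $\bm u$ independent of $(x,\bm Z)$---the secular equation collapses (after using the MP self-consistent equation $\gamma z\,m(z)^2 - (1-\gamma-z)m(z)+1=0$) to a single scalar equation that I expect to simplify to
\[
 z = (1+\lambda)\bigl(1+\tfrac{\gamma}{\lambda}\bigr).
\]
This candidate outlier lies strictly above $(1+\sqrt\gamma)^2$ exactly when $\lambda^2>\gamma$; in the complementary regime the secular equation has no root outside the support, hence by Weyl no eigenvalue escapes and $\varsigma_1(S)\to(1+\sqrt\gamma)^2$.

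For the eigenvector part, I would differentiate the secular equation at the outlier and compute the residue of $v^\top R_S(z) v$ at $z=\varsigma_1$ for $v=x/\sqrt{n}$; the projection of $v_1(S)$ onto $x$ equals the square root of this residue up to a normalizing derivative, yielding a closed-form overlap that is strictly positive precisely when $\lambda^2>\gamma$ and that vanishes continuously as $\lambda^2\downarrow\gamma$. In the subcritical regime, the absence of an outlier combined with delocalization of bulk eigenvectors (via the local Marchenko--Pastur law) gives $\langle v_1(S),x/\sqrt n\rangle\to 0$.

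The main obstacle will be making the heuristic limits $\tfrac{1}{n}x^\top R(z)x\to m(z)$ and $\tfrac{1}{N}\bm u^\top\bm Z^\top R(z)\bm Z\bm u\to 1+z\,m(z)$ quantitative and uniform in $z$ near the spectral edge, since this is where the transition happens and where the resolvent becomes singular. I would handle this by combining the isotropic local Marchenko--Pastur law with Hanson--Wright-type quadratic-form concentration for $\bm u$; alternatively, for the Gaussian case here, one can use a single-step leave-one-out decomposition together with the independence between $\bm u$ and $\bm Z$ to avoid invoking the local law in full generality.
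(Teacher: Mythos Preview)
The paper does not contain a proof of this statement. It is stated as a classical result (the Baik--Ben Arous--P\'ech\'e transition) and attributed to \cite{Joh01, BBP05, BS06}; the environment \texttt{oldthm} is used precisely to quote known theorems from the literature without proof. So there is no ``paper's own proof'' to compare your proposal against.

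Your sketch is a reasonable outline of the standard resolvent/secular-equation approach to BBP-type transitions and is broadly in line with how such results are proved in the cited references. If you want to sharpen it, the main loose thread is the eigenvector overlap in the subcritical regime: invoking ``delocalization of bulk eigenvectors via the local Marchenko--Pastur law'' is correct in spirit but heavier machinery than the original BBP papers use, and you should be explicit about which isotropic local law you are appealing to and under what moment assumptions on the prior $\pi$ it applies.
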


For the spiked Wigner model, the performance of the spectral method is characterized by a variant of the BBP transition \cite{FP07, CDMF09}. 

\begin{oldthm}
    Suppose that $\bm Y$ are defined in Definition~\ref{def-spiked-Wigner}. When $0\leq \lambda\leq 1$, the top eigenvalue $\varsigma_1(\frac{1}{\sqrt{n}}\bm Y)$ remains within the bulk of the noise spectrum, concentrating at $2$ (matching the top eigenvalue of $\bm Z$) and the top eigenvector $v_1(\frac{1}{\sqrt{n}}\bm Y)$ is asymptotically orthogonal with the spike $x$. 
    
    In contrast, when $\lambda>1$, the top eigenvalue $\varsigma_1(\frac{1}{\sqrt{n}}\bm Y)$ detaches from the bulk, concentrating at $\lambda+\frac{1}{\lambda}>2$, and the top eigenvector $v_1(\frac{1}{\sqrt{n}}\bm Y)$ achieves a non-vanishing correlation with the spike $x$.
\end{oldthm}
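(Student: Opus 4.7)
The plan is to view $M := \tfrac{1}{\sqrt{n}}\bm{Y} = \tfrac{1}{\sqrt{n}}\bm{Z} + \theta_n \hat{x}\hat{x}^{\top}$, with $\hat{x}=x/\|x\|$ and $\theta_n := \lambda\|x\|^{2}/n \to \lambda$, as a rank-one perturbation of a Wigner matrix, and analyze its outlier via the secular equation together with an isotropic local semicircle law. Any eigenvalue $\mu$ of $M$ outside the spectrum of $\tfrac{1}{\sqrt{n}}\bm{Z}$ must satisfy
\[
    \theta_n\,\hat{x}^{\top}G(\mu)\hat{x} = 1, \qquad G(\mu) := \bigl(\mu I - \tfrac{1}{\sqrt{n}}\bm{Z}\bigr)^{-1},
\]
so the whole proof reduces to controlling the quadratic form $\hat{x}^{\top}G(\mu)\hat{x}$ and to ruling out outliers in the subcritical regime.

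First I would invoke the isotropic local semicircle law for $\tfrac{1}{\sqrt{n}}\bm{Z}$ (standard after Knowles--Yin and Bloemendal--Erd\H{o}s--Knowles--Yau--Yin): uniformly for $\mu$ at mesoscopic distance above the edge, with high probability $\hat{x}^{\top}G(\mu)\hat{x} = m_{sc}(\mu)+o(1)$, where $m_{sc}(\mu)=(\mu-\sqrt{\mu^{2}-4})/2$ is the Stieltjes transform of the semicircle law, satisfying $m_{sc}(\mu)+1/m_{sc}(\mu) = \mu$ and mapping $(2,\infty)$ bijectively and decreasingly onto $(0,1)$. Combined with rigidity of the top eigenvalue of $\tfrac{1}{\sqrt{n}}\bm{Z}$ at $2$, the limiting secular equation $\lambda\,m_{sc}(\mu)=1$ has a solution $\mu^{*}>2$ if and only if $\lambda>1$, in which case $\mu^{*}=\lambda+1/\lambda$. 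This handles both the subcritical statement (no outlier, $\varsigma_{1}(M)\to 2$) and the supercritical location of $\varsigma_{1}(M)$.

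For the eigenvector in the supercritical regime, the identity $Mv = \mu^{*}v$ forces $v \propto G(\mu^{*})\hat{x}$, so after normalization
\[
    \langle v_{1}(M),\hat{x}\rangle^{2} \;=\; \frac{1}{\theta_{n}^{2}\,\hat{x}^{\top}G(\mu^{*})^{2}\hat{x}} \;\longrightarrow\; \frac{-1}{\lambda^{2}\,m'_{sc}(\mu^{*})} \;=\; 1-\frac{1}{\lambda^{2}}\,,
\]
where I used $G(\mu)^{2} = -\tfrac{d}{d\mu}G(\mu)$ and the isotropic law to differentiate through, together with the direct computation $m'_{sc}(\lambda+1/\lambda) = -1/(\lambda^{2}-1)$. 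In the subcritical regime, delocalization of the bulk eigenvectors of $\tfrac{1}{\sqrt{n}}\bm{Z}$ (for which $\hat{x}$ is an essentially random direction) gives $|\langle v_{1}(M),\hat{x}\rangle|\to 0$.

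The one genuinely technical step is the isotropic local law uniformly up to the spectral edge so that one may rigorously pass to the limit in the secular equation and in its derivative; this is by now well established and for the present purposes it suffices to quote the classical BBP-type results \cite{FP07,CDMF09} rather than reprove them. The only mild dependence on the prior $\pi$ enters through the requirement that $\hat{x}$ be suitably delocalized so that the isotropic law applies, which is guaranteed by the assumption $\|x\|\approx\sqrt{n}$ under the standard priors considered later in the paper.
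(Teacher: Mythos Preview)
The paper does not prove this statement at all: it is stated as background in the introduction and attributed directly to \cite{FP07,CDMF09}, with no argument given. Your sketch is a correct outline of the by-now standard route to such BBP-type results (secular equation for the rank-one perturbation, isotropic local semicircle law to replace $\hat{x}^{\top}G(\mu)\hat{x}$ by $m_{sc}(\mu)$, and differentiation to get the eigenvector overlap), and you yourself note that for the paper's purposes one may simply cite \cite{FP07,CDMF09}. So there is nothing to compare: the paper treats this as a black box, and your proposal goes strictly beyond what the paper does by sketching why the black box is true.
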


A central question is whether this spectral approach is optimal, both statistically and computationally. It turns out that from a statistical perspective, the spectral method is not always optimal. For ``simple'' and ``dense'' priors $\pi$ (e.g., the product distribution of $n$ standard normal variables or the uniform distribution over the hypercube $\{ -1,+1 \}^n$), it is known that no statistics achieves strong detection or weak recovery below the BBP threshold \cite{PWBM18}. On the other hand, a long line of work has demonstrated that if $\pi$ is supported on vectors with $pn$ non-zero entries for sufficiently small $p$ (the setting of sparse PCA), then exhaustive searches requiring time $\exp(\Omega(n))$ over sparse vectors can succeed at detection and recovery for certain values below the BBP threshold \cite{LKZ15, KXZ16, BDM+16, BMV+18, PWBM18, LM19, EKJ20}. From a computational perspective, it is shown in \cite{KWB22, MW25} that under mild assumptions on the prior $\pi$, a large class of algorithms namely those based on {\em low-degree polynomials} cannot solve the detection and recovery problem below the BBP threshold. This provides strong evidence that the BBP transition represents a fundamental computational barrier for a broad range of efficient algorithms, and suggests the emergence of a statistical-computational gap when the prior $\pi$ is sparse.

\subsection{Correlated spiked models}{\label{subsec:correlated-spiked-model}}

Modern data analysis increasingly involves multiple, related datasets. The field of {\em multi-modal learning} \cite{NKK+11, RT17} is built on the premise that jointly analyzing such datasets can yield more powerful inferences than processing each one in isolation. This is particularly relevant when signals are observed through different sensors or modalities, each providing a complementary view.

To theoretically understand the potential and limitations of multi-modal inference, recent works \cite{KZ25, MZ25+} have introduced a natural extension of the spiked matrix model called the {\em correlated spiked matrices model}. In this model, we observe a pair of matrices
\begin{equation}{\label{eq-def-correlated-spike-general}}
    \bm{X} = \sum_{k=1}^{r} \tfrac{\lambda_k}{\sqrt{n}} x_k u_k^{\top} + \bm{W}, \quad \bm{Y} = \sum_{k=1}^{r} \tfrac{\mu_k}{\sqrt{n}} y_k v_k^{\top} + \bm{Z} \,.
\end{equation}
Here $\bm{W},\bm{Z} \in \mathbb R^{n*N}$ are noise matrices, $\{ x_k,y_k: 1 \leq k \leq r\}$ are correlated spikes in the sense that $\langle x_k, y_k \rangle \approx \rho_k \| x_k \| \| y_k \|$ for some $\rho_k>0$, and $\{ \lambda_k, \mu_k \}$ are signal-to-noise ratio. Two basic problems regarding this model are as follows: (1) the detection problem, i.e., deciding whether $(\bm{X},\bm{Y})$ is sampled from the law of correlated spiked model or is sampled from the law of pure noise matrices; (2) the recovery problem, i.e., recovering the planted (correlated) spikes $(x_k,y_k)$ from the observation $(\bm X,\bm Y)$. 

Initial studies of this model have yielded intriguing insights. The authors of \cite{KZ25} characterized the detectability threshold of the Bayes-optimal estimator and compared it empirically to the performance of standard spectral methods like Partial Least Squares (PLS) \cite{WSE01, Pir06} and Canonical Correlation Analysis (CCA) \cite{Tom00, GW19}. In a complementary line of work, \cite{MZ25+} derived the high-dimensional limits for a natural spectral method based on the sample-cross-covariance matrix $\bm{S}=\bm{X} \bm{Y}^{\top}$ in the rank-one case ($r=1$), showing that this canonical spectral approach might be sub-optimal even in the simple rank-one setting. This motivates the following natural questions:
\begin{question}{\label{Main-Question}}
    \begin{enumerate}
        \item[(1)] Can we settle the exact computational threshold for inference in this correlated model?
        \item[(2)] What is the optimal algorithm that approaches the computational threshold for this model?
    \end{enumerate}
\end{question}

\subsection{Main results and discussions}

In this paper, we focus on the following special rank-one setting of \eqref{eq-def-correlated-spike-general}, which we call the correlated spiked Wigner model and the correlated spiked Wishart model. 

\begin{DEF}[Correlated spikes distribution]{\label{def-correlated-spikes}}
    Let $x,y \in \mathbb R^n$ are random vectors such that $\{ x_i,y_i \}_{1 \leq i \leq n}$ are i.i.d.\ sampled from $\pi_*$, where $\pi_*$ is the law of two correlated random variables satisfying the following moment conditions:
    \begin{equation}{\label{eq-moment-pi}}
        \begin{aligned}
        &\mathbb E_{(X,Y)\sim\pi_*}[X]= \mathbb E_{(X,Y)\sim\pi_*}[Y]=0 \,, \\
        &\mathbb E_{(X,Y)\sim\pi_*}[X^2]= \mathbb E_{(X,Y)\sim\pi_*}[Y^2]=1 \,, \\
        &\mathbb E_{(X,Y)\sim\pi_*}[XY] = \rho \in (0,1) \,.
        \end{aligned}
    \end{equation}
    Let $\pi=\pi_*^{\otimes n}$ be the law of $(x,y)$. Of particular interest are the following canonical priors:
    \begin{itemize}
        \item The correlated Gaussian prior: we let $\pi_*$ be the law of a pair of mean-zero normal variables with variance $1$ and correlation $\rho$;
        \item The correlated Rademacher prior: we let $\pi_*$ be the law of a pair of Rademacher variables (i.e., marginally each variable is uniformly distributed in $\{ -1,+1 \}$) with correlation $\rho$.
        \item The correlated sparse Rademacher prior: we let $p \in (0,1)$ and let $\pi_*$ be the law of $(\tfrac{BX}{\sqrt{p}},\tfrac{BY}{\sqrt{p}})$, where $B$ is an independent Bernoulli variable with parameter $p$ and $(X,Y)$ are correlated Rademacher variables with correlation $\rho$.
    \end{itemize}
\end{DEF}
\begin{DEF}[Correlated spiked Wigner model]{\label{def-correlated-spike-specific}}
    A pair of matrices $(\bm X,\bm Y)$ is called correlated spiked Wigner matrices if they can be written as
    \begin{equation}{\label{eq-def-correlated-spike-specific}}
        \bm{X} = \tfrac{\lambda}{\sqrt{n}} xx^{\top} + \bm{W}, \quad \bm{Y} = \tfrac{\mu}{\sqrt{n}} yy^{\top} + \bm{Z} \,.
    \end{equation}
    Here $(x,y)$ are correlated spikes sampled from $\pi$, $\bm W,\bm Z$ are independent Wigner matrices, and $\lambda,\mu\geq 0$ are signal-to-noise ratios. Denote $\Pb=\Pb_{\lambda,\mu,\rho,n}$ be the law of $(\bm X,\bm Y)$ under \eqref{eq-def-correlated-spike-specific}. In addition, denote $\Qb=\Qb_n$ be the law of $(\bm W,\bm Z)$. 
\end{DEF}
\begin{DEF}[Correlated spiked Wishart model]{\label{def-correlated-spike-covariance-specific}}
    A pair of matrices $(\bm X,\bm Y) \in \mathbb R^{n*N} \times \mathbb R^{n*N}$ is called correlated spiked Wishart matrices if they can be written as
    \begin{equation}{\label{eq-def-correlated-spike-covariance-specific}}
        \bm{X} = \tfrac{\sqrt{\lambda}}{\sqrt{n}} x \bm{u}^{\top} + \bm{W}, \quad \bm{Y} = \tfrac{\sqrt{\mu}}{\sqrt{n}} y \bm{v}^{\top} + \bm{Z} \,.
    \end{equation}
    Here $(x,y)$ are correlated spikes sampled from $\pi$, $\bm u,\bm v$ are independent random vectors sampled from $\mathcal N(0,\mathbb I_N)$, $\bm W,\bm Z \in \mathbb R^{n*N}$ are matrices with i.i.d.\ standard normal entries, and $\lambda,\mu\geq 0$ are signal-to-noise ratios. Denote $\overline\Pb=\overline\Pb_{\lambda,\mu,\rho,n,N}$ be the law of $(\bm X,\bm Y)$ under \eqref{eq-def-correlated-spike-covariance-specific}. In addition, denote $\overline\Qb=\overline\Qb_{n,N}$ be the law of $(\bm W,\bm Z)$. Note that equivalently we can write
    \begin{align*}
        (\bm X,\bm Y)=\big( \bm X_1,\ldots,\bm X_N;\bm Y_1,\ldots, \bm Y_N \big)  \,,
    \end{align*}
    where given $x,y$ we have
    \begin{align*}
        \bm X_i \overset{i.i.d.}{\sim} \mathcal N\big( 0,\mathbb I_n+\tfrac{\lambda}{n} xx^{\top} \big) \mbox{ and } \bm Y_i \overset{i.i.d.}{\sim} \mathcal N\big( 0,\mathbb I_n+\tfrac{\mu}{n} yy^{\top} \big) \,.
    \end{align*}
\end{DEF}

We now formally define the detection and recovery problems in this model.

\begin{DEF}{\label{def-strong-detection}}
    We say an algorithm $\mathcal A$ that takes $(\bm X,\bm Y)$ as input and outputs an element in $\{ 0,1 \}$ achieves {\em strong detection} between $\Pb$ and $\Qb$, if
    \begin{align}{\label{eq-def-strong-detection}}
        \Pb\big( \mathcal A(\bm{X},\bm{Y})=0 \big) + \Qb\big( \mathcal A(\bm X,\bm Y)=1 \big) \to 0 \mbox{ as } n \to \infty \,.
    \end{align}
\end{DEF}
\begin{DEF}{\label{def-weak-recovery}}
    We say an algorithm $\mathcal A$ that takes $(\bm X,\bm Y)$ as input and outputs an estimator $(\widehat{x},\widehat{y}) \in \mathbb R^n \times \mathbb R^n$ achieves {\em weak recovery}, if
    \begin{equation}{\label{eq-def-weak-recovery}}
        \mathbb E_{\Pb}\Big[ \tfrac{ |\langle \widehat x,x \rangle| }{ \| \widehat x \| \| x \| } + \tfrac{ |\langle \widehat y,y \rangle| }{ \| \widehat y \| \| y \| } \Big] \geq \epsilon \mbox{ for some constant } \epsilon>0  \,.
    \end{equation}
\end{DEF}

In this work, we completely resolve Question~\ref{Main-Question} for the models \eqref{eq-def-correlated-spike-specific} and \eqref{eq-def-correlated-spike-covariance-specific}. Define
\begin{equation}{\label{eq-def-F(lambda,mu,rho)}}
    F(\lambda,\mu,\rho,\gamma)=\max\Big\{ \frac{ \lambda^2 }{ \gamma }, \frac{ \mu^2 }{ \gamma }, \frac{ \lambda^2 \rho^2 }{ \gamma-\lambda^2+\lambda^2 \rho^2 } + \frac{ \mu^2 \rho^2 }{ \gamma-\mu^2+\mu^2 \rho^2 } \Big\} \,.
\end{equation}
Our positive results provide efficient detection and recovery algorithms that succeed whenever $F(\lambda,\mu,\rho,1)>1$ for correlated spiked Wigner model, and $F(\lambda,\mu,\rho,\tfrac{n}{N})>1$ for correlated spiked Wishart model.

\begin{thm}[Informal]{\label{MAIN-THM-upper-bound-informal}}
    Suppose that $F(\lambda,\mu,\rho,1)>1$. Then, under mild assumptions on the prior $\pi$ (see Assumption~\ref{assum-upper-bound} for details), there exists two algorithms $\mathcal A,\mathcal A'$ with polynomial running time such that $\mathcal A$ (respectively, $\mathcal A'$) takes $\bm X,\bm Y$ defined in \eqref{eq-def-correlated-spike-specific} as input and achieves strong detection (respectively, weak recovery).
\end{thm}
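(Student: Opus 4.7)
The plan is to build both algorithms from a family of \emph{edge-decorated cycle counts}. Fix a cycle length $k = k(n) = \Theta(\log n)$, and for each decoration $\sigma = (\sigma_1,\ldots,\sigma_k) \in \{X,Y\}^k$, set $\bm M^X := \bm X$, $\bm M^Y := \bm Y$ and define
\begin{equation*}
T_\sigma \ :=\ \sum_{\substack{i_1,\ldots,i_k \in [n] \\ \text{distinct}}} \prod_{j=1}^{k} \bm M^{\sigma_j}_{i_j,\, i_{j+1}} \qquad \text{(indices cyclic modulo $k$).}
\end{equation*}
Algorithm $\mathcal A$ outputs $1$ if and only if a suitable linear combination $S := \sum_\sigma w(\sigma)\, T_\sigma$ exceeds a threshold. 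Note $T_\sigma$ is a polynomial of degree $k = O(\log n)$ in the matrix entries, and $S$ is computable in polynomial time via standard color-coding / matrix-power techniques.

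The first step is the signal/noise decomposition of $T_\sigma$. Under $\Pb$, expanding each entry of $\bm X, \bm Y$ as signal-plus-noise and isolating the pure signal part, vertex $i_j$ is incident to two edges with decorations $(\sigma_{j-1},\sigma_j)$; after taking expectation over $(x_i,y_i) \overset{\text{i.i.d.}}{\sim} \pi_*$ this vertex contributes a factor of $\mathbb E[X^2] = 1$, $\mathbb E[Y^2] = 1$, or $\mathbb E[XY] = \rho$ according to whether the pair is $XX$, $YY$, or mixed. This produces the ``signal weight''
\begin{equation*}
w(\sigma) \ :=\ \lambda^{k_X(\sigma)}\, \mu^{k_Y(\sigma)}\, \rho^{c(\sigma)},
\end{equation*}
where $k_X,k_Y$ count decorations of each type and $c(\sigma)$ counts mixed vertices, so that $\mathbb E_\Pb[T_\sigma] \sim n^{k/2}\, w(\sigma)$ to leading order. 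Under $\Qb$ the trace method gives $\mathrm{Var}_\Qb(T_\sigma) \sim n^k$ (dominated by vertex-distinct, self-paired cycles, valid for $k = O(\log n)$), and distinct decorations produce uncorrelated cycle counts. With the optimal choice $w(\sigma) = \lambda^{k_X}\mu^{k_Y}\rho^c$, the squared signal-to-noise ratio of $S$ equals, to leading order,
\begin{equation*}
\sum_{\sigma \in \{X,Y\}^k} w(\sigma)^2 \ =\ \operatorname{tr}(M'^{\,k}), \qquad M' \ :=\ \begin{pmatrix} \lambda^2 & \mu^2 \rho^2 \\ \lambda^2 \rho^2 & \mu^2 \end{pmatrix},
\end{equation*}
by a transfer-matrix identity: the $(\sigma_{j-1},\sigma_j)$-entry of $M'$ encodes the squared contribution of edge $j$ together with the vertex preceding it (an extra factor of $\rho^2$ when the transition is mixed). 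A direct calculation shows that the top eigenvalue of $M'$ exceeds $1$ precisely when $F(\lambda,\mu,\rho,1) > 1$: the conditions $\lambda^2 > 1$ and $\mu^2 > 1$ correspond to the diagonal entries of $M'$ dominating, while the interaction condition $\tfrac{\lambda^2\rho^2}{1-\lambda^2+\lambda^2\rho^2} + \tfrac{\mu^2\rho^2}{1-\mu^2+\mu^2\rho^2} > 1$ is equivalent to $\det(I-M') < 0$, i.e.\ $\lambda^2\mu^2\rho^4 > (1-\lambda^2)(1-\mu^2)$. Hence $\operatorname{tr}(M'^{\,k}) \to \infty$ in our regime, and a Chebyshev-type comparison of $S$ under $\Pb$ and $\Qb$ yields strong detection.

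For weak recovery, I would form the analogous \emph{decorated-path matrix} $\widehat{\mathcal M} \in \mathbb R^{n \times n}$ whose $(a,b)$-entry is a weighted sum over length-$k$ self-avoiding paths from $a$ to $b$ with the same decoration weights; the top eigenvector of $\widehat{\mathcal M}$ is then output as $\widehat x$ (and analogously $\widehat y$ from a twin construction). The same first-/second-moment framework, with the two free endpoints replacing the cycle closure, shows that $\widehat{\mathcal M}$ develops a spectral outlier in the regime $F(\lambda,\mu,\rho,1) > 1$ whose eigenvector correlates with $x$ to leading order, yielding weak recovery in the sense of Definition~\ref{def-weak-recovery}.

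The principal technical obstacle is the precise trace-method estimate of $\mathrm{Var}_\Qb(T_\sigma)$ at logarithmic cycle length. One must enumerate and control the contribution from ``tangled'' cycle-pairs that share many vertices or edges, corresponding to nontrivial quotient graphs of the doubled cycle; a careful combinatorial argument, classifying perfect edge-pairings of the doubled cycle according to their quotient topology, is needed to show that these degenerate contributions sum to $o(n^k)$. The mild moment conditions on $\pi_*$ (Assumption~\ref{assum-upper-bound}) enter exactly here, preventing higher-moment tails of the prior from blowing up the variance through heavy-vertex contributions. A secondary but routine task is the algebraic matching of $\operatorname{tr}(M'^{\,k})$ with the threshold function $F(\lambda,\mu,\rho,1)$, which was sketched above via the characteristic polynomial of $M'$.
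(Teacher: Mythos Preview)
Your approach is essentially the paper's: weighted decorated cycle counts for detection, weighted decorated paths for recovery, with the same transfer matrix governing the signal-to-noise (the paper's matrix is $\begin{pmatrix}\lambda^2 & \lambda^2\rho^2\\ \mu^2\rho^2 & \mu^2\end{pmatrix}$, which has the same characteristic polynomial as your $M'$, so the eigenvalue criterion $A_+>1 \Leftrightarrow F(\lambda,\mu,\rho,1)>1$ agrees).

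However, you have misidentified the main technical obstacle. Under $\Qb$ the decorated-cycle polynomials $f_S$ are \emph{orthonormal} (since the noise entries are i.i.d.\ Gaussians and each edge appears exactly once in $f_S$), so $\mathbb E_{\Qb}[S^2]$ is computed in one line with no tangled-cycle enumeration at all; the prior moments never enter here. The genuinely delicate step is bounding $\operatorname{Var}_{\Pb}[S]$, where the signal creates nontrivial covariances $\operatorname{Cov}_{\Pb}(f_S,f_K)$ between cycles sharing vertices. This is where Assumption~\ref{assum-upper-bound} is used (to control products like $x_i^a y_i^b$ at vertices of degree up to $4$ in $S\cup K$), and where the paper introduces the quantity $\mathtt M(S,K)$ and a careful case analysis over intersection patterns. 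Your proposal does not address $\operatorname{Var}_{\Pb}$ at all, and the ``Chebyshev-type comparison under $\Pb$ and $\Qb$'' implicitly assumes it is under control.

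Two secondary differences worth noting. First, for detection the paper takes $\ell=o(\log n/\log\log n)$ rather than $\Theta(\log n)$; this lets combinatorial factors like $\ell^{2\ell}$ and $C^\ell$ be absorbed as $n^{o(1)}$, trivializing several enumeration bounds (see the proof of Lemma~\ref{lem-detection-most-technical}). At $\Theta(\log n)$ these factors become polynomial in $n$ and you would need the more refined decomposition the paper reserves for recovery. Second, for recovery the paper does \emph{not} take a top eigenvector of the path matrix: it fixes an arbitrary anchor $w$ and sets $\widehat x_u = \Phi^{\mathcal J}_{w,u}$ (truncated), then shows directly that $\mathbb E_{\Pb}[\Phi^{\mathcal J}_{w,u}\, x_u x_w]\approx 1$ with bounded second moment. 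This avoids any spectral concentration argument and is substantially simpler to analyze than your eigenvector route.
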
 
\begin{thm}[Informal]{\label{MAIN-THM-upper-bound-informal-covariance}}
    Suppose that $\tfrac{n}{N}=\gamma$ for some $\gamma=\Theta(1)$ and $F(\lambda,\mu,\rho,\gamma)>1$. Then, under mild assumptions on the prior $\pi$ (see Assumption~\ref{assum-upper-bound} for details), there exists two algorithms $\mathcal A,\mathcal A'$ with polynomial running time such that $\mathcal A$ (respectively, $\mathcal A'$) takes $\bm X,\bm Y$ defined in \eqref{eq-def-correlated-spike-covariance-specific} as input and achieves strong detection (respectively, weak recovery).
\end{thm}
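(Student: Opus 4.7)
The plan is to prove the theorem by constructing, for $L = \Theta(\log n)$, a polynomial-time statistic $T_L(\bm X, \bm Y)$ that is a signed sum of edge-decorated self-avoiding cycles in the natural bipartite structure induced by the Wishart observations. Concretely, each cycle is a sequence $i_1, a_1, i_2, a_2, \ldots, i_L, a_L, i_1$ of alternating vertices in $[n]$ and $[N]$ (all $i_k$ distinct, all $a_k$ distinct), and each of the $2L$ edges is labeled either $X$ or $Y$ to indicate which matrix supplies the edge weight. The contribution of one labeled cycle is the product of the corresponding (suitably centered) entries of $\bm X$ and $\bm Y$, and $T_L$ sums these contributions over all self-avoiding labeled cycles. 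The detection algorithm $\mathcal A$ thresholds $T_L$; the recovery algorithm $\mathcal A'$ uses the vertex-rooted variant $T_L^{(i)}$ that restricts to cycles passing through $i \in [n]$ as an entry-wise estimator of $x_i$, and analogously for $y_i$.

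Detection follows from a first- and second-moment computation. Under $\overline \Pb$, expanding $\bm X = \tfrac{\sqrt{\lambda}}{\sqrt{n}} x \bm u^{\top} + \bm W$ and likewise for $\bm Y$, the signal contribution of each labeled cycle factors across its edges: an $X$-labeled edge at $(i,a)$ yields $\tfrac{\sqrt{\lambda}}{\sqrt{n}} x_i \bm u_a$, and a color-change $X \to Y$ at an $[n]$-vertex $i$ produces a factor $x_i y_i$ which, summed over $i$, contributes $\langle x, y\rangle/n \approx \rho$. After summing over self-avoiding embeddings of the cycle, the leading term in the first moment organizes itself as the trace of the $L$-th power of a $2 \times 2$ transition matrix on the alphabet $\{X, Y\}$, whose diagonal entries are $\lambda/\gamma$ and $\mu/\gamma$ and whose off-diagonal entries involve $\rho$; its top eigenvalue is exactly $F(\lambda, \mu, \rho, \gamma)$. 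A separate combinatorial computation shows $\mathbb E_{\overline \Qb}[T_L] = 0$ and that $\mathrm{Var}_{\overline \Qb}[T_L]$ grows only polynomially in $n$ (self-avoidance being crucial here to kill non-cycle noise contributions), so Chebyshev yields strong detection for $L$ a sufficiently large multiple of $\log n$ whenever $F > 1$.

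The main obstacle is controlling $\mathrm{Var}_{\overline \Pb}[T_L]$: one must sum over pairs of self-avoiding labeled cycles and bound the contribution from every possible overlap pattern (cycles sharing $k$ vertices in $[n]$ and $\ell$ vertices in $[N]$, with overlapping edges possibly carrying the same or different labels). The combinatorics are substantially more intricate than in the Wigner-only case because one tracks two vertex-set sizes $n, N$, both spikes $\lambda, \mu$, and the correlation $\rho$ simultaneously; each overlap pattern contributes a multi-parameter sum that must be dominated by the disjoint-pair contribution $(\mathbb E_{\overline \Pb}[T_L])^2$. Establishing this requires careful bookkeeping via generating functions in $\lambda^2/\gamma$, $\mu^2/\gamma$, and $\rho^2$, together with showing that the subleading spectral structure of the $2 \times 2$ transition matrix does not conspire with overlap combinatorics to amplify joint-overlap terms. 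The weak-recovery analysis is essentially the same computation applied to $T_L^{(i)}$, for which rooting at $i$ reduces the cycle count by a factor of $1/n$ but boosts the signal proportionally to $x_i$, so that an appropriate rescaling of $T_L^{(i)}$ becomes an $\ell^2$-consistent estimator of the spike whenever $F > 1$, giving weak recovery in the sense of Definition~\ref{def-weak-recovery}.
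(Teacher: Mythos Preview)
Your high-level plan---counting edge-decorated bipartite self-avoiding cycles and tracking the first moment through a $2\times 2$ transfer matrix---is indeed the paper's strategy, but a crucial ingredient is missing: the \emph{weighting} of each decorated cycle class $[H]$ by $\Upsilon(H)=\lambda^{|E_\bullet(H)|/2}\mu^{|E_\circ(H)|/2}\rho^{|\mathsf{diff}(H)|}$ (see \eqref{eq-def-f-mathcal-G}--\eqref{eq-def-Upsilon-S}). Your $T_L$ is an unweighted sum, and this does \emph{not} reach the threshold $F>1$. Under $\overline\Qb$ the polynomials $\{h_S\}$ are orthonormal, so among all linear combinations the Cauchy--Schwarz-optimal one weights $h_S$ by $\mathbb E_{\overline\Pb}[h_S]=\Upsilon(S)n^{-\ell}$; only this choice makes the squared signal-to-noise grow like $(A_+/\gamma)^\ell$, where $A_+$ is the top eigenvalue of $\bigl(\begin{smallmatrix}\lambda^2&\lambda^2\rho^2\\\mu^2\rho^2&\mu^2\end{smallmatrix}\bigr)$ (Lemma~\ref{lem-bound-beta-mathcal-G}). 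For an unweighted sum the corresponding rate is $B_+^2/(2\gamma)$ with $B_+$ the top eigenvalue of $\bigl(\begin{smallmatrix}\lambda&\lambda\rho\\\mu\rho&\mu\end{smallmatrix}\bigr)$, and $B_+^2/2\le A_+$ with strict inequality whenever $\Upsilon$ is nonconstant across $\mathcal G$---already at $\rho=0$ you would need $\max(\lambda^2,\mu^2)>2\gamma$ instead of $>\gamma$. The paper flags this explicitly in Section~1.3: the weighting ``is a key departure from `unweighted' spectral methods'' and is believed ``crucial \ldots\ to achieve the exact computational threshold.''

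Relatedly, your transfer-matrix description is off: the relevant matrix has diagonal entries $\lambda^2,\mu^2$ (not $\lambda/\gamma,\mu/\gamma$), and its top eigenvalue $A_+$ is not equal to $F(\lambda,\mu,\rho,\gamma)$; the correct link (end of the proof of Lemma~\ref{lem-bound-beta-mathcal-H}) is $A_+>\gamma\iff F>1$ under $\lambda^2,\mu^2\le\gamma$. Two smaller points. First, the paper restricts to decorations with $\mathsf{diff}(H)\subset V^{\mathsf a}(H)$, since a color change at an $[N]$-vertex contributes $\mathbb E[\bm u_a\bm v_a]=0$ to the signal while still inflating the null variance; your ``each of the $2L$ edges is labeled'' allows such changes and worsens the ratio further. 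Second, for recovery the paper uses $\Upsilon$-weighted decorated \emph{paths} $\Psi^{\mathcal I}_{u,v}$ with prescribed endpoints (see \eqref{eq-def-Phi-i,j-mathcal-I}), with $\ell=\Theta_\delta(\log n)$, rather than rooted cycles; your rooted-cycle estimator may be workable but would again need the $\Upsilon$-weights to hit the threshold.
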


Our algorithm is based on subgraph counting, a method widely used for network analysis in both theory \cite{MNS15, BDER16} and practice \cite{ADH+08, RPS+21}. For example, in the context of community detection, counting cycles of logarithmic length has been shown to achieve the optimal detection threshold for distinguishing the stochastic block model (SBM) with symmetric communities from the \ER graph model in the sparse regime \cite{MNS15, MNS24}. Similarly, in the dense regime, counting signed cycles turns out to achieve the optimal asymptotic power \cite{Ban18, BM17}. In addition, algorithms based on counting non-backtracking or self-avoiding walks \cite{Mas14, MNS18, BLM15, AS15, AS18} have been proposed to achieve sharp threshold with respect to the related community recovery problem.

Our main innovation is to generalize this approach to the two-matrix setting. Let us focus on the correlated Wigner model for simplicity. Whereas prior work counts cycles in a single graph, we count \emph{decorated cycles} (see Section~\ref{subsec:notation} for a formal definition) in the pair of matrices $(\bm X,\bm Y)$. Specifically, we consider cycles where each edge is ``decorated'' according to whether it is taken from $\bm X$ or $\bm Y$. The choice of such decorations on edges ensures that the number of decorated cycles grows exponentially faster than the number of unlabeled cycles, which enables us to surpass the BBP threshold in a single spiked model. While the concept is natural, its analysis presents a significant technical challenge since we need to control the complex correlations between counts of different decorated cycles. To overcome this, we introduce a delicate \emph{weighting scheme}, where each decorated cycle is assigned a weight based on its specific combinatorial structure (see \eqref{eq-def-f-mathcal-H} and \eqref{eq-def-Phi-i,j-mathcal-J} for details). This weighted approach is a key departure from ``unweighted'' spectral methods like Partial Least Squares (PLS) or Canonical Correlation Analysis (CCA). We believe such weighting is indeed crucial for an efficient algorithm to achieve the exact computational threshold in this model, and thus we tend to feel that the PLS/CCA method are suboptimal in our model.

We complement our algorithmic result by showing evidence for a matching computational lower bound, suggesting that reaching detection and recovery in the regime $F(\lambda,\mu,\rho)<1$ would require a rather different approach and might be fundamentally impossible for all efficient algorithms.

\begin{thm}[Informal]{\label{MAIN-THM-lower-bound-informal}}
    Suppose that $F(\lambda,\mu,\rho,1)<1$. Then, under mild assumptions on the prior $\pi$ (see Assumption~\ref{assum-lower-bound} for details), all algorithms based on low-degree polynomials fails to achieve strong detection between $\Pb$ and $\Qb$ defined in \eqref{eq-def-correlated-spike-specific}.
\end{thm}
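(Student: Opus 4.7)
The plan is to bound the low-degree likelihood ratio norm $\|L^{\leq D}\|_{L^2(\Qb)}^2$, where $L = \mathrm{d}\Pb/\mathrm{d}\Qb$ and $D$ is taken to be a slowly growing parameter (typically a small power of $n$), and then invoke the standard LDLR-to-hardness reduction (as in \cite{KWB22, MW25}) to preclude strong detection by any polynomial of degree at most $D$. Since $\Qb$ is a product Gaussian measure, an orthonormal basis of $L^2(\Qb)$ consists of products of normalized Hermite polynomials $\prod_{i \leq j} \tilde h_{k^X_{ij}}(\bm X_{ij}) \tilde h_{k^Y_{ij}}(\bm Y_{ij})$, indexed by a pair of multigraphs $(k^X, k^Y)$ on $[n]$. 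Using the identity $\E[\tilde h_k(a + G)] = a^k/\sqrt{k!}$ for $G \sim \mathcal N(0,1)$ together with $(x,y) \sim \pi_*^{\otimes n}$, the Fourier coefficients factor as
\begin{align*}
    \hat L(k^X, k^Y) = \Big( \prod_{i < j} \tfrac{\lambda^{k^X_{ij}} \mu^{k^Y_{ij}}}{n^{(k^X_{ij}+k^Y_{ij})/2}\sqrt{k^X_{ij}!\, k^Y_{ij}!}} \Big) \prod_{i=1}^n \E_{(X,Y)\sim\pi_*}\big[ X^{d^X_i} Y^{d^Y_i} \big] \,,
\end{align*}
where $d^X_i = \sum_{j \neq i} k^X_{ij}$ and $d^Y_i = \sum_{j \neq i} k^Y_{ij}$. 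The target is to bound $\|L^{\leq D}\|^2 = \sum \hat L(k^X, k^Y)^2$, restricted to total multi-index degree at most $D$, by $O(1)$ uniformly in $n$.

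Next I will identify which multigraph pairs contribute non-negligibly. The moment conditions \eqref{eq-moment-pi} force any term with some vertex of total degree exactly one to vanish. A scaling argument then compares the vertex-choice factor $\sim n^V / V!$ against the edge normalization $n^{-E}$, where $V$ is the number of touched vertices and $E = \sum_{i<j}(k^X_{ij} + k^Y_{ij})$ is the total edge multiplicity; since every touched vertex now has degree at least two, one always has $V \leq E$, with equality iff every touched vertex has degree exactly two in the combined multigraph $H_X \cup H_Y$. Such configurations are disjoint unions of \emph{decorated cycles}; vertices of degree $\geq 3$ are penalized by a factor of $n^{-1/2}$ per unit of excess degree, while double-edge and other degenerate short-cycle configurations produce only an additive $O(1)$ constant that can be absorbed into the final bound.

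For a proper decorated cycle of length $\ell \geq 3$, each edge carries a color in $\{X, Y\}$ with multiplicity one, and the squared Fourier coefficient equals $n^{-\ell}\lambda^{2a}\mu^{2b}\rho^{2t}$, where $a, b$ are the numbers of $X$- and $Y$-edges and $t$ counts the $X \leftrightarrow Y$ transitions around the cycle. Summing over the $\sim n^\ell/(2\ell)$ cycle shapes and all $2^\ell$ colorings yields $\tfrac{1}{2\ell}\mathrm{tr}(\bm M^\ell)$, where
\begin{align*}
    \bm M = \begin{pmatrix} \lambda^2 & \lambda\mu\rho^2 \\ \lambda\mu\rho^2 & \mu^2 \end{pmatrix} \,.
\end{align*}
Summing over $\ell$ produces (formally) $-\tfrac12\log\det(\mathbb I - \bm M)$, which is finite iff the spectral radius of $\bm M$ is less than one. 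A direct computation gives $\det(\mathbb I - \bm M) = (1-\lambda^2)(1-\mu^2) - \lambda^2\mu^2\rho^4$, which together with the marginal conditions $\lambda^2,\mu^2 < 1$ is readily verified to be equivalent to $F(\lambda,\mu,\rho,1) < 1$. Hence $\|L^{\leq D}\|^2 = O(1)$, and the standard LDLR criterion rules out strong detection by any polynomial of degree $\leq D$.

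The chief technical obstacle will be the robust version of the middle step: bounding contributions from multigraph pairs with higher-multiplicity edges and with vertices of total degree $\geq 3$, in regimes where $\pi_*$ is not Gaussian and so the joint moments $\E_{\pi_*}[X^{d^X}Y^{d^Y}]$ can grow with the degrees. The sparse Rademacher case of Definition~\ref{def-correlated-spikes}, whose moments scale like $p^{1-(d^X+d^Y)/2}$, will be the most delicate and presumably drives the ``mild assumptions'' of Assumption~\ref{assum-lower-bound}. One must also choose the truncation $D$ (a small polynomial in $n$) so that cycles of length up to $D$ recover essentially the full geometric series while combinatorial over-counting in the multigraph enumeration remains of smaller order.
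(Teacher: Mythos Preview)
Your proposal is correct and arrives at the same threshold, but by a genuinely different route from the paper. The paper does not expand directly in the Hermite/multigraph basis; instead it invokes the replica-overlap identity for additive Gaussian models (Proposition~\ref{prop-low-deg-Adv-add-Gaussian}) to write
\[
\chi^2_{\leq D}(\widehat\Pb;\widehat\Qb)=\mathbb E_{(x,y),(x',y')\sim\pi}\Big[\exp_{\leq D}\big(\tfrac{\lambda^2}{2n}\langle x,x'\rangle^2+\tfrac{\mu^2}{2n}\langle y,y'\rangle^2\big)\Big],
\]
expands this as a double series in the exponents $k,\ell$, and then controls the joint moments $\mathbb E[(\tfrac{1}{\sqrt n}\langle x,x'\rangle)^{2k}(\tfrac{1}{\sqrt n}\langle y,y'\rangle)^{2\ell}]$ by a Lindeberg interpolation to a pair of correlated standard Gaussians $(U,V)$ with $\mathbb E[UV]=\rho^2$ (Lemmas~\ref{lem-correlated-Gaussian-moment} and~\ref{lem-compare-moments-Gaussian-non-Gaussian}). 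The Gaussian endpoint $\mathbb E[\exp(\tfrac12(\lambda^2 U^2+\mu^2 V^2))]$ evaluates to $\det(\mathbb I-\Sigma A)^{-1/2}$ with $A=\mathrm{diag}(\lambda^2,\mu^2)$ and $\Sigma$ the covariance of $(U,V)$; since $\Sigma A$ has the same spectrum as your $\bm M=A^{1/2}\Sigma A^{1/2}$, both arguments land on the identical determinant condition $(1-\lambda^2)(1-\mu^2)>\lambda^2\mu^2\rho^4$. Your combinatorial route has the advantage of making the decorated-cycle structure explicit from the outset (mirroring the algorithmic upper bound and the transfer-matrix computation of $A_+$ in Lemma~\ref{lem-bound-beta-mathcal-H}), and the multigraph error terms you flag would be handled by direct moment bounds under subgaussianity. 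The paper's interpolation route sidesteps that enumeration entirely, at the price of needing the positive-association hypothesis (Item~(3) of Assumption~\ref{assum-lower-bound}) to make the moment comparison monotone; neither argument strictly dominates the other.
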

\begin{thm}[Informal]{\label{MAIN-THM-lower-bound-informal-covariance}}
    Suppose that $\tfrac{n}{N}=\gamma$ for some $\gamma=\Theta(1)$ and $F(\lambda,\mu,\rho,\gamma)<1$. Then, under mild assumptions on the prior $\pi$ (see Assumption~\ref{assum-lower-bound} for details), all algorithms based on low-degree polynomials fails to achieve strong detection between $\overline\Pb$ and $\overline\Qb$ defined in \eqref{eq-def-correlated-spike-covariance-specific}.
\end{thm}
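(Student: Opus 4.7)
The approach is to bound the low-degree likelihood ratio (LDLR) norm between $\overline\Pb$ and $\overline\Qb$. Writing $\{H_\alpha\}$ for the product Hermite basis orthonormal under the Gaussian null $\overline\Qb$, each multi-index $\alpha$ is identified with a pair of edge-decorated multigraphs on the bipartite vertex set $[n]\sqcup[N]$, where every edge carries a decoration $\bm X$ or $\bm Y$ and an integer multiplicity. The conclusion will follow from
\[
\|L^{\leq D}\|^2 \;:=\; \sum_{|\alpha|\leq D}\bigl(\mathbb E_{\overline\Pb}[H_\alpha]\bigr)^2 \;=\; O(1) \quad \text{as } n\to\infty,
\]
for $D=D_n$ growing polylogarithmically, since bounded LDLR rules out strong detection by any low-degree polynomial.

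I will compute $\mathbb E_{\overline\Pb}[H_\alpha]$ by integrating out the latent Gaussians $\bm u,\bm v$ first, then the spike $(x,y)\sim\pi$. For a simple multigraph (multiplicities in $\{0,1\}$) only the rank-one ``mean'' part of each entry contributes; taking expectation over $\bm u$ and $\bm v$ (which are independent of each other) forces every $[N]$-vertex to have all its incident edges of the same decoration, while the moment conditions \eqref{eq-moment-pi} give each $[n]$-vertex a factor $1$ when its two incident edges share decoration and a factor $\rho$ when they differ. Under Assumption~\ref{assum-lower-bound}, the dominant contribution to $\|L^{\leq D}\|^2$ comes from simple $2$-regular decorated bipartite multigraphs, i.e.\ disjoint unions of alternating-decoration cycles.

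By the standard cluster expansion, $\|L^{\leq D}\|^2=\exp\bigl(\sum_C w(C)\bigr)+o(1)$, where $C$ ranges over connected decorated cycles. A cycle of length $2\ell$ with $\ell_X$ pure-$\bm X$ and $\ell_Y=\ell-\ell_X$ pure-$\bm Y$ $[N]$-vertices and $2s$ mixed $[n]$-vertices has squared expectation $\lambda^{2\ell_X}\mu^{2\ell_Y}\rho^{4s}/n^{2\ell}$, and there are $\simeq n^\ell N^\ell/(2\ell)$ labeled cyclic arrangements. Setting $A=\lambda^2/\gamma$, $B=\mu^2/\gamma$, $R=\rho^2$, the generating function collapses to a transfer-matrix trace:
\[
\sum_C w(C) \;=\; \sum_{\ell\geq 1}\frac{\operatorname{tr}(T^\ell)}{2\ell} \;=\; -\tfrac{1}{2}\log\det(\mathbb I-T), \qquad T=\begin{pmatrix}A & AR \\ BR & B\end{pmatrix},
\]
which equals $-\tfrac{1}{2}\log\bigl[(1-A)(1-B)-ABR^2\bigr]$. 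This is finite iff $A,B<1$ and $(1-A)(1-B)>ABR^2$; a short algebraic manipulation shows this is equivalent to the three strict inequalities $\lambda^2/\gamma<1$, $\mu^2/\gamma<1$, and $\tfrac{\lambda^2\rho^2}{\gamma-\lambda^2+\lambda^2\rho^2}+\tfrac{\mu^2\rho^2}{\gamma-\mu^2+\mu^2\rho^2}<1$, i.e.\ precisely to $F(\lambda,\mu,\rho,\gamma)<1$.

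The main technical obstacle is verifying that multigraphs with edge-multiplicities $\geq 2$ or vertices of degree $>2$ contribute only lower-order terms in $n$, so that the leading behavior is captured by the simple alternating cycles above. This requires careful Wick-contraction bookkeeping on the $[N]$-side (where repeated edges generate additional noise-pair contractions that must be tracked simultaneously for $\bm X$- and $\bm Y$-decorations) and moment bounds on $\pi_*$ coming from Assumption~\ref{assum-lower-bound} (where higher-degree $[n]$-vertices pull in higher joint moments $\mathbb E[X^aY^b]$). The genuinely new feature relative to the single-spike Wishart analysis of \cite{KWB22, MW25} is the coupling of $\bm X$- and $\bm Y$-edges at mixed $[n]$-vertices via $\rho$, which is exactly what produces the third term in $F$; the remaining truncation analysis at degree $D=D_n$ follows the standard template.
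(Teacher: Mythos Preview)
Your approach is genuinely different from the paper's. Where you expand the Hermite coefficients into decorated bipartite multigraphs and argue via a cluster expansion that simple cycles dominate, the paper instead uses a replica-overlap reduction in the style of \cite{BKW20} (Lemma~\ref{lem-bound-Adv-cov-relax-1}) to bound $\chi^2_{\leq D}(\overline\Pb;\overline\Qb)$ by $\mathbb E_{(x,y),(x',y')}[\varphi_D(\lambda^2\langle x,x'\rangle^2/4n^2)\,\varphi_D(\mu^2\langle y,y'\rangle^2/4n^2)]$, bounds the coefficients of $\varphi_D$ combinatorially (Lemma~\ref{lem-bound-coeff-varphi-D}), and then runs a Lindeberg interpolation (Lemma~\ref{lem-compare-moments-Gaussian-non-Gaussian}) to replace the normalized overlaps by a pair of correlated standard Gaussians $(U,V)$ with $\mathbb E[UV]=\rho^2$. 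The final Gaussian integral $\mathbb E[\exp(\tfrac{1}{2\gamma}(\lambda^2 U^2+\mu^2 V^2))]$ equals exactly your $\det(\mathbb I-T)^{-1/2}$, and the matrix $T$ you write down is the same transfer matrix that appears in the paper's \emph{upper}-bound analysis (Appendix~\ref{subsec:proof-lem-2.2}), so both routes recover the identical threshold.

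The trade-off is this: the paper's replica-plus-Lindeberg route packages all of the ``higher-degree $[n]$-vertex'' and ``edge-multiplicity $\geq 2$'' bookkeeping into a single moment comparison that holds uniformly for $k,\ell=n^{o(1)}$, and the resulting two-dimensional Gaussian integral is immediate. Your direct graph expansion is closer in spirit to the algorithmic side of the paper and makes the cycle structure transparent, but the step you flag as the ``main technical obstacle'' is exactly where the work is, and I would push back on the claim that it ``follows the standard template.'' You must simultaneously (i) handle the degree cutoff $D$ interacting with disconnected components, (ii) show that components sharing vertices contribute negligibly, and (iii) bound contributions from $[n]$-vertices of degree $>2$, which pull in the \emph{joint} moments $\mathbb E[X^aY^b]$ under $\pi_*$. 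Point (iii) is where the correlated setting departs from the single-spike template: the paper's Lindeberg argument uses the positive-correlation condition in Assumption~\ref{assum-lower-bound}(3) in a specific way (see Claim~\ref{claim-moment-U-t-*-V-t-*}), and it is not obvious how that condition would enter a purely combinatorial bound on your higher-degree graph terms. Your route is plausible, but carrying it through amounts to re-deriving the Lindeberg conclusion by hand.
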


Our main approach to show Theorem~\ref{MAIN-THM-lower-bound-informal} is as follows. Again, let us focus on the correlated spiked Wigner model for simplicity. Firstly, using standard tools developed for the Gaussian additive model \cite{KWB22}, we can bound the optimal ``advantage'' of degree-$D$ polynomials by 
\begin{align}
    \mathbb E\Big[ \exp_{\leq D}\big( \tfrac{\lambda^2 \langle x,x' \rangle^2 + \mu^2 \langle y,y' \rangle^2 }{2n} \big) \Big] \,,  \label{eq-low-deg-adv-handwaving}
\end{align}
where $(x,y)$ and $(x',y')$ are independently sampled from $\pi$ and $\exp_{\leq D}(\cdot)$ is the truncated exponential function (see \eqref{eq-exp-leq-D}). The primary difficulty arises from the correlation between the inner products $\langle x,x' \rangle$ and $\langle y,y' \rangle$, which causes analyses designed for single spiked model (e.g., the approach in \cite{KWB22}) to break down. Our key insight is that using central limit theorem, we expect that $(\frac{\langle x,x' \rangle}{\sqrt{n}},\tfrac{ \langle y,y' \rangle }{ \sqrt{n} })$ behaves like a pair of standard normal variables $(U,V)$ with correlation $\rho^2$. This motivates us to bound \eqref{eq-low-deg-adv-handwaving} by a two-step procedure. Firstly, we bound the tractable ``Gaussian approximation'' of \eqref{eq-low-deg-adv-handwaving}, defined to be $\mathbb E[ \exp_{\leq D}( \tfrac{\lambda^2 U^2 + \mu^2 V^2}{2}) ]$. Secondly, we show that such Gaussian approximation is indeed valid for all low-order moments via a delicate Lindeberg's interpolation argument, which allows us to transfer the bound from the Gaussian setting back to the original quantity in \eqref{eq-low-deg-adv-handwaving}. We believe that our approach is not only effective for this problem but also constitutes a general and easily implementable methodology that could be applied to a wider range of problems involving correlated priors.  

\begin{remark}
    Combining Theorems~\ref{MAIN-THM-upper-bound-informal} and \ref{MAIN-THM-lower-bound-informal} (respectively, Theorems~\ref{MAIN-THM-upper-bound-informal-covariance} and \ref{MAIN-THM-lower-bound-informal-covariance}), our results suggest that model \eqref{eq-def-correlated-spike-specific} (respectively, model \eqref{eq-def-correlated-spike-covariance-specific}) exhibits a sharp computational transition at the threshold $F(\lambda,\mu,\rho)=1$ (respectively, $F(\lambda,\mu,\rho,\tfrac{n}{N})=1$). Notably, our results show that an algorithm can leverage the correlation between the spikes to detect the signals even in regimes where efficiently recovering either $x$ from $\bm{X}$ alone or recovering $y$ from $\bm{Y}$ alone is believed to be computationally infeasible. 
\end{remark}

\begin{remark}
    We now compare the computational threshold established in this work with the PLS/CCA thresholds identified in \cite{BHPZ19, MY23, BG23+, MZ25+} for the correlated spiked Wishart model \eqref{eq-def-correlated-spike-covariance-specific}. For the model \eqref{eq-def-correlated-spike-covariance-specific}, a common approach is the Canonical Correlation Analysis (CCA) \cite{BHPZ19, GW19, Yang22a, Yang22b, BG23+}, where the idea is to extract information of the spike from the MANOVA matrix 
    \begin{align*}
        (\bm{X} \bm{X}^{\top})^{-\frac{1}{2}} (\bm{Y} \bm{X}^{\top}) (\bm{Y} \bm{Y}^{\top})^{-\frac{1}{2}} \,.
    \end{align*} 
    (Note that the CCA approach requires $\bm X \bm X^{\top}$ to be invertible, restricting its applicability to the setting $n \leq N$.)
    It was shown in \cite{BHPZ19, MY23, BG23+} that the CCA approach achieves weak recovery if and only if
    \begin{align*}
        \frac{ \lambda^2 \mu^2 \rho^2 (\gamma^{-1}-1) }{ (\lambda^2+1)(\mu^2+1) } > 1 \mbox{ where } \gamma=\frac{n}{N} \,,
    \end{align*}
    This condition is strictly suboptimal compared to our computational threshold $F(\lambda,\mu,\rho,\gamma)=1$.
    
    Another widely used method for \eqref{eq-def-correlated-spike-covariance-specific} is the Partial Least Squares (PLS), which analyzes the sample-cross-covariance matrix $\bm{S}=\bm{X} \bm{Y}^{\top}$. In \cite{MZ25+} the authors shown that the PLS approach achieves weak recovery if and only if $\tau \leq \gamma$, where $\tau=\tau(\lambda,\mu,\rho)$ is the largest solution of 
    \begin{align*}
        1 + (1-\rho^2 \lambda^2 \mu^2 -\lambda^2-\mu^2) X + (\lambda^2\mu^2 -\lambda^2 -\mu^2) X^2 +\lambda^2 \mu^2 X^3 \,.
    \end{align*}
    This PLS threshold is also strictly suboptimal compared to our computational threshold $F(\lambda,\mu,\rho,\gamma)=1$. Figure~\ref{fig:phase-diagram} illustrates this comparison, depicting the exact computational threshold alongside the CCA and PLS thresholds for a specific parameter setting ($\gamma=0.25$, $\rho=0.99$).
\end{remark}

\begin{figure}[!ht]
    \centering
    \vspace{0cm}
    \includegraphics[height=7.62cm,width=10.70cm]{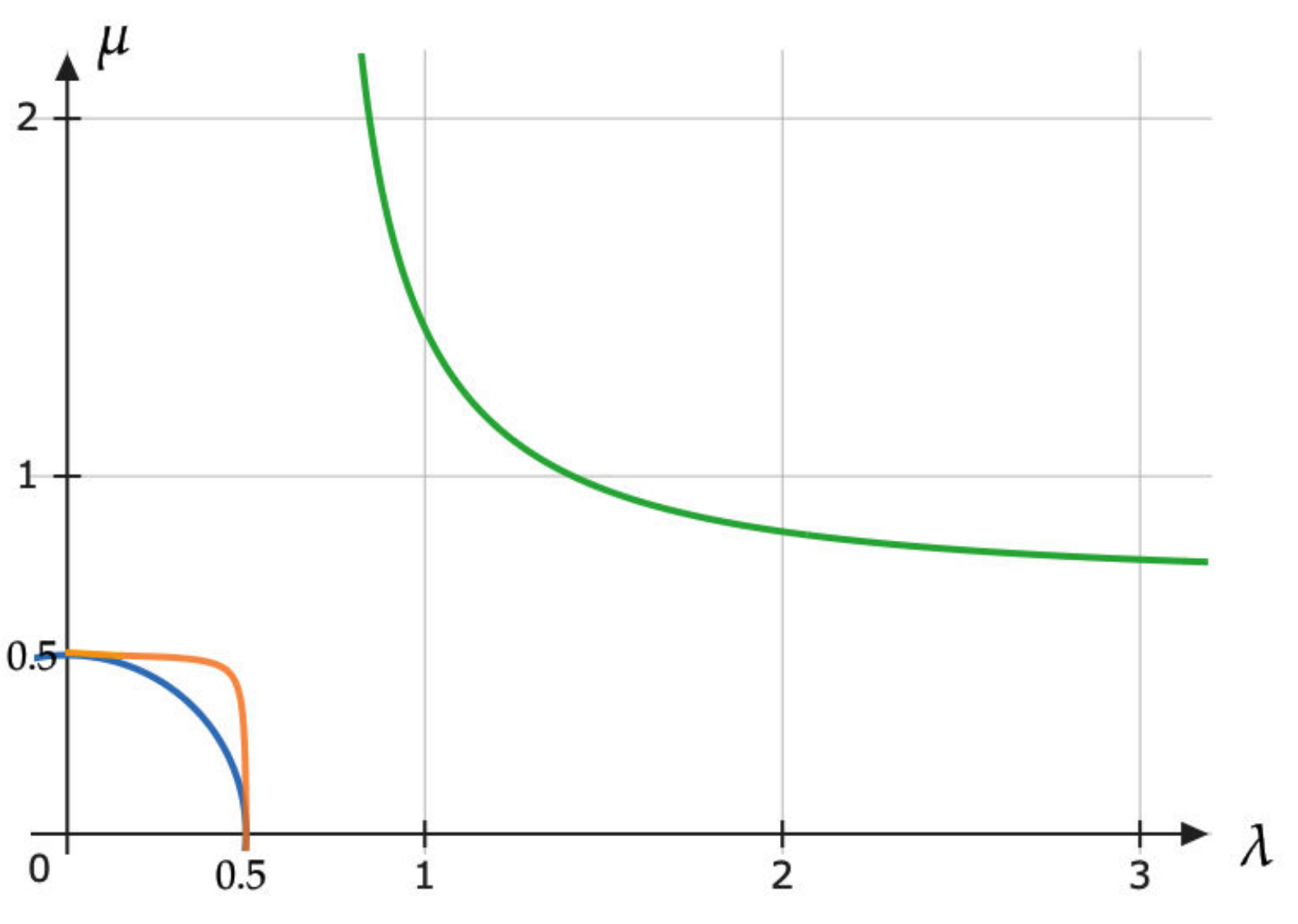}
    \caption{\noindent Phase diagram in the $(\lambda,\mu)$ plane illustrating the thresholds for the subgraph counts method (blue, this work), the PLS method (orange, \cite{MZ25+}), and the CCA method (green, \cite{BHPZ19, MY23, BG23+}). Here we take $\gamma=0.25$ and $\rho=0.99$.}
    \label{fig:phase-diagram}
\end{figure}

\begin{remark}
    For the correlated spiked Wigner model \eqref{eq-def-correlated-spike-specific}, a direct comparison with the PLS threshold or the CCA threshold is challenging, as both the PLS method and the CCA method are less understood in this setting. However, we suspect that the PLS/CCA approach might still be strictly suboptimal in our model, as we discussed following Theorem~\ref{MAIN-THM-upper-bound-informal-covariance}.
\end{remark}

\subsection{Notation and paper organization}{\label{subsec:notation}}

We record in this subsection some notation conventions. For a matrix or a vector $M$, we will use $M^{\top}$ to denote its transpose. For a $k*k$ matrix $M=(m_{ij})_{k*k}$, let $\mathsf{det}(M)$ and $\mathsf{tr}(M)$ be the determinant and trace of $M$, respectively. Denote $M \succ 0$ if $M$ is positive definite and $M \succeq 0$ if $M$ is semi-positive definite. For two $k*l$ matrices $M_1$ and $M_2$, we define their inner product to be
\begin{align*}
    \big\langle M_1,M_2 \big\rangle:=\sum_{i=1}^k \sum_{j=1}^l M_1(i,j)M_2(i,j) \,.
\end{align*}
We will use $\mathbb{I}_{k}$ to denote the $k*k$ identity matrix (and we drop the subscript if the dimension is clear from the context). We will use the following notation conventions on graphs.

{\em Labeled graphs}. Denote by $\mathsf K_n$ the complete graph with vertex set $[n]$. For any graph $H$, let $V(H)$ denote the vertex set of $H$ and let $E(H)$ denote the edge set of $H$. A graph $H = (V(H), E(H))$ is bipartite if its vertex set can be partitioned as $V(H) = V^{\mathsf a}(H) \sqcup V^{\mathsf b}(H)$ such that every edge in $E(H)$ has one endpoint in $V^{\mathsf a}(H)$ and the other in $V^{\mathsf b}(H)$. We denote by $\mathsf K_{n,N}$ the complete bipartite graph with vertex parts $[n]$ and $[N]$. We say $H$ is a subgraph of $G$, denoted by $H\subset G$, if $V(H) \subset V(G)$ and $E(H) \subset E(G)$. For all $v \in V(H)$, define $\mathsf{deg}_H(v)=\#\{ e \in E(H): v \in e \}$ to be the degree of $v$ in $H$. We say $v$ is an isolated vertex of $H$, if $\mathsf{deg}_H(v)=0$. Denote $\mathsf I(H)$ be the set of isolated vertices of $H$. We say $v$ is a leaf of $H$, if $\mathsf{deg}_H(v)=1$. Denote $\mathsf{L}(H)$ be the set of leaves in $H$. For $H,S \subset \mathsf K_n$, denote by $H \cap S$ the graph with vertex set given by $V(H) \cap V(S)$ and edge set given by $E(H)\cap E(S)$, and denote by $S \cup H$ the graph with vertex set given by $V(H) \cup V(S)$ and edge set $E(H) \cup E(S)$. In addition, denote by $S \Cap H$ the graph induced by the edge set $E(S)\cap E(H)$ (in particular, this induced graph have no isolated vertices).  

{\em Decorated graphs}. We say a triple $H=(V(H),E(H);\gamma(H))$ is a decorated graph, if the pair $(V(H),E(H))$ forms a graph and $\gamma(H):E(H) \to \{ \bullet,\circ \}$ is a function that assigns a ``decoration'' to each edge. Denote
\begin{equation}{\label{eq-def-E-1-E-2-decorated}}
    E_{\bullet}(H)=\{ e \in E(H): \gamma(e)=\bullet \} \mbox{ and } E_{\circ}(H)=\{ e \in E(H): \gamma(e)=\circ \}
\end{equation}
In addition, define
\begin{equation}{\label{eq-def-V-1-V-2-decorated}}
    \begin{aligned}
        &V_{\bullet}(H) = \{ v \in V(H): (v,u) \in E_{\bullet}(H) \mbox{ for some } u \in V(H) \} \,;  \\
        &V_{\circ}(H) = \{ v \in V(H): (v,u) \in E_{\circ}(H) \mbox{ for some } u \in V(H) \} \,.
    \end{aligned}
\end{equation}
If $H$ is a bipartite graph, define $V^{\mathsf a}_{\bullet},V^{\mathsf b}_{\circ},V^{\mathsf b}_{\bullet},V^{\mathsf b}_{\circ}$ with respect to $V^{\mathsf a},V^{\mathsf b}$ in the similar manner. Also define $H_{\bullet},H_{\circ}$ to the the subgraph of $H$ with
\begin{equation}{\label{eq-def-H-1-H-2}}
    V(H_{\bullet}) = V_{\bullet}(H), \ E(H_{\bullet})=E_{\bullet}(H); \quad V(H_{\circ})=V_{\circ}(H), \ E(H_{\circ})=E_{\circ}(H) \,.
\end{equation}
Finally, define
\begin{equation}{\label{eq-def-Dif(H)}}
    \mathsf{diff}(H) = V_{\bullet}(H) \cap V_{\circ}(H) \,.
\end{equation}

{\em Graph isomorphisms and unlabeled graphs.} Two graphs $H$ and $H'$ are isomorphic, denoted by $H\cong H'$, if there exists a bijection $\sigma:V(H) \to V(H')$ such that $(\sigma(u),\sigma(v)) \in E(H')$ if and only if $(u,v)\in E(H)$. Two decorated graphs $H$ and $H'$ are isomorphic, if there exists a bijection $\sigma:V(H) \to V(H')$ such that $\sigma$ maps $E_{\bullet}(H)$ to $E_{\bullet}(H')$ and maps $E_{\circ}(H)$ to $E_{\circ}(H')$. Denote by $[H]$ the isomorphism class of $H$; it is customary to refer to these isomorphic classes as unlabeled hypergraphs. Let $\mathsf{Aut}(H)$ be the number of automorphisms of $H$ (graph isomorphisms to itself). 

We use standard asymptotic notations: for two sequences $a_n$ and $b_n$ of positive numbers, we write $a_n = O(b_n)$, if $a_n<Cb_n$ for an absolute constant $C$ and for all $n$ (similarly we use the notation $O_h$ if the constant $C$ is not absolute but depends only on $h$); we write $a_n = \Omega(b_n)$, if $b_n = O(a_n)$; we write $a_n = \Theta(b_n)$, if $a_n =O(b_n)$ and $a_n = \Omega(b_n)$; we write $a_n = o(b_n)$ or $b_n = \omega(a_n)$, if $a_n/b_n \to 0$ as $n \to \infty$. For two real numbers $a$ and $b$, we let $a \vee b = \max \{ a,b \}$ and $a \wedge b = \min \{ a,b \}$. For two sets $A$ and $B$, we define $A\sqcup B$ to be the disjoint union of $A$ and $B$ (so the notation $\sqcup$ only applies when $A, B$ are disjoint). The indicator function of sets $A$ is denoted by $\mathbf{1}_{A}$. In addition, we use both $|A|$ and $\#A$ to denote the cardinality of $A$.
 
The rest of the paper is organized as follows. In Section~\ref{sec:main-alg-results}, we formally state the results in Theorem~\ref{MAIN-THM-upper-bound-informal} by presenting the construction of our detection and recovery statistics and presenting their theoretical guarantees. Section~\ref{sec:stat-analysis} provides the statistical analysis of our statistics. In Section~\ref{sec:hypergraph-color-coding}, we present efficient algorithms to approximately compute our statistics based on color coding. In Section~\ref{sec:lower-bound}, we formally state and prove the results mentioned in Theorem~\ref{MAIN-THM-lower-bound-informal}. Several auxiliary results are postponed to the appendix to ensure a smooth flow of presentation.

\section{Algorithmic results}{\label{sec:main-alg-results}}

In this section we formally state the results in Theorems~\ref{MAIN-THM-upper-bound-informal} and \ref{MAIN-THM-upper-bound-informal-covariance}. Note that for the correlated spiked Wigner model \eqref{eq-def-correlated-spike-specific}, when $\lambda>1$ (respectively, $\mu>1$), it is possible to detect and estimate the spike $x$ (respectively, $y$) using the single observation $\bm X$ (respectively, $\bm Y$).\footnote{In addition, since $x,y$ are positively correlated, we expect that an estimator $\widehat{x}=\widehat{x}(\bm X)$ that is positively correlated with $x$ should also be positively correlated with $y$.} Thus, for the model \eqref{eq-def-correlated-spike-specific} we may assume that
\begin{equation}{\label{eq-prelim-assumption-upper-bound}}
    0 \leq \lambda, \mu, \rho \leq 1 \mbox{ and } F(\lambda,\mu,\rho,1)>1+\epsilon \mbox{ for some constant } \epsilon>0 \,. 
\end{equation}
Similarly, for the model \eqref{eq-def-correlated-spike-covariance-specific} we may assume that 
\begin{equation}{\label{eq-prelim-assumption-upper-bound-cov}}
    \frac{n}{N}=\gamma,\  0 \leq \rho, \frac{\lambda^2}{\gamma}, \frac{\mu^2}{\gamma} \leq 1 \mbox{ and } F(\lambda,\mu,\rho,\gamma)>1+\epsilon \mbox{ for some constant } \epsilon>0 \,. 
\end{equation}
Now we state our assumptions of the prior $\pi$ in the upper bound, which will be assumed throughout Sections~\ref{sec:main-alg-results}, \ref{sec:stat-analysis} and \ref{sec:hypergraph-color-coding}.
\begin{assum}{\label{assum-upper-bound}}
    Suppose that $\pi=\pi_*^{\otimes n}$, where $\pi_*$ is the law of a pair of correlated variables $(\mathbb X,\mathbb Y)$ satisfying the following conditions: 
    \begin{enumerate}
        \item[(1)] $\mathbb E[\mathbb X]=\mathbb E[\mathbb Y]=0$, $\mathbb E[\mathbb X^2]=\mathbb E[\mathbb Y^2]=1$, and $\mathbb E[\mathbb X \mathbb Y]=\rho$. 
        \item[(2)] $\mathbb E[\mathbb X^4],\mathbb E[\mathbb Y^4] \leq C$ for some absolute constant $C$.
    \end{enumerate}
    Note that condition~(1) above is consistent with \eqref{eq-moment-pi}.
\end{assum}

\subsection{The detection statistics and theoretical guarantees}{\label{subsec:detect-stat}}

\subsubsection{The correlated spiked Wigner model}{\label{subsubsec:detect-stat-Wigner}}

For any decorated graph $S \subset \mathsf K_n$, define (recall \eqref{eq-def-E-1-E-2-decorated})
\begin{equation}{\label{eq-def-f-S}}
    f_S(\bm X,\bm Y) = \prod_{(i,j) \in E_{\bullet}(S)} \bm X_{i,j} \prod_{ (i,j) \in E_{\circ}(S) } \bm Y_{i,j} \,.
\end{equation}
Let $\ell\in\mathbb N$ be a parameter that will be decided later and define $\mathcal H=\mathcal H(\ell)$ to be the collection of decorated cycles with length $\ell$ (see Figure~\ref{fig:decorated-cycle} for an illustration). 
\begin{figure}[!ht]
    \centering
    \vspace{0cm}
    \includegraphics[height=5.19cm,width=11.86cm]{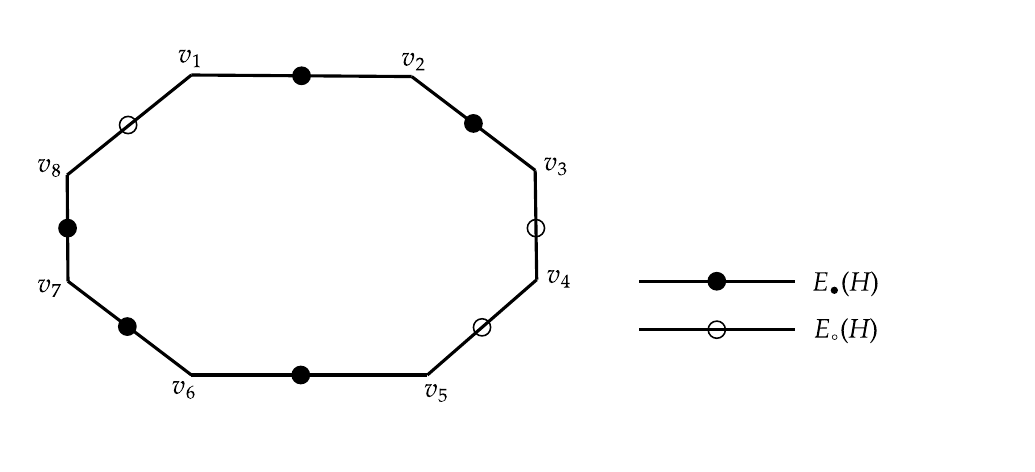}
    \caption{\noindent An unlabeled graph $[H] \in \mathcal H(\ell)$ with $\ell=8$. Here $\mathsf{diff}(H)=\{ v_1,v_3,v_5,v_8 \}$.}
    \label{fig:decorated-cycle}
\end{figure}

\noindent In addition, define
\begin{equation}{\label{eq-def-f-mathcal-H}}
    f_{\mathcal H}(\bm X,\bm Y) = \frac{1}{ \sqrt{n^{\ell} \beta_{\mathcal H}} } \sum_{ [H] \in \mathcal H } \Xi(H) \sum_{ S \subset \mathsf{K}_n, S \cong H } f_{S}(\bm X,\bm Y) \,, 
\end{equation}
where 
\begin{equation}{\label{eq-def-Xi-S}}
    \Xi(H) = \lambda^{|E_{\bullet}(H)|} \mu^{|E_{\circ}(H)|} \rho^{|\mathsf{diff}(H)|} 
\end{equation}
and 
\begin{equation}{\label{eq-def-beta-mathcal-H}}
    \beta_{\mathcal H} = \sum_{[H] \in \mathcal H} \frac{ \rho^{2\mathsf{diff}(H)} \lambda^{2|E_{\bullet}(H)|} \mu^{2|E_{\circ}(H)|} }{ |\mathsf{Aut}(H)| } = \sum_{[H] \in \mathcal H} \frac{ \Xi(H)^2 }{ |\mathsf{Aut}(H)| } \,.
\end{equation}
Algorithm~\ref{alg:detection-meta} below describes our proposed method for detection in correlated spiked Wigner model.
\begin{breakablealgorithm}{\label{alg:detection-meta}}
\caption{Detection in correlated spiked Wigner model}
    \begin{algorithmic}[1]
    \STATE {\bf Input:} Two symmetric matrices $\bm X,\bm Y$, a family $\mathcal H$ of non-isomorphic decorated graphs, and a threshold $\tau\geq 0$.
    \STATE Compute $f_{\mathcal H}(\bm X,\bm Y)$ according to \eqref{eq-def-f-mathcal-H}. 
    \STATE Let $\mathtt q=1$ if $f_{\mathcal H}(\bm X,\bm Y) \geq \tau$ and $\mathtt q=0$ if $f_{\mathcal H}(\bm X,\bm Y) < \tau$.
    \STATE {\bf Output:} $\mathtt q$.
    \end{algorithmic}
\end{breakablealgorithm}
We now state that Algorithm~\ref{alg:detection-meta} succeeds under Assumption~\ref{assum-upper-bound}, Equation~\eqref{eq-prelim-assumption-upper-bound} and an appropriate choice of the parameter $\ell$. To this end, we first show the following bound on $\beta_{\mathcal H}$.

\begin{lemma}{\label{lem-bound-beta-mathcal-H}}
    There exists a constant $D=D(\lambda,\mu,\rho)>1$ such that 
    \begin{equation}{\label{eq-bound-beta-mathcal-H}}
        \frac{D^{-1}}{\ell^2} A_+^{\ell} \leq \beta_{\mathcal H} \leq D A_+^{\ell} \,,
    \end{equation}
    where 
    \begin{equation}{\label{eq-def-A-+}}
        A_+ = \frac{ \lambda^2 + \mu^2 + \sqrt{ \lambda^4 + \mu^4 - (2-4\rho^4) \lambda^2 \mu^2 } }{ 2 } \,.
    \end{equation}
    In addition, suppose that $F(\lambda,\mu,\rho,\gamma)>1$, then there exists a constant $\delta=\delta(\lambda,\mu,\rho,\gamma)>0$ such that $A_+>(1+\delta)\gamma$. In particular, when \eqref{eq-prelim-assumption-upper-bound} is satisfied we have $A_+>1+\delta$ for some $\delta=\delta(\lambda,\mu,\rho)$. 
\end{lemma}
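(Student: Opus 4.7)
The plan is to recognize $\beta_{\mathcal H}$ as a normalized trace of a $2\times 2$ transfer matrix on the alphabet of edge decorations $\{\bullet,\circ\}$, diagonalize the matrix to identify $A_+$ as its Perron eigenvalue, and finally verify the equivalence $F(\lambda,\mu,\rho,\gamma)>1 \Longleftrightarrow A_+>\gamma$ by direct algebra.

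\emph{Transfer matrix representation.} An abstract decorated $\ell$-cycle is determined, after choosing a basepoint and an orientation, by a sequence $s=(s_1,\dots,s_\ell)\in\{\bullet,\circ\}^{\ell}$ of edge decorations. The dihedral group $D_\ell$ (of order $2\ell$) acts on such sequences by rotations and reflections; its orbits are in bijection with $\mathcal H$, and the stabilizer of a representative sequence $s$ of the class $[H]$ is exactly $\mathsf{Aut}(H)$. Since the quantity $\Xi(H)^2=\lambda^{2|E_\bullet(H)|}\mu^{2|E_\circ(H)|}\rho^{2|\mathsf{diff}(H)|}$ is constant along each orbit, orbit--stabilizer gives
\begin{equation*}
    \sum_{s\in\{\bullet,\circ\}^{\ell}}\Xi(s)^2 \;=\; \sum_{[H]\in\mathcal H}\frac{2\ell}{|\mathsf{Aut}(H)|}\,\Xi(H)^2 \;=\; 2\ell\cdot\beta_{\mathcal H}.
\end{equation*}
Reading off the defining exponents as $|E_\bullet|=\#\{i:s_i=\bullet\}$, $|E_\circ|=\#\{i:s_i=\circ\}$, and $|\mathsf{diff}|=\#\{i:s_{i-1}\ne s_i\}$ (indices mod $\ell$), the summand factorizes as a nearest-neighbor product. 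With $w(\bullet)=\lambda^2$, $w(\circ)=\mu^2$, and $T_{s,s'}=w(s')\rho^{2\mathbf{1}[s\ne s']}$, the sum equals $\mathsf{tr}(T^{\ell})$, where
\begin{equation*}
    T\;=\;\begin{pmatrix}\lambda^2 & \mu^2\rho^2 \\ \lambda^2\rho^2 & \mu^2\end{pmatrix}.
\end{equation*}

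\emph{Spectrum and bounds.} The characteristic polynomial of $T$ is $x^2-(\lambda^2+\mu^2)x+\lambda^2\mu^2(1-\rho^4)$, whose roots are exactly $A_\pm$ from \eqref{eq-def-A-+}. Both roots are real since the discriminant $(\lambda^2-\mu^2)^2+4\rho^4\lambda^2\mu^2$ is non-negative, and both are non-negative since their sum $\lambda^2+\mu^2$ and product $\lambda^2\mu^2(1-\rho^4)$ are non-negative (using $\rho\le 1$). Therefore $A_+^{\ell}\le\mathsf{tr}(T^{\ell})=A_+^{\ell}+A_-^{\ell}\le 2A_+^{\ell}$, and dividing by $2\ell$ gives $A_+^{\ell}/(2\ell)\le \beta_{\mathcal H}\le A_+^{\ell}/\ell$, which implies \eqref{eq-bound-beta-mathcal-H} with e.g.\ $D=2$.

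\emph{Equivalence with $F>1$.} It remains to show $F(\lambda,\mu,\rho,\gamma)>1\Longrightarrow A_+>(1+\delta)\gamma$ for some $\delta>0$; it then suffices to take $\delta=A_+/\gamma-1$. If the maximum in $F$ is attained by $\lambda^2/\gamma$ or $\mu^2/\gamma$, the elementary bound $A_+\ge\max(\lambda^2,\mu^2)$, which follows from $\sqrt{(\lambda^2-\mu^2)^2+4\rho^4\lambda^2\mu^2}\ge|\lambda^2-\mu^2|$, gives $A_+>\gamma$ immediately. Otherwise $\lambda^2,\mu^2\le\gamma$, and clearing denominators in the third entry of $F$ shows
\begin{equation*}
    \frac{\lambda^2\rho^2}{\gamma-\lambda^2+\lambda^2\rho^2}+\frac{\mu^2\rho^2}{\gamma-\mu^2+\mu^2\rho^2}>1 \;\Longleftrightarrow\; \rho^4\lambda^2\mu^2>(\gamma-\lambda^2)(\gamma-\mu^2).
\end{equation*}
Under $\lambda^2,\mu^2\le\gamma$, one has $2\gamma-(\lambda^2+\mu^2)\ge 0$, so the inequality $A_+>\gamma$ is equivalent to $\sqrt{\lambda^4+\mu^4-(2-4\rho^4)\lambda^2\mu^2}>2\gamma-(\lambda^2+\mu^2)$; squaring and simplifying produces exactly the same condition $\rho^4\lambda^2\mu^2>(\gamma-\lambda^2)(\gamma-\mu^2)$. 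The main technical care is concentrated in this last step, where one must track the sign of $2\gamma-(\lambda^2+\mu^2)$ to avoid introducing spurious solutions when squaring; this is precisely why the case split on whether $\max(\lambda^2,\mu^2)>\gamma$ is needed, and once that is handled, the rest is bookkeeping.
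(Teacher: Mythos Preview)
Your proof is correct and in fact cleaner than the paper's. Both identify the same $2\times 2$ transfer matrix with Perron eigenvalue $A_+$, but where the paper bounds $\beta_{\mathcal H}$ above and below by sums over \emph{open} strings $\omega\in\{\bullet,\circ\}^\ell$ (incurring extra factors of $\rho^{\pm 2}$ from the missing wraparound and of $\ell$ from the crude estimate $1\le|\mathsf{Aut}(H)|\le\ell$), you use orbit--stabilizer under the full dihedral action to obtain the exact identity $\beta_{\mathcal H}=\tfrac{1}{2\ell}\mathsf{tr}(T^\ell)=\tfrac{1}{2\ell}(A_+^\ell+A_-^\ell)$. This immediately yields the sharper bounds $A_+^\ell/(2\ell)\le\beta_{\mathcal H}\le A_+^\ell/\ell$, so you actually improve the lower bound from $\ell^{-2}$ to $\ell^{-1}$. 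The algebraic verification that $F>1$ implies $A_+>\gamma$ is essentially the same in both proofs.

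One small expository wobble: your case split is phrased as ``if the maximum in $F$ is attained by $\lambda^2/\gamma$ or $\mu^2/\gamma$'' versus ``otherwise $\lambda^2,\mu^2\le\gamma$'', but the second clause does not follow from negating the first (the third term of $F$ could dominate while still $\lambda^2>\gamma$, and then the denominator $\gamma-\lambda^2+\lambda^2\rho^2$ might even be negative). The correct dichotomy---which you yourself name in your final sentence---is simply whether $\max(\lambda^2,\mu^2)>\gamma$; with that phrasing the argument is airtight. In the paper's setting this is moot anyway, since $\lambda^2,\mu^2\le\gamma$ is a standing assumption (equations \eqref{eq-prelim-assumption-upper-bound} and \eqref{eq-prelim-assumption-upper-bound-cov}).
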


The proof of Lemma~\ref{lem-bound-beta-mathcal-H} is postponed to Section~\ref{subsec:proof-lem-2.2} of the appendix. Our main result for the detecting algorithm for model \eqref{eq-def-correlated-spike-specific} can be summarized as follows:

\begin{proposition}{\label{main-prop-detection}}
    Suppose that Assumption~\ref{assum-upper-bound} and Equation~\eqref{eq-prelim-assumption-upper-bound} hold, and we choose  
    \begin{align}{\label{eq-condition-strong-detection}}
        \omega(1)=\ell=o(\tfrac{\log n}{\log\log n})  \,.
    \end{align}
    Then we have
    \begin{align}{\label{eq-moment-control}}
        \mathbb E_{\Pb}\big[ f_{\mathcal H} \big]=\omega(1), \ \mathbb E_{\Qb}\big[ f_{\mathcal H}^2 \big]=1+o(1) \mbox{, and } \operatorname{Var}_{\Pb}\big[ f_{\mathcal H} \big]=o(1) \cdot \mathbb E_{\Pb}\big[ f_{\mathcal H} \big]^2 \,.
    \end{align}
\end{proposition}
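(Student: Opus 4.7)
The plan is to prove the three moment estimates in \eqref{eq-moment-control} by directly expanding the subgraph-count statistic $f_{\mathcal H}$ and computing moments one by one using the structure of decorated cycles. The first two claims reduce to clean combinatorial calculations; the main technical obstacle is controlling the planted variance, which amounts to showing that $\mathbb E_{\Pb}[f_{\mathcal H}^2]/\mathbb E_{\Pb}[f_{\mathcal H}]^2 = 1+o(1)$ and is dominated by vertex-disjoint pairs of cycles.

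For the first moment under $\Pb$, I would fix a cycle $S \cong H$ and first integrate out the Wigner noise:
$$\mathbb E_{\Pb}[f_S \mid x,y] = \Big(\tfrac{\lambda}{\sqrt n}\Big)^{|E_{\bullet}(S)|} \Big(\tfrac{\mu}{\sqrt n}\Big)^{|E_{\circ}(S)|} \prod_{v \in V(S)} x_v^{\mathsf{deg}_{S_{\bullet}}(v)} y_v^{\mathsf{deg}_{S_{\circ}}(v)}.$$
In a decorated cycle each vertex $v$ has either both incident edges of the same color (so $v \notin \mathsf{diff}(S)$ and contributes $x_v^2$ or $y_v^2$) or one edge of each color (so $v \in \mathsf{diff}(S)$ and contributes $x_v y_v$). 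Using $\mathbb E[\mathbb X^2] = \mathbb E[\mathbb Y^2] = 1$ and $\mathbb E[\mathbb X \mathbb Y] = \rho$ from Assumption~\ref{assum-upper-bound}~(1), taking expectation under $\pi = \pi_*^{\otimes n}$ gives $\mathbb E_{\Pb}[f_S] = n^{-\ell/2}\,\Xi(H)$. Combined with the cycle count $\#\{S \subset \mathsf K_n : S \cong H\} = (1+o(1))\,n^{\ell}/|\mathsf{Aut}(H)|$, which is valid under \eqref{eq-condition-strong-detection} since $\ell^2 = o(n)$, summation over $[H] \in \mathcal H$ yields $\mathbb E_{\Pb}[f_{\mathcal H}] = (1+o(1))\sqrt{\beta_{\mathcal H}}$. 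Lemma~\ref{lem-bound-beta-mathcal-H} together with \eqref{eq-prelim-assumption-upper-bound} then gives $\beta_{\mathcal H} \geq D^{-1}\ell^{-2}(1+\delta)^{\ell} = \omega(1)$, proving the first claim.

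For the second moment under $\Qb$, I would exploit the independence of Wigner entries. Expanding $f_{\mathcal H}^2$, the expectation $\mathbb E_{\Qb}[f_S f_{S'}]$ vanishes unless every edge of $S \cup S'$ appears with even total multiplicity, separately under each of the $\bullet$- and $\circ$-decorations (since odd Gaussian moments vanish). For simple decorated cycles this forces $S = S'$ as decorated graphs, so $\mathbb E_{\Qb}[f_S^2] = 1$ and
$$\mathbb E_{\Qb}[f_{\mathcal H}^2] = \tfrac{1}{n^{\ell}\beta_{\mathcal H}} \sum_{[H] \in \mathcal H} \Xi(H)^2 \cdot \#\{S \cong H\} = 1+o(1)$$
by the same cycle count as above.

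The main obstacle is the variance bound under $\Pb$. I would expand
$$\mathbb E_{\Pb}[f_{\mathcal H}^2] = \tfrac{1}{n^{\ell}\beta_{\mathcal H}} \sum_{[H],[H'] \in \mathcal H} \Xi(H)\Xi(H') \sum_{S \cong H,\,S' \cong H'} \mathbb E_{\Pb}[f_S f_{S'}]$$
and organize the inner sum by the joint overlap pattern $(K, J)$, where $K = V(S) \cap V(S')$ and $J = S \Cap S'$ is the shared edge-decorated subgraph. For vertex-disjoint pairs ($K = \emptyset$), the product structure $\pi_*^{\otimes n}$ gives $\mathbb E_{\Pb}[f_S f_{S'}] = \mathbb E_{\Pb}[f_S]\,\mathbb E_{\Pb}[f_{S'}]$ exactly, and such pairs contribute $(1+o(1))\,\mathbb E_{\Pb}[f_{\mathcal H}]^2$ since almost all pairs of $\ell$-cycles are vertex-disjoint when $\ell^2 = o(n)$. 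For $|K| = k \geq 1$, I would derive a pointwise bound
$$\mathbb E_{\Pb}[f_S f_{S'}] \leq n^{-\ell}\,\Xi(H)\,\Xi(H')\,C_*^{k+|E(J)|}$$
for a constant $C_* = C_*(\lambda,\mu,\rho)$, using $\mathbb E[\bm X_{ij}^2 \mid x] = 1 + \tfrac{\lambda^2}{n}x_i^2 x_j^2$ on doubled edges and the fourth-moment bound $\mathbb E[\mathbb X^4],\mathbb E[\mathbb Y^4] \leq C$ from Assumption~\ref{assum-upper-bound}~(2) to absorb the higher-order $x$- and $y$-moments at shared vertices. A standard enumeration of overlap shapes yields at most $N_H N_{H'} \cdot \ell^{O(k)}/n^k$ pairs for each fixed pattern, so the overall contribution from $|K| = k$ is suppressed by a factor $(C_* \ell^{O(1)}/n)^k$ relative to the disjoint case. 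The hypothesis $\ell = o(\log n/\log\log n)$ is precisely what is needed to make $\sum_{k=1}^{\ell}$ of these error terms $o(\beta_{\mathcal H})$, accounting for the $\ell^{O(\ell)}$ blow-up in the number of overlap patterns and the growth of $|\mathsf{Aut}(H)|^{-1}$. The delicate bookkeeping of how the decorations on $J$ interact with the $\mathsf{diff}$-structure of $S$ and $S'$ will be the main technical difficulty.
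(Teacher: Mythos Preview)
Your plan for the first two moments matches the paper's Lemma~\ref{lem-mean-var-f-H-part-1} essentially verbatim, and your overall strategy for the planted variance (separate vertex-disjoint from overlapping pairs, bound the latter combinatorially) is the same as the paper's Lemmas~\ref{lem-mean-var-f-H-part-2}--\ref{lem-detection-most-technical}. The only real difference is cosmetic: the paper works with $\operatorname{Cov}_{\Pb}(f_S,f_{S'})$ directly, so vertex-disjoint pairs contribute exactly zero and there is no need to match the disjoint sum against $\mathbb E_{\Pb}[f_{\mathcal H}]^2$.

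However, your stated pointwise bound
\[
\mathbb E_{\Pb}[f_S f_{S'}] \le n^{-\ell}\,\Xi(H)\,\Xi(H')\,C_*^{\,k+|E(J)|}
\]
is false as written, and the error is not harmless. Integrating out the noise on a shared same-colored edge gives $\mathbb E[(\tfrac{\lambda}{\sqrt n}x_ix_j+\bm W_{ij})^2\mid x]=1+\tfrac{\lambda^2}{n}x_i^2x_j^2=\Theta(1)$, not $\Theta(n^{-1})$; the correct power of $n$ is $n^{-\ell+|E(J)|}$, where $|E(J)|=|E_\bullet(S)\cap E_\bullet(S')|+|E_\circ(S)\cap E_\circ(S')|$. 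Your bound would for instance force $\mathbb E_{\Pb}[f_S^2]\le n^{-\ell}C_*^{2\ell}=o(1)$ when $S=S'$, whereas in fact $\mathbb E_{\Pb}[f_S^2]\ge \mathbb E_{\Qb}[f_S^2]=1$.

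With the correct $n^{-\ell+|E(J)|}$ factor in place, the enumeration argument still closes, but you need two ingredients you did not mention: (i) the diagonal $S=S'$ must be handled separately --- its contribution to $\operatorname{Var}_{\Pb}[f_{\mathcal H}]/\mathbb E_{\Pb}[f_{\mathcal H}]^2$ is $(1+o(1))/\beta_{\mathcal H}=o(1)$; and (ii) for $S\neq S'$ with $|V(S)\cap V(S')|=k\ge 1$, use the graph-theoretic fact $|E(J)|\le |E(S)\cap E(S')|\le k-1$ (two distinct cycles cannot share a full cycle on their common vertices), so that $n^{|E(J)|-k}\le n^{-1}$. The paper carries out exactly this split in Lemma~\ref{lem-detection-most-technical}, bounding the diagonal by $(1+o(1))/\beta_{\mathcal H}$ and the off-diagonal by $n^{-1+o(1)}$ after absorbing the $C_*^{k}$-type factors into $n^{o(1)}$ via $k\le\ell=o(\log n/\log\log n)$.
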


Combining these variance bounds with Chebyshev’s inequality, we arrive at the following sufficient condition for the statistic $f_{\mathcal H}$ to achieve strong detection.

\begin{thm}{\label{MAIN-THM-detection}}
    Suppose that Assumption~\ref{assum-upper-bound} and Equation~\eqref{eq-prelim-assumption-upper-bound} hold and we choose $\ell$ according to \eqref{eq-condition-strong-detection}. Then the testing error satisfies
    \begin{equation}{\label{eq-testing-error}}
        \Pb\big( f_{\mathcal H}(\bm X,\bm Y)\leq\tau \big) + \Qb\big( f_{\mathcal H}(\bm X,\bm Y)\geq\tau \big) = o(1) \,,
    \end{equation}
    where the threshold is chosen as
    \begin{equation*}
        \tau = c \cdot \mathbb E_{\Pb}\big[ f_{\mathcal H}(\bm X,\bm Y) \big]
    \end{equation*}
    for any fixed constant $0<c<1$. In particular, Algorithm~\ref{alg:detection-meta} described above achieves strong detection between $\Pb$ and $\Qb$.
\end{thm}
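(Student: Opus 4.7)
The statement is essentially a bookkeeping consequence of Proposition~\ref{main-prop-detection} combined with Chebyshev's and Markov's inequalities. Since Proposition~\ref{main-prop-detection} already supplies all the hard moment estimates, the real work is in those moment computations, not here.

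First I would handle the null side. Under $\Qb$, apply Markov's inequality to $f_{\mathcal H}^2$, which is a non-negative random variable:
\begin{align*}
    \Qb\big( f_{\mathcal H}(\bm X,\bm Y) \geq \tau \big) \leq \Qb\big( f_{\mathcal H}(\bm X,\bm Y)^2 \geq \tau^2 \big) \leq \frac{ \mathbb E_{\Qb}[f_{\mathcal H}^2] }{ \tau^2 } = \frac{ 1+o(1) }{ c^2 \big( \mathbb E_{\Pb}[f_{\mathcal H}] \big)^2 },
\end{align*}
which is $o(1)$ because Proposition~\ref{main-prop-detection} gives $\mathbb E_{\Pb}[f_{\mathcal H}] = \omega(1)$. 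For the planted side, under $\Pb$ I would apply Chebyshev's inequality centered at $\mathbb E_{\Pb}[f_{\mathcal H}]$:
\begin{align*}
    \Pb\big( f_{\mathcal H}(\bm X,\bm Y) \leq \tau \big) = \Pb\big( \mathbb E_{\Pb}[f_{\mathcal H}] - f_{\mathcal H} \geq (1-c) \mathbb E_{\Pb}[f_{\mathcal H}] \big) \leq \frac{ \operatorname{Var}_{\Pb}[f_{\mathcal H}] }{ (1-c)^2 \big( \mathbb E_{\Pb}[f_{\mathcal H}] \big)^2 } = o(1),
\end{align*}
using the third bound in \eqref{eq-moment-control}. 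Summing the two error probabilities yields \eqref{eq-testing-error}.

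To conclude that Algorithm~\ref{alg:detection-meta} achieves strong detection, observe that $\mathbb E_{\Pb}[f_{\mathcal H}]$ is a deterministic function of $\lambda,\mu,\rho,\ell,n$ (so the threshold $\tau$ is known in advance and does not need to be estimated from data), and that the choice $\omega(1) = \ell = o(\log n / \log\log n)$ from \eqref{eq-condition-strong-detection} is admissible. The algorithmic content therefore reduces to approximately computing $f_{\mathcal H}$ in polynomial time, a task deferred to Section~\ref{sec:hypergraph-color-coding} via a color coding scheme. The only genuine obstacle is to establish the three estimates in \eqref{eq-moment-control} of Proposition~\ref{main-prop-detection}; in particular, the second-moment upper bound $\mathbb E_{\Qb}[f_{\mathcal H}^2] = 1+o(1)$ is delicate because it crucially requires the weighting $\Xi(H)/\sqrt{n^{\ell} \beta_{\mathcal H}}$ in \eqref{eq-def-f-mathcal-H} to produce cancellations across the decoration classes, and this is the focus of the statistical analysis in Section~\ref{sec:stat-analysis}.
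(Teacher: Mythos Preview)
Your proof is correct and essentially identical to the paper's: both apply Chebyshev (equivalently, Markov on $f_{\mathcal H}^2$, since $\mathbb E_{\Qb}[f_{\mathcal H}]=0$) under $\Qb$ and Chebyshev under $\Pb$, invoking the three moment bounds of Proposition~\ref{main-prop-detection}. One small inaccuracy in your closing commentary: the bound $\mathbb E_{\Qb}[f_{\mathcal H}^2]=1+o(1)$ is not the delicate part---it follows immediately from the orthonormality \eqref{eq-standard-orthogonal} of the $f_S$ under $\Qb$ together with the normalization by $\beta_{\mathcal H}$ (see Lemma~\ref{lem-mean-var-f-H-part-1}); the genuinely hard estimate is $\operatorname{Var}_{\Pb}[f_{\mathcal H}]=o(1)\cdot\mathbb E_{\Pb}[f_{\mathcal H}]^2$, handled in Lemma~\ref{lem-mean-var-f-H-part-2}.
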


From a computational perspective, evaluating each
\begin{align*}
    \sum_{ S \subset \mathsf K_n: S \cong H } f_S(\bm X,\bm Y)
\end{align*}
in \eqref{eq-def-f-mathcal-H} by exhaustive search takes $n^{O(\ell)}$ time which is super-polynomial when $\ell=\omega(1)$. To resolve this computational issue, in Section~\ref{subsec:approx-detection} 
we design an polynomial-time algorithm (see Algorithm~\ref{alg:cal-widetilde-f}) to compute an approximation $\widetilde{f}_{\mathcal H}(\bm X,\bm Y)$ (see \eqref{eq-def-widetilde-f-H}) for $f_{\mathcal H}(\bm X,\bm Y)$ using the strategy of color coding \cite{AYZ95, AR02, HS17, MWXY24, MWXY23, Li25+}. The following result shows that the statistic $\widetilde f_{\mathcal H}$ achieves strong detection under the same condition as in Theorem~\ref{MAIN-THM-detection}.

\begin{thm}{\label{MAIN-THM-detection-algorithmic}}
    Suppose that Assumption~\ref{assum-upper-bound} and Equation~\ref{eq-prelim-assumption-upper-bound} hold and we choose $\ell$ according to \eqref{eq-condition-strong-detection}. Then \eqref{eq-testing-error} holds with $\widetilde f_{\mathcal H}$ in place of $f_{\mathcal H}$, namely
    \begin{equation}{\label{eq-testing-error-algorithmic}}
        \Pb\big( \widetilde f_{\mathcal H}(\bm X,\bm Y)\leq\tau \big) + \Qb\big( \widetilde f_{\mathcal H}(\bm X,\bm Y)\geq\tau \big) = o(1) \,,
    \end{equation}
    Moreover, $\widetilde f_{\mathcal H}$ can be computed in time $n^{2+o(1)}$.
\end{thm}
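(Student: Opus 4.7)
The plan is to construct $\widetilde f_{\mathcal H}$ as a color-coded Monte Carlo estimator of $f_{\mathcal H}$ following the standard recipe of \cite{AYZ95, AR02, HS17, MWXY24, MWXY23, Li25+}, verify that it preserves the moment estimates of Proposition~\ref{main-prop-detection} up to lower-order corrections, and evaluate it in time $n^{2+o(1)}$ via a dynamic program. Concretely, I would draw $M$ independent uniform colorings $\chi^{(1)},\ldots,\chi^{(M)}:[n]\to[\ell]$, call $S\subset\mathsf K_n$ \emph{colorful under $\chi$} if its $\ell$ vertices receive pairwise distinct colors, and set
$$
f_{\mathcal H}^{\chi}(\bm X,\bm Y) = \frac{\ell^\ell}{\ell!}\cdot\frac{1}{\sqrt{n^\ell\beta_{\mathcal H}}}\sum_{[H]\in\mathcal H}\Xi(H)\sum_{\substack{S\subset\mathsf K_n,\,S\cong H\\ S\text{ colorful under }\chi}}f_S(\bm X,\bm Y),
$$
with $\widetilde f_{\mathcal H}=\tfrac{1}{M}\sum_{t=1}^M f_{\mathcal H}^{\chi^{(t)}}$. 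Since every fixed labeled $\ell$-vertex subgraph is colorful under $\chi$ with probability exactly $\ell!/\ell^\ell$, the rescaling makes $\mathbb E_\chi[f_{\mathcal H}^{\chi}\mid\bm X,\bm Y]=f_{\mathcal H}$ pointwise; in particular $\mathbb E_{\Pb}[\widetilde f_{\mathcal H}]=\mathbb E_{\Pb}[f_{\mathcal H}]=\omega(1)$, and the calculation of Section~\ref{sec:stat-analysis} in fact gives $\mathbb E_{\Pb}[f_{\mathcal H}]\asymp\sqrt{\beta_{\mathcal H}}\asymp A_+^{\ell/2}$.

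The variance analysis proceeds via the total-variance decomposition $\operatorname{Var}_*[\widetilde f_{\mathcal H}]=\operatorname{Var}_*[f_{\mathcal H}]+\mathbb E_*[\operatorname{Var}_\chi(\widetilde f_{\mathcal H}\mid\bm X,\bm Y)]$ for $*\in\{\Pb,\Qb\}$. Proposition~\ref{main-prop-detection} already gives $\operatorname{Var}_{\Pb}[f_{\mathcal H}]=o(\mathbb E_{\Pb}[f_{\mathcal H}]^2)$ and $\mathbb E_{\Qb}[f_{\mathcal H}^2]=1+o(1)$. For the extra term, orthogonality under $\Qb$ (the only non-vanishing contributions to $\mathbb E_{\Qb}[f_S f_{S'}]$ come from $S=S'$ as decorated graphs) together with the colorful probability $\ell!/\ell^\ell$ yields
$$
\mathbb E_\chi\,\mathbb E_{\Qb}\bigl[(f_{\mathcal H}^{\chi})^2\bigr] \;=\; \frac{\ell^\ell}{\ell!}\cdot\mathbb E_{\Qb}[f_{\mathcal H}^2] \;\le\; e^{O(\ell)}\cdot(1+o(1)),
$$
so $\mathbb E_{\Qb}[\operatorname{Var}_\chi(\widetilde f_{\mathcal H})]\le e^{O(\ell)}/M$, and an analogous bound holds on the planted side. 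Choosing $M=e^{C\ell}$ with $C=C(\lambda,\mu,\rho)$ a sufficiently large constant makes the remainder $o(\mathbb E_{\Pb}[f_{\mathcal H}]^2)$, and Chebyshev's inequality delivers \eqref{eq-testing-error-algorithmic} with the threshold $\tau=c\cdot\mathbb E_{\Pb}[f_{\mathcal H}]$, exactly as in the proof of Theorem~\ref{MAIN-THM-detection}.

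To evaluate each $f_{\mathcal H}^{\chi}$ I would use the standard color-coding dynamic program with states $(v,C,\gamma)\in[n]\times 2^{[\ell]}\times\{\bullet,\circ\}$ recording the current endpoint of a partial colorful path, the set of colors already used, and the decoration of the most recent edge. Transitions cost $O(n)$, the state space has size $n\cdot 2^\ell\cdot 2$, and closing the cycle at the starting vertex while dividing by $|\mathsf{Aut}(H)|$ for each isomorphism class adds only a $\mathrm{poly}(\ell)$ factor. Under $\ell=o(\log n/\log\log n)$ both $2^\ell$ and $M=e^{\Theta(\ell)}$ are $n^{o(1)}$, so the total runtime is $M\cdot O(n^2\cdot 2^\ell\cdot\mathrm{poly}(\ell))=n^{2+o(1)}$. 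The main obstacle is the quantitative trade-off in the variance step: the color-coding rescaling by $\ell^\ell/\ell!\approx e^\ell$ inflates the null second moment by an exponential factor that the signal $\mathbb E_{\Pb}[f_{\mathcal H}]^2\asymp A_+^\ell$ may not fully absorb on its own (since Lemma~\ref{lem-bound-beta-mathcal-H} only guarantees $A_+>1+\delta$, possibly with $A_+<e$). It is precisely because $e^{O(\ell)}=n^{o(1)}$ under the prescribed range of $\ell$ that we can take $M=e^{\Theta(\ell)}$ simultaneously large enough to suppress the coloring variance and small enough to preserve the polynomial runtime.
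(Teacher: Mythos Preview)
Your construction and analysis are essentially the paper's: average $e^{\Theta(\ell)}$ rescaled color-coded counts, control the extra coloring variance, and evaluate via a subset-DP over colors. The one substantive difference is in the variance bookkeeping. You bound $\mathbb E_*[\operatorname{Var}_\chi(\widetilde f_{\mathcal H}\mid\bm X,\bm Y)]$ crudely by $\tfrac{1}{M}\cdot\tfrac{\ell^\ell}{\ell!}\cdot\mathbb E_*[f_{\mathcal H}^2]$ (on the planted side this implicitly uses the non-negativity $\mathbb E_{\Pb}[f_S f_K]\ge 0$, which you should state) and then absorb the $e^{O(\ell)}$ overhead by taking $M$ large. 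The paper instead bounds $\mathbb E_*[(\widetilde f_{\mathcal H}-f_{\mathcal H})^2]$ directly and exploits that the centered coloring correlation $\mathbb E_\chi\bigl[(\tfrac{1}{r}\chi_\tau(V(S))-1)(\tfrac{1}{r}\chi_\tau(V(K))-1)\bigr]$ vanishes whenever $V(S)\cap V(K)=\emptyset$; only intersecting pairs survive, and the resulting sum is precisely the quantity already controlled in the proof of Lemma~\ref{lem-mean-var-f-H-part-2}. This lets the paper take exactly $t=\lceil 1/r\rceil$ colorings without tuning a constant, whereas your route needs $M=e^{C\ell}$ for unspecified large $C$. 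Both are valid because $e^{\Theta(\ell)}=n^{o(1)}$ under \eqref{eq-condition-strong-detection}.

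Two small loose ends in your DP sketch. First, the state $(v,C,\gamma)$ records neither the starting vertex nor the first-edge decoration, both of which are needed to close the cycle and to apply the wrap-around $\rho$ factor; the paper's DP tracks both endpoints $(x,y)$ explicitly. Second, since your DP aggregates all decoration sequences simultaneously, each labeled decorated cycle $S$ is traversed exactly $2\ell$ times irrespective of $[S]$, so the correct normalization is a single global factor $2\ell$, not a per-class division by $|\mathsf{Aut}(H)|$.
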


\subsubsection{The correlated spiked Wishart model}{\label{subsubsec:detect-stat-cov}}

For any bipartite graph $S \subset \mathsf K_{n,N}$, we will always use the convention that an edge $(i,j)$ is written in the order that $i\in[n]$ and $j\in[N]$. For any decorated bipartite graph $S \subset \mathsf K_{n,N}$, define (recall \eqref{eq-def-E-1-E-2-decorated})
\begin{equation}{\label{eq-def-f-S-cov}}
    h_S(\bm X,\bm Y) = \prod_{(i,j) \in E_{\bullet}(S)} \bm X_{i,j} \prod_{ (i,j) \in E_{\circ}(S) } \bm Y_{i,j} \,.
\end{equation}
Let $\ell\in\mathbb N$ be a parameter that will be decided later and let $\mathcal G=\mathcal G(\ell)$ be the collection of all unlabeled, decorated, bipartite cycles with length $2\ell$ such that $\mathsf{diff}(H) \subset V^{\mathsf a}(H)$ for all $[H] \in\mathcal G$ (see Figure~\ref{fig:decorated-bipartite-cycle} for an illustration). 
\begin{figure}[!ht]
    \centering
    \vspace{0cm}
    \includegraphics[height=7.07cm,width=12.10cm]{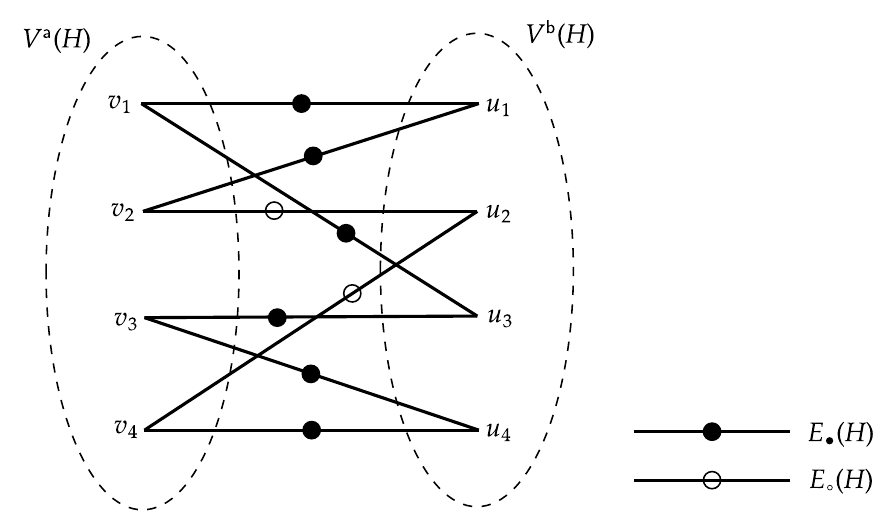}
    \caption{\noindent An unlabeled graph $[H] \in \mathcal G(\ell)$ with $\ell=4$. Here $\mathsf{diff}(H)=\{ v_2,v_4 \}$.}
    \label{fig:decorated-bipartite-cycle}
\end{figure}

\noindent Define 
\begin{equation}{\label{eq-def-f-mathcal-G}}
    h_{\mathcal G}(\bm X,\bm Y) = \frac{1}{ \sqrt{n^{\ell} N^{\ell} \beta_{\mathcal G}} } \sum_{ [H] \in \mathcal G } \Upsilon(H) \sum_{ S \subset \mathsf{K}_{n,N}, S \cong H } h_{S}(\bm X,\bm Y) \,, 
\end{equation}
where 
\begin{equation}{\label{eq-def-Upsilon-S}}
    \Upsilon(H) = \lambda^{|E_{\bullet}(H)|/2} \mu^{|E_{\circ}(H)|/2} \rho^{|\mathsf{diff}(H)|} 
\end{equation}
and 
\begin{equation}{\label{eq-def-beta-mathcal-G}}
    \beta_{\mathcal G} = \sum_{[H] \in \mathcal G} \frac{ \rho^{2\mathsf{diff}(H)} \lambda^{|E_{\bullet}(H)|} \mu^{|E_{\circ}(H)|} }{ |\mathsf{Aut}(H)| } = \sum_{[H] \in \mathcal G} \frac{ \Upsilon(H)^2 }{ |\mathsf{Aut}(H)| } \,.
\end{equation}
Algorithm~\ref{alg:detection-cov-meta} below describes our proposed method for detection in correlated spiked Wishart model.
\begin{breakablealgorithm}{\label{alg:detection-cov-meta}}
\caption{Detection in correlated spiked Wishart model}
    \begin{algorithmic}[1]
    \STATE {\bf Input:} Two matrices $\bm X,\bm Y$, a family $\mathcal G$ of non-isomorphic decorated bipartite graphs, and a threshold $\tau\geq 0$.
    \STATE Compute $h_{\mathcal G}(\bm X,\bm Y)$ according to \eqref{eq-def-f-mathcal-G}. 
    \STATE Let $\mathtt q=1$ if $h_{\mathcal G}(\bm X,\bm Y) \geq \tau$ and $\mathtt q=0$ if $h_{\mathcal G}(\bm X,\bm Y) < \tau$.
    \STATE {\bf Output:} $\mathtt q$.
    \end{algorithmic}
\end{breakablealgorithm}
We now state that Algorithm~\ref{alg:detection-cov-meta} succeeds under Assumption~\ref{assum-upper-bound}, Equation~\eqref{eq-prelim-assumption-upper-bound-cov} and an appropriate choice of the parameter $\ell$. Similarly as in Section~\ref{subsec:detect-stat}, we first show the following bound on $\beta_{\mathcal G}$.

\begin{lemma}{\label{lem-bound-beta-mathcal-G}}
    There exists a constant $D=D(\lambda,\mu,\rho,\gamma)>1$ such that (recall \eqref{eq-def-A-+}) 
    \begin{equation}{\label{eq-bound-beta-mathcal-G}}
        \frac{D^{-1}}{\ell^2} A_+^{\ell} \leq \beta_{\mathcal G} \leq D A_+^{\ell} \,,
    \end{equation}
    In addition, suppose that \eqref{eq-prelim-assumption-upper-bound-cov} holds, then there exists a constant $\delta=\delta (\lambda,\mu,\rho,\gamma)>0$ such that $A_+>(1+\delta)\gamma$.
\end{lemma}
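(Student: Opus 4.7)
The plan is to reduce the sum defining $\beta_{\mathcal G}$ to a one-dimensional transfer-matrix trace, in direct analogy with Lemma~\ref{lem-bound-beta-mathcal-H}. The first step is to parameterize elements of $\mathcal G(\ell)$ by cyclic binary sequences. The constraint $\mathsf{diff}(H) \subset V^{\mathsf a}(H)$ forces every vertex $v \in V^{\mathsf b}(H)$ to have its two incident edges carrying the same decoration, so labeling the $V^{\mathsf b}$ vertices cyclically as $b_1, \ldots, b_\ell$, each $b_k$ is tagged with a single decoration $d_k \in \{\bullet,\circ\}$ (the common decoration on its two incident edges), and the whole decorated bipartite cycle is determined up to relabeling by the cyclic sequence $d = (d_1,\ldots,d_\ell)$. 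Writing $p(d) = \#\{k : d_k = \bullet\}$ and $\mathsf{ch}(d) = \#\{k : d_k \neq d_{k+1 \bmod \ell}\}$, one reads off $|E_\bullet(H)| = 2p(d)$, $|E_\circ(H)| = 2(\ell-p(d))$, and $|\mathsf{diff}(H)| = \mathsf{ch}(d)$. A direct verification identifies the bipartition-preserving automorphism group of the underlying undecorated bipartite $2\ell$-cycle as the dihedral group of order $2\ell$ acting on $d$ by rotations and reflections of $[\ell]$, and the orbit-stabilizer theorem then yields
\begin{equation*}
\beta_{\mathcal G} = \frac{1}{2\ell} \sum_{d \in \{\bullet,\circ\}^\ell} \lambda^{2p(d)} \mu^{2(\ell-p(d))} \rho^{2\,\mathsf{ch}(d)}.
\end{equation*}

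The next step is a standard transfer-matrix evaluation. Splitting the vertex weight $\lambda^2$ (resp.\ $\mu^2$) symmetrically between the two incident positions of the cyclic product shows that the above sum equals $\mathsf{tr}(M^\ell)$ with
\begin{equation*}
M = \begin{pmatrix} \lambda^2 & \lambda\mu\rho^2 \\ \lambda\mu\rho^2 & \mu^2 \end{pmatrix},
\end{equation*}
and a short computation gives that the two eigenvalues of $M$ are precisely $A_\pm$ as in \eqref{eq-def-A-+}. Since $M$ has non-negative entries and $A_+ A_- = \lambda^2\mu^2(1-\rho^4) \geq 0$, both eigenvalues are non-negative with $A_- \leq A_+$. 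Therefore
\begin{equation*}
\frac{A_+^\ell}{2\ell} \leq \beta_{\mathcal G} = \frac{A_+^\ell + A_-^\ell}{2\ell} \leq \frac{A_+^\ell}{\ell},
\end{equation*}
which yields \eqref{eq-bound-beta-mathcal-G} with, say, $D = 2$.

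For the gap $A_+ > (1+\delta)\gamma$, observe that \eqref{eq-prelim-assumption-upper-bound-cov} forces $\lambda^2, \mu^2 \leq \gamma$, so the first two terms in the definition of $F$ are at most $1$ and the third must strictly exceed $1+\epsilon$. With the substitutions $u = \gamma - \lambda^2(1-\rho^2)$ and $v = \gamma - \mu^2(1-\rho^2)$, a short algebraic manipulation shows the third term exceeds $1$ if and only if $(\gamma-\lambda^2)(\gamma-\mu^2) < \lambda^2\mu^2\rho^4$, i.e., $\mathsf{det}(\gamma I - M) < 0$; since the real eigenvalues of $M$ satisfy $0 \leq A_- \leq A_+$, this is in turn equivalent to $A_+ > \gamma$. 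A continuity/compactness argument then upgrades the strict inequality to $A_+ > (1+\delta)\gamma$ for some $\delta = \delta(\lambda,\mu,\rho,\gamma) > 0$. The main technical care lies in the first step, namely verifying that the bipartition-preserving automorphism group of the bipartite $2\ell$-cycle is really $D_\ell$ (and not a larger group allowing a side-swap) and that the orbit-stabilizer identity correctly produces the labeled sum over $\{\bullet,\circ\}^\ell$ without miscounting sequences with non-trivial stabilizers (such as constant sequences). Once that book-keeping is set up cleanly, everything else is a $2\times 2$ eigenvalue calculation.
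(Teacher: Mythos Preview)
Your proposal is correct and rests on the same key observation as the paper: the constraint $\mathsf{diff}(H)\subset V^{\mathsf a}(H)$ collapses each decorated bipartite $2\ell$-cycle to a decoration sequence $d\in\{\bullet,\circ\}^\ell$, reducing everything to a $2\times 2$ transfer matrix on $\ell$-cycles. The paper carries this out by exhibiting an explicit bijection $\psi:\mathcal G(\ell)\to\mathcal H(\ell)$ preserving both $\Upsilon(H)=\Xi(\psi(H))$ and $|\mathsf{Aut}(H)|=|\mathsf{Aut}(\psi(H))|$, concluding $\beta_{\mathcal G}=\beta_{\mathcal H}$ and then simply invoking Lemma~\ref{lem-bound-beta-mathcal-H}. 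You instead redo the transfer-matrix computation directly, but use orbit--stabilizer under $D_\ell$ to obtain the \emph{exact} identity $\beta_{\mathcal G}=(A_+^\ell+A_-^\ell)/(2\ell)$. This is slightly sharper than what the paper records: the proof of Lemma~\ref{lem-bound-beta-mathcal-H} only uses the crude estimate $1\le|\mathsf{Aut}(H)|\le\ell$, which is why the lower bound in \eqref{eq-bound-beta-mathcal-H} carries an $\ell^{-2}$ rather than $\ell^{-1}$. Your determinant characterization $\det(\gamma I-M)<0\iff A_+>\gamma$ is also a cleaner phrasing of the algebraic manipulation the paper performs in the proof of Lemma~\ref{lem-bound-beta-mathcal-H}. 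The caveat you flag about whether $\mathsf{Aut}(H)$ should be read as the bipartition-preserving automorphism group is exactly right and is implicit in the paper's use of \eqref{eq-enumerate-H-in-K-N,n}; once that is granted, both arguments are equivalent.
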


The proof of Lemma~\ref{lem-bound-beta-mathcal-G} is incorporated in Section~\ref{subsec:proof-lem-2.6} of the appendix. Our main result for the detecting algorithm for model \eqref{eq-def-correlated-spike-covariance-specific} can be summarized as follows:

\begin{proposition}{\label{main-prop-detection-cov}}
    Suppose that Assumption~\ref{assum-upper-bound} and Equation~\eqref{eq-prelim-assumption-upper-bound-cov} hold and we choose $\ell$ according to \eqref{eq-condition-strong-detection}. Then we have
    \begin{align}{\label{eq-moment-control-cov}}
        \mathbb E_{\overline\Pb}\big[ h_{\mathcal G} \big]=\omega(1), \ \mathbb E_{\overline\Qb}\big[ h_{\mathcal G}^2 \big]=1+o(1) \mbox{, and } \operatorname{Var}_{\overline\Pb}\big[ h_{\mathcal G} \big]=o(1) \cdot \mathbb E_{\overline\Pb}\big[ h_{\mathcal G} \big]^2 \,.
    \end{align}
\end{proposition}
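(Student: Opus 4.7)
The plan is to establish the three moment estimates in \eqref{eq-moment-control-cov} in parallel, following the same outline as the proof of Proposition~\ref{main-prop-detection} for the Wigner case. The first moment and the null-case second moment reduce to direct Gaussian calculations, while the planted-case variance bound is the main technical step.

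For the first moment, I would condition on $(x,y)$ and exploit that, under $\overline\Pb$, the columns of $\bm X$ (resp.\ $\bm Y$) are conditionally independent Gaussian vectors with covariance $\mathbb I_n + \tfrac{\lambda}{n} x x^\top$ (resp.\ $\mathbb I_n + \tfrac{\mu}{n} y y^\top$). The expectation $\mathbb E_{\overline\Pb}[h_S \mid x,y]$ then factorizes over $b$-vertices $j \in V^{\mathsf b}(S)$; by the bipartite-cycle structure each such $j$ is incident to exactly two edges of a single decoration, and a direct computation gives $\mathbb E[\bm X_{i_1,j} \bm X_{i_2,j} \mid x] = \tfrac{\lambda}{n} x_{i_1} x_{i_2}$ (or the $\mu$-analog for $\bm Y$). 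Taking the expectation over $(x,y)$ via the moment conditions in Assumption~\ref{assum-upper-bound}, each $a$-vertex contributes $1$ if it lies outside $\mathsf{diff}(S)$ and $\rho$ if it lies in $\mathsf{diff}(S)$, yielding $\mathbb E_{\overline\Pb}[h_S] = n^{-\ell} \Upsilon(S)$. Using the count $\#\{S \subset \mathsf K_{n,N}: S \cong H\} = (1+o(1)) n^\ell N^\ell / |\mathsf{Aut}(H)|$ (valid since $\ell^2 = o(n \wedge N)$) and summing over $[H] \in \mathcal G$ gives $\mathbb E_{\overline\Pb}[h_{\mathcal G}] = (1+o(1)) \gamma^{-\ell/2} \sqrt{\beta_{\mathcal G}}$, which by Lemma~\ref{lem-bound-beta-mathcal-G} is at least $D^{-1/2} \ell^{-1} (1+\delta)^{\ell/2} = \omega(1)$. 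For the null-case second moment, I would use orthogonality of distinct Gaussian monomials: under $\overline\Qb$ the entries of $\bm X, \bm Y$ are i.i.d.\ standard normal, so $\mathbb E[h_S h_{S'}]$ vanishes unless each $\bm X_e, \bm Y_e$ appearing in the product does so to an even power, which (since each edge appears with multiplicity at most one in each cycle) forces $S = S'$ as decorated graphs; the normalization $\beta_{\mathcal G}$ is tailored precisely so that $\mathbb E_{\overline\Qb}[h_{\mathcal G}^2] = 1 + o(1)$.

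The main technical step is the variance bound $\operatorname{Var}_{\overline\Pb}[h_{\mathcal G}] = o(1) \cdot \mathbb E_{\overline\Pb}[h_{\mathcal G}]^2$. I would expand $\mathbb E_{\overline\Pb}[h_{\mathcal G}^2]$ as a double sum over labeled pairs $(S,S')$ and classify the pairs by their shared substructure: the common edge set $T = S \Cap S'$ together with the collisions of $a$- and $b$-vertices outside $E(T)$. Conditioning on $(x,y)$ and using column-wise factorization, the expectation at a generic $b$-vertex (belonging to only one of $S,S'$) matches the first-moment computation, while at a collision $j \in V^{\mathsf b}(S) \cap V^{\mathsf b}(S')$ Isserlis' formula expands $\mathbb E[\bm X_{i_1,j} \bm X_{i_2,j} \bm X_{i_3,j} \bm X_{i_4,j} \mid x]$ into a sum of Wick pairings. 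The fully disjoint pairs (sharing no vertices) contribute $\mathbb E[h_S]\mathbb E[h_{S'}]$ and collectively reproduce $(1+o(1)) \mathbb E_{\overline\Pb}[h_{\mathcal G}]^2$, which cancels against $\mathbb E_{\overline\Pb}[h_{\mathcal G}]^2$; the remaining intersection patterns I would show contribute $o(1) \cdot \mathbb E_{\overline\Pb}[h_{\mathcal G}]^2$.

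The main obstacle, absent in the Wigner setting, is that two distinct edges $(i,j)$ and $(i',j)$ lying in $S$ and $S'$ respectively are coupled through the column variable $u_j$ even when no edge is shared, so the expectation does not automatically factor over $S$ and $S'$ just because $E(S \Cap S') = \emptyset$. I would handle this by a careful accounting: each shared $a$-vertex saves a factor $n^{-1}$ and each shared $b$-vertex saves $N^{-1}$ in the number of labeled pairs, and I would show that these combinatorial savings dominate the at-most-polynomial-in-$\ell$ gain from the additional Wick pairings at collisions, using $\ell = o(\log n/\log\log n)$ to absorb the polynomial overhead and the strict inequality $A_+ > (1+\delta)\gamma$ from Lemma~\ref{lem-bound-beta-mathcal-G} to ensure that the exponential growth of $\mathbb E_{\overline\Pb}[h_{\mathcal G}]^2$ dominates all sub-exponential overhead from the cross-terms.
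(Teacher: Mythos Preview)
Your proposal is correct and follows essentially the same architecture as the paper's proof: a first-moment computation giving $\mathbb E_{\overline\Pb}[h_S]=n^{-\ell}\Upsilon(S)$, orthogonality under $\overline\Qb$ for the null second moment, and a covariance expansion over labeled pairs $(S,S')$ for the planted variance, with the vertex-intersection classes bounded using $\ell=o(\log n/\log\log n)$ and $A_+>(1+\delta)\gamma$.

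The one noteworthy technical difference is in the conditioning. You condition only on $(x,y)$ and work with the column law $\bm X_j\sim\mathcal N(0,\mathbb I_n+\tfrac{\lambda}{n}xx^\top)$, which is why you see the ``obstacle'' that edges $(i,j),(i',j)$ sharing a $b$-vertex are coupled through the column and require Isserlis' formula. The paper instead conditions on $(x,y,\bm u,\bm v)$ and averages over the i.i.d.\ noise $\bm W,\bm Z$ first (see the derivation of \eqref{eq-cov-f-S-f-K-cov-simplify-1}), which makes all entries conditionally independent and sidesteps that coupling entirely; the Wick-pairing bookkeeping you anticipate is then replaced by the explicit combinatorial bound $\mathtt P(S,K)$ in Lemma~\ref{lem-est-cov-f-S-f-K-cov}, followed by the crude estimate $\mathtt P(S,K)\le n^{o(1)}\Upsilon(S)\Upsilon(K)$ and a count over intersection sizes (Lemma~\ref{lem-detection-cov-most-technical}). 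Both routes reach the same endpoint, but the paper's extra conditioning on $\bm u,\bm v$ makes the second-moment calculation mechanically closer to the Wigner case and avoids invoking Isserlis at shared columns.
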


Combining these variance bounds with Chebyshev’s inequality, we arrive at the following sufficient condition for the statistic $h_{\mathcal G}$ to achieve strong detection.

\begin{thm}{\label{MAIN-THM-detection-cov}}
    Suppose that Assumption~\ref{assum-upper-bound} and Equation~\eqref{eq-prelim-assumption-upper-bound-cov} hold and we choose $\ell$ according to \eqref{eq-condition-strong-detection}. Then the testing error satisfies
    \begin{equation}{\label{eq-testing-error-cov}}
        \overline\Pb\big( h_{\mathcal G}(\bm X,\bm Y)\leq\tau \big) + \overline\Qb\big( h_{\mathcal G}(\bm X,\bm Y)\geq\tau \big) = o(1) \,,
    \end{equation}
    where the threshold is chosen as
    \begin{equation*}
        \tau = c \cdot \mathbb E_{\overline\Pb}\big[ h_{\mathcal G}(\bm X,\bm Y) \big]
    \end{equation*}
    for any fixed constant $0<c<1$. In particular, Algorithm~\ref{alg:detection-cov-meta} described above achieves strong detection between $\overline\Pb$ and $\overline\Qb$.
\end{thm}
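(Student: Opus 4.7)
The plan is to deduce Theorem~\ref{MAIN-THM-detection-cov} from Proposition~\ref{main-prop-detection-cov} by a standard second-moment argument. The three estimates in \eqref{eq-moment-control-cov} are tailored precisely for this: the first gives a test statistic whose mean diverges under the planted law, the second controls the second moment under the null, and the third guarantees concentration around the planted mean. So with the proposition in hand, the theorem is essentially a Chebyshev plus Markov calculation, and no further probabilistic input is required.

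First I would handle the planted side. With $\tau = c \cdot \mathbb E_{\overline\Pb}[h_{\mathcal G}]$ for any fixed $0<c<1$, Chebyshev's inequality yields
\begin{align*}
\overline\Pb\bigl( h_{\mathcal G} \leq \tau \bigr)
&\leq \overline\Pb\Bigl( \bigl| h_{\mathcal G} - \mathbb E_{\overline\Pb}[h_{\mathcal G}] \bigr| \geq (1-c)\mathbb E_{\overline\Pb}[h_{\mathcal G}] \Bigr) \\
&\leq \frac{ \operatorname{Var}_{\overline\Pb}[h_{\mathcal G}] }{ (1-c)^2 \mathbb E_{\overline\Pb}[h_{\mathcal G}]^2 } = o(1),
\end{align*}
using the third estimate in \eqref{eq-moment-control-cov}.

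Next I would address the null side. I would first observe that $\mathbb E_{\overline\Qb}[h_S(\bm X,\bm Y)] = 0$ for every $S \in \mathcal G$: under $\overline\Qb$ the entries $\bm X_{i,j}, \bm Y_{i,j}$ are i.i.d.\ standard Gaussians, and since $S$ is a simple cycle each entry of $\bm X$ or $\bm Y$ appears at most once in $h_S$, so the expectation factorizes into a product of mean-zero terms. Consequently $\mathbb E_{\overline\Qb}[h_{\mathcal G}]=0$, and Markov's inequality gives
\begin{align*}
\overline\Qb\bigl( h_{\mathcal G} \geq \tau \bigr)
\leq \frac{ \mathbb E_{\overline\Qb}[h_{\mathcal G}^2] }{ \tau^2 }
= \frac{ 1+o(1) }{ c^2 \mathbb E_{\overline\Pb}[h_{\mathcal G}]^2 } = o(1),
\end{align*}
since $\mathbb E_{\overline\Pb}[h_{\mathcal G}]=\omega(1)$ by the first estimate. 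Summing the two bounds establishes \eqref{eq-testing-error-cov}, and the ``In particular'' conclusion then follows immediately by inspecting Algorithm~\ref{alg:detection-cov-meta}.

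The substantive difficulty therefore lies entirely in proving Proposition~\ref{main-prop-detection-cov}, which we are allowed to assume here. The hard part in that proposition — not addressed by this proof — will be the variance upper bound under $\overline\Pb$, requiring a careful combinatorial analysis of how pairs of decorated bipartite cycles $S, S' \subset \mathsf K_{n,N}$ overlap and contribute to $\mathbb E_{\overline\Pb}[h_S h_{S'}]$, together with a matching upper bound on the dominant terms governed by the weighting scheme \eqref{eq-def-Upsilon-S} and the spectral quantity $A_+$ from Lemma~\ref{lem-bound-beta-mathcal-G}. Once those estimates are in place, the deduction above is routine.
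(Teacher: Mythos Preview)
Your proposal is correct and essentially identical to the paper's own proof: the paper likewise deduces Theorem~\ref{MAIN-THM-detection-cov} directly from Proposition~\ref{main-prop-detection-cov} via Chebyshev's inequality on both sides, using $\mathbb E_{\overline\Qb}[h_{\mathcal G}]=0$ (established in Lemma~\ref{lem-mean-var-f-mathcal-G-part-1}) so that $\operatorname{Var}_{\overline\Qb}[h_{\mathcal G}]=\mathbb E_{\overline\Qb}[h_{\mathcal G}^2]$. Your remark that the real work lies in the variance bound under $\overline\Pb$ is also exactly how the paper is organized.
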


Again, To resolve the computational issue, in Section~\ref{subsec:approx-detection} we design an polynomial-time algorithm (see Algorithm~\ref{alg:cal-widetilde-f-mathcal-G}) to compute an approximation $\widetilde{h}_{\mathcal G}(\bm X,\bm Y)$ (see \eqref{eq-def-widetilde-f-mathcal-G}) for $h_{\mathcal G}(\bm X,\bm Y)$ using the strategy of color coding. The following result shows that the statistic $\widetilde h_{\mathcal G}$ achieves strong detection under the same condition as in Theorem~\ref{MAIN-THM-detection-cov}.

\begin{thm}{\label{MAIN-THM-detection-algorithmic-cov}}
    Suppose that Assumption~\ref{assum-upper-bound} and Equation~\ref{eq-prelim-assumption-upper-bound} hold and we choose $\ell$ according to \eqref{eq-condition-strong-detection}. Then \eqref{eq-testing-error} holds with $\widetilde h_{\mathcal G}$ in place of $h_{\mathcal G}$, namely
    \begin{equation}{\label{eq-testing-error-algorithmic-cov}}
        \overline\Pb\big( \widetilde h_{\mathcal G}(\bm X,\bm Y)\leq\tau \big) + \overline\Qb\big( \widetilde h_{\mathcal G}(\bm X,\bm Y)\geq\tau \big) = o(1) \,,
    \end{equation}
    Moreover, $\widetilde h_{\mathcal G}$ can be computed in time $n^{2+o(1)}$.
\end{thm}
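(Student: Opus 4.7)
The plan is to mirror the strategy used in Theorem~\ref{MAIN-THM-detection-algorithmic} for the Wigner case, adapted to the bipartite decorated setting of $\mathcal G(\ell)$. Writing $k=2\ell$ for the vertex count of a cycle $H\in\mathcal G(\ell)$, I would first define the color-coded surrogate $\widetilde h_{\mathcal G}$ by independently coloring the $n$ ``left'' vertices with a uniform color in $[k]$ and the $N$ ``right'' vertices with a uniform color in $[k]$, and replacing the inner sum $\sum_{S\subset \mathsf K_{n,N}, S\cong H}h_S$ in \eqref{eq-def-f-mathcal-G} by a suitable constant multiple of the sum over \emph{colorful} copies of $H$ (those whose vertices receive pairwise distinct colors respecting the bipartition types of $H$). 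Averaging over $T=n^{o(1)}$ independent color assignments gives $\widetilde h_{\mathcal G}$.

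The first step is to check that conditional on the colorings, each colorful copy contributes the correct expectation: by a standard inclusion argument, the probability that a fixed $S\cong H$ is colorful is $k!/k^{|V^{\mathsf a}(H)|}\cdot k!/k^{|V^{\mathsf b}(H)|}$ times an appropriate symmetry factor depending on the types of the two sides, so after normalizing with this factor we obtain $\mathbb E[\widetilde h_{\mathcal G}\mid \bm X,\bm Y]=h_{\mathcal G}(\bm X,\bm Y)$. Thus $\mathbb E_{\overline{\mathbb P}}[\widetilde h_{\mathcal G}]=\mathbb E_{\overline{\mathbb P}}[h_{\mathcal G}]$ and $\mathbb E_{\overline{\mathbb Q}}[\widetilde h_{\mathcal G}]=\mathbb E_{\overline{\mathbb Q}}[h_{\mathcal G}]=0$, so the only additional work is to show
$$
\mathrm{Var}_{\overline{\mathbb Q}}(\widetilde h_{\mathcal G})=(1+o(1))\,\mathrm{Var}_{\overline{\mathbb Q}}(h_{\mathcal G}) \quad\text{and}\quad \mathrm{Var}_{\overline{\mathbb P}}(\widetilde h_{\mathcal G})=o(1)\cdot \mathbb E_{\overline{\mathbb P}}[\widetilde h_{\mathcal G}]^2,
$$
since once these are established, Chebyshev's inequality with $\tau=c\,\mathbb E_{\overline{\mathbb P}}[\widetilde h_{\mathcal G}]$ yields \eqref{eq-testing-error-algorithmic-cov} exactly as in Theorem~\ref{MAIN-THM-detection-cov}.

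To control the variance under $\overline{\mathbb Q}$, I would expand $\mathbb E_{\overline{\mathbb Q}}[\widetilde h_{\mathcal G}^2]$ as a double sum over pairs $(S_1,S_2)$ of labeled decorated bipartite cycles isomorphic to elements of $\mathcal G$. Since entries of $\bm W,\bm Z$ are independent standard normals and $\bm W,\bm Z$ are mutually independent, the expectation vanishes unless $E_\bullet(S_1)=E_\bullet(S_2)$ and $E_\circ(S_1)=E_\circ(S_2)$, i.e.\ $S_1=S_2$ as decorated graphs. The extra coloring randomness introduces a factor bounded by the probability that such an $S$ is colorful twice, which by independence of the two samples factors nicely; by choosing $T=n^{o(1)}$ independent trials the sampling variance collapses to a $(1+o(1))$ multiplicative factor over the true $\overline{\mathbb Q}$-variance, already computed to be $1+o(1)$ in Proposition~\ref{main-prop-detection-cov}. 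The analogous $\overline{\mathbb P}$-bound follows by the same pair-decomposition, combined with the variance bound in \eqref{eq-moment-control-cov}; the color-coding only rescales contributions of fixed pairs by multiplicative constants depending on $k=2\ell=o(\log n/\log\log n)$, so the error from color coding is absorbed into the $o(1)$ term.

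The remaining ingredient is the $n^{2+o(1)}$ running time, which is the standard color-coding dynamic program adapted to decorated bipartite cycles. For each unlabeled $[H]\in\mathcal G(\ell)$, fix a root and a traversal order of $V(H)$; then compute, for every pair (current vertex $v\in[n]\cup[N]$, subset $C$ of used colors), the weighted sum of partial colorful embeddings of the traversed prefix, using the entries of $\bm X,\bm Y$ as edge weights according to the decoration. Each update is $O(|E(\bm X)|+|E(\bm Y)|)=O(nN)=O(n^2)$ and the number of states is $2^k\cdot(n+N)=n^{o(1)}\cdot n$, giving $n^{2+o(1)}$ total. Summing the DP outputs over $[H]\in\mathcal G$, with weights $\Upsilon(H)/|\mathsf{Aut}(H)|$ and the normalization $(n^\ell N^\ell \beta_{\mathcal G})^{-1/2}$ from \eqref{eq-def-f-mathcal-G}, produces $\widetilde h_{\mathcal G}$ in the claimed time. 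The main technical obstacle I anticipate is bookkeeping the decoration-aware colorful embedding counts so that the DP transitions correctly distinguish edges in $E_\bullet$ from those in $E_\circ$ and simultaneously enforce the constraint $\mathsf{diff}(H)\subset V^{\mathsf a}(H)$ used to define $\mathcal G$; once this is set up carefully, the rest of the proof reduces to combining Proposition~\ref{main-prop-detection-cov} with the small-variance estimate from color coding.
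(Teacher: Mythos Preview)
Your proposal is essentially the paper's approach: define $\widetilde h_{\mathcal G}$ via color coding, show it approximates $h_{\mathcal G}$ in $L_2$ under both $\overline{\mathbb P}$ and $\overline{\mathbb Q}$ (this is Proposition~\ref{prop-approximate-detection-statistics-cov}), and combine with the dynamic-programming running-time bound (Proposition~\ref{prop-running-time-detection-cov}). One small cleanup: your colorful-probability formula is garbled (as written it exceeds $1$); the paper sidesteps the bipartite bookkeeping by simply coloring all of $[n]\cup[N]$ with a single palette $[2\ell]$ and taking $r=(2\ell)!/(2\ell)^{2\ell}$, then averaging over $t=\lceil 1/r\rceil=e^{\Theta(\ell)}=n^{o(1)}$ independent colorings.
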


\subsection{The recovery statistics and theoretical guarantees}{\label{subsec:recovery-stat}}

\subsubsection{The correlated spiked Wigner model}{\label{subsubsec:recovery-stat-Wigner}}

We only show how to estimate $x$ and estimating $y$ can be done in the similar manner. Recall \eqref{eq-def-f-S}. Let $\ell\in\mathbb N$ be a parameter that will be decided later. In addition, define $\mathcal J_{\star}=\mathcal J_{\star}(\ell)$ to be the collection of unlabeled decorated paths with length $\ell$, and define (recall \eqref{eq-def-V-1-V-2-decorated})
\begin{equation}{\label{eq-def-mathcal-J}}
    \mathcal J=\mathcal J(\ell):= \Big\{ [H] \in \mathcal J_{\star}: \mathsf{L}(H) \subset V_{\bullet}(H) \Big\} \,.
\end{equation}
It is clear that if we label the vertices in $V(H)$ in the order $v_1,\ldots,v_{\ell+1}$ such that $(v_i,v_i+1) \in E(J)$, then $(v_1,v_2),(v_{\ell},v_{\ell+1}) \in E_{\bullet}(J)$ (see Figure~\ref{fig:decorated-path} for an illusttration). 
\begin{figure}[!ht]
    \centering
    \vspace{0cm}
    \includegraphics[height=4.24cm,width=13cm]{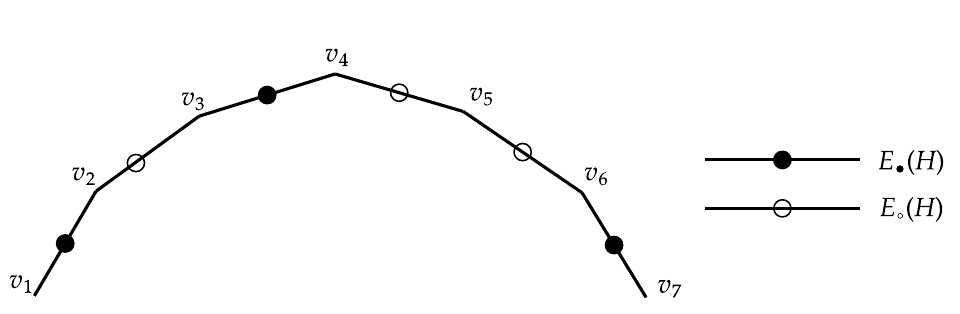}
    \caption{\noindent An unlabeled graph $[H] \in \mathcal J(\ell)$ with $\ell=6$. Here $\mathsf{diff}(H)=\{ v_2,v_3,v_4,v_6 \}$.}
    \label{fig:decorated-path}
\end{figure}
Define 
\begin{equation}{\label{eq-def-Phi-i,j-mathcal-J}}
    \Phi_{u,v}^{\mathcal J} := \frac{1}{ n^{\frac{\ell}{2}-1} \beta_{\mathcal J} } \sum_{[H] \in \mathcal J} \Xi(H) \sum_{ \substack{ S \subset \mathsf K_n: S \cong H \\ \mathsf L(S)=\{ u,v \} } } f_{S}(\bm X,\bm Y) \,. 
\end{equation}
for each $u,v \in [n]$. Here 
\begin{equation}{\label{eq-def-beta-mathcal-J}}
    \beta_{\mathcal J} = \sum_{[H] \in \mathcal J} \frac{ \lambda^{2|E_{\bullet}(H)|} \mu^{2|E_{\circ}(H)|} \rho^{2|\mathsf{diff}(H)|} }{ |\mathsf{Aut}(H)| } \overset{\eqref{eq-def-Xi-S}}{=} \sum_{[J] \in \mathcal J} \frac{ \Xi(H)^2 }{ |\mathsf{Aut}(H)| } \,.
\end{equation}
Our proposed method of recovery in correlated spiked Wigner model is as follows.
\begin{breakablealgorithm}{\label{alg:recovery-meta}}
\caption{Recovery in correlated spiked Wigner model}
    \begin{algorithmic}[1]
    \STATE {\bf Input:} Two symmetric matrices $\bm X,\bm Y$, a family $\mathcal J$ of non-isomorphic decorated graphs.
    \STATE For each pair $u,v \in [n]$, compute $\Phi_{u,v}^{\mathcal J}$ as in \eqref{eq-def-Phi-i,j-mathcal-J}. 
    \STATE Arbitrarily choose $w \in [n]$. Let $\widehat{x}_u = \Phi^{\mathcal J}_{w,u} \cdot \mathbf 1_{ \{ |\Phi^{\mathcal J}_{w,u}| \leq R^4 \} }$ for $1 \leq u \leq n$, where $R$ is given in \eqref{eq-L2-estimation-error}.
    \STATE {\bf Output:} $\widehat x$.
    \end{algorithmic}
\end{breakablealgorithm}
Similarly as in Lemma~\ref{lem-bound-beta-mathcal-H}, we need the following bound on $\beta_{\mathcal J}$.

\begin{lemma}{\label{lem-bound-beta-mathcal-J}}
    There exists a constant $D=D(\lambda,\mu,\rho)>1$ such that 
    \begin{equation}{\label{eq-bound-beta-mathcal-J}}
        D^{-1} A_+^{\ell} \leq \beta_{\mathcal J}, \beta_{\mathcal J_{\star}} \leq D A_+^{\ell} \,,
    \end{equation}
    where $A_+$ is defined in \eqref{eq-def-A-+}. In addition, suppose that $F(\lambda,\mu,\rho,\gamma)>1+\epsilon$ holds, then there exists a constant $0<\delta=\delta(\lambda,\mu,\rho)<0.1$ such that $A_+>(1+\delta)\gamma$. In particular, when \eqref{eq-prelim-assumption-upper-bound} is satisfied we have $A_+>1+\delta$ for some $\delta=\delta(\lambda,\mu,\rho) \in (0,0.1)$. 
\end{lemma}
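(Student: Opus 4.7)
The plan is to run a transfer-matrix computation, analogous to the one underlying Lemma~\ref{lem-bound-beta-mathcal-H} but one step simpler because paths carry only a $\mathbb Z/2$ reflection symmetry rather than a dihedral one. First I would identify an unlabeled decorated path $[H] \in \mathcal J_\star(\ell)$ with an equivalence class of binary sequences $s=(s_1,\ldots,s_\ell) \in \{\bullet,\circ\}^\ell$ under the reversal $s \mapsto \bar s=(s_\ell,\ldots,s_1)$, where $s_i$ records the decoration of the $i$-th edge along the path. An internal vertex lies in $\mathsf{diff}(H)$ precisely when the decorations of its two incident edges differ, and leaves never contribute, so
\begin{equation*}
    \Xi(H)^2 = \lambda^{2\#\{i: s_i=\bullet\}} \mu^{2\#\{i: s_i=\circ\}} \rho^{2\#\{i<\ell:\, s_i\ne s_{i+1}\}} \,.
\end{equation*}
Since $|\mathsf{Aut}(H)|=2$ exactly when $s$ is a palindrome (and otherwise $1$), a direct orbit-counting argument will yield
\begin{equation*}
    2\beta_{\mathcal J_\star}=\sum_{s \in \{\bullet,\circ\}^\ell} \Xi(s)^2 \quad\text{and}\quad 2\beta_{\mathcal J}=\sum_{s:\, s_1=s_\ell=\bullet} \Xi(s)^2 \,,
\end{equation*}
where the $\mathcal J$ sum reflects the extra constraint that both leaves lie in $V_\bullet(H)$.

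Next I would introduce the $2\times 2$ transfer matrix
\begin{equation*}
    T=\begin{pmatrix}\lambda^2 & \rho^2\mu^2 \\ \rho^2\lambda^2 & \mu^2\end{pmatrix} ,
\end{equation*}
whose entry $T_{ab}$ is the multiplicative weight of appending an edge of type $b$ after an edge of type $a$. Writing $v_0=(\lambda^2,\mu^2)^\top$ and $e_\bullet=(1,0)^\top$, telescoping the product gives $\sum_s \Xi(s)^2 = v_0^\top T^{\ell-1}\mathbf 1$ and $\sum_{s_1=s_\ell=\bullet}\Xi(s)^2=\lambda^2 (T^{\ell-1})_{\bullet,\bullet}$. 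A short calculation shows $\mathrm{tr}(T)=\lambda^2+\mu^2$ and $\det(T)=\lambda^2\mu^2(1-\rho^4)$, and the two eigenvalues of $T$ are exactly $A_\pm$ from \eqref{eq-def-A-+}.

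Finally, assuming $\lambda,\mu,\rho>0$ (boundary cases trivialize either $\mathcal J$ or the statement), $T$ has strictly positive entries, hence is irreducible, and Perron-Frobenius supplies strictly positive left and right top-eigenvectors with eigenvalue $A_+$. Consequently $v_0$, $\mathbf 1$, and $e_\bullet$ all have $\Theta(1)$ overlap with these eigenvectors, so $v_0^\top T^{\ell-1}\mathbf 1$ and $(T^{\ell-1})_{\bullet,\bullet}$ are both $\Theta(A_+^{\ell-1})$. This gives the two-sided bound $D^{-1}A_+^\ell \leq \beta_{\mathcal J},\beta_{\mathcal J_\star} \leq DA_+^\ell$. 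The second assertion $A_+>(1+\delta)\gamma$ under $F(\lambda,\mu,\rho,\gamma)>1+\epsilon$ is a statement purely about the function $A_+$ and is already established inside the proof of Lemma~\ref{lem-bound-beta-mathcal-H}, so I would simply invoke it.

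The main (and rather mild) obstacle is the lower bound: one must verify that the positive vectors $v_0,\mathbf 1,e_\bullet$ retain $\Omega(1)$ overlap with the Perron eigenvectors uniformly in the parameters, which is immediate once $\rho>0$ secures irreducibility. Unlike the cycle family $\mathcal H$, there is no dihedral quotient of order $\Theta(\ell)$ to absorb, which explains why the bound in \eqref{eq-bound-beta-mathcal-J} avoids the extra $\ell^{-2}$ prefactor present in \eqref{eq-bound-beta-mathcal-H}.
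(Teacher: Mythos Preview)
Your proposal is correct and follows essentially the same route as the paper. Both proofs set up a bijection between (leaf-rooted) unlabeled decorated paths and binary strings, rewrite $\beta_{\mathcal J}$ and $\beta_{\mathcal J_\star}$ as the constrained and unconstrained string sums, and then control those sums via the $2\times 2$ transfer matrix with eigenvalues $A_\pm$; the paper reuses the explicit recursion solution \eqref{eq-formula-X-ell-Y-ell} from Lemma~\ref{lem-bound-beta-mathcal-H} (bounding the $\bullet\bullet$-constrained sum below by $\rho^4\lambda^4(\mathsf X_{\ell-2}+\mathsf Y_{\ell-2})$), whereas you invoke Perron--Frobenius directly, but the two are equivalent packagings of the same argument.
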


The proof of Lemma~\ref{lem-bound-beta-mathcal-J} is incorporated in Section~\ref{subsec:proof-lem-2.10}. The key of our argument is the following result, which shows that $\Phi_{u,v}^{\mathcal J}$ is positively correlated with $x_u x_v$ and has bounded variance. 
\begin{proposition}{\label{main-prop-recovery}}
    Suppose that Assumption~\ref{assum-upper-bound} and Equation~\eqref{eq-prelim-assumption-upper-bound} hold and we choose $\ell$ such that 
    \begin{align}{\label{eq-condition-weak-recovery}}
        \ell=O_{\delta}(\log n), \quad (1+\delta)^{\ell} \geq n^2 \,.
    \end{align}
    Then there exists sufficiently large $R=O_{\lambda,\mu,\rho,\delta}(1)$ such that 
    \begin{align}
        \mathbb E_{\Pb}\Big[ \Phi^{\mathcal J}_{u,v} \cdot x_u x_v \Big] = 1+o(1) \mbox{ and }
        \mathbb E_{\Pb}\Big[ \big( \Phi^{\mathcal J}_{u,v} \big)^2 \Big], \ \mathbb E_{\Pb}\Big[ \big( \Phi^{\mathcal J}_{u,v} \big)^2 x_u^2 x_v^2 \Big] \leq R \,.  \label{eq-L2-estimation-error}
    \end{align}
\end{proposition}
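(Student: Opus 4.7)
The plan is to prove the three estimates by expanding $\Phi^{\mathcal J}_{u,v}$ as a weighted sum over decorated paths and analyzing first and second moments under $\mathbb P$, in direct parallel to Proposition~\ref{main-prop-detection} but with paths replacing cycles and with an extra external factor of $x_u x_v$ (respectively $x_u^2 x_v^2$) to carry through the moment computations.

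\textbf{First moment.} Linearity reduces $\mathbb E_{\mathbb P}[\Phi^{\mathcal J}_{u,v} x_u x_v]$ to computing $\mathbb E_{\mathbb P}[f_S(\bm X,\bm Y)\, x_u x_v]$ for each decorated path $S\subset\mathsf K_n$ with $\mathsf L(S)=\{u,v\}$. After substituting $\bm X = \tfrac{\lambda}{\sqrt n}xx^{\top} + \bm W$ and $\bm Y = \tfrac{\mu}{\sqrt n}yy^{\top} + \bm Z$, I integrate out the independent Gaussian noise first; since every edge of a simple path appears at most once, only the pure-spike term survives, leaving a monomial in the coordinates of $x,y$. The crucial observation is that the constraint $\mathsf L(S)\subset V_\bullet(S)$ forces both endpoints $u,v$ to contribute $\mathbb E[x^2]=1$ (using the external $x_u x_v$ to complete the square), while each internal vertex $i$ contributes $\mathbb E[x_i^2]=1$ or $\mathbb E[y_i^2]=1$ (pure $\bullet$ or pure $\circ$) or $\mathbb E[x_iy_i]=\rho$ (if $i\in\mathsf{diff}(S)$). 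Hence $\mathbb E_{\mathbb P}[f_S\, x_u x_v] = n^{-\ell/2}\Xi(H)$ for $S\cong H$. Summing over the $(1+o(1))\cdot c_H\, n^{\ell-1}/|\mathsf{Aut}(H)|$ embeddings of each class and invoking the definition of $\beta_{\mathcal J}$ produces the claimed $1+o(1)$.

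\textbf{Second moments.} For the bounds $\mathbb E_{\mathbb P}[(\Phi^{\mathcal J}_{u,v})^2]\leq R$ and $\mathbb E_{\mathbb P}[(\Phi^{\mathcal J}_{u,v})^2 x_u^2 x_v^2]\leq R$, I expand the square into a double sum over pairs $(S_1,S_2)$ of decorated paths, each with leaf set $\{u,v\}$, and analyze $\mathbb E_{\mathbb P}[f_{S_1}f_{S_2}]$ by conditioning on $(x,y)$ and integrating the Gaussian noise. Each edge $(i,j)$ falls into one of several regimes: absent (factor $1$), in exactly one $S_k$ (pure spike factor $\tfrac{\lambda}{\sqrt n}x_ix_j$ or $\tfrac{\mu}{\sqrt n}y_iy_j$), in both $S_k$ with matching decoration (factor $(\text{spike})^2+1$), or in both with conflicting decorations (product of two spike factors, since $\bm W\perp\bm Z$). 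Binomial-expanding the ``$+1$'' in the matched regime by selecting a subset $T$ of matched edges to realize the noise contribution reduces each $\mathbb E_{\mathbb P}[f_{S_1}f_{S_2}]$ to a sum over $T$ of purely signal-driven moments, each computable by the same vertex-by-vertex analysis used in the first-moment step. The diagonal pairs $S_1=S_2$ with $T=E(S_1)$ reproduce $\beta_{\mathcal J}$ and yield the leading $O(1)$ contribution after the normalization $n^{\ell-2}\beta_{\mathcal J}^2$.

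\textbf{Bounding the off-diagonal sum.} Non-diagonal pairs are organized by the isomorphism class of the joint decorated structure $S_1\cup S_2$ together with $T$. Three ingredients drive the estimate: (i) each additional shared edge or vertex between $S_1$ and $S_2$ decreases $|V(S_1\cup S_2)|$ by one, costing a factor of $n^{-1}$ in the number of embeddings; (ii) Assumption~\ref{assum-upper-bound}(2) gives $\mathbb E_\pi[x^4],\mathbb E_\pi[y^4]\leq C$, so the higher moments at shared vertices are uniformly bounded; and (iii) a transfer-matrix argument of the type underlying Lemma~\ref{lem-bound-beta-mathcal-J} bounds the weighted sum over decorated subpaths of any length by $O(A_+^{\mathrm{length}})$. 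Combining these, the total off-diagonal contribution is at most $O_{\lambda,\mu,\rho,\delta}\!\big(\mathrm{poly}(\ell)\cdot n^2 (1+\delta)^{-\ell}\big)=o(1)$, where the assumption $(1+\delta)^\ell \geq n^2$ in \eqref{eq-condition-weak-recovery} is exactly what absorbs the polynomial losses from enumerating overlap patterns. Inserting the extra factor $x_u^2 x_v^2$ adds at most one application of the fourth-moment bound at each endpoint and therefore preserves the $O(1)$ estimate. The final truncation $\mathbf 1_{|\Phi^{\mathcal J}_{w,u}|\leq R^4}$ in Algorithm~\ref{alg:recovery-meta} loses only $o(1)$ correlation with $x_w x_u$ by Markov's inequality applied to the bounded fourth moment.

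\textbf{Principal obstacle.} The delicate step is the combinatorial bookkeeping of the off-diagonal second moment. Because each edge carries a $\bullet/\circ$ decoration and each mixed vertex carries a $\rho$-reward, the number of overlap configurations between two paths is substantially larger than in the single-matrix cycle analysis of Proposition~\ref{main-prop-detection}, and the weights $\Xi(H)$ couple the combinatorial counts with the parameters $(\lambda,\mu,\rho)$. The quantitative core of the argument is to verify that the full overlap sum is still governed by the principal eigenvalue $A_+$ of the $2\times 2$ transfer operator from Lemma~\ref{lem-bound-beta-mathcal-J}, and not by any larger eigenvalue that might arise from self-intersecting or doubled subpaths; this is precisely what allows the factor $n^2$ loss (from counting two endpoints of each leg of the overlap) to be absorbed into $(1+\delta)^\ell$.
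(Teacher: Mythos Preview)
Your overall approach---first moment via the all-spike term, second moment via expanding $\mathbb E_{\mathbb P}[f_{S_1}f_{S_2}]$ after integrating out the Gaussian noise, then summing over overlap structures---is the same as the paper's. However, your quantitative bookkeeping in the second-moment step is inverted. The diagonal pairs $S_1=S_2$ with your $T=E(S_1)$ contribute, after the normalization $n^{\ell-2}\beta_{\mathcal J}^2$,
\[
\frac{1}{n^{\ell-2}\beta_{\mathcal J}^2}\sum_{[H]\in\mathcal J}\Xi(H)^2\cdot\frac{(1+o(1))\,n^{\ell-1}}{|\mathsf{Aut}(H)|}
\;=\;\frac{(1+o(1))\,n}{\beta_{\mathcal J}}
\;\leq\;\frac{Dn}{A_+^{\ell}}\;=\;o(1),
\]
not $O(1)$; the point is that $\Phi^{\mathcal J}_{u,v}$ is normalized by $\beta_{\mathcal J}$ rather than $\sqrt{\beta_{\mathcal J}}$, so the detection-case intuition does not carry over. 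Conversely, your claim that the off-diagonal sum is $O\!\big(\mathrm{poly}(\ell)\,n^2(1+\delta)^{-\ell}\big)=o(1)$ is false: pairs $(S,K)$ sharing \emph{only} the endpoints $\{u,v\}$ already contribute $\Theta(1)$, since $\sum_{S}\Xi(S)^2\approx n^{\ell-1}\beta_{\mathcal J}$ and the all-spike term gives $\mathbb E[f_Sf_K\,x_u^2x_v^2]\asymp n^{-\ell}\Xi(S)\Xi(K)$, so that
\[
\frac{1}{n^{\ell-2}\beta_{\mathcal J}^2}\cdot n^{-\ell}\cdot\big(n^{\ell-1}\beta_{\mathcal J}\big)^2\;=\;1.
\]
(If both of your claimed contributions were $o(1)$ you would contradict the Cauchy--Schwarz lower bound $\mathbb E_{\mathbb P}[(\Phi^{\mathcal J}_{u,v})^2 x_u^2 x_v^2]\geq \big(\mathbb E_{\mathbb P}[\Phi^{\mathcal J}_{u,v}\, x_u x_v]\big)^2=1+o(1)$.)

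The paper organizes the double sum not by the isomorphism class of $S\cup K$ but by writing $V(S)\cap V(K)$ as a union of $\mathtt T\geq 1$ consecutive intervals along the path ordering $v_1=u,\dots,v_{\ell+1}=v$, the first and last of which must contain $u$ and $v$ respectively. The case $\mathtt T=1$ (full vertex overlap) is the one killed by $(1+\delta)^\ell\geq n^2$; the case $\mathtt T=2$ (two paths agreeing on an initial and a terminal segment---in particular, the ``endpoints only'' pairs above) is the dominant $O_{\lambda,\mu,\rho,\delta}(1)$ term, controlled by a convergent geometric series in $A_+^{-1}$; the cases $\mathtt T\geq 3$ give $n^{-1+o(1)}$ using only $\ell=O(\log n)$. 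Your transfer-matrix intuition and ingredients (i)--(iii) are correct, and this interval parametrization is precisely how they are cashed out.
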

Combining these variance bounds with Markov's inequality, we arrive at the following sufficient condition for Algorithm~\ref{alg:recovery-meta} to achieve weak recovery.
\begin{thm}{\label{MAIN-THM-recovery}}
    Suppose that Assumption~\ref{assum-upper-bound} and Equation~\ref{eq-prelim-assumption-upper-bound} hold, and we choose $\ell$ according to \eqref{eq-condition-weak-recovery}. Then we have (below we write $\widehat{x}$ to be the output of Algorithm~\ref{alg:recovery-meta})
    \begin{align*}
        \mathbb E_\Pb\Big[ \tfrac{ |\langle \widehat x, x \rangle| }{ \| \widehat x \| \| x \| } \Big] \geq \Omega(1) \,.
    \end{align*}
\end{thm}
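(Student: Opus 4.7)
The plan is to certify the correlation between $\widehat{x}$ and $x$ by first controlling the proxy quantity $x_w \langle \widehat{x}, x \rangle$, to which Proposition~\ref{main-prop-recovery} applies directly. Write $T_u = \Phi^{\mathcal J}_{w,u}\,\mathbf 1\{|\Phi^{\mathcal J}_{w,u}| \leq R^4\}$ so that $\widehat{x}_u = T_u$, and note that $\Phi^{\mathcal J}_{w,w} = 0$ because a decorated path in $\mathcal J$ has two \emph{distinct} leaves. Heuristically, Proposition~\ref{main-prop-recovery} says that $\Phi^{\mathcal J}_{w,u}$ is an unbiased estimator of $x_w x_u$ with bounded second moment, so we expect $\widehat{x}$ to behave like $x_w\cdot x$, and the analysis will make this precise at the level of correlation.

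I would first absorb the truncation into the first-moment identity. For any $u \neq w$, by Cauchy--Schwarz and Markov together with the bounds in Proposition~\ref{main-prop-recovery},
\begin{align*}
\Big| \mathbb E_{\Pb}\big[ \Phi^{\mathcal J}_{w,u} x_w x_u \mathbf 1\{|\Phi^{\mathcal J}_{w,u}| > R^4\} \big] \Big|
\leq \sqrt{ \mathbb E_{\Pb}\big[ (\Phi^{\mathcal J}_{w,u})^2 x_w^2 x_u^2 \big] \cdot \Pb\big( |\Phi^{\mathcal J}_{w,u}| > R^4 \big) }
\leq \sqrt{ R \cdot R^{-7} } = R^{-3} \,.
\end{align*}
Hence for all sufficiently large $R$, $\mathbb E_{\Pb}[T_u x_u x_w] \geq 1 - R^{-3} - o(1) \geq \tfrac12$. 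Summing over $u$ (and using $T_w \equiv 0$) gives $\mathbb E_{\Pb}[x_w \langle \widehat{x}, x\rangle] \geq (n-1)/2$. A further application of Cauchy--Schwarz, together with $\mathbb E[x_w^2] = 1$, then yields
\begin{align*}
\mathbb E_{\Pb}\big[ \langle \widehat{x}, x\rangle^2 \big] \geq \frac{ \big(\mathbb E_{\Pb}[x_w \langle \widehat{x}, x\rangle]\big)^2 }{ \mathbb E_{\Pb}[x_w^2] } \geq \tfrac{(n-1)^2}{4} \,.
\end{align*}

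To convert this second-moment lower bound into a bound on the normalized correlation, I would control the denominator through a fourth-moment estimate. Truncation yields the deterministic inequality $\|\widehat{x}\|^2 = \sum_u T_u^2 \leq n R^8$, while the fourth-moment condition in Assumption~\ref{assum-upper-bound} combined with the product structure $\pi = \pi_*^{\otimes n}$ gives $\mathbb E_{\Pb}[\|x\|^4] = O(n^2)$; hence $\mathbb E_{\Pb}\big[\|\widehat{x}\|^4 \|x\|^4\big] \leq (n R^8)^2 \cdot O(n^2) = O(n^4)$. Setting $Q = |\langle \widehat{x}, x \rangle|/(\|\widehat{x}\| \|x\|) \in [0,1]$ and applying Cauchy--Schwarz to the factorization $\langle \widehat{x}, x\rangle^2 = Q^2 \cdot \|\widehat{x}\|^2 \|x\|^2$,
\begin{align*}
\mathbb E_{\Pb}\big[\langle \widehat{x}, x\rangle^2\big]^2 \leq \mathbb E_{\Pb}[Q^4] \cdot \mathbb E_{\Pb}\big[\|\widehat{x}\|^4 \|x\|^4\big] \,.
\end{align*}
Since $Q \in [0,1]$ we have $\mathbb E_{\Pb}[Q] \geq \mathbb E_{\Pb}[Q^4]$, so combining the previous bounds gives
\begin{align*}
\mathbb E_{\Pb}\Big[ \tfrac{ |\langle \widehat x, x \rangle| }{ \| \widehat x \| \| x \| } \Big] \geq \frac{ \mathbb E_{\Pb}\big[ \langle \widehat{x}, x\rangle^2 \big]^2 }{ \mathbb E_{\Pb}\big[\|\widehat{x}\|^4 \|x\|^4\big] } \geq \frac{ \Omega(n^4) }{ O(n^4) } = \Omega(1) \,,
\end{align*}
which is the desired conclusion.

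The real substance of the argument sits in Proposition~\ref{main-prop-recovery}; the remaining work is the truncation and Cauchy--Schwarz bookkeeping above. The main delicacy is calibrating $R$ large enough that the truncation tail $R^{-3}$ is strictly dominated by the $1+o(1)$ main term coming from Proposition~\ref{main-prop-recovery}, and that the final constant (which depends polynomially on $R$ and hence on $\lambda,\mu,\rho,\delta$) remains strictly positive; everything else is soft.
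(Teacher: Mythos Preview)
Your proof is correct and follows essentially the same approach as the paper's: absorb the truncation via Cauchy--Schwarz and the moment bounds from Proposition~\ref{main-prop-recovery}, sum to obtain a lower bound on (a moment of) $\langle \widehat{x}, x\rangle$, then convert to the normalized correlation by controlling $\|\widehat{x}\|\,\|x\|$. The only difference is cosmetic---you route through $\mathbb E[\langle \widehat{x}, x\rangle^2]$ and fourth moments of the norm product, whereas the paper uses $\mathbb E[|\langle \widehat{x}, x\rangle|]$ and second moments; your variant is arguably cleaner since it sidesteps the passage from $\mathbb E[x_w\langle \widehat{x}, x\rangle]$ to $\mathbb E[|\langle \widehat{x}, x\rangle|]$, which is not entirely immediate for a general prior.
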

Again, to resolve the computational issue of calculating $\Phi_{u,v}^{\mathcal J}$, in Section~\ref{subsec:approx-recovery}, we give a polynomial-time algorithm (see Algorithm~\ref{alg:cal-widetilde-Phi}) that computes an approximation $\widetilde{\Phi}_{u,v}^{\mathcal J}$ for $\Phi_{u,v}^{\mathcal J}$ using the strategy of color coding. The following result shows that the approximated similarity score $\widetilde{\Phi}_{u,v}^{\mathcal J}$ enjoys the same statistical guarantee under the same condition \eqref{eq-condition-weak-recovery} as Theorem~\ref{MAIN-THM-recovery}.
\begin{thm}{\label{MAIN-THM-recovery-algorithmic}}
    Proposition~\ref{main-prop-recovery} and Theorem~\ref{MAIN-THM-recovery} continues to hold with $\widetilde{\Phi}_{u,v}^{\mathcal J}$ in place of $\Phi_{u,v}^{\mathcal J}$. In addition, $\{ \widetilde{\Phi}_{u,v}^{\mathcal J}: u,v \in [n] \}$ can be computed in time $n^{T+o(1)}$ for some constant $T=T(\lambda,\mu,\rho)$.
\end{thm}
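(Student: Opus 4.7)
The plan is to build $\widetilde\Phi^{\mathcal J}_{u,v}$ as an unbiased color-coding estimator of $\Phi^{\mathcal J}_{u,v}$ and to leverage Proposition~\ref{main-prop-recovery} as a near-black-box. Fix a palette size $k = C\ell$ with $C = C(\lambda,\mu,\rho,\delta)$ to be tuned. Sample a uniformly random coloring $c: [n] \to [k]$, and restrict the inner sum in \eqref{eq-def-Phi-i,j-mathcal-J} to those embeddings $S \subset \mathsf K_n$ whose $\ell+1$ vertices receive pairwise distinct colors under $c$ (the ``colorful'' embeddings). Because every $H \in \mathcal J$ has the same number of vertices, the colorfulness probability depends only on $(\ell, k)$; dividing by this probability yields an unbiased estimator $\widehat\Phi^{\mathcal J}_{u,v}(c)$, and $\widetilde\Phi^{\mathcal J}_{u,v}$ is the average of $M$ independent copies $\widehat\Phi^{\mathcal J}_{u,v}(c_1), \ldots, \widehat\Phi^{\mathcal J}_{u,v}(c_M)$.

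For a fixed coloring, $\widehat\Phi^{\mathcal J}_{u,v}(c)$ is computable by a dynamic program on tuples (current vertex, prefix length, subset of used colors, decoration-sequence of the traversed prefix); transitions append one vertex $w$ whose color is not yet used, together with a decoration $\gamma \in \{\bullet,\circ\}$ contributing the corresponding $\bm X$ or $\bm Y$ entry. With $\ell = \Theta(\log n)$ the state space has size $n \cdot 2^k \cdot 2^{\ell} \cdot \ell = n^{1 + O(1/\log(1+\delta))}$ and each transition costs $O(n)$, so a single round of DP computes $\widehat\Phi^{\mathcal J}_{u,v}(c)$ for every pair $(u,v)$ simultaneously in time $n^{T+o(1)}$ with $T = T(\lambda,\mu,\rho)$; taking $M = n^{o(1)}$ suffices for the variance reduction below.

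Unbiasedness combined with Proposition~\ref{main-prop-recovery} immediately transfers the first-moment bound, giving $\mathbb E_\Pb[\widetilde\Phi^{\mathcal J}_{u,v} x_u x_v] = 1 + o(1)$. For the second moment I decompose
\begin{align*}
\mathbb E_\Pb\bigl[ (\widetilde\Phi^{\mathcal J}_{u,v})^2 \bigr] \;=\; \mathbb E_\Pb\bigl[ (\Phi^{\mathcal J}_{u,v})^2 \bigr] + \mathbb E_\Pb\bigl[ (\widetilde\Phi^{\mathcal J}_{u,v} - \Phi^{\mathcal J}_{u,v})^2 \bigr] \,,
\end{align*}
where the first term is $O(1)$ by Proposition~\ref{main-prop-recovery} and the second, after averaging out the $M$ independent colorings, reduces to bounding the color-variance of a single $\widehat\Phi^{\mathcal J}_{u,v}(c_1)$. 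The same decomposition handles $\mathbb E_\Pb[(\widetilde\Phi^{\mathcal J}_{u,v})^2 x_u^2 x_v^2]$. Once these inherited moment bounds are established, the truncation step in Algorithm~\ref{alg:recovery-meta} and the proof of Theorem~\ref{MAIN-THM-recovery} go through verbatim with $\widetilde\Phi^{\mathcal J}$ replacing $\Phi^{\mathcal J}$.

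The main obstacle is controlling this color-variance. It expands as a double sum over pairs $(S, S')$ of decorated path embeddings indexed by the joint isomorphism type of $S \cup S'$, where each pair is weighted both by $\Xi(H)\Xi(H') \rho^{|\mathsf{diff}(S) \cap \mathsf{diff}(S')|}$ (coming from the $(\bm X,\bm Y)$-covariance structure) and by an additional factor reflecting the probability that $S \cup S'$ is colorful under $c_1$. My plan is to reuse the overlap classification already developed in the proof of Proposition~\ref{main-prop-recovery}: for pairs sharing $j$ vertices, the combinatorial weight is bounded by $\ell^{O(1)} A_+^{\ell}$ after normalization via Lemma~\ref{lem-bound-beta-mathcal-J}, while the color factor introduces an extra $k^{-\Omega(j)}$ penalty for $j \geq 1$. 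Choosing $C$ (and hence $k$) large enough compared to the combinatorial exponent drowns the correction terms, and the spectral gap $A_+ > 1 + \delta$ from \eqref{eq-prelim-assumption-upper-bound} together with $\ell = \Theta(\log n)$ supplies the $o(1)$ slack needed for both the unbiasedness transfer and the second-moment bound.
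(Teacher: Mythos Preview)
Your framework (unbiased color-coding estimator, DP over color subsets, transfer of moment bounds) is sound, but the quantitative claim driving your variance bound is wrong. For two path embeddings $S,S'$ sharing exactly $j$ vertices, the relevant coloring factor is $\operatorname{Cov}_c[\chi_c(V(S)),\chi_c(V(S'))]/p^2 = \tfrac{k^j(k-j)!}{k!}-1$, which is $\approx \binom{j}{2}/k$ for $j\ll\sqrt{k}$ and grows to $1/p-1$ at $j=\ell+1$; it is \emph{not} $k^{-\Omega(j)}$. (Also, the event you need is ``$S$ colorful and $S'$ colorful'', not ``$S\cup S'$ colorful''.) Since every pair in the sum shares at least the two endpoints $\{u,v\}$, the dominant $\mathtt T{=}2$ contribution from the proof of Proposition~\ref{main-prop-recovery}---which is $\Theta(1)$---is suppressed only by $O(1/k)=O(1/\ell)$, while the full-overlap $\mathtt T{=}1$ terms pick up a factor as large as $1/p=n^{\Theta(1/C)}$. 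So with $M=n^{o(1)}$ colorings you cannot simply absorb the crude $1/p$ blow-up; a correct argument would force you to rerun the entire overlap analysis of Section~\ref{subsec:proof-prop-2.10} with these $j$-dependent weights, combining them term by term with the $A_+^{-(j-2)}$ decay there. That may be doable, but it is not what you wrote.

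The paper sidesteps all of this by taking the palette size to be exactly $\ell+1$ and averaging over $t=\lceil 1/\varkappa\rceil=e^{\Theta(\ell)}=n^{\Theta(1)}$ independent colorings. With that choice the product of coloring factors satisfies $\mathbb E_\xi\bigl[\tfrac{1}{t}\sum_{\mathtt k}\tfrac{1}{\varkappa}\chi_{\xi_{\mathtt k}}(V(S))\cdot \tfrac{1}{t}\sum_{\mathtt h}\tfrac{1}{\varkappa}\chi_{\xi_{\mathtt h}}(V(K))\bigr]\le 1+\tfrac{1}{t\varkappa}\le 2$ \emph{uniformly} over all pairs $(S,K)$, so $\mathbb E_\Pb\bigl[(\widetilde\Phi^{\mathcal J}_{u,v})^2\bigr]$ and $\mathbb E_\Pb\bigl[(\widetilde\Phi^{\mathcal J}_{u,v})^2 x_u^2 x_v^2\bigr]$ are bounded by the very same double sum already controlled in Proposition~\ref{main-prop-recovery}, with no extra $j$-dependent bookkeeping. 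The price is that $t$ is polynomial in $n$, but this is absorbed into the running-time exponent $T$.
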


\subsubsection{The correlated spiked Wishart model}{\label{subsubsec:recovery-stat-cov}}

We only show how to estimate $x$ and estimating $y$ can be done in the similar manner. Recall \eqref{eq-def-f-S-cov}. Let $\ell\geq 0$ be a parameter that will be decided later. In addition, define $\mathcal I_{\star\star}=\mathcal I_{\star\star}(\ell)$ to be the collection of unlabeled, decorated, bipartite paths with length $2\ell-1$ or $2\ell$ such that $\mathsf{diff}(H) \subset V^{\mathsf a}(H)$ for all $H \in \mathcal I_{\star\star}$. In addition, define $\mathcal I_{\star}=\mathcal I_{\star}(\ell)$ be the collection of $[H] \in \mathcal I_{\star\star}(\ell)$ such that $\mathsf{L}(H) \subset V_{\bullet}(H) \cap V^{\mathsf a}(H)$. Finally, define (recall \eqref{eq-def-V-1-V-2-decorated})
\begin{equation}{\label{eq-def-mathcal-I}}
    \mathcal I=\mathcal I(\ell):= \Big\{ [H] \in \mathcal I_{\star}(\ell): \mathsf{L}(H) \subset V_{\bullet}(H) \Big\} \,.
\end{equation}
(see Figure~\ref{fig:decorated-bipartite-path} for an illustration.)
\begin{figure}[!ht]
    \centering
    \vspace{0cm}
    \includegraphics[height=7.15cm,width=12.58cm]{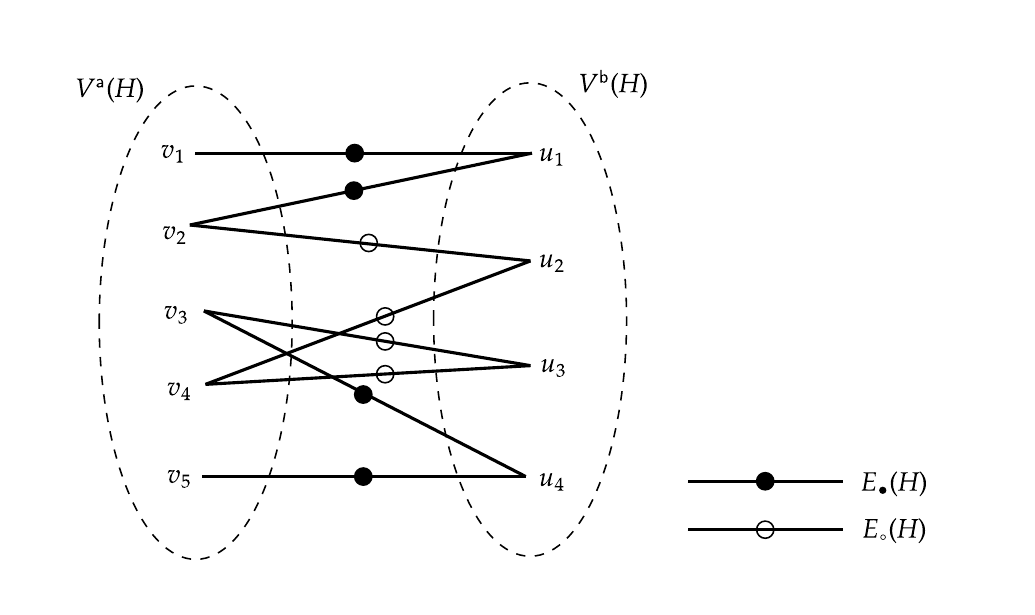}
    \caption{\noindent An unlabeled graph $[H] \in \mathcal I(\ell)$ with $\ell=4$. Here $\mathsf{diff}(H)=\{ v_2,v_4 \}$.}
    \label{fig:decorated-bipartite-path}
\end{figure}

\noindent Define 
\begin{equation}{\label{eq-def-Phi-i,j-mathcal-I}}
    \Psi_{u,v}^{\mathcal I} := \frac{1}{ N^{\ell} n^{-1} \beta_{\mathcal I} } \sum_{[H] \in \mathcal I} \Upsilon(H) \sum_{ \substack{ S \subset \mathsf K_{n,N}: S \cong H \\ \mathsf L(S)=\{ u,v \} } } h_{S}(\bm X,\bm Y) \,. 
\end{equation}
for each $u,v \in [n]$. Here 
\begin{equation}{\label{eq-def-beta-mathcal-I}}
    \beta_{\mathcal I} = \sum_{[H] \in \mathcal I} \frac{ \lambda^{|E_{\bullet}(H)|} \mu^{|E_{\circ}(H)|} \rho^{2|\mathsf{diff}(H)|} }{ |\mathsf{Aut}(H)| } \overset{\eqref{eq-def-Upsilon-S}}{=} \sum_{[H] \in \mathcal I} \frac{ \Upsilon(H)^2 }{ |\mathsf{Aut}(H)| } \,.
\end{equation}
Our proposed method of recovery in correlated spiked Wishart model is as follows.
\begin{breakablealgorithm}{\label{alg:recovery-cov-meta}}
\caption{Recovery in correlated spiked Wishart model}
    \begin{algorithmic}[1]
    \STATE {\bf Input:} Two matrices $\bm X,\bm Y$, a family $\mathcal I$ of non-isomorphic decorated, bipartite graphs.
    \STATE For each pair $u,v \in [n]$, compute $\Psi_{u,v}^{\mathcal I}$ as in \eqref{eq-def-Phi-i,j-mathcal-I}. 
    \STATE Arbitrarily choose $w \in [n]$. Let $\widehat{x}_u = \Psi^{\mathcal I}_{w,u} \cdot \mathbf 1_{ \{ |\Psi^{\mathcal I}_{w,u}| \leq R^4 \} }$ for $1 \leq u \leq n$, where $R$ is given in \eqref{eq-L2-estimation-error-cov}.
    \STATE {\bf Output:} $\widehat x$.
    \end{algorithmic}
\end{breakablealgorithm}
Similarly as in Lemma~\ref{lem-bound-beta-mathcal-J}, we need the following bound on $\beta_{\mathcal I}$.

\begin{lemma}{\label{lem-bound-beta-mathcal-I}}
    There exists a constant $D=D(\lambda,\mu,\rho,\gamma)>1$ such that 
    \begin{equation}{\label{eq-bound-beta-mathcal-I}}
        D^{-1} A_+^{\ell} \leq \beta_{\mathcal I}, \beta_{\mathcal I_{\star}}, \beta_{\mathcal I_{\star\star}} \leq D A_+^{\ell} \,,
    \end{equation}
    where $A_+$ is defined in \eqref{eq-def-A-+}. In addition, suppose that \eqref{eq-prelim-assumption-upper-bound-cov} holds, there exists a constant $0<\delta=\delta(\lambda,\mu,\rho,\gamma)<0.1$ such that $A_+>(1+\delta)\gamma$.
\end{lemma}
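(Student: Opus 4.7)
The plan is to analyze $\beta_{\mathcal I}, \beta_{\mathcal I_{\star}}, \beta_{\mathcal I_{\star\star}}$ by a transfer matrix argument, following the template of Lemmas~\ref{lem-bound-beta-mathcal-H} and \ref{lem-bound-beta-mathcal-J} but adapted to the bipartite setting. The crucial structural input is that, by the constraint $\mathsf{diff}(H)\subset V^{\mathsf a}(H)$, every $V^{\mathsf b}$ vertex of a decorated bipartite path in $\mathcal I_{\star\star}$ has all incident edges of the same decoration. Walking along the path, each time we pass through a $V^{\mathsf b}$ vertex the decoration is preserved, while each time we pass through an internal $V^{\mathsf a}$ vertex the decoration is allowed to flip and, in that case, picks up a factor of $\rho^{2}$ in $\Upsilon(H)^{2}$. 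For a decorated path of length $2\ell$ whose two endpoints both lie in $V^{\mathsf a}$, I would encode it by a sequence of ``edge-pair types'' $(i_{1},\dots ,i_{\ell})\in\{\bullet ,\circ\}^{\ell}$, where $i_{k}$ is the common decoration of the $k$-th pair of edges sharing a $V^{\mathsf b}$ vertex.

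Writing $t_{\bullet}=\lambda^{2}$, $t_{\circ}=\mu^{2}$, one then has
\[
  \frac{\Upsilon(H)^{2}}{|\mathsf{Aut}(H)|}
  =\frac{1}{|\mathsf{Aut}(H)|}\prod_{k=1}^{\ell}t_{i_{k}}\prod_{k=1}^{\ell-1}\rho^{2\mathbf 1\{i_{k}\neq i_{k+1}\}},
\]
so, up to the bounded factor $|\mathsf{Aut}(H)|\in\{1,2\}$ coming from path reflection, the sum over decorated paths becomes $\mathbf 1^{\top}\operatorname{diag}(t)\,M^{\ell-1}\mathbf 1$ with transfer matrix
\[
  M=\begin{pmatrix}\lambda^{2}&\mu^{2}\rho^{2}\\ \lambda^{2}\rho^{2}&\mu^{2}\end{pmatrix}.
\]
A direct characteristic-polynomial computation identifies the two eigenvalues of $M$ as exactly the $A_{\pm}$ defined in \eqref{eq-def-A-+}. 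Since every entry of $M$ is strictly positive whenever $\lambda,\mu,\rho>0$, Perron--Frobenius gives $M^{\ell-1}=A_{+}^{\ell-1}(P_{+}+o(1))$ with $P_{+}$ a rank-one projector whose entries are strictly positive. This yields $\beta_{\mathcal I_{\star\star}}=\Theta(A_{+}^{\ell})$. Passing to the nested subsets, the constraint $\mathsf L(H)\subset V^{\mathsf a}(H)$ simply discards the odd-length contributions in $\mathcal I_{\star\star}$ (which are of the same order $A_+^{\ell-1}$, hence also subsumed), while the leaf-decoration constraint $\mathsf L(H)\subset V_{\bullet}(H)$ amounts to fixing $i_{1}=i_{\ell}=\bullet$, i.e.\ extracting the $(\bullet,\bullet)$ entry of $M^{\ell-1}$; strict positivity of the Perron eigenvector makes this entry $\Theta(A_{+}^{\ell-1})$ as well. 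Hence $\beta_{\mathcal I},\beta_{\mathcal I_{\star}},\beta_{\mathcal I_{\star\star}}$ are all $\Theta(A_{+}^{\ell})$ with implied constants depending only on $(\lambda,\mu,\rho,\gamma)$.

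For the claim that $F(\lambda,\mu,\rho,\gamma)>1+\epsilon$ implies $A_{+}>(1+\delta)\gamma$, the argument is purely algebraic. If $\lambda^{2}>\gamma$ or $\mu^{2}>\gamma$ (either of the first two terms of $F$ exceeding $1$), then $A_{+}\geq\max(\lambda^{2},\mu^{2})>\gamma$ directly from the diagonal of $M$. Otherwise $\lambda^{2},\mu^{2}\leq\gamma$ and a direct expansion of
\[
  \frac{\lambda^{2}\rho^{2}}{\gamma-\lambda^{2}+\lambda^{2}\rho^{2}}+\frac{\mu^{2}\rho^{2}}{\gamma-\mu^{2}+\mu^{2}\rho^{2}}>1
\]
reduces it to $(\gamma-\lambda^{2})(\gamma-\mu^{2})<\lambda^{2}\mu^{2}\rho^{4}$, which is precisely $\det(\gamma I-M)<0$; since $A_{-}\leq \gamma$ in this regime, this is equivalent to $A_{+}>\gamma$. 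A compactness argument on the parameter region then upgrades $F\geq1+\epsilon$ to $A_{+}\geq(1+\delta)\gamma$ for some $\delta=\delta(\lambda,\mu,\rho,\gamma)\in(0,0.1)$. The main obstacle I expect is bookkeeping rather than conceptual: absorbing the $|\mathsf{Aut}|$ factors cleanly, checking that the leaf and endpoint constraints only modify the leading transfer-matrix count by an $\ell$-independent multiplicative constant, and handling the two coexisting lengths $2\ell-1$ and $2\ell$ in $\mathcal I_{\star\star}$ without double counting; the strict positivity of the Perron eigenvector is what makes the lower bound $\beta_{\mathcal I}\geq D^{-1}A_{+}^{\ell}$ survive the leaf constraints.
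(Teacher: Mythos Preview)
Your proposal is correct and essentially follows the paper's approach: the key structural observation (that the constraint $\mathsf{diff}(H)\subset V^{\mathsf a}(H)$ forces edges to come in equally-decorated pairs around each $V^{\mathsf b}$ vertex, so the decoration data is captured by a string in $\{\bullet,\circ\}^{\ell}$) is identical, and the transfer-matrix analysis you describe matches what underlies Lemmas~\ref{lem-bound-beta-mathcal-H} and \ref{lem-bound-beta-mathcal-J}. The only presentational difference is that the paper packages this as an explicit bijection $\psi:\mathcal I_{\star}(\ell)\to\mathcal J_{\star}(\ell)$ (and $\mathcal I(\ell)\to\mathcal J(\ell)$) showing $\beta_{\mathcal I_{\star}}=\beta_{\mathcal J_{\star}}$ and $\beta_{\mathcal I}=\beta_{\mathcal J}$, then cites Lemma~\ref{lem-bound-beta-mathcal-J} rather than redoing the eigenvalue computation; your direct route is equivalent.
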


The proof of Lemma~\ref{lem-bound-beta-mathcal-I} is incorporated in Section~\ref{subsec:proof-lem-2.14}. The key of our argument is the following result, which shows that $\Psi_{u,v}^{\mathcal I}$ is positively correlated with $x_u x_v$ and has bounded variance. 

\begin{proposition}{\label{main-prop-recovery-cov}}
    Suppose that Assumption~\ref{assum-upper-bound} and Equation~\eqref{eq-prelim-assumption-upper-bound-cov} hold, and we choose $\ell$ according to \eqref{eq-condition-weak-recovery}. Then there exists sufficiently large $R=O_{\lambda,\mu,\rho,\gamma}(1)$ such that 
    \begin{align}
        \mathbb E_{\overline\Pb}\Big[ \Psi^{\mathcal I}_{u,v} \cdot x_u x_v \Big] = 1+o(1) \mbox{ and }
        \mathbb E_{\overline\Pb}\Big[ \big( \Psi^{\mathcal I}_{u,v} \big)^2 \Big], \ \mathbb E_{\overline\Pb}\Big[ \big( \Psi^{\mathcal I}_{u,v} \big)^2 x_u^2 x_v^2 \Big] \leq R \,.  \label{eq-L2-estimation-error-cov}
    \end{align}
\end{proposition}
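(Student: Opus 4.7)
The plan is to mirror the moment analysis behind Proposition~\ref{main-prop-recovery} while carefully accommodating the latent Gaussian vectors $\bm u,\bm v$ and the bipartite decoration. I would proceed by a tower-of-expectation computation: first integrate the Gaussian noise $\bm W,\bm Z$, then $\bm u,\bm v$, and finally the prior $(x,y)$; the normalization in \eqref{eq-def-Phi-i,j-mathcal-I} together with Lemma~\ref{lem-bound-beta-mathcal-I} then yields the stated bounds.

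\textbf{First moment.} Fix $[H] \in \mathcal I$ of length $2\ell$ and an embedding $S \cong H$ with $\mathsf L(S)=\{u,v\}$. Since $S$ has distinct edges, $\mathbb E_{\bm W,\bm Z}[h_S \mid x,y,\bm u,\bm v]$ is the product of signal terms edge by edge. Every column vertex $c$ of $S$ has degree $2$, so integrating $\bm u,\bm v$ yields $\mathbb E[\bm u_c^2]=1$ or $\mathbb E[\bm v_c^2]=1$ when the two incident edges share a decoration, and $0$ otherwise; the built-in constraint $\mathsf{diff}(H)\subset V^{\mathsf a}(H)$ guarantees all column vertices are non-mixed, hence give non-vanishing contribution. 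What remains is a factor $n^{-\ell}\lambda^{|E_\bullet(H)|/2}\mu^{|E_\circ(H)|/2}$ times a product over interior row vertices of $x_r^2$, $y_r^2$, or $x_r y_r$ according to the colors of the two incident edges, together with $x_u,x_v$ from the leaves. Multiplying by $x_ux_v$ and averaging over $\pi$ produces the factor $\rho^{|\mathsf{diff}(H)|}$, giving $\mathbb E_{\overline\Pb}[h_S \cdot x_ux_v]=\Upsilon(H)/n^{\ell}$. Counting embeddings of $H$ with prescribed leaves $\{u,v\}$ as $(1+o(1))\,n^{\ell-1}N^{\ell}/|\mathsf{Aut}(H)|$ and summing over $[H]\in\mathcal I$, the normalization $N^{\ell}n^{-1}\beta_{\mathcal I}$ together with the definition of $\beta_{\mathcal I}$ in \eqref{eq-def-beta-mathcal-I} delivers $\mathbb E_{\overline\Pb}[\Psi^{\mathcal I}_{u,v}\cdot x_ux_v]=1+o(1)$ under \eqref{eq-condition-weak-recovery}.

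\textbf{Second moment.} I would expand the two moments in \eqref{eq-L2-estimation-error-cov} as double sums over pairs $(S_1,S_2)$, both isomorphic to elements of $\mathcal I$ and sharing leaf set $\{u,v\}$, and decompose by the edge-overlap $K=S_1 \Cap S_2$. For $K=\emptyset$, the two paths use disjoint noise entries, so conditional independence given $(x,y,\bm u,\bm v)$ factorizes the computation, and shared row vertices cost only bounded $(x,y)$-moments via Assumption~\ref{assum-upper-bound}(2); this reproduces the squared first moment. For each non-empty $K$, the added shared edges save either a $1/N$ (shared column) or $1/n$ (shared interior row) factor from the reduced embedding count, while they contribute at most $\lambda^2,\mu^2,\lambda\mu,\rho^2$ per edge, which is bounded per-step by the effective weight that already enters $\beta_{\mathcal I}$. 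Summing over overlap shapes gives a geometric bound, so non-disjoint contributions are $O((n\wedge N)^{-1})$ relative to $1$; using $\ell=O(\log n)$ and $(1+\delta)^{\ell}\ge n^{2}$ this is $o(1)$. The bound on $\mathbb E_{\overline\Pb}[(\Psi^{\mathcal I}_{u,v})^2 x_u^2x_v^2]$ follows identically upon absorbing two additional bounded fourth moments at $u$ and $v$.

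\textbf{Main obstacle.} The most delicate step is the uniform combinatorial control of the overlap patterns $K$ in this bipartite, color-decorated setting. Connected components of $K$ can be bipartite paths of varying parities; each inherits decorations from $S_1$ and $S_2$, and at every ``seam'' where a wing $S_i\setminus K$ reattaches to $K$ the color alternation and the constraint $\mathsf{diff}\subset V^{\mathsf a}$ must remain compatible. I would resolve this by packaging the overlap weights as a product of effective per-edge weights (with the row/column asymmetry dictated by $\gamma=n/N$) and reducing the resulting sum to the spectral-radius calculation already performed for $\beta_{\mathcal I}$ in Lemma~\ref{lem-bound-beta-mathcal-I}, so that each extra edge in $K$ costs at most one factor of $n$ or $N$ while its signal weight is controlled by $A_+$. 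This is the Wishart counterpart of the overlap analysis used in the Wigner case, with the extra Gaussian integration over $\bm u,\bm v$ producing the $N$-scaling that replaces a factor of $n$.
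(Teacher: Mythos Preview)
Your first-moment computation and the overall strategy match the paper's: the conditional expectation is exactly Lemma~\ref{lem-conditional-exp-given-endpoints-cov} and \eqref{eq-conditional-mean-Phi-i,j-mathcal-I}, and the reduction of the overlap analysis to the spectral-radius estimate behind $\beta_{\mathcal I}$ is precisely Lemma~\ref{lem-contribution-sub-chain-cov}, applied inside Lemmas~\ref{lem-est-cov-f-S-f-K-chain-case-cov}--\ref{lem-recovery-most-technical-cov}.

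There is, however, a concrete error in your second-moment heuristic. The claim that ``non-disjoint contributions are $O((n\wedge N)^{-1})$ relative to $1$'' is false, and it comes from the misidentification that a shared edge ``contributes at most $\lambda^2,\mu^2,\lambda\mu,\rho^2$''. A shared decorated edge $e$ contributes $\mathbb E[\bm X_e^2]\approx 1$ from the \emph{noise} variance, not a signal factor; relative to two unshared copies this is a gain of order $n$ in the moment, which exactly offsets the lost embedding freedom up to the $\Upsilon$-weight. After reweighting, sharing a contiguous $2k$-edge segment near an endpoint contributes a term of order $(A_+/\gamma)^{-k}$ --- a constant in $(0,1)$ independent of $n$ --- and the sum over $k$ is a convergent geometric series of size $O_{\lambda,\mu,\rho,\gamma}(1)$. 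That constant is the actual source of $R$ in \eqref{eq-L2-estimation-error-cov}; it is not killed by any power of $n$. The paper makes this structure explicit by decomposing not by edge-overlap $K$ but by the vertex-interval data $\mathsf{SEQ}(S,K)$ of \eqref{eq-def-SEQ(S,K)}--\eqref{eq-def-sigma(S,K)} and Lemma~\ref{lem-bound-Upsilon-mathtt-P}, which cleanly separates $\mathtt T=1$ (all vertices shared; $o(1)$ via \eqref{eq-condition-weak-recovery}), $\mathtt T=2$ (segments near both endpoints; the dominant $O(1)$ term), and $\mathtt T\ge 3$ (polynomially small in $n$).
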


Combining these variance bounds with Markov's inequality, we arrive at the following sufficient condition for Algorithm~\ref{alg:recovery-cov-meta} to achieve weak recovery.

\begin{thm}{\label{MAIN-THM-recovery-cov}}
    Suppose that Assumption~\ref{assum-upper-bound} and Equation~\ref{eq-prelim-assumption-upper-bound-cov} hold, and we choose $\ell$ according to \eqref{eq-condition-weak-recovery}. Then we have (below we write $\widehat{x}$ to be the output of Algorithm~\ref{alg:recovery-cov-meta})
    \begin{align*}
        \mathbb E_{\overline\Pb}\Big[ \tfrac{ |\langle \widehat x, x \rangle| }{ \| \widehat x \| \| x \| } \Big] \geq \Omega(1) \,.
    \end{align*}
\end{thm}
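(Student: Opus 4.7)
My proof would mirror Theorem~\ref{MAIN-THM-recovery} for the Wigner case, with the moment bounds of Proposition~\ref{main-prop-recovery-cov} playing the role of \eqref{eq-L2-estimation-error}. As a first step I would write $\widetilde{\Psi}_{w,u} := \Psi^{\mathcal I}_{w,u} \mathbf 1_{\{|\Psi^{\mathcal I}_{w,u}| \leq R^4\}}$ so that $\widehat x_u = \widetilde{\Psi}_{w,u}$, and verify that truncation is harmless. Markov applied to $(\Psi^{\mathcal I}_{w,u})^2$ together with \eqref{eq-L2-estimation-error-cov} gives $\overline\Pb(|\Psi^{\mathcal I}_{w,u}|>R^4) \leq R^{-7}$, so Cauchy-Schwarz yields
\[
  \Big| \mathbb E_{\overline\Pb}\big[ \Psi^{\mathcal I}_{w,u}\, x_w x_u \, \mathbf 1_{\{|\Psi^{\mathcal I}_{w,u}| > R^4\}} \big] \Big| \leq \sqrt{ \mathbb E_{\overline\Pb}[(\Psi^{\mathcal I}_{w,u})^2 x_w^2 x_u^2]\cdot R^{-7} } \leq R^{-3}.
\]
Combined with \eqref{eq-L2-estimation-error-cov} this gives $\mathbb E_{\overline\Pb}[\widetilde{\Psi}_{w,u} x_w x_u] \geq 1/2$ for $R$ large, while $\mathbb E_{\overline\Pb}[\widetilde{\Psi}_{w,u}^2] \leq R$ and $\mathbb E_{\overline\Pb}[\widetilde{\Psi}_{w,u}^2 x_w^2 x_u^2] \leq R$ are inherited directly.

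Next I would study $Z := \langle \widehat x, x\rangle\, x_w = \sum_{u \in [n]} \widetilde{\Psi}_{w,u} x_u x_w$; note the diagonal term $u=w$ vanishes, since every path in $\mathcal I$ has two distinct leaves. Summing the first moment yields $\mathbb E_{\overline\Pb}[Z] \geq n/4$ for $n$ large, and Cauchy-Schwarz termwise gives
\[
  \mathbb E_{\overline\Pb}[Z^2] = \sum_{u,v} \mathbb E_{\overline\Pb}\big[ \widetilde{\Psi}_{w,u} \widetilde{\Psi}_{w,v} x_u x_v x_w^2 \big] \leq \sum_{u,v} \sqrt{\mathbb E_{\overline\Pb}[\widetilde{\Psi}_{w,u}^2 x_u^2 x_w^2]\cdot \mathbb E_{\overline\Pb}[\widetilde{\Psi}_{w,v}^2 x_v^2 x_w^2]} \leq R\, n^2.
\]
Applying Paley-Zygmund to $Z_+$ (using $\mathbb E[Z_+] \geq \mathbb E[Z]$ and $\mathbb E[Z_+^2] \leq \mathbb E[Z^2]$) would then give $\overline\Pb(Z\geq n/8) \geq c_0/R$ for some absolute $c_0>0$; on this event, $|\langle \widehat x, x\rangle| \geq n/(8|x_w|)$.

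To finish, I would pick a large constant $M=M(R,\lambda,\mu,\rho,\gamma)$ and bound the tail events $\{|x_w|\geq M\}$, $\{\|\widehat x\|^2 \geq M n\}$, and $\{\|x\|^2 \geq 2n\}$ to each have probability at most $c_0/(10R)$. The first follows from $\mathbb E[x_w^2]=1$, the second from $\mathbb E[\|\widehat x\|^2] \leq Rn$, and the third from the weak law of large numbers under the uniform fourth-moment assumption in Assumption~\ref{assum-upper-bound}. On the intersection of all four good events, which then has probability at least $c_0/(2R)$, we get
\[
  \frac{|\langle \widehat x, x\rangle|}{\|\widehat x\|\,\|x\|} \geq \frac{n/(8M)}{\sqrt{Mn}\cdot \sqrt{2n}} = \Omega\big(M^{-3/2}\big),
\]
so taking expectations yields $\mathbb E_{\overline\Pb}[|\langle \widehat x, x\rangle|/(\|\widehat x\|\|x\|)] = \Omega(1)$. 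The only delicate part is this last calibration: since the Paley-Zygmund lower bound on $\overline\Pb(Z\geq n/8)$ is only $\Theta(1/R)$, the tail thresholds on $\|\widehat x\|$, $\|x\|$, and $|x_w|$ must be chosen with probability $\ll 1/R$. This causes no real difficulty because $R=O_{\lambda,\mu,\rho,\gamma}(1)$ is a fixed constant and each of the relevant moments is bounded.
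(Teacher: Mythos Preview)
Your proof is correct but follows a different route from the paper's. After the same truncation analysis, the paper proceeds more directly: it uses the deterministic bound $\|\widehat x\|^2 \leq R^8 n$ (immediate from $|\widehat x_u|\leq R^4$) together with $\mathbb E_{\overline\Pb}[\|x\|^2]=n$ to obtain $\mathbb E_{\overline\Pb}[\|\widehat x\|^2\|x\|^2]=O(n^2)$, and then applies a single Cauchy--Schwarz step,
\[
\mathbb E_{\overline\Pb}\Big[\tfrac{|\langle \widehat x,x\rangle|}{\|\widehat x\|\,\|x\|}\Big]\ \ge\ \mathbb E_{\overline\Pb}\Big[\tfrac{\langle \widehat x,x\rangle^2}{\|\widehat x\|^2\|x\|^2}\Big]\ \ge\ \frac{\mathbb E_{\overline\Pb}\bigl[|\langle \widehat x,x\rangle|\bigr]^2}{\mathbb E_{\overline\Pb}\bigl[\|\widehat x\|^2\|x\|^2\bigr]},
\]
to conclude. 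This is shorter and avoids the Paley--Zygmund/good-event calibration you carry out. On the other hand, your approach is arguably more careful: the paper's passage from the entrywise bound $\mathbb E_{\overline\Pb}[\widehat x_u\, x_u x_w]\geq 1/2$ to the assertion $\mathbb E_{\overline\Pb}[|\langle \widehat x,x\rangle|]\geq\Omega(n)$ is somewhat informal (it involves the extra factor $x_w$, which is neither bounded nor independent of $\langle\widehat x,x\rangle$), whereas your Paley--Zygmund argument on $Z=\langle\widehat x,x\rangle\, x_w$ handles this issue cleanly. You could simplify your route slightly by noting, as the paper does, that $\|\widehat x\|^2\leq R^8 n$ holds deterministically, eliminating one of your three tail events.
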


Again, to resolve the computational issue of calculating $\Psi_{u,v}^{\mathcal I}$, in Section~\ref{subsec:approx-recovery} we give a polynomial-time algorithm (see Algorithm~\ref{alg:cal-widetilde-Phi-mathcal-I}) that computes an approximation $\widetilde{\Psi}_{u,v}^{\mathcal I}$ for $\Psi_{u,v}^{\mathcal I}$ using the strategy of color coding. The following result shows that the approximated similarity score $\widetilde{\Psi}_{u,v}^{\mathcal I}$ enjoys the same statistical guarantee under the same condition \eqref{eq-condition-weak-recovery} as Theorem~\ref{MAIN-THM-recovery-cov}.

\begin{thm}{\label{MAIN-THM-recovery-algorithmic-cov}}
    Proposition~\ref{main-prop-recovery-cov} and Theorem~\ref{MAIN-THM-recovery-cov} continues to hold with $\widetilde{\Psi}_{u,v}^{\mathcal I}$ in place of $\Psi_{u,v}^{\mathcal I}$. In addition, $\{ \widetilde{\Psi}_{u,v}^{\mathcal I}: u,v \in [n] \}$ can be computed in time $n^{T+o(1)}$ for some constant $T=T(\lambda,\mu,\rho,\gamma)$.
\end{thm}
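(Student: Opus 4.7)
The plan is to build $\widetilde{\Psi}_{u,v}^{\mathcal I}$ through a color-coding construction that mirrors Algorithm~\ref{alg:cal-widetilde-Phi} from the Wigner recovery setting, adapted to the bipartite/decorated framework already used for $\widetilde h_{\mathcal G}$ in Section~\ref{subsec:approx-detection}. Concretely, pick $K=K(\ell)$ slightly larger than the maximum number of vertices appearing in any $[H]\in\mathcal I(\ell)$ (so $K = O(\ell) = O(\log n)$ under \eqref{eq-condition-weak-recovery}), and sample two independent uniform colorings $\phi^{\mathsf a}:[n]\to[K]$ and $\phi^{\mathsf b}:[N]\to[K]$. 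For fixed endpoints $u,v\in[n]$, define the colorful analogue
\begin{align*}
    \widetilde{\Psi}_{u,v}^{\mathcal I}
    = \frac{1}{N^{\ell}n^{-1}\beta_{\mathcal I}}\sum_{[H]\in\mathcal I}\Upsilon(H)\,C(H)
    \sum_{\substack{S\subset \mathsf K_{n,N}:\,S\cong H\\ \mathsf L(S)=\{u,v\}\\ \phi \text{ is injective on }V(S)}} h_S(\bm X,\bm Y)\,,
\end{align*}
where $C(H)$ is the usual combinatorial correction ($K^{|V^{\mathsf a}(H)|}K^{|V^{\mathsf b}(H)|}$ divided by the falling factorials, so that averaging over $\phi$ gives back $\Psi_{u,v}^{\mathcal I}$). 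Amplify accuracy by averaging over $T=n^{o(1)}$ independent colorings, which keeps the overall running time polynomial.

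I would carry out the analysis in three steps. First, I would verify that $\mathbb{E}_\phi[\widetilde{\Psi}_{u,v}^{\mathcal I}]=\Psi_{u,v}^{\mathcal I}$ deterministically, so that Proposition~\ref{main-prop-recovery-cov} already gives the correct first moment $\mathbb{E}_{\overline\Pb}[\widetilde{\Psi}_{u,v}^{\mathcal I}\, x_u x_v]=1+o(1)$. Second, I would bound the extra variance induced by the coloring using the same second-moment method used for $\widetilde h_{\mathcal G}$: the variance splits into a sum over pairs $([H],[H'])$ of a combinatorial overlap factor and the correlations $\mathbb{E}_{\overline\Pb}[f_S f_{S'}]$ that were already controlled in Section~\ref{sec:stat-analysis} for the proof of Proposition~\ref{main-prop-recovery-cov}. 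Because each term picks up only a constant-factor inflation from the color-coding normalization (the coloring is injective on $O(\ell)$ vertices out of $K$ colors with probability $\Omega(1)$), averaging over $T=n^{o(1)}$ trials yields the same bounds as in \eqref{eq-L2-estimation-error-cov} with a slightly larger constant $R$. This transfers Theorem~\ref{MAIN-THM-recovery-cov} verbatim to $\widetilde{\Psi}_{u,v}^{\mathcal I}$.

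Third, I would compute $\widetilde{\Psi}_{u,v}^{\mathcal I}$ by a dynamic program along the path. Since every $[H]\in\mathcal I$ is a bipartite path of length $2\ell-1$ or $2\ell$ with $\mathsf{diff}(H)\subset V^{\mathsf a}(H)$ and both leaves in $V^{\mathsf a}_{\bullet}(H)$, a colorful embedding is naturally built one edge at a time. The DP state is (current step, current vertex in $[n]\cup[N]$, subset of colors used so far, current decoration type, an indicator of whether the current $V^{\mathsf a}$-vertex already belongs to $\mathsf{diff}$). The number of states is at most $O(\ell)\cdot (n+N)\cdot 2^K\cdot O(1)$, and each transition costs $O(n+N)$, so the total work for fixed endpoints and fixed coloring is $n\cdot 2^{O(\ell)}\cdot n^{O(1)} = n^{O(1)}$. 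Summing over $u,v\in[n]$ and over the $T=n^{o(1)}$ independent colorings keeps the total time at $n^{T+o(1)}$ for some constant $T=T(\lambda,\mu,\rho,\gamma)$, as claimed.

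The main obstacle is the bookkeeping of the constraint $\mathsf{diff}(H)\subset V^{\mathsf a}(H)$ together with the decoration pattern along the path: a vertex in $V^{\mathsf a}$ belongs to $\mathsf{diff}$ precisely when the two incident edges carry different decorations, which must be tracked in the DP and must match with the weight $\rho^{|\mathsf{diff}(H)|}$ sitting inside $\Upsilon(H)$. Once this local rule is encoded into the DP transition (exactly as in the analogous argument for $\widetilde h_{\mathcal G}$ in Section~\ref{subsec:approx-detection}), the remainder of the argument is a direct transcription of the color-coding proofs for Theorems~\ref{MAIN-THM-detection-algorithmic-cov} and \ref{MAIN-THM-recovery-algorithmic}, so I would refer to those analyses and only highlight the minor modifications needed for $\mathcal I(\ell)$.
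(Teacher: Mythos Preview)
Your overall approach matches the paper's: color coding, averaging over independent colorings to get an unbiased estimator of $\Psi_{u,v}^{\mathcal I}$, bounding the second moment by the same quantities already controlled in the proof of Proposition~\ref{main-prop-recovery-cov}, and computing the colorful sums by dynamic programming along the path. The paper packages this as Proposition~\ref{prop-approximate-recovery-statistics-cov} (statistical guarantees) and Proposition~\ref{prop-running-time-recovery-cov} (running time), using a single coloring $\xi:[n],[N]\to[2\ell+1]$ and a separate DP for each fixed $[H]\in\mathcal I$ rather than the joint DP over decoration types you sketch; either variant works.

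There is, however, a genuine parameter error in your variance step. Under \eqref{eq-condition-weak-recovery} one has $\ell=\Theta(\log n)$, so with $K=O(\ell)$ colors the probability that a fixed $(2\ell{+}1)$-vertex set is colorful is $\varkappa=e^{-\Theta(\ell)}=n^{-\Theta(1)}$, not $\Omega(1)$ as you claim. (Choosing $K$ large enough to make this probability $\Omega(1)$, e.g.\ $K\asymp\ell^2$, forces $2^K=n^{\Theta(\log n)}$ and kills the DP.) Your normalization $C(H)\asymp 1/\varkappa$ therefore inflates each second-moment term by a factor $n^{\Theta(1)}$, and $T=n^{o(1)}$ independent colorings cannot absorb it. The fix --- which is exactly what the paper does, and what the Wigner-recovery analog you invoke also does --- is to take $t=\lceil 1/\varkappa\rceil=n^{\Theta(1)}$ trials, so that
\[
\mathbb E\Big[\Big(\tfrac{1}{t}\sum_{\mathtt k}\tfrac{1}{\varkappa}\chi_{\xi_{\mathtt k}}(V(S))\Big)\Big(\tfrac{1}{t}\sum_{\mathtt k}\tfrac{1}{\varkappa}\chi_{\xi_{\mathtt k}}(V(K))\Big)\Big]\le \tfrac{1}{t\varkappa}\le 1,
\]
after which the second moment is bounded by the same expression \eqref{eq-var-Phi-i,j-mathcal-I-relax-1} as for $\Psi_{u,v}^{\mathcal I}$. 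With this correction the total running time is still $n^{T'+o(1)}$ for some constant $T'=T'(\lambda,\mu,\rho,\gamma)$, and the rest of your plan goes through.
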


\section{Statistical analysis of the subgraph counts}{\label{sec:stat-analysis}}

\subsection{Proof of Proposition~\ref{main-prop-detection}}{\label{subsec:proof-prop-2.3}}

\begin{lemma}{\label{lem-mean-var-f-H-part-1}}
    Suppose that Assumption~\ref{assum-upper-bound} and Equation~\eqref{eq-prelim-assumption-upper-bound} hold, and we choose $\ell$ according to \eqref{eq-condition-strong-detection}. Then
    \begin{align}
        &\mathbb E_{\Qb}[ f_{\mathcal H} ] =0 \,, \label{eq-mean-Qb-f-H} \\
        &\mathbb E_{\Pb}[ f_{\mathcal H} ] =[1+o(1)] \sqrt{\beta_{\mathcal H}} = \omega(1) \,, \label{eq-mean-pb-f-H} \\
        &\mathbb E_{\Qb}[ f_{\mathcal H}^2 ] = 1+o(1) \,. \label{eq-var-Qb-f-H}
    \end{align}
\end{lemma}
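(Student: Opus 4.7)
My plan is to establish the three identities by a direct first-- and second--moment calculation for the weighted cycle sum in \eqref{eq-def-f-mathcal-H}. The identity \eqref{eq-mean-Qb-f-H} is immediate: under $\Qb$ the pair $(\bm X,\bm Y)$ consists of two independent Wigner matrices with mean--zero entries, and every $[H]\in\mathcal H$ is a cycle; hence $\mathbb E_{\Qb}[f_S]=0$ for every $S\cong H$, and linearity gives \eqref{eq-mean-Qb-f-H}.

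For \eqref{eq-mean-pb-f-H} I would condition first on the spike pair $(x,y)$ and then average. Because the noise $\bm W,\bm Z$ is independent of $(x,y)$ and centered, and because the edges of a cycle are pairwise distinct,
\begin{equation*}
\mathbb E_{\Pb}[f_S\mid x,y] \;=\; \frac{\lambda^{|E_\bullet(H)|}\mu^{|E_\circ(H)|}}{n^{\ell/2}}\prod_{v\in V(S)}x_v^{\deg_\bullet(v)}y_v^{\deg_\circ(v)}
\end{equation*}
for any $S\cong H$, where $\deg_\bullet(v),\deg_\circ(v)$ denote the number of incident edges of each decoration. Since $H$ is a cycle, each vertex has total degree two: a vertex $v\notin\mathsf{diff}(H)$ is incident to two edges of the same decoration and contributes either $x_v^2$ or $y_v^2$ (each with expectation $1$ under Assumption~\ref{assum-upper-bound}), while a vertex $v\in\mathsf{diff}(H)$ contributes $x_v y_v$ with expectation $\rho$. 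Averaging over $\pi=\pi_*^{\otimes n}$ therefore gives $\mathbb E_{\Pb}[f_S]=\Xi(H)/n^{\ell/2}$. The number of embeddings $S\subset\mathsf K_n$ with $S\cong H$ is $n(n-1)\cdots(n-\ell+1)/|\mathsf{Aut}(H)|=n^\ell(1-O(\ell^2/n))/|\mathsf{Aut}(H)|$, and this correction depends only on $\ell$. Inserting into \eqref{eq-def-f-mathcal-H} and recognising $\beta_{\mathcal H}$ from \eqref{eq-def-beta-mathcal-H} yields
\begin{equation*}
\mathbb E_{\Pb}[f_{\mathcal H}] \;=\; (1+o(1))\cdot\frac{1}{\sqrt{\beta_{\mathcal H}}}\sum_{[H]\in\mathcal H}\frac{\Xi(H)^2}{|\mathsf{Aut}(H)|} \;=\; (1+o(1))\sqrt{\beta_{\mathcal H}}.
\end{equation*}
The $\omega(1)$ claim then follows from Lemma~\ref{lem-bound-beta-mathcal-H}, which gives $\beta_{\mathcal H}\geq D^{-1}\ell^{-2}A_+^\ell$ with $A_+>1+\delta$ and $\ell=\omega(1)$.

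For \eqref{eq-var-Qb-f-H} I would expand $f_{\mathcal H}^2$ as a double sum over pairs $(S_1,S_2)$. Under $\Qb$, the entries of $\bm W,\bm Z$ are jointly independent centered Gaussians, and $f_{S_1}f_{S_2}$ is a product of these entries each appearing to power at most two; its expectation therefore vanishes unless every entry appears to an even power. Since each edge appears at most once in each of $E_\bullet(S_i)$ and $E_\circ(S_i)$, this forces $E_\bullet(S_1)=E_\bullet(S_2)$ and $E_\circ(S_1)=E_\circ(S_2)$, i.e.\ $S_1=S_2$ as decorated subgraphs of $\mathsf K_n$. On this diagonal $\mathbb E_{\Qb}[f_S^2]=1$, since cycles have no loops and all relevant off--diagonal entries of $\bm W,\bm Z$ have unit variance. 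Reusing the embedding count from the previous paragraph then gives
\begin{equation*}
\mathbb E_{\Qb}[f_{\mathcal H}^2] \;=\; (1+o(1))\cdot\frac{1}{\beta_{\mathcal H}}\sum_{[H]\in\mathcal H}\frac{\Xi(H)^2}{|\mathsf{Aut}(H)|} \;=\; 1+o(1).
\end{equation*}
The only technical care needed is in making the $(1+o(1))$ correction uniform across $[H]\in\mathcal H$, but the embedding ratio $n!/[(n-\ell)!\,n^\ell]=1-O(\ell^2/n)$ depends only on $\ell$ and is $1-o(1)$ under $\ell=o(\log n/\log\log n)$, so there is no genuine obstacle; everything else is bookkeeping.
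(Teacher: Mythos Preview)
Your proposal is correct and follows essentially the same approach as the paper: the same conditioning on $(x,y)$ to compute $\mathbb E_{\Pb}[f_S]=\Xi(H)/n^{\ell/2}$ via the vertex-by-vertex degree analysis, the same orthogonality $\mathbb E_{\Qb}[f_Sf_K]=\mathbf 1_{\{S=K\}}$, and the same embedding count $n^\ell(1+o(1))/|\mathsf{Aut}(H)|$ combined with the definition of $\beta_{\mathcal H}$. Your remark that the $(1+o(1))$ correction is uniform over $[H]\in\mathcal H$ because the ratio depends only on $\ell$ is exactly the point needed to pass the $o(1)$ through the sum.
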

\begin{proof}
    Recall \eqref{eq-def-f-mathcal-H}. Since $\mathbb E_{\Qb}[ f_S(\bm X,\bm Y) ]=0$ for all $S \subset \mathsf K_n$ and $E(S) \neq \emptyset$, we have $\mathbb E_{\Qb}[f_{\mathcal H}]=0$ by linearity. In addition, recall \eqref{eq-def-correlated-spike-specific}, we have
    \begin{align}
        \mathbb E_{\Pb}[f_S(\bm X,\bm Y)] &= \mathbb E_{x,y} \mathbb E_{\Pb}\Big[ \prod_{(i,j) \in E_{\bullet}(S)} \big( \tfrac{\lambda}{\sqrt{n}} x_i x_j + \bm W_{i,j} \big) \prod_{(i,j) \in E_{\circ}(S)} \big( \tfrac{\mu}{\sqrt{n}} y_i y_j + \bm Z_{i,j} \big) \mid x,y \Big] \nonumber \\
        &= \mathbb E_{x,y}\Big[ \prod_{(i,j) \in E_{\bullet}(S)} \big( \tfrac{\lambda}{\sqrt{n}} x_i x_j \big) \prod_{(i,j) \in E_{\circ}(S)} \big( \tfrac{\mu}{\sqrt{n}} y_i y_j \big) \Big] \nonumber \\
        &= \lambda^{|E_{\bullet}(S)|} \mu^{|E_{\circ}(S)|} n^{-\frac{|E(S)|}{2}} \mathbb E_{x,y}\Big[ \prod_{(i,j) \in E_{\bullet}(S)} \big( x_i x_j \big) \prod_{(i,j) \in E_{\circ}(S)} \big( y_i y_j \big) \Big] \nonumber \\
        &= \lambda^{|E_{\bullet}(S)|} \mu^{|E_{\circ}(S)|} \rho^{\mathsf{diff}(S)} n^{ -\frac{\ell}{2} } \overset{\eqref{eq-def-Xi-S}}{=} \Xi(S) n^{-\frac{\ell}{2}} \,,  \label{eq-mean-Pb-f-S}
    \end{align}
    where the last but one equality follows from $|E(S)|=\ell$ and (recall \eqref{eq-def-E-1-E-2-decorated}, \eqref{eq-def-V-1-V-2-decorated} and \eqref{eq-def-Dif(H)}) 
    \begin{align*}
        \prod_{(i,j) \in E_{\bullet}(S)} \big( x_i x_j \big) \prod_{(i,j) \in E_{\circ}(S)} \big( y_i y_j \big) = \prod_{i \in V_{\bullet}(S) \setminus V_{\circ}(S)} x_i^2 \prod_{i \in V_{\circ}(S) \setminus V_{\bullet}(S)} y_i^2 \prod_{ i \in \mathsf{diff}(S) } x_i y_i
    \end{align*}
    for all $[S] \in \mathcal H$. Thus, we have
    \begin{align*}
        \mathbb E_{\Pb}[f_{\mathcal H}] &\overset{\eqref{eq-def-f-mathcal-H}}{=} f_{\mathcal H}(\bm X,\bm Y) = \frac{1}{ \sqrt{\beta_{\mathcal H}} } \sum_{ [H] \in \mathcal H } \frac{ \Xi(H) }{ n^{\ell/2} } \sum_{ S \subset \mathsf{K}_n, S \cong H } \Xi(S) n^{ -\frac{\ell}{2} } \\
        &= \frac{ 1 }{ n^{\ell}\sqrt{\beta_{\mathcal H}} } \sum_{[H] \in \mathcal H} \Xi(H)^2 \cdot \#\big\{ S \cong \mathsf K_n: S \cong H \big\} \\
        & = [1+o(1)] \cdot \frac{1}{ \sqrt{\beta_{\mathcal H}} }  \sum_{[H] \in \mathcal H} \frac{ \Xi(H)^2 }{ |\mathsf{Aut}(H)| } \overset{\eqref{eq-def-beta-mathcal-H}}{=} \sqrt{\beta_{\mathcal H}} \,,
    \end{align*}
    where the third equality follows from
    \begin{equation}{\label{eq-enumerate-H-in-K-n}}
        \#\big\{ S \cong \mathsf K_n: S \cong H \big\} = \frac{ (1+o(1)) n^{|V(H)|} }{ |\mathsf{Aut}(H)| } \mbox{ for all } |V(H)|=n^{o(1)} \,.
    \end{equation}
    Also, from Lemma~\ref{lem-bound-beta-mathcal-H} and our choice of $\ell$ in \eqref{eq-condition-strong-detection} we see that $\sqrt{\beta_{\mathcal H}}=\omega(1)$. Finally, note that
    \begin{align}{\label{eq-standard-orthogonal}}
        \mathbb E_{\Qb}\big[ f_{S}(\bm X,\bm Y) f_{K}(\bm X,\bm Y) \big] = \mathbf 1_{ \{ S=K \} } \,,
    \end{align}
    we have
    \begin{align*}
        \mathbb E_{\Qb}[f_{\mathcal H}^2] &\overset{\eqref{eq-def-f-mathcal-H}}{=} \sum_{ [H],[I] \in \mathcal H } \frac{ \Xi(H)\Xi(I) }{ n^{\ell} \beta_{\mathcal H} } \sum_{ \substack{ S,K \subset \mathsf K_n \\ S \cong H, K \cong I } } \mathbf 1_{ \{ S=K \} } \\
        &= \sum_{ [H] \in \mathcal H } \frac{ \Xi(H)^2 }{ n^{\ell} \beta_{\mathcal H} } \#\big\{ S \cong \mathsf K_n: S \cong H \big\}  = \frac{ 1+o(1) }{ \beta_{\mathcal H} } \sum_{ [H] \in \mathcal H } \frac{ \Xi(H)^2 }{ |\mathsf{Aut}(H)| } \overset{\eqref{eq-def-beta-mathcal-H}}{=} 1+o(1) \,. \qedhere
    \end{align*}
\end{proof}

Now we bound the variance of $f_{\mathcal H}$ under the alternative hypothesis $\Pb$, as incorporated in the following lemma. 

\begin{lemma}{\label{lem-mean-var-f-H-part-2}}
    Assume that Assumption~\ref{assum-upper-bound} and Equation~\eqref{eq-prelim-assumption-upper-bound} hold, and we choose $\ell$ according to \eqref{eq-condition-strong-detection}. Then we have
    \begin{equation}{\label{eq-var-Pb-f-H}}
        \frac{ \operatorname{Var}_{\Pb}[f_{\mathcal H}] }{ \mathbb E_{\Pb}[f_{\mathcal H}]^2 } = 1+o(1) \,.
    \end{equation}
\end{lemma}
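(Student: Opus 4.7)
The plan is to establish $\mathbb{E}_{\mathbb P}[f_{\mathcal H}^2] = (1+o(1))\beta_{\mathcal H}$; combined with $\mathbb{E}_{\mathbb P}[f_{\mathcal H}]^2 = (1+o(1))\beta_{\mathcal H}$ from \eqref{eq-mean-pb-f-H}, this immediately yields \eqref{eq-var-Pb-f-H}. Expanding the square via the definition \eqref{eq-def-f-mathcal-H},
$$\mathbb{E}_{\mathbb P}[f_{\mathcal H}^2] = \frac{1}{n^{\ell}\beta_{\mathcal H}} \sum_{[H],[I]\in\mathcal H} \Xi(H)\Xi(I) \sum_{\substack{S,K\subset\mathsf K_n \\ S\cong H,\, K\cong I}} \mathbb{E}_{\mathbb P}\bigl[f_S f_K\bigr],$$
so the task reduces to evaluating this inner sum and isolating its leading contribution.

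For each pair $(S,K)$, I would compute $\mathbb E_{\mathbb P}[f_S f_K]$ by first conditioning on $(x,y)$ and integrating out the independent Wigner matrices $\bm W,\bm Z$, which factorizes edgewise. Edges in the symmetric differences $E_\bullet(S)\triangle E_\bullet(K)$ and $E_\circ(S)\triangle E_\circ(K)$ yield the signal factors $\frac{\lambda}{\sqrt n}x_ix_j$ and $\frac{\mu}{\sqrt n}y_iy_j$ respectively; edges in $E_\bullet(S)\cap E_\bullet(K)$ (and analogously for $\circ$) yield the variance-plus-signal term $1+\frac{\lambda^2}{n}x_i^2x_j^2$; and mixed-decoration overlaps yield $\frac{\lambda\mu}{n}x_ix_jy_iy_j$, since $\bm W$ and $\bm Z$ are independent with zero mean. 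Taking the expectation over $(x,y)$ then reduces to a product of per-vertex moments $\mathbb E[x_v^{a_v}y_v^{b_v}]$, each controlled by Assumption~\ref{assum-upper-bound}.

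The leading contribution comes from \emph{vertex-disjoint} pairs, for which $\mathbb E_{\mathbb P}[f_Sf_K]$ factorizes as $\mathbb E_{\mathbb P}[f_S]\,\mathbb E_{\mathbb P}[f_K]=\Xi(H)\Xi(I)\,n^{-\ell}$ by \eqref{eq-mean-Pb-f-S}, while the number of such pairs equals $(1+o(1))\frac{n^{2\ell}}{|\mathsf{Aut}(H)|\,|\mathsf{Aut}(I)|}$ thanks to \eqref{eq-enumerate-H-in-K-n}. Substituting and invoking \eqref{eq-def-beta-mathcal-H} recovers exactly $\beta_{\mathcal H}$, matching the square of the mean. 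It therefore remains to show that all corrections from pairs with nonempty vertex or edge overlap contribute only $o(\beta_{\mathcal H})$.

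The main obstacle is this overlap analysis. I would classify pairs by the unlabeled decorated structure of $S\cup K$ together with the edge-overlap pattern, parameterized by the shared decorated subgraph $S\Cap K$ and the extra vertex identifications outside of it. For a fixed overlap class with $k$ shared vertices and $m$ shared edges (counting matched-decoration and mixed-decoration separately), the number of realizations scales like $n^{2\ell-k}$ up to automorphism factors, while the edgewise expansion yields an $O(1)$ product of per-vertex moments, a noise-variance contribution of $1$ (without any $1/\sqrt n$) on each matched-decoration shared edge, and signal factors of total magnitude $n^{-(\ell-m)}$ from the unshared edges on each side. Balancing these against $n^{\ell}\beta_{\mathcal H}$ and using the crucial fact that cycles of length $\ell$ have exactly $\ell$ vertices—so each shared edge beyond the disjoint regime forces a commensurate loss of vertices—each extra overlap edge/vertex should cost a factor of $n^{-1}$ relative to the main term. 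Since two cycles intersect only in a union of vertex-disjoint simple paths, the number of overlap isomorphism classes is polynomial in $\ell$, and combined with $\beta_{\mathcal H}\asymp A_+^{\ell}/\ell^2$ from Lemma~\ref{lem-bound-beta-mathcal-H} and the range $\ell=o(\log n/\log\log n)$, the total overlap contribution can be bounded by $n^{-c}\,\ell^{O(1)}\,\beta_{\mathcal H}=o(\beta_{\mathcal H})$. The delicate point is carefully bookkeeping the mixed-decoration overlaps, where factors of $\rho=\mathbb E[x_vy_v]$ appear at shared vertices and must be reconciled with the $\Xi(H)\Xi(I)$ weighting so as not to over-count or miss cancellations across the different decoration patterns.
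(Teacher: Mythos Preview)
Your proposal is correct and matches the paper's approach: the paper expands $\operatorname{Var}_{\Pb}[f_{\mathcal H}]$ into covariances (Lemma~\ref{lem-est-cov-f-S-f-K} gives $\operatorname{Cov}_{\Pb}(f_S,f_K)=0$ for vertex-disjoint pairs and an explicit bound $n^{-\ell+|E_\bullet(S)\cap E_\bullet(K)|+|E_\circ(S)\cap E_\circ(K)|}\,\mathtt M(S,K)$ otherwise), then in Lemma~\ref{lem-detection-most-technical} splits the overlapping pairs into the diagonal $S=K$ (contributing $(1+o(1))/\beta_{\mathcal H}$) and the rest (contributing $n^{-1+o(1)}$) --- this is exactly your second-moment computation with the vertex-disjoint contribution identified as $(\mathbb E_{\Pb}f_{\mathcal H})^2$. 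Two small corrections worth noting: the diagonal $S=K$ does not enjoy an $n^{-c}$ saving and must be handled separately (it is $o(\beta_{\mathcal H})$ only because $\beta_{\mathcal H}=\omega(1)$); and the number of overlap isomorphism classes is not polynomial in $\ell$ (decorations alone give $2^{O(\ell)}$ patterns), though this is harmless since $\ell=o(\log n/\log\log n)$ absorbs any $\ell^{O(\ell)}$ factor into $n^{o(1)}$, which is precisely how the paper handles these constants in \eqref{eq-trivial-bound-mathtt-M}.
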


Assuming Lemma~\ref{lem-mean-var-f-H-part-2}, we can complete the proof of Proposition~\ref{main-prop-detection}.
\begin{proof}[Proof of Proposition~\ref{main-prop-detection} assuming Lemma~\ref{lem-mean-var-f-H-part-2}]
    It suffices to note that combining Lemmas~\ref{lem-mean-var-f-H-part-1} and \ref{lem-mean-var-f-H-part-2} yields Proposition~\ref{main-prop-detection}.
\end{proof}

The remaining part of this subsection is devoted to the proof of Lemma~\ref{lem-mean-var-f-H-part-2}. Note that
\begin{align}
    \frac{ \operatorname{Var}_{\Pb}[f_{\mathcal H}] }{ \mathbb E_{\Pb}[f_{\mathcal H}]^2 } \overset{\eqref{eq-def-f-mathcal-H},\text{Lemma~\ref{lem-mean-var-f-H-part-1}}}{=}\ & [1+o(1)] \sum_{[H],[I] \in \mathcal H} \frac{ \Xi(H) \Xi(I) }{ n^{\ell} \beta_{\mathcal H}^2 }  \sum_{ \substack{ S,K \subset \mathsf K_n \\ S \cong H, K \cong I } } \operatorname{Cov}_{\Pb}\big( f_S(\bm X,\bm Y), f_K(\bm X,\bm Y) \big) \,.  \label{eq-var-Pb-f-H-relax-1}
\end{align}
The first step of our proof is to show the following bound on $\operatorname{Cov}_{\Pb}(f_S(\bm X,\bm Y),f_K(\bm X,\bm Y))$. Define (recall \eqref{eq-def-H-1-H-2})
\begin{align}
    \mathtt M(S,K):=\ & \lambda^{ |E_{\bullet}(S) \triangle E_{\bullet}(K)| } \mu^{ |E_{\circ}(S) \triangle E_{\circ}(K)| } \rho^{|\mathsf{diff}(S) \setminus V(K)|+ |\mathsf{diff}(K) \setminus V(S)|} \nonumber  \\
    & \cdot C^{|V(S) \cap V(K)|-|V(S_{\bullet} \cap K_{\bullet}) \cup V(S_{\circ} \cap K_{\circ})|} \,.  \label{eq-def-mathtt-M}
\end{align}

\begin{lemma}{\label{lem-est-cov-f-S-f-K}}
    We have 
    \begin{align*}
        \operatorname{Cov}_{\Pb}\big( f_S,f_K \big) \leq\ & \mathbf 1_{ \{ V(S) \cap V(K) \neq \emptyset \} } \cdot n^{ -\ell+|E_{\bullet}(S)\cap E_{\bullet}(K)| +|E_{\circ}(S) \cap E_{\circ}(K)| } \cdot \mathtt M(S,K)  \,.
    \end{align*} 
\end{lemma}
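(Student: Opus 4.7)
The plan is to condition on the spikes $(x, y)$, exploit the Gaussian structure of $(\bm W, \bm Z)$, and then average over $(x, y)$ using the product form $\pi = \pi_*^{\otimes n}$. Conditional on $(x, y)$, the entries of $\bm X$ and $\bm Y$ are independent with $\mathbb E[\bm X_{ij} \mid x, y] = \tfrac{\lambda}{\sqrt n} x_i x_j$, $\mathbb E[\bm X_{ij}^2 \mid x, y] = 1 + \tfrac{\lambda^2}{n} x_i^2 x_j^2$, and symmetric formulas for $\bm Y$. Grouping the edges according to whether they appear in $E_\bullet(S) \cap E_\bullet(K)$, $E_\circ(S) \cap E_\circ(K)$, $E_\bullet(S) \triangle E_\bullet(K)$, or $E_\circ(S) \triangle E_\circ(K)$ (note that an edge may contribute to more than one of these sets if its decoration differs between $S$ and $K$), I will obtain a closed form for $\mathbb E_{\Pb}[f_S f_K \mid x, y]$ as a product over edges of the above conditional moments.

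First I would treat the vanishing case $V(S) \cap V(K) = \emptyset$: the conditional moment product then splits into two factors whose $(x,y)$-dependencies are supported on the disjoint sets $V(S)$ and $V(K)$, and since $\pi = \pi_*^{\otimes n}$ is a product measure over coordinates, the expectation over $(x,y)$ factorizes and yields exactly $\mathbb E_{\Pb}[f_S]\, \mathbb E_{\Pb}[f_K]$. Hence the covariance vanishes, which justifies the indicator in the claim.

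For the case $V(S) \cap V(K) \neq \emptyset$ I would first extract the $n$-power. Each edge in a symmetric difference contributes a factor $n^{-1/2}$, while each edge in a colored intersection contributes $1 + O(n^{-1})$. Expanding the $(1 + \cdot)$ factors, the constant (``leading'') choice gives total $n$-power
\begin{equation*}
n^{-\tfrac12 |E_\bullet(S) \triangle E_\bullet(K)| - \tfrac12 |E_\circ(S) \triangle E_\circ(K)|} = n^{-\ell + |E_\bullet(S) \cap E_\bullet(K)| + |E_\circ(S) \cap E_\circ(K)|} \,,
\end{equation*}
using $|E(S)| = |E(K)| = \ell$. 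Any nontrivial expansion of the $(1 + \cdot)$ terms produces additional $n^{-1}$ factors that are dominated by per-vertex constant contributions and may be absorbed into the $C$-factor. I would also check that subtracting $\mathbb E[f_S]\mathbb E[f_K] = \Xi(S)\Xi(K) n^{-\ell}$ does not worsen the bound: in the presence of any colored edge overlap the leading $n$-power strictly dominates $n^{-\ell}$, while when only vertex overlap is present the two terms have matching $n$-order and the triangle inequality suffices.

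Finally, after extracting the $n$-power, the remaining $(x,y)$-expectation factorizes vertex-by-vertex as $\prod_v \mathbb E_{\pi_*}[\mathbb X^{a_v} \mathbb Y^{b_v}]$, where $a_v, b_v$ record the $\bullet$- and $\circ$-degrees of $v$ in the relevant symmetric-difference edge sets. The $\lambda$- and $\mu$-powers in $\mathtt M(S,K)$ come directly from counting symmetric-difference edges, while the $\rho$- and $C$-powers arise from a vertex classification using that $S, K$ are cycles: a vertex in $\mathsf{diff}(S) \setminus V(K)$ has $(a_v, b_v) = (1, 1)$ and contributes $\mathbb E[\mathbb X \mathbb Y] = \rho$, and symmetrically for $\mathsf{diff}(K) \setminus V(S)$; a vertex in $V(S) \setminus V(K)$ outside $\mathsf{diff}(S)$ has $(a_v, b_v) \in \{(2,0),(0,2)\}$ and contributes $\mathbb E[\mathbb X^2] = \mathbb E[\mathbb Y^2] = 1$; a vertex in $V(S) \cap V(K)$ may have total degree up to $4$, and by H\"older together with the fourth-moment bound in Assumption~\ref{assum-upper-bound}(2) it contributes at most an absolute constant, which I fold into $C$; finally, vertices lying in $V(S_\bullet \cap K_\bullet) \cup V(S_\circ \cap K_\circ)$ already receive a trivial constant bound directly from the matching-color structure and so require no separate $C$-factor, which explains the subtraction in the exponent of $C$. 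The main technical obstacle is precisely this per-vertex classification across all possible overlap patterns of the two decorated cycles and a careful verification that the subleading terms from the $(1+\cdot)$ expansion can be cleanly absorbed; once this bookkeeping is in place, assembling the $n$-, $\lambda$-, $\mu$-, $\rho$-, and $C$-factors gives exactly the claimed bound.
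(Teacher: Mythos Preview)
Your proposal is correct and follows essentially the same approach as the paper: condition on $(x,y)$, factor the conditional expectation over edges, show the covariance vanishes when $V(S)\cap V(K)=\emptyset$ by coordinate independence, extract the $n$-power from symmetric-difference edges, expand the $(1+\tfrac{\lambda^2 x_i^2 x_j^2}{n})$ factors, and then perform a vertex-by-vertex moment classification using the cycle structure and the fourth-moment bound. The only minor simplification the paper makes is that, in the overlapping case, it does not subtract $\mathbb E_{\Pb}[f_S]\,\mathbb E_{\Pb}[f_K]$ at all---since this product equals $\Xi(S)\Xi(K)n^{-\ell}>0$, one has $\operatorname{Cov}_{\Pb}(f_S,f_K)\le \mathbb E_{\Pb}[f_S f_K]$ directly, so bounding the raw second moment already suffices.
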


The proof of Lemma~\ref{lem-est-cov-f-S-f-K} is postponed to Section~\ref{subsec:proof-lem-3.3} of the appendix. Using Lemma~\ref{lem-est-cov-f-S-f-K}, we see that the right hand side of \eqref{eq-var-Pb-f-H-relax-1} can be written as 
\begin{align}
    [1+o(1)] \sum_{ \substack{ S,K \subset \mathsf K_n: [S],[K] \in \mathcal H \\ V(S) \cap V(K) \neq \emptyset } } \frac{ \Xi(S) \Xi(K) \mathtt M(S,K) }{ n^{2\ell-|E_{\bullet}(S)\cap E_{\bullet}(K)|-|E_{\circ}(S) \cap E_{\circ}(K)|} \beta_{\mathcal H}^2 }  \,.  \label{eq-var-Pb-f-H-relax-2}
\end{align}
Now we split \eqref{eq-var-Pb-f-H-relax-2} into two parts, the first part counts the contribution from $S=K$ (in this case we have $|E_{\bullet}(S)\cap E_{\bullet}(K)|+|E_{\circ}(S) \cap E_{\circ}(K)|=|E(S)|=\ell$):
\begin{align}
    \sum_{ S \in \mathsf K_n: [S] \in \mathcal H } \frac{ \Xi(S)^2 \cdot \mathtt M(S,S) }{ n^{\ell} \beta_{\mathcal H}^2 }  \,.  \label{eq-var-Pb-f-H-relax-3-Part-1}
\end{align}
The second part counts the contribution from $V(S) \cap V(K) \neq \emptyset$ and $S \neq K$:
\begin{align}
    \sum_{ \substack{ S,K \subset \mathsf K_n: [S],[K] \in \mathcal H \\ V(S) \cap V(K) \neq \emptyset, S \neq K } } \frac{ \Xi(S) \Xi(K) \cdot \mathtt M(S,K) }{ n^{2\ell-|E_{\bullet}(S)\cap E_{\bullet}(K)|-|E_{\circ}(S) \cap E_{\circ}(K)|} \beta_{\mathcal H}^2 }  \,,  \label{eq-var-Pb-f-H-relax-3-Part-2}
\end{align}
We now bound \eqref{eq-var-Pb-f-H-relax-3-Part-1} and \eqref{eq-var-Pb-f-H-relax-3-Part-2} separately via the following lemma.

\begin{lemma}{\label{lem-detection-most-technical}}
    Suppose that Assumption~\ref{assum-upper-bound} and Equation~\eqref{eq-prelim-assumption-upper-bound} hold, and we choose $\ell$ according to \eqref{eq-condition-strong-detection}. Then we have
    \begin{align}
        & \eqref{eq-var-Pb-f-H-relax-3-Part-1} \leq \tfrac{1+o(1)}{\beta_{\mathcal H}} = o(1) \,.  \label{eq-bound-var-Pb-f-H-relax-3-Part-1} \\
        & \eqref{eq-var-Pb-f-H-relax-3-Part-2} \leq n^{-\frac{1}{2}+o(1)} = o(1) \,.  \label{eq-bound-var-Pb-f-H-relax-3-Part-2}
    \end{align}
\end{lemma}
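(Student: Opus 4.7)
\emph{Part 1.} When $S=K$ all four factors defining $\mathtt M(S,S)$ trivialize: both symmetric differences are empty, $\mathsf{diff}(S)\setminus V(S)=\emptyset$, and $V(S_{\bullet}\cap S_{\bullet})\cup V(S_{\circ}\cap S_{\circ})=V(S)\cap V(S)$, so $\mathtt M(S,S)=1$. Since every $[H]\in\mathcal H$ is a cycle with $|V(H)|=|E(H)|=\ell$ and $\ell=n^{o(1)}$ under \eqref{eq-condition-strong-detection}, the vertex-label count \eqref{eq-enumerate-H-in-K-n} applies and \eqref{eq-var-Pb-f-H-relax-3-Part-1} simplifies to
\[
(1+o(1))\sum_{[H]\in\mathcal H}\frac{\Xi(H)^2}{|\mathsf{Aut}(H)|\,\beta_{\mathcal H}^2}\overset{\eqref{eq-def-beta-mathcal-H}}{=}\frac{1+o(1)}{\beta_{\mathcal H}}=o(1),
\]
where the last step uses $\beta_{\mathcal H}\geq D^{-1}\ell^{-2}A_+^{\ell}\to\infty$ from Lemma~\ref{lem-bound-beta-mathcal-H} combined with $\ell=\omega(1)$.

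\emph{Part 2: setup and geometric identity.} The plan for \eqref{eq-var-Pb-f-H-relax-3-Part-2} is to group labeled pairs $(S,K)$ by the decorated isomorphism class of the \emph{overlay} $O=(S,K;\iota)$, where $\iota$ encodes the identification of both $V(S)\cap V(K)$ and of the jointly-present-and-jointly-decorated edges. Writing $v(O):=|V(S)\cup V(K)|$, $a:=|V(S)\cap V(K)|$, $m:=|V(S\Cap K)|$, $k:=$~number of connected components of $S\Cap K$, and $p:=a-m$, a direct application of \eqref{eq-enumerate-H-in-K-n} produces $(1+o(1))n^{v(O)}/|\mathsf{Aut}(O)|$ labeled realizations per type. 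Because $S,K$ are cycles of length $\ell$ and $S\neq K$, the subgraph $S\Cap K$ is a proper subgraph of each, hence a disjoint union of $k$ paths, giving $|E_{\bullet}(S)\cap E_{\bullet}(K)|+|E_{\circ}(S)\cap E_{\circ}(K)|=m-k$ and $v(O)=2\ell-a=2\ell-m-p$. The resulting $n$-exponent in the summand is
\[
v(O)-2\ell+|E_{\bullet}(S)\cap E_{\bullet}(K)|+|E_{\circ}(S)\cap E_{\circ}(K)|=-(p+k),
\]
which is at most $-1$ because $V(S)\cap V(K)\neq\emptyset$ forces $p+k\geq 1$. Moreover, the $C$-exponent in $\mathtt M(O)$ equals exactly $p$, so each pure-vertex overlap costs only a harmless $C/n$.

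\emph{Part 2: controlling the type-sum.} After the regrouping, it remains to show
\[
\frac{1}{\beta_{\mathcal H}^2}\sum_{O}n^{-(p+k)}\cdot\frac{|\Xi(S)\Xi(K)\mathtt M(S,K)|}{|\mathsf{Aut}(O)|}\leq n^{-1/2+o(1)}.
\]
The idea is to rewrite $\Xi(S)\Xi(K)\mathtt M(S,K)=\lambda^{2|E_{\bullet}(S\cup K)|}\mu^{2|E_{\circ}(S\cup K)|}\rho^{|\mathsf{diff}(S)|+|\mathsf{diff}(K)|+|\mathsf{diff}(S)\setminus V(K)|+|\mathsf{diff}(K)\setminus V(S)|}C^{p}$, decompose each cycle $S,K$ into the arcs lying inside $S\Cap K$ and those outside, and resum via the same generating-function/transfer-matrix machinery that produces $\beta_{\mathcal H}\leq DA_+^{\ell}$ in Lemma~\ref{lem-bound-beta-mathcal-H}. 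Each of $S$ and $K$ will contribute at most $DA_+^{\ell}$ to the weighted count, matching $\beta_{\mathcal H}^2$ up to constants, while a combinatorial overhead of $\ell^{O(p+k)}=n^{o(1)}$ accounts for the positions of the $k$ path components and $p$ pure-vertex overlaps inside each cycle. Combined with the $n^{-(p+k)}$ damping, this yields the claimed $n^{-1/2+o(1)}$ bound.

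\emph{Main obstacle.} The delicate step is executing the transfer-matrix estimate in the previous paragraph uniformly in $(p,k)$ without losing a factor exponential in $\ell$. Because $A_+$ arises as the largest eigenvalue of a $2\times 2$ matrix whose off-diagonal entries couple the two edge types $\bullet,\circ$ through the factor $\rho^{2}$ at vertices in $\mathsf{diff}$, one must track the $\bullet/\circ$-type of each arc of $S$ and $K$ separately rather than crudely bounding by $(\lambda\vee\mu)^{2\ell}$, which would lose the saving. Once this refined accounting is set up and matched to $\beta_{\mathcal H}^2$ component-by-component along the overlay, the bound \eqref{eq-bound-var-Pb-f-H-relax-3-Part-2} follows.
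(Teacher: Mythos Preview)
Your Part~1 is exactly the paper's argument.

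For Part~2, your overlay-type decomposition is correct in principle and your $n$-exponent calculation $-(p+k)\leq -1$ is right, but you are working much harder than necessary and misidentify the obstacle. The condition $\ell=o(\log n/\log\log n)$ in \eqref{eq-condition-strong-detection} is strong enough that \emph{any} factor of the form $(\text{const})^{\ell}$ or $\ell^{O(\ell)}$ is $n^{o(1)}$. The paper exploits this immediately: since $|V(S)\cap V(K)|\leq\ell$, one has the crude bound
\[
\mathtt M(S,K)\leq\Xi(S)\Xi(K)\cdot\bigl(C\lambda^{-2}\mu^{-2}\rho^{-2}\bigr)^{|V(S)\cap V(K)|}=n^{o(1)}\,\Xi(S)\Xi(K),
\]
so $\Xi(S)\Xi(K)\mathtt M(S,K)\leq n^{o(1)}\Xi(S)^2\Xi(K)^2$. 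With this, no transfer-matrix resummation is needed: the paper simply fixes $S$, conditions on $t=|V(S)\cap V(K)|$, counts at most $\ell^{2t}n^{\ell-t}/|\mathsf{Aut}(H)|$ choices of $K\cong H$, absorbs $\ell^{2t}\leq\ell^{2\ell}=n^{o(1)}$, and lets the $\Xi(H)^2/|\mathsf{Aut}(H)|$ sum collapse to $\beta_{\mathcal H}$, then repeats for $S$. The constraint $|E_\bullet(S)\cap E_\bullet(K)|+|E_\circ(S)\cap E_\circ(K)|\leq t-1$ delivers the spare $n^{-1}$, giving $n^{-1+o(1)}$ outright. Your worry about ``losing a factor exponential in $\ell$'' and the caution against bounds like $(\lambda\vee\mu)^{2\ell}$ is precisely what the detection-regime assumption on $\ell$ is designed to make irrelevant; the careful arc-by-arc bookkeeping you propose is what the paper reserves for the \emph{recovery} lemma (Lemma~\ref{lem-recovery-most-technical}), where $\ell=\Theta(\log n)$ and such factors are no longer negligible.
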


The proof of Lemma~\ref{lem-detection-most-technical} is postponed to Section~\ref{subsec:proof-lem-3.4} in the appendix. Clearly, plugging \eqref{eq-bound-var-Pb-f-H-relax-3-Part-1} and \eqref{eq-bound-var-Pb-f-H-relax-3-Part-2} into \eqref{eq-var-Pb-f-H-relax-2} yields Lemma~\ref{lem-mean-var-f-H-part-2}.

\subsection{Proof of Proposition~\ref{main-prop-detection-cov}}{\label{subsec:proof-prop-2.7}}

\begin{lemma}{\label{lem-mean-var-f-mathcal-G-part-1}}
    Suppose that Assumption~\ref{assum-upper-bound} and Equation~\eqref{eq-condition-strong-detection} hold, and we choose $\ell$ according to \eqref{eq-condition-strong-detection}. Then
    \begin{align}
        &\mathbb E_{\overline\Qb}[ h_{\mathcal G} ] =0 \,, \label{eq-mean-Qb-f-mathcal-G} \\
        &\mathbb E_{\overline\Pb}[ h_{\mathcal G} ] =[1+o(1)] \sqrt{\gamma^{-\ell}\beta_{\mathcal G}} = \omega(1) \,, \label{eq-mean-pb-f-mathcal-G} \\
        &\mathbb E_{\overline\Qb}[ h_{\mathcal G}^2 ] = 1+o(1) \,. \label{eq-var-Qb-f-mathcal-G}
    \end{align}
\end{lemma}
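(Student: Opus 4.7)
The plan is to mirror the argument used for the Wigner case in Lemma~\ref{lem-mean-var-f-H-part-1}, adapting the bookkeeping to the bipartite setup. First, $\mathbb E_{\overline\Qb}[h_S]=0$ for every decorated $S \subset \mathsf K_{n,N}$ with $E(S)\neq\emptyset$, since under $\overline\Qb$ the matrices $\bm W,\bm Z$ have independent centered Gaussian entries and $h_S$ is a product of distinct such entries. By the linearity of expectation (recall \eqref{eq-def-f-mathcal-G}), this gives \eqref{eq-mean-Qb-f-mathcal-G}.

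For the mean under $\overline\Pb$, condition on $(x,y,\bm u,\bm v)$ and replace each $\bm X_{i,j}$ (resp.\ $\bm Y_{i,j}$) by $\tfrac{\sqrt\lambda}{\sqrt n} x_i \bm u_j$ (resp.\ $\tfrac{\sqrt\mu}{\sqrt n} y_i \bm v_j$); the noise contributes zero on expectation since each noise entry appears exactly once in $h_S$. This yields
\[
    \mathbb E_{\overline\Pb}[h_S \mid x,y,\bm u,\bm v] = \lambda^{|E_{\bullet}(S)|/2} \mu^{|E_{\circ}(S)|/2} n^{-\ell} \prod_{(i,j)\in E_{\bullet}(S)} x_i \bm u_j \prod_{(i,j)\in E_{\circ}(S)} y_i \bm v_j ,
\]
using $|E(S)|=2\ell$. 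The key structural observation is that since $\mathsf{diff}(H)\subset V^{\mathsf a}(H)$ for all $[H]\in\mathcal G$, every $j \in V^{\mathsf b}(H)$ is incident to two edges of the same decoration; hence the product over $\bm u_j$'s (resp.\ $\bm v_j$'s) reduces to $\prod_{j \in V^{\mathsf b}_{\bullet}(S)} \bm u_j^2 \prod_{j\in V^{\mathsf b}_{\circ}(S)} \bm v_j^2$, whose $(\bm u,\bm v)$-expectation equals $1$. On the $V^{\mathsf a}$-side each vertex has degree two, so the $(x,y)$-expectation factors into $\mathbb E[x_i^2]=1$, $\mathbb E[y_i^2]=1$, and $\mathbb E[x_iy_i]=\rho$ according as $i \in V^{\mathsf a}_{\bullet}\setminus V^{\mathsf a}_{\circ}$, $V^{\mathsf a}_{\circ}\setminus V^{\mathsf a}_{\bullet}$, or $\mathsf{diff}(S)$. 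Together this gives $\mathbb E_{\overline\Pb}[h_S] = \Upsilon(S)\, n^{-\ell}$, the exact bipartite analogue of \eqref{eq-mean-Pb-f-S}.

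The count of labeled copies $S\subset\mathsf K_{n,N}$ with $S\cong H$ is $[1+o(1)] n^{\ell}N^{\ell}/|\mathsf{Aut}(H)|$ (respecting the bipartition, since $|V^{\mathsf a}(H)|=|V^{\mathsf b}(H)|=\ell$), provided $\ell=n^{o(1)}$, which holds under \eqref{eq-condition-strong-detection}. Plugging into \eqref{eq-def-f-mathcal-G} and using \eqref{eq-def-beta-mathcal-G}, I expect to obtain
\[
    \mathbb E_{\overline\Pb}[h_{\mathcal G}] = \frac{[1+o(1)]\,N^{\ell}}{\sqrt{n^{\ell}N^{\ell}\beta_{\mathcal G}}}\, \beta_{\mathcal G} = [1+o(1)]\,\sqrt{\gamma^{-\ell}\beta_{\mathcal G}} ,
\]
using $n=\gamma N$. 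Finally, Lemma~\ref{lem-bound-beta-mathcal-G} together with $A_+>(1+\delta)\gamma$ gives $\gamma^{-\ell}\beta_{\mathcal G}\geq D^{-1}\ell^{-2}(1+\delta)^{\ell}=\omega(1)$ under \eqref{eq-condition-strong-detection}, yielding \eqref{eq-mean-pb-f-mathcal-G}.

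For \eqref{eq-var-Qb-f-mathcal-G}, I will use the orthogonality relation $\mathbb E_{\overline\Qb}[h_S h_K] = \mathbf 1_{\{S=K\}}$: this holds because $\bm W,\bm Z$ are independent with iid standard Gaussian entries, so the expectation vanishes unless every edge index in $E_{\bullet}(S)\cup E_{\bullet}(K)$ (and separately in $E_{\circ}(S)\cup E_{\circ}(K)$) has even multiplicity, which for simple decorated graphs forces $S=K$. Expanding $h_{\mathcal G}^2$ via \eqref{eq-def-f-mathcal-G}, one is left with a single diagonal sum which, after inserting $\#\{S\cong H\}=[1+o(1)]n^{\ell}N^{\ell}/|\mathsf{Aut}(H)|$ and \eqref{eq-def-beta-mathcal-G}, collapses to $1+o(1)$. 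The main obstacle is the careful bipartite bookkeeping in the computation of $\mathbb E_{\overline\Pb}[h_S]$ (in particular verifying that the constraint $\mathsf{diff}(H)\subset V^{\mathsf a}(H)$ is exactly what makes the $\bm u,\bm v$ factors collapse to squares so that no stray variance factors appear); everything else is a transcription of the Wigner argument with $n^{\ell}$ replaced by $n^{\ell}N^{\ell}$ and $\Xi(\cdot)$ by $\Upsilon(\cdot)$.
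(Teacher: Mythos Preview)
Your proposal is correct and follows essentially the same approach as the paper: both compute $\mathbb E_{\overline\Pb}[h_S]=\Upsilon(S)n^{-\ell}$ by conditioning on $(x,y,\bm u,\bm v)$ and exploiting the constraint $\mathsf{diff}(H)\subset V^{\mathsf a}(H)$ to collapse the $\bm u,\bm v$ factors, then invoke the bipartite embedding count $[1+o(1)]n^{\ell}N^{\ell}/|\mathsf{Aut}(H)|$ and the orthogonality $\mathbb E_{\overline\Qb}[h_Sh_K]=\mathbf 1_{\{S=K\}}$. The bookkeeping and the appeal to Lemma~\ref{lem-bound-beta-mathcal-G} for the $\omega(1)$ conclusion are identical in spirit and detail.
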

\begin{proof}
    Recall \eqref{eq-def-f-mathcal-G}. Since $\mathbb E_{\overline\Qb}[ h_S(\bm X,\bm Y) ]=0$ for all $S \subset \mathsf K_{n,N}$ and $E(S) \neq \emptyset$, we have $\mathbb E_{\overline\Qb}[h_{\mathcal G}]=0$ by linearity. In addition, recall \eqref{eq-def-correlated-spike-covariance-specific}, we have
    \begin{align}
        \mathbb E_{\overline\Pb}[h_S(\bm X,\bm Y)] &= \mathbb E_{x,y,\bm u,\bm v} \mathbb E_{\overline\Pb}\Big[ \prod_{(i,j) \in E_{\bullet}(S)} \big( \tfrac{\sqrt{\lambda}}{\sqrt{n}} x_i \bm{u}_j + \bm W_{i,j} \big) \prod_{(i,j) \in E_{\circ}(S)} \big( \tfrac{\sqrt{\mu}}{\sqrt{n}} y_i \bm{v}_j + \bm Z_{i,j} \big) \mid x,y,\bm u,\bm v \Big] \nonumber \\
        &= \mathbb E_{x,y,\bm u,\bm v}\Big[ \prod_{(i,j) \in E_{\bullet}(S)} \big( \tfrac{\sqrt{\lambda}}{\sqrt{n}} x_i \bm{u}_j \big) \prod_{(i,j) \in E_{\circ}(S)} \big( \tfrac{\sqrt{\mu}}{\sqrt{n}} y_i \bm{v}_j \big) \Big] \nonumber \\
        &= \lambda^{\frac{|E_{\bullet}(S)|}{2}} \mu^{\frac{|E_{\circ}(S)|}{2}} n^{-\frac{|E(S)|}{2}} \mathbb E_{x,y,\bm u,\bm v}\Big[ \prod_{(i,j) \in E_{\bullet}(S)} \big( x_i \bm{u}_j \big) \prod_{(i,j) \in E_{\circ}(S)} \big( y_i \bm{v}_j \big) \Big] \nonumber \\
        &= \lambda^{\frac{|E_{\bullet}(S)|}{2}} \mu^{\frac{|E_{\circ}(S)|}{2}} \rho^{\mathsf{diff}(S)} n^{ -\ell } \overset{\eqref{eq-def-Xi-S}}{=} \Upsilon(S) n^{-\ell} \,,  \label{eq-mean-Pb-f-S-cov}
    \end{align}
    where the last but one equality follows from $|E(S)|=2\ell$ and (recall $\mathsf{diff}(S)\subset V^{\mathsf a}(S)$)
    \begin{align*}
        \prod_{(i,j) \in E_{\bullet}(S)} \big( x_i \bm u_j \big) \prod_{(i,j) \in E_{\circ}(S)} \big( y_i \bm v_j \big) =\ & \prod_{i \in V^{\mathsf a}_{\bullet}(S) \setminus V^{\mathsf a}_{\circ}(S)} x_i^2 \prod_{i \in V^{\mathsf a}_{\circ}(S) \setminus V^{\mathsf a}_{\bullet}(S)} y_i^2 \prod_{ i \in \mathsf{diff}(S) } x_i y_i \cdot \\
        & \prod_{j \in V^{\mathsf b}_{\bullet}(S) \setminus V^{\mathsf b}_{\circ}(S)} \bm u_j^2 \prod_{j \in V^{\mathsf b}_{\circ}(S) \setminus V^{\mathsf b}_{\bullet}(S)} \bm v_j^2
    \end{align*}
    for all $[S] \in \mathcal G$. Thus, we have
    \begin{align*}
        \mathbb E_{\overline\Pb}\big[ h_{\mathcal G}(\bm X,\bm Y) \big] &\overset{\eqref{eq-def-f-mathcal-G}}{=} \frac{1}{ \sqrt{\beta_{\mathcal G}} } \sum_{ [H] \in \mathcal H } \frac{ \Upsilon(H) }{ n^{\ell/2} N^{\ell/2} } \sum_{ S \subset \mathsf{K}_n, S \cong H } \Xi(S) n^{ -\ell } \\
        &= \frac{ 1 }{ n^{3\ell/2}N^{\ell/2} \sqrt{\beta_{\mathcal H}} } \sum_{[H] \in \mathcal H} \Upsilon(H)^2 \cdot \#\big\{ S \cong \mathsf K_n: S \cong H \big\} \\
        & = [1+o(1)] \cdot \frac{ 1 }{ \sqrt{\gamma^{\ell}\beta_{\mathcal H}} }  \sum_{[H] \in \mathcal H} \frac{ \Upsilon(H)^2 }{ |\mathsf{Aut}(H)| } \overset{\eqref{eq-def-beta-mathcal-G}}{=}  \sqrt{\gamma^{-\ell} \beta_{\mathcal G}} \,,
    \end{align*}
    where the third equality follows from $N=\gamma n$ and
    \begin{equation}{\label{eq-enumerate-H-in-K-N,n}}
        \#\big\{ S \cong \mathsf K_{n,N}: S \cong H \big\} = \frac{ (1+o(1)) n^{\ell} N^{\ell} }{ |\mathsf{Aut}(H)| } \mbox{ for all } [H] \in \mathcal G \,.
    \end{equation}
    Also, from Lemma~\ref{lem-bound-beta-mathcal-G} and our choice of $\ell$ in \eqref{eq-condition-strong-detection} we see that $\sqrt{\gamma^{-\ell}\beta_{\mathcal G}}=\omega(1)$. Finally, note that
    \begin{align}{\label{eq-standard-orthogonal-cov}}
        \mathbb E_{\overline\Qb}\big[ h_{S}(\bm X,\bm Y) h_{K}(\bm X,\bm Y) \big] = \mathbf 1_{ \{ S=K \} } \,,
    \end{align}
    we have
    \begin{align*}
        \mathbb E_{\overline\Qb}[h_{\mathcal G}^2] &\overset{\eqref{eq-def-f-mathcal-G}}{=} \sum_{ [H],[I] \in \mathcal G } \frac{ \Upsilon(H)\Upsilon(I) }{ n^{\ell} N^{\ell} \beta_{\mathcal G} } \sum_{ \substack{ S,K \subset \mathsf K_n \\ S \cong H, K \cong I } } \mathbf 1_{ \{ S=K \} } \\
        &= \sum_{ [H] \in \mathcal G } \frac{ \Upsilon(H)^2 }{ n^{\ell} N^{\ell} \beta_{\mathcal G} } \#\big\{ S \cong \mathsf K_{n,N}: S \cong H \big\}  = \frac{ 1+o(1) }{ \beta_{\mathcal G} } \sum_{ [H] \in \mathcal G } \frac{ \Upsilon(H)^2 }{ |\mathsf{Aut}(H)| } \overset{\eqref{eq-def-beta-mathcal-G}}{=} 1+o(1) \,. \qedhere
    \end{align*}
\end{proof}

Now we bound the variance of $h_{\mathcal G}$ under the alternative hypothesis $\overline\Pb$, as incorporated in the following lemma. 

\begin{lemma}{\label{lem-mean-var-f-mathcal-G-part-2}}
    Assume that Assumption~\ref{assum-upper-bound} and Equation~\eqref{eq-prelim-assumption-upper-bound} hold and we choose $\ell$ according to \eqref{eq-condition-strong-detection}. Then we have
    \begin{equation}{\label{eq-var-Pb-f-mathcal-G}}
        \frac{ \operatorname{Var}_{\overline\Pb}[h_{\mathcal G}] }{ \mathbb E_{\overline\Pb}[h_{\mathcal G}]^2 } = 1+o(1) \,.
    \end{equation}
\end{lemma}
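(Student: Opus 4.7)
The plan is to parallel the proof of Lemma~\ref{lem-mean-var-f-H-part-2} for the Wigner case, adapted to the bipartite cycle family $\mathcal G$ and to the latent Gaussian factors $\bm u, \bm v$ of the Wishart model. First, combining \eqref{eq-def-f-mathcal-G} with the mean formula \eqref{eq-mean-pb-f-mathcal-G} of Lemma~\ref{lem-mean-var-f-mathcal-G-part-1}, I would expand
\begin{align*}
    \frac{\operatorname{Var}_{\overline\Pb}[h_{\mathcal G}]}{\mathbb E_{\overline\Pb}[h_{\mathcal G}]^{2}}
    = [1+o(1)]\sum_{[H],[I]\in\mathcal G}\frac{\gamma^{\ell}\,\Upsilon(H)\Upsilon(I)}{n^{\ell}N^{\ell}\beta_{\mathcal G}^{2}}
    \sum_{\substack{S,K\subset\mathsf K_{n,N}\\ S\cong H,\,K\cong I}}\operatorname{Cov}_{\overline\Pb}(h_S,h_K),
\end{align*}
which is the Wishart analog of \eqref{eq-var-Pb-f-H-relax-1}; here the covariance vanishes whenever $V(S)\cap V(K)=\emptyset$ by independence of disjoint observations.

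Second, I would establish a Wishart analog of Lemma~\ref{lem-est-cov-f-S-f-K}. Substituting $\bm X_{i,j}=\sqrt{\lambda/n}\,x_i\bm u_j+\bm W_{i,j}$ (and similarly for $\bm Y$) into $h_S h_K$ and integrating first in the Gaussian noise $(\bm W,\bm Z)$ kills any edge of odd total multiplicity across $S\cup K$. The residual average over $(x,y,\bm u,\bm v)$ factors across the bipartition: vertices in $V^{\mathsf a}(S\cap K)$ contribute moments of $\pi_*$ with a factor $\rho$ at each $\mathsf{diff}$ vertex (controlled via Assumption~\ref{assum-upper-bound}(2)), while vertices in $V^{\mathsf b}(S\cap K)$ contribute moments of the independent standard Gaussians $\bm u_j,\bm v_j$ that are uniformly bounded because every relevant vertex has degree $O(1)$ in $S\cup K$. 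Collecting these factors yields
\begin{align*}
    \operatorname{Cov}_{\overline\Pb}(h_S,h_K)
    \le \mathbf 1_{\{V(S)\cap V(K)\ne\emptyset\}}\cdot n^{-2\ell+|E_\bullet(S)\cap E_\bullet(K)|+|E_\circ(S)\cap E_\circ(K)|}\cdot \overline{\mathtt M}(S,K),
\end{align*}
where $\overline{\mathtt M}(S,K)$ is the bipartite analog of \eqref{eq-def-mathtt-M}, gathering factors $\lambda^{|E_\bullet(S)\triangle E_\bullet(K)|}\mu^{|E_\circ(S)\triangle E_\circ(K)|}$ on the symmetric differences, $\rho$-factors on the $\mathsf{diff}$-vertices not shared between $S$ and $K$, and an absolute constant on mismatched overlap vertices on either side of the bipartition.

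Third, I split the double sum into the diagonal ($S=K$) and off-diagonal ($S\ne K$ with $V(S)\cap V(K)\ne\emptyset$) contributions, mirroring \eqref{eq-var-Pb-f-H-relax-3-Part-1}--\eqref{eq-var-Pb-f-H-relax-3-Part-2} and Lemma~\ref{lem-detection-most-technical}. Using \eqref{eq-enumerate-H-in-K-N,n} together with $N=\gamma^{-1}n$, the diagonal part telescopes into $[1+o(1)]\gamma^{\ell}/\beta_{\mathcal G}$, which is negligible by the lower bound $\beta_{\mathcal G}\ge D^{-1}\ell^{-2}A_+^{\ell}$ combined with $A_+\ge(1+\delta)\gamma$ from Lemma~\ref{lem-bound-beta-mathcal-G} and $\ell=\omega(1)$. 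The main obstacle is the off-diagonal bound: parametrize $(S,K)$ by their unlabeled overlap $S\Cap K$, count labeled realizations (which gains a factor of order $n^{|V^{\mathsf a}(S\cap K)|}N^{|V^{\mathsf b}(S\cap K)|}$), and balance this against the exponent deficit $n^{2\ell-\#\text{shared edges}}$ in the denominator. The bipartite constraint $\mathsf{diff}\subset V^{\mathsf a}$ and the distinct weights $n$ versus $N$ on overlap vertices make the case analysis more delicate than in Lemma~\ref{lem-detection-most-technical}, but the essential mechanism is unchanged: each extra missing edge contributes $\Theta(n^{-1})$, which cannot be offset by the combinatorial gain from labeled overlap vertices, while the margin $A_+/\gamma\ge 1+\delta$ in $\beta_{\mathcal G}^{2}$ absorbs the remaining exponential factors, ultimately giving an $n^{-1/2+o(1)}$ bound on the off-diagonal contribution. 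Assembling the diagonal and off-diagonal estimates with the $[1+o(1)]$ prefactor establishes the claim \eqref{eq-var-Pb-f-mathcal-G}.
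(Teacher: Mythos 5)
Your outline matches the paper's own proof step for step: the same expansion of the variance ratio via \eqref{eq-def-f-mathcal-G} and Lemma~\ref{lem-mean-var-f-mathcal-G-part-1}, a covariance bound of exactly the shape in Lemma~\ref{lem-est-cov-f-S-f-K-cov}, and a diagonal/off-diagonal split handled as in Lemma~\ref{lem-detection-cov-most-technical}. One small slip in your sketch of $\overline{\mathtt M}(S,K)$: because the Wishart signal enters at scale $\sqrt{\lambda}$, the symmetric-difference edges should carry $\lambda^{\frac{1}{2}|E_{\bullet}(S)\triangle E_{\bullet}(K)|}\mu^{\frac{1}{2}|E_{\circ}(S)\triangle E_{\circ}(K)|}$ (as in the paper's $\mathtt P(S,K)$) rather than the full exponents, which is what makes $\Upsilon(S)\Upsilon(K)\overline{\mathtt M}(S,K)$ factor consistently downstream.
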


Assuming Lemma~\ref{lem-mean-var-f-mathcal-G-part-2}, we can complete the proof of Proposition~\ref{main-prop-detection-cov}.

\begin{proof}[Proof of Proposition~\ref{main-prop-detection-cov} assuming Lemma~\ref{lem-mean-var-f-mathcal-G-part-2}]
    It suffices to note that combining Lemmas~\ref{lem-mean-var-f-mathcal-G-part-1} and \ref{lem-mean-var-f-mathcal-G-part-2} yields Proposition~\ref{main-prop-detection-cov}.
\end{proof}

The remaining part of this subsection is devoted to the proof of Lemma~\ref{lem-mean-var-f-mathcal-G-part-2}. Note that
\begin{align}
    \frac{ \operatorname{Var}_{\overline\Pb}[h_{\mathcal G}] }{ \mathbb E_{\overline\Pb}[h_{\mathcal G}]^2 } \overset{\eqref{eq-def-f-mathcal-G},\text{Lemma~\ref{lem-mean-var-f-mathcal-G-part-1}}}{=}\ & [1+o(1)] \sum_{[H],[I] \in \mathcal G} \frac{ \Upsilon(H)\Upsilon(I) }{ \gamma^{-\ell} n^{\ell} N^{\ell} \beta_{\mathcal G}^2 }  \sum_{ \substack{ S,K \subset \mathsf K_{n,N} \\ S \cong H, K \cong I } } \operatorname{Cov}_{\overline\Pb}\big( h_S(\bm X,\bm Y), h_K(\bm X,\bm Y) \big) \,.  \label{eq-var-Pb-f-mathcal-G-relax-1}
\end{align}
The first step of our proof is to show the following bound on $\operatorname{Cov}_{\overline\Pb}(h_S(\bm X,\bm Y),h_K(\bm X,\bm Y))$. Define (recall \eqref{eq-def-H-1-H-2})
\begin{align}
    \mathtt P(S,K):=\ & \lambda^{ \frac{1}{2} |E_{\bullet}(S) \triangle E_{\bullet}(K)| } \mu^{ \frac{1}{2} |E_{\circ}(S) \triangle E_{\circ}(K)| } \rho^{|\mathsf{diff}(S) \setminus V(K)|+ |\mathsf{diff}(K) \setminus V(S)|} \nonumber  \\
    & \cdot (2C)^{|V(S) \cap V(K)|-|V(S_{\bullet} \cap K_{\bullet}) \cup V(S_{\circ} \cap K_{\circ})|} \,.  \label{eq-def-mathtt-P}
\end{align}

\begin{lemma}{\label{lem-est-cov-f-S-f-K-cov}}
    We have 
    \begin{align*}
        \operatorname{Cov}_{\overline\Pb}\big( h_S,h_K \big) \leq\ & \mathbf 1_{ \{ V(S) \cap V(K) \neq \emptyset \} } \cdot n^{ -2\ell+|E_{\bullet}(S)\cap E_{\bullet}(K)| +|E_{\circ}(S) \cap E_{\circ}(K)| } \cdot \mathtt P(S,K)  \,.
    \end{align*} 
\end{lemma}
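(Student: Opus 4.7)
The plan is to mirror the proof of the analogous Wigner-case Lemma~\ref{lem-est-cov-f-S-f-K}, with adjustments reflecting the bipartite structure and the additional Gaussian vectors $\bm u, \bm v$ that appear in the Wishart model.

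First I would condition on the signal $(x, y, \bm u, \bm v)$ and average over the Gaussian noise matrices $\bm W, \bm Z$. Writing $\theta_{(i,j)}^{\bullet} := \tfrac{\sqrt{\lambda}}{\sqrt{n}} x_i \bm u_j$ and $\theta_{(i,j)}^{\circ} := \tfrac{\sqrt{\mu}}{\sqrt{n}} y_i \bm v_j$, and setting $P := E_\bullet(S) \cap E_\bullet(K)$, $Q := E_\circ(S) \cap E_\circ(K)$, the identity $\mathbb E[(\theta + g)^2] = \theta^2 + 1$ for $g \sim \mathcal N(0,1)$ yields
\begin{align*}
\mathbb E\big[h_S h_K \,\big|\, \text{sig}\big] = \prod_{e \in P}\big((\theta_e^\bullet)^2 + 1\big) \prod_{e \in Q}\big((\theta_e^\circ)^2 + 1\big) \prod_{e \in E_\bullet(S) \triangle E_\bullet(K)} \theta_e^\bullet \prod_{e \in E_\circ(S) \triangle E_\circ(K)} \theta_e^\circ.
\end{align*}
Expanding the shared-edge factors and extracting the $(P_0, Q_0) = (P, Q)$ term, which is precisely $\mathbb E[h_S \mid \text{sig}] \mathbb E[h_K \mid \text{sig}]$, the full covariance decomposes as
\begin{align*}
\operatorname{Cov}_{\overline\Pb}(h_S, h_K) = \operatorname{Cov}\big(\mathbb E[h_S \mid \text{sig}], \mathbb E[h_K \mid \text{sig}]\big) + \sum_{(P_0, Q_0) \subsetneq (P, Q)} \mathbb E_{\text{sig}}\Big[\prod_{e \in P_0}(\theta_e^\bullet)^2 \prod_{e \in Q_0}(\theta_e^\circ)^2 \prod_{e \in E_\bullet(S) \triangle E_\bullet(K)} \theta_e^\bullet \prod_{e \in E_\circ(S) \triangle E_\circ(K)} \theta_e^\circ\Big].
\end{align*}

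Next I take the outer expectation over $(x, y, \bm u, \bm v)$ term by term. Each summand factors over the vertex set $V(S) \cup V(K)$: at a vertex $i \in [n]$ one bounds $\mathbb E[x_i^a y_i^b]$ via Assumption~\ref{assum-upper-bound}, contributing $1$ if $a = b = 0$, $\rho$ if $(a, b) = (1, 1)$ (producing the factor $\rho^{|\mathsf{diff}(S) \setminus V(K)| + |\mathsf{diff}(K) \setminus V(S)|}$ in $\mathtt P(S, K)$), and at most $C$ when $a + b \geq 3$; at a vertex $j \in [N]$ one bounds the Gaussian moment $\mathbb E[\bm u_j^{2a} \bm v_j^{2b}] \leq (2a-1)!!(2b-1)!! \leq 2^{a+b}$, which is precisely the extra factor of $2$ upgrading the Wigner constant $C$ to $2C$ in the base of $\mathtt P(S, K)$. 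The per-edge prefactors $\sqrt{\lambda}, \sqrt{\mu}$ yield $\lambda^{|E_\bullet(S) \triangle E_\bullet(K)|/2}$ and $\mu^{|E_\circ(S) \triangle E_\circ(K)|/2}$, while the accumulated powers of $n^{-1/2}$ become $n^{-(2\ell - |P| - |Q|) - |P_0| - |Q_0|}$, which is bounded above by the target exponent $n^{-2\ell + |P| + |Q|}$ since $|P_0|, |Q_0| \geq 0$. Finally, when $V(S) \cap V(K) = \emptyset$, every summand splits into independent pieces involving only $S$-variables or only $K$-variables, forcing $\operatorname{Cov}_{\overline\Pb}(h_S, h_K) = 0$ and justifying the indicator.

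The main obstacle is the combinatorial bookkeeping over the strict subsets $(P_0, Q_0) \subsetneq (P, Q)$ together with the bipartite degree structure. Concretely, one must verify that for every such summand the vertex overlap contributing to the moment bounds is correctly captured by $|V(S) \cap V(K)| - |V(S_\bullet \cap K_\bullet) \cup V(S_\circ \cap K_\circ)|$ (which is precisely the exponent of $2C$ in \eqref{eq-def-mathtt-P}), and that the powers of $\rho$, $\lambda^{1/2}$, $\mu^{1/2}$ collected vertex-by-vertex align with the definition of $\mathtt P(S, K)$. Once this accounting is carried through in the same spirit as for $\mathtt M(S, K)$ in the Wigner case, the stated bound follows by summing the worst-case contributions across $(P_0, Q_0) \subseteq (P, Q)$.
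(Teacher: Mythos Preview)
Your plan is essentially the paper's proof: condition on the signal $(x,y,\bm u,\bm v)$, average out the Gaussian noise, expand the shared-edge factors $(\theta^2+1)$, and bound the resulting expectation vertex by vertex, picking up $\rho$ at vertices of $\mathsf{diff}(S)\setminus V(K)$ and $\mathsf{diff}(K)\setminus V(S)$, $C$ at the $[n]$-side overlap vertices, and a Gaussian constant at the $[N]$-side overlap vertices, then summing over the expansion subsets to get a $[1+o(1)]$ factor. The only organizational difference is that the paper does not bother isolating the $(P_0,Q_0)=(P,Q)$ term or writing a conditional-covariance decomposition; when $V(S)\cap V(K)\neq\emptyset$ it simply bounds $\operatorname{Cov}_{\overline\Pb}(h_S,h_K)\le \mathbb E_{\overline\Pb}[h_Sh_K]$ and controls the full expectation. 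Your decomposition is correct but introduces a term $\operatorname{Cov}\big(\mathbb E[h_S\mid\mathrm{sig}],\mathbb E[h_K\mid\mathrm{sig}]\big)$ that you never explicitly bound; to close the argument you either need to bound that term by its positive part (which is exactly the $(P_0,Q_0)=(P,Q)$ summand you extracted) or, more simply, drop the decomposition and sum over all $(P_0,Q_0)\subseteq(P,Q)$ as the paper does.
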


The proof of Lemma~\ref{lem-est-cov-f-S-f-K-cov} is postponed to Section~\ref{subsec:proof-lem-3.7} of the appendix. Using Lemma~\ref{lem-est-cov-f-S-f-K-cov}, we see that the right hand side of \eqref{eq-var-Pb-f-mathcal-G-relax-1} can be written as 
\begin{align}
    [1+o(1)] \sum_{ \substack{ S,K \subset \mathsf K_{n,N}: [S],[K] \in \mathcal G \\ V(S) \cap V(K) \neq \emptyset } } \frac{ \Upsilon(S) \Upsilon(K) \mathtt P(S,K) }{ \gamma^{-\ell} N^{\ell} n^{3\ell-|E_{\bullet}(S)\cap E_{\bullet}(K)|-|E_{\circ}(S) \cap E_{\circ}(K)|} \beta_{\mathcal G}^2 }  \,.  \label{eq-var-Pb-f-mathcal-G-relax-2}
\end{align}
Now we split \eqref{eq-var-Pb-f-mathcal-G-relax-2} into two parts, the first part counts the contribution from $S=K$ (in this case we have $|E_{\bullet}(S)\cap E_{\bullet}(K)|+|E_{\circ}(S) \cap E_{\circ}(K)|=|E(S)|=2\ell$):
\begin{align}
    \sum_{ S \in \mathsf K_{n,N}: [S] \in \mathcal G } \frac{ \Upsilon(S)^2 \cdot \mathtt P(S,S) }{ \gamma^{-\ell} N^{\ell} n^{\ell} \beta_{\mathcal G}^2 }  \,.  \label{eq-var-Pb-f-mathcal-G-relax-3-Part-1}
\end{align}
The second part counts the contribution from $V(S) \cap V(K) \neq \emptyset$ and $S \neq K$:
\begin{align}
    \sum_{ \substack{ S,K \subset \mathsf K_{n,N}: [S],[K] \in \mathcal G \\ V(S) \cap V(K) \neq \emptyset, S \neq K } } \frac{ \Upsilon(S) \Upsilon(K) \cdot \mathtt P(S,K) }{ \gamma^{-\ell} N^{\ell} n^{3\ell-|E_{\bullet}(S)\cap E_{\bullet}(K)|-|E_{\circ}(S) \cap E_{\circ}(K)|} \beta_{\mathcal G}^2 }  \,,  \label{eq-var-Pb-f-mathcal-G-relax-3-Part-2}
\end{align}
We now bound \eqref{eq-var-Pb-f-mathcal-G-relax-3-Part-1} and \eqref{eq-var-Pb-f-mathcal-G-relax-3-Part-2} separately via the following lemma.

\begin{lemma}{\label{lem-detection-cov-most-technical}}
    Suppose that Assumption~\ref{assum-upper-bound} and Equation~\eqref{eq-prelim-assumption-upper-bound-cov} hold, and we choose $\ell$ according to \eqref{eq-condition-strong-detection}. Then we have
    \begin{align}
        & \eqref{eq-var-Pb-f-mathcal-G-relax-3-Part-1} \leq \tfrac{1+o(1)}{\gamma^{-\ell}\beta_{\mathcal G}} = o(1) \,.  \label{eq-bound-var-Pb-f-mathcal-G-relax-3-Part-1} \\
        & \eqref{eq-var-Pb-f-mathcal-G-relax-3-Part-2} \leq n^{-\frac{1}{2}+o(1)} = o(1) \,.  \label{eq-bound-var-Pb-f-mathcal-G-relax-3-Part-2}
    \end{align}
\end{lemma}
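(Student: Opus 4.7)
The proof plan for Lemma~\ref{lem-detection-cov-most-technical} mirrors the Wigner-case argument of Lemma~\ref{lem-detection-most-technical}, with the edge decorations and the two vertex parts of $\mathsf K_{n,N}$ tracked carefully.

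For \eqref{eq-bound-var-Pb-f-mathcal-G-relax-3-Part-1}, observe that when $S = K$ every symmetric difference and set difference appearing in \eqref{eq-def-mathtt-P} vanishes, and because a decorated cycle has no isolated vertex we have $V(S) = V(S_{\bullet}) \cup V(S_{\circ})$, so $\mathtt P(S, S) = 1$. Grouping by isomorphism class and applying \eqref{eq-enumerate-H-in-K-N,n} reduces the sum to
\begin{align*}
    [1+o(1)] \sum_{[H] \in \mathcal G} \frac{\Upsilon(H)^2 / |\mathsf{Aut}(H)|}{\gamma^{-\ell} \beta_{\mathcal G}^2} = \frac{1+o(1)}{\gamma^{-\ell} \beta_{\mathcal G}}\,.
\end{align*}
Lemma~\ref{lem-bound-beta-mathcal-G} together with $A_+ > (1+\delta)\gamma$ gives $\gamma^{-\ell}\beta_{\mathcal G} \geq D^{-1}\ell^{-2}(1+\delta)^\ell$, which diverges under \eqref{eq-condition-strong-detection}, establishing \eqref{eq-bound-var-Pb-f-mathcal-G-relax-3-Part-1}.

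The substantive step is \eqref{eq-bound-var-Pb-f-mathcal-G-relax-3-Part-2}. The plan is to stratify the sum by the joint structure of the pair $(S, K)$, i.e.\ by the isomorphism type of the edge-decorated graph $S \cup K$ together with the markings that distinguish $S$ and $K$. For a given type set $a = |V^{\mathsf a}(S) \cap V^{\mathsf a}(K)|$, $b = |V^{\mathsf b}(S) \cap V^{\mathsf b}(K)|$, $e_{\bullet} = |E_{\bullet}(S) \cap E_{\bullet}(K)|$, $e_{\circ} = |E_{\circ}(S) \cap E_{\circ}(K)|$. The number of labeled realizations of a given type is at most a symmetry factor times $n^{\ell}N^{\ell} \cdot n^{\ell-a}N^{\ell-b}$: $S$ first takes $n^\ell N^\ell/|\mathsf{Aut}(H)|$ labelings, and then the $\ell-a$ non-overlap vertices of $K$ in part $\mathsf a$ and $\ell-b$ in part $\mathsf b$ are placed freely while the overlap vertices are identified with prescribed elements of $V(S)$. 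Substituting $N = n/\gamma$ and dividing by the denominator $\gamma^{-\ell} N^\ell n^{3\ell - e_{\bullet} - e_{\circ}} \beta_{\mathcal G}^2$ leaves a net $n$-dependence of the form $\gamma^{b+O(1)} \cdot n^{e_{\bullet} + e_{\circ} - a - b}$ per overlap type.

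The combinatorial core of the argument is the inequality $e_{\bullet} + e_{\circ} - (a+b) \leq -1$ whenever $V(S) \cap V(K) \neq \emptyset$. Indeed, because both $S$ and $K$ are cycles, their common-edge set is a subgraph of a cycle, hence a disjoint union of paths and therefore a forest. Writing $W := V(S_{\bullet} \cap K_{\bullet}) \cup V(S_{\circ} \cap K_{\circ})$ with $c_W$ connected components, we have $e_{\bullet} + e_{\circ} \leq |W| - c_W$ and $|W| \leq a + b$. If $c_W \geq 1$ this forces the claimed inequality; otherwise $|W| = 0 < a + b$ and the inequality holds trivially since $e_{\bullet} + e_{\circ} = 0$. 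Additional contributions from vertices in $(V(S) \cap V(K)) \setminus W$ are absorbed by the factor $C^{|V(S)\cap V(K)| - |W|}$ appearing in $\mathtt P(S, K)$, which is bounded under Assumption~\ref{assum-upper-bound}. Since the number of overlap types is at most $\ell^{O(\ell)} = n^{o(1)}$ under \eqref{eq-condition-strong-detection}, the final bound $n^{-1+o(1)} \leq n^{-1/2 + o(1)}$ follows.

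The main obstacle is the bookkeeping of the $\lambda, \mu, \rho, C$ factors inside $\Upsilon(S) \Upsilon(K) \mathtt P(S, K)$ relative to the normalization $\beta_{\mathcal G}^2$. One must verify that after extracting the weight of the common-edge sub-forest, the remaining powers factor into two independent copies of the generating function whose sum equals $\beta_{\mathcal G}$, so that these terms cancel against the $\beta_{\mathcal G}^2$ denominator and leave only the desired polynomial decay. This is the bipartite-decorated analogue of the transfer-matrix computation used to prove Lemma~\ref{lem-bound-beta-mathcal-G}, with the added subtlety that $V^{\mathsf a}$ and $V^{\mathsf b}$ vertices carry distinct enumeration weights (sizes $n$ versus $N$). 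Because $\gamma = \Theta(1)$ and all other parameters are constants, the resulting weight mismatches are absorbed into $O(1)$ multiplicative constants that do not affect the polynomial-in-$n$ decay.
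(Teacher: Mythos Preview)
Your proof of \eqref{eq-bound-var-Pb-f-mathcal-G-relax-3-Part-1} is identical to the paper's. For \eqref{eq-bound-var-Pb-f-mathcal-G-relax-3-Part-2} your overall strategy (stratify by overlap, use a vertex--edge excess inequality, absorb the number of overlap types as $n^{o(1)}$) is also the paper's strategy, and your forest inequality $e_{\bullet}+e_{\circ}\le a+b-1$ is a correct alternative to the paper's bound $e_{\bullet}+e_{\circ}\le\min\{a,b\}$; both yield the needed $a+b-(e_{\bullet}+e_{\circ})\ge 1$.

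The one point where you diverge is exactly the one you flag as the ``main obstacle'': the bookkeeping of the $\lambda,\mu,\rho,C$ factors inside $\Upsilon(S)\Upsilon(K)\mathtt P(S,K)$. The paper dispatches this in one line \emph{before} any stratification, via the crude bound
\[
\mathtt P(S,K)\ \le\ n^{o(1)}\cdot\Upsilon(S)\,\Upsilon(K),
\]
obtained by noting that every exponent in \eqref{eq-def-mathtt-P} is at most $|V(S)\cap V(K)|\le 2\ell=o(\log n/\log\log n)$ and all bases are $O(1)$ constants (this is the Wishart analogue of \eqref{eq-trivial-bound-mathtt-M}). After this absorption the summand is simply $\Upsilon(S)^2\Upsilon(K)^2$, and the cancellation against $\beta_{\mathcal G}^2$ is immediate: fix $S$ and sum over $K$ by isomorphism class to recover one factor of $\beta_{\mathcal G}$, then sum over $S$ to recover the second. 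No transfer-matrix computation or delicate factorization is needed. So your plan is correct, but the obstacle you anticipate is an artifact of not exploiting $\ell=o(\log n/\log\log n)$ early enough.
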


The proof of Lemma~\ref{lem-detection-cov-most-technical} in postponed to Section~\ref{subsec:proof-lem-3.8} of the appendix. Clearly, plugging \eqref{eq-bound-var-Pb-f-mathcal-G-relax-3-Part-1} and \eqref{eq-bound-var-Pb-f-mathcal-G-relax-3-Part-2} into \eqref{eq-var-Pb-f-mathcal-G-relax-2} yields Lemma~\ref{lem-mean-var-f-mathcal-G-part-2}.

\subsection{Proof of Theorems~\ref{MAIN-THM-detection} and \ref{MAIN-THM-detection-cov}}{\label{subsec:proof-thm-2.4}}

With Propositions~\ref{main-prop-detection} and \ref{main-prop-detection-cov}, we are ready to prove Theorems~\ref{MAIN-THM-detection} and \ref{MAIN-THM-detection-cov}.
\begin{proof}[Proof of Theorems~\ref{MAIN-THM-detection} and \ref{MAIN-THM-detection-cov}]
    Under condition \eqref{eq-condition-strong-detection}, from Proposition~\ref{main-prop-detection} we get that
    \begin{align*}
        \mathbb E_{\Qb}[f_{\mathcal H}]=0, \ \mathbb E_{\Qb}[f_{\mathcal H}^2] = 1+o(1), \ \mathbb E_{\Pb}[f_{\mathcal H}] = \omega(1), \ \operatorname{Var}_{\Pb}[f_{\mathcal H}] = o(1) \cdot \mathbb E_{\Pb}[f_{\mathcal H}]^2 \,.
    \end{align*}
    Thus, for any constant $0<c<1$ we obtain
    \begin{align*}
        &\Pb\Big( f_{\mathcal H}(\bm Y) \leq c\cdot \mathbb E_{\Pb}[f_{\mathcal H}] \Big) \leq \frac{ \operatorname{Var}_{\Pb}[f_{\mathcal H}] }{ (1-c)^2 \mathbb E_{\Pb}[f_{\mathcal H}]^2 } = o(1) \,, \\
        &\Qb\Big( f_{\mathcal H}(\bm Y) \geq c\cdot \mathbb E_{\Pb}[f_{\mathcal H}] \Big) \leq \frac{ \operatorname{Var}_{\Qb}[f_{\mathcal H}] }{ c^2 \mathbb E_{\Pb}[f_{\mathcal H}]^2 } = o(1) \,. 
    \end{align*}
    We can derive Theorem~\ref{MAIN-THM-detection-cov} from Proposition~\ref{main-prop-detection-cov} in the same manner.
\end{proof}

\subsection{Proof of Proposition~\ref{main-prop-recovery}}{\label{subsec:proof-prop-2.10}}

\begin{lemma}{\label{lem-conditional-exp-given-endpoints}}
    We have 
    \begin{align*}
        \mathbb E_{\Pb}\big[ f_S(\bm X,\bm Y) \mid x_u,x_v \big] = n^{-\ell/2} \Xi(S) x_u x_v
    \end{align*}
    for all $[S] \in \mathcal J$ and $\mathsf L(S)=\{ u,v \}$.
\end{lemma}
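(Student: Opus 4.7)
The plan is a direct computation in two stages: first take expectation over the noise $(\bm W, \bm Z)$ conditionally on $(x,y)$, then take expectation over $(x_w, y_w)$ for every vertex $w \notin \{u, v\}$ using the moment conditions of Assumption~\ref{assum-upper-bound}(1) together with the structural assumption that $\mathsf{L}(S) \subset V_\bullet(S)$ for every $[S] \in \mathcal J$.

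First, I would condition on the full spike pair $(x,y)$. By \eqref{eq-def-correlated-spike-specific}, $\mathbb E_{\Pb}[\bm X_{i,j}\mid x,y] = \tfrac{\lambda}{\sqrt n} x_i x_j$ and $\mathbb E_{\Pb}[\bm Y_{i,j}\mid x,y] = \tfrac{\mu}{\sqrt n} y_i y_j$, and since $\bm W, \bm Z$ have independent entries above the diagonal, the edge factors in \eqref{eq-def-f-S} are conditionally independent. Hence
\begin{align*}
    \mathbb E_{\Pb}\big[f_S(\bm X,\bm Y)\mid x,y\big]
    = \tfrac{\lambda^{|E_\bullet(S)|}\mu^{|E_\circ(S)|}}{n^{\ell/2}}\,
      \prod_{(i,j)\in E_\bullet(S)} x_i x_j \prod_{(i,j)\in E_\circ(S)} y_i y_j\,,
\end{align*}
using that $|E(S)|=\ell$.

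Next I would regroup the product over edges into a product over vertices. For each $w\in V(S)$, let $d_\bullet(w)$ and $d_\circ(w)$ denote its degrees in $E_\bullet(S)$ and $E_\circ(S)$. Then the vertex factor contributed by $w$ is $x_w^{d_\bullet(w)} y_w^{d_\circ(w)}$. Since $S$ is a path with leaves $\{u,v\}$, every internal vertex has total degree $2$, and since $[S]\in\mathcal J$ forces $\mathsf L(S)\subset V_\bullet(S)$, the two leaves $u,v$ satisfy $d_\bullet = 1$, $d_\circ = 0$, contributing exactly $x_u x_v$. For an internal $w$, the vertex factor is $x_w^2$ (if $w\in V_\bullet(S)\setminus V_\circ(S)$), $y_w^2$ (if $w\in V_\circ(S)\setminus V_\bullet(S)$), or $x_w y_w$ (if $w\in\mathsf{diff}(S)$, recalling \eqref{eq-def-Dif(H)}).

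Finally, taking expectation over $\{(x_w,y_w)\}_{w\neq u,v}$, which are i.i.d.\ from $\pi_*$ and independent of $(x_u,x_v)$, I apply Assumption~\ref{assum-upper-bound}(1): each $x_w^2$ and $y_w^2$ factor integrates to $1$, and each $x_w y_w$ factor integrates to $\rho$. This leaves exactly one factor of $\rho$ for each $w\in\mathsf{diff}(S)$, yielding the overall prefactor $\rho^{|\mathsf{diff}(S)|}$. Combining with the $\tfrac{\lambda^{|E_\bullet(S)|}\mu^{|E_\circ(S)|}}{n^{\ell/2}}$ from the first step and recalling the definition \eqref{eq-def-Xi-S} of $\Xi(S)$ gives the claimed identity. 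There is no real obstacle here; the only point requiring care is the vertex-regrouping step, where the hypothesis $\mathsf L(S)\subset V_\bullet(S)$ is exactly what guarantees that the surviving factor at the leaves is $x_u x_v$ rather than $y_u y_v$ or a mixed term.
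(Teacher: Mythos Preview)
Your proposal is correct and follows essentially the same two-stage computation as the paper: first integrate out the Gaussian noise $(\bm W,\bm Z)$ conditionally on $(x,y)$, then regroup the resulting edge product into vertex factors and apply the moment conditions of $\pi_*$ at the internal vertices, using the structural property $\mathsf L(S)\subset V_\bullet(S)$ to ensure the leaf contribution is exactly $x_u x_v$. One small point of precision: since the conditioning is on $x_u,x_v$ only, in principle $y_u,y_v$ must also be integrated out, but this is harmless here because they do not appear in the vertex product (the leaves have $d_\circ=0$); it would be worth saying this explicitly.
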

\begin{proof}
    Denote $\pi_{u,v}$ be the law of $(x,y)$ given $x_u,x_v$. We then have
    \begin{align*}
        \mathbb E_{\Pb}\big[ f_S(\bm X,\bm Y) \mid x_u,x_v \big] = \mathbb E_{(x,y)\sim\pi_{u,v}}\Big[ \mathbb E\big[ f_S(\bm X,\bm Y) \mid x,y \big] \Big] \,.
    \end{align*}
    Recall \eqref{eq-def-f-S}, we have
    \begin{align*}
        \mathbb E\big[ f_S(\bm X,\bm Y) \mid x,y \big] &= \mathbb E\Big[ \prod_{(i,j) \in E_{\bullet}(S)} \big( \tfrac{\lambda}{\sqrt{n}} x_i x_j + \bm W_{i,j} \big) \prod_{(i,j) \in E_{\circ}(S)} \big( \tfrac{\mu}{\sqrt{n}} y_i y_j + \bm Z_{i,j} \big) \mid x,y \Big] \\
        &= n^{-\ell/2} \lambda^{|E_{\bullet}(S)|} \mu^{|E_{\circ}(S)|} \prod_{(i,j) \in E_{\bullet}(S)} \big( x_i x_j \big) \prod_{(i,j) \in E_{\circ}(S)} \big( y_i y_j \big) \,.
    \end{align*}
    In addition, recall \eqref{eq-def-V-1-V-2-decorated} and recall that for all $[S] \in \mathcal J$ with $\mathsf L(S)=\{ u,v \}$, we have $\{ u,v \} \in V_{\bullet}(S)$. Thus,
    \begin{align*}
        & \prod_{(i,j) \in E_{\bullet}(S)} \big( x_i x_j \big) \prod_{(i,j) \in E_{\circ}(S)} \big( y_i y_j \big) \\
        =\ & x_u x_v \cdot \prod_{ i \in V(S) \setminus \{ u,v \} } \big( x_i^2 \mathbf 1_{ \{ i \in V_{\bullet}(S) \setminus \mathsf{diff}(S) \} } + y_i^2 \mathbf 1_{ \{ i \in V_{\circ}(S) \setminus \mathsf{diff}(S) \} } + x_i y_i \mathbf 1_{ \{ i \in \mathsf{diff}(S) \} } \big) \,.
    \end{align*}
    This yields that
    \begin{align*}
        \mathbb E_{(x,y) \sim \pi_{u,v}}\Big[ \prod_{(i,j) \in E_{\bullet}(S)} \big( x_i x_j \big) \prod_{(i,j) \in E_{\circ}(S)} \big( y_i y_j \big) \Big] = \rho^{|\mathsf{diff}(S)|} x_u x_v \,, 
    \end{align*}
    leading to the desired result (recall \eqref{eq-def-Xi-S}).
\end{proof}

Based on Lemma~\ref{lem-conditional-exp-given-endpoints}, we see that 
\begin{align*}
    \mathbb E_{\Pb}[f_S(\bm Y)]=0 \mbox{ for all } [S] \in \mathcal J, \mathsf L(S)=\{ u,v \} \Longrightarrow \mathbb E_{\Pb}\big[ \Phi_{u,v}^{\mathcal J} \big]=0 \,.
\end{align*}
In addition, we have
\begin{align}
    &\mathbb E_{\Pb}\big[ \Phi^{\mathcal J}_{u,v} \mid x_u,x_v \big] \overset{\eqref{eq-def-Phi-i,j-mathcal-J}}{=} \frac{ 1 }{ n^{\frac{\ell}{2}-1} \beta_{\mathcal J}  } \sum_{[H] \in \mathcal J} \sum_{ \substack{ S \subset \mathsf K_n: S \cong H \\ \mathsf L(S)=\{ u,v \} } } \Xi(H)^2 n^{-\ell/2} x_u x_v \nonumber \\
    =\ & \frac{ x_u x_v }{ n^{\ell-1} \beta_{\mathcal J} } \sum_{[H] \in \mathcal J} \Xi(H)^2 x_u x_v \cdot \#\big\{ S \subset \mathsf K_n: S \cong H, \mathsf L(S)=\{ u,v \} \big\} \nonumber \\
    =\ & \frac{ x_u x_v }{ n^{\ell-1} \beta_{\mathcal J} } \sum_{[H] \in \mathcal J} \Xi(H)^2 \cdot \frac{ n^{\ell-1} }{ |\mathsf{Aut}(H)| }  \overset{\eqref{eq-def-beta-mathcal-J}}{=} (1+o(1)) x_u x_v \,.  \label{eq-conditional-mean-Phi-i,j}
\end{align}
We now bound the variance of $\Phi_{i,j}^{\mathcal J}$ via the following lemma.
\begin{lemma}{\label{lem-conditional-var-Phi-i,j}}
    Suppose that Assumption~\ref{assum-upper-bound} and Equation~\ref{eq-prelim-assumption-upper-bound} hold and we choose $\ell$ according to \eqref{eq-condition-weak-recovery}. Then
    \begin{equation}{\label{eq-conditional-var-Phi-i,j}}
        \mathbb E_{\Pb}\Big[ \big( \Phi_{u,v}^{\mathcal J} \big)^2 \Big], \ \mathbb E_{\Pb}\Big[ \big( \Phi_{u,v}^{\mathcal J} \big)^2 x_u^2 x_v^2 \Big] \leq O_{\lambda,\mu,\rho,\delta}(1)  \,.
    \end{equation}
\end{lemma}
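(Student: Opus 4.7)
The plan is to adapt the variance analysis of Lemma~\ref{lem-mean-var-f-H-part-2} to the path setting, with leaves fixed at $\{u,v\}$. First I would expand
\begin{align*}
\mathbb E_{\Pb}\big[(\Phi_{u,v}^{\mathcal J})^2\big] = \frac{1}{n^{\ell-2}\beta_{\mathcal J}^2} \sum_{[H],[I] \in \mathcal J} \Xi(H)\Xi(I) \sum_{\substack{S \cong H,\, K \cong I \\ \mathsf L(S)=\mathsf L(K)=\{u,v\}}} \mathbb E_{\Pb}\big[f_S f_K\big]
\end{align*}
and derive a moment bound on $\mathbb E_{\Pb}[f_S f_K]$ analogous to Lemma~\ref{lem-est-cov-f-S-f-K}. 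Conditioning on $(x,y)$ factorizes $f_S f_K$ over edges: shared edges contribute $\mathbb E[\bm X_{ij}^2\mid x] = 1 + \tfrac{\lambda^2}{n} x_i^2 x_j^2$ (and the $\circ$-analogue), while unique edges contribute $\tfrac{\lambda}{\sqrt n} x_i x_j$ (and the $\circ$-analogue). Expanding the shared-edge products and integrating against the prior yields a bound of the shape $\mathbb E_{\Pb}[f_S f_K] \leq n^{-\ell + |E_\bullet(S)\cap E_\bullet(K)| + |E_\circ(S)\cap E_\circ(K)|} \cdot \mathtt M(S,K) \cdot R_{u,v}(S,K)$, where $\mathtt M(S,K)$ is as in \eqref{eq-def-mathtt-M} and $R_{u,v}(S,K)$ is a prior-expectation factor localized at the fixed endpoints, controlled by Assumption~\ref{assum-upper-bound}(2).

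Next, I would split the double sum into the diagonal part $S = K$ and the off-diagonal part $S \neq K$. Using the enumeration $\#\{S \cong H:\mathsf L(S) = \{u,v\}\} = (1+o(1)) n^{\ell-1}/|\mathsf{Aut}(H)|$ (as in the derivation of \eqref{eq-conditional-mean-Phi-i,j}) together with Lemma~\ref{lem-bound-beta-mathcal-J} and the choice $(1+\delta)^{\ell} \geq n^2$, the diagonal contribution is of order $n/\beta_{\mathcal J} = O(n^{-1}) = o(1)$. The $O(1)$ main term comes instead from the off-diagonal pairs $(S, K)$ with $V(S) \cap V(K) = \{u, v\}$, i.e., those with disjoint interiors: for such pairs one has $\mathbb E_{\Pb}[f_S f_K] = \mathbb E[\mathbb E[f_S\mid x,y]\cdot\mathbb E[f_K\mid x,y]] = n^{-\ell}\Xi(S)\Xi(K)\,\mathbb E[x_u^2 x_v^2]$ by Lemma~\ref{lem-conditional-exp-given-endpoints}, and summing these yields exactly $(1+o(1))\mathbb E[x_u^2 x_v^2] = 1+o(1)$, which matches the lower bound from $\operatorname{Var}(\mathbb E[\Phi_{u,v}^{\mathcal J} \mid x_u, x_v]) = 1+o(1)$. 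All remaining off-diagonal pairs with nontrivial interior overlap would be shown to contribute $o(1)$ via a combinatorial enumeration that parallels Lemma~\ref{lem-detection-most-technical}\eqref{eq-bound-var-Pb-f-H-relax-3-Part-2}.

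The main obstacle is precisely this final off-diagonal enumeration: one must classify all overlap patterns between two decorated paths sharing the endpoints $\{u, v\}$, weight each by $\mathtt M(S, K)$ and the endpoint factor $R_{u,v}(S,K)$, and verify that each additional shared interior vertex costs a factor of $n^{-\Omega(1)}$ that beats the combinatorial growth generated by such overlaps after dividing by $\beta_{\mathcal J}^2$. The endpoint constraint modifies the effective automorphism groups and introduces boundary degree conditions not present in the cycle case of Lemma~\ref{lem-detection-most-technical}, but the combinatorial reduction through the decorated multigraph $S \cup K$ from Section~\ref{subsec:proof-prop-2.3} carries over. Finally, for the weighted moment $\mathbb E_{\Pb}[(\Phi_{u,v}^{\mathcal J})^2 x_u^2 x_v^2]$, the same strategy applies with an additional factor $x_u^2 x_v^2$ inside the prior expectation at the endpoints; these extra moments (at most $x_u^4$ or $x_u^2 y_u^2$) are bounded by Assumption~\ref{assum-upper-bound}(2), yielding the same $O_{\lambda,\mu,\rho,\delta}(1)$ bound.
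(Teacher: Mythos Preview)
Your high-level structure---expand the square, bound $\mathbb E_{\Pb}[f_S f_K]$ via a path analogue of Lemma~\ref{lem-est-cov-f-S-f-K}, then enumerate overlap patterns---matches the paper, and your diagonal estimate $n/\beta_{\mathcal J}=o(1)$ is correct. But the off-diagonal enumeration has a real gap.

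You propose that the argument ``parallels Lemma~\ref{lem-detection-most-technical}'' and that ``the combinatorial reduction from Section~\ref{subsec:proof-prop-2.3} carries over.'' It does not. That argument uses the detection regime $\ell=o(\tfrac{\log n}{\log\log n})$ in two essential places: the crude bound $\mathtt M(S,K)\le n^{o(1)}\Xi(S)\Xi(K)$ in \eqref{eq-trivial-bound-mathtt-M} relies on $(C\lambda^{-2}\mu^{-2}\rho^{-2})^{|V(S)\cap V(K)|}\le n^{o(1)}$, and the enumeration after \eqref{eq-var-Pb-f-H-relax-3-Part-2-simplify-3} relies on $\ell^{2\ell}=n^{o(1)}$. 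Under \eqref{eq-condition-weak-recovery} you have $\ell=\Theta(\log n)$, so the first becomes $n^{\Theta(1)}$ and the second $n^{\Theta(\log\log n)}$; both blow up and the detection bookkeeping cannot close.

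The paper replaces this with a structural decomposition specific to paths sharing both endpoints: it records where along $S$ and along $K$ the shared vertex-segments sit via sequences $\mathsf{SEQ}(S,K),\overline{\mathsf{SEQ}}(S,K)$ (equations~\eqref{eq-def-SEQ(S,K)}--\eqref{eq-def-sigma(S,K)}), then proves a multiplicative factorization of $\Xi(S)\Xi(K)\mathtt M(S,K)$ over these segments (Lemma~\ref{lem-bound-Xi-mathtt-M}) with only a $(\rho^{-8}C)^{\mathtt T}$ penalty in the number $\mathtt T$ of segments. Each segment factor is then controlled by Lemma~\ref{lem-contribution-sub-chain} as $Dn^{\mathtt b-\mathtt a-1}A_+^{\mathtt b-\mathtt a}$, giving geometric decay in $A_+$ rather than $n^{o(1)}$ slop. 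This is the ingredient missing from your plan.

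A secondary correction: the ``nontrivial interior overlap'' terms do \emph{not} contribute $o(1)$. In the paper's $\mathtt T=2$ case the initial/final shared segments have lengths $\mathtt q-1$ and $\ell+1-\mathtt p$ ranging over all nonnegative integers, and the total is the convergent geometric series $\sum_{\mathtt q,\mathtt p}A_+^{-(\mathtt q-1)-(\ell+1-\mathtt p)}=O_{\lambda,\mu,\rho,\delta}(1)$; only the single summand $\mathtt q=1,\mathtt p=\ell+1$ is your disjoint-interior case. This does not break the lemma (you only need $O(1)$), but it shows the tripartite split diagonal/disjoint-interior/rest is too coarse to isolate an $o(1)$ remainder.
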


Based on Lemma~\ref{lem-conditional-var-Phi-i,j}, we can now finish the proof of Proposition~\ref{main-prop-recovery}.
\begin{proof}[Proof of Proposition~\ref{main-prop-recovery} assuming Lemma~\ref{lem-conditional-var-Phi-i,j}]
    Note that \eqref{eq-conditional-mean-Phi-i,j} immediately implies that $\mathbb E_{\Pb}[ \Phi^{\mathcal J}_{i,j} \cdot x_i x_j] = 1+o(1)$. Combined with Lemma~\ref{lem-conditional-var-Phi-i,j}, we have shown Proposition~\ref{main-prop-recovery}.
\end{proof}

The rest part of this subsection is devoted to the proof of Lemma~\ref{lem-conditional-var-Phi-i,j}. We will only show how bound $\mathbb E_{\Pb}[ (\Phi_{u,v}^{\mathcal J})^2 x_u^2 x_v^2 ]$ and $\mathbb E_{\Pb}[ (\Phi_{u,v}^{\mathcal J})^2 ]$ can be bounded in the same manner. Recall \eqref{eq-def-Phi-i,j-mathcal-J}, we have
\begin{align}
    \mathbb E_{\Pb}\Big[ \big( \Phi_{u,v}^{\mathcal J} \big)^2 x_u^2 x_v^2 \Big] = \sum_{ \substack{ [S],[K] \in \mathcal J \\ \mathsf L(S)=\mathsf L(K)=\{ i,j \} } } \frac{ \Xi(S) \Xi(K) }{ n^{\ell-2} \beta_{\mathcal J}^2 } \mathbb E_{\Pb}\big[ f_S f_K \cdot x_u^2 x_v^2 \big] \,.  \label{eq-var-Phi-i,j-relax-1}
\end{align}

\begin{lemma}{\label{lem-est-cov-f-S-f-K-chain-case}}
    Recall \eqref{eq-def-E-1-E-2-decorated} and \eqref{eq-def-mathtt-M}. Suppose that Assumption~\ref{assum-upper-bound} holds, we have 
    \begin{align*}
        \mathbb E_{\Pb}\big[ f_S f_K \cdot x_u^2 x_v^2 \big] \leq C^2 n^{ -\ell+|E_{\bullet}(S)\cap E_{\bullet}(K)| +|E_{\circ}(S) \cap E_{\circ}(K)| } \mathtt M(S,K)  \,.
    \end{align*} 
\end{lemma}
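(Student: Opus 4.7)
The plan is to adapt the proof of Lemma~\ref{lem-est-cov-f-S-f-K} (the covariance bound in Section~\ref{subsec:proof-lem-3.3} of the appendix), reusing its vertex- and edge-counting bookkeeping and absorbing the extra factor $x_u^2 x_v^2$ into the per-vertex moment estimate at the two leaves $u$ and $v$. Because $\mathsf L(S)=\mathsf L(K)=\{u,v\}$ automatically forces $\{u,v\} \subseteq V(S) \cap V(K)$, the indicator $\mathbf 1_{\{V(S) \cap V(K) \neq \emptyset\}}$ present in Lemma~\ref{lem-est-cov-f-S-f-K} becomes redundant, and the only genuinely new ingredient is a $C^2$ prefactor generated by the two extra $x^2$-insertions.

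First I would condition on $(x,y)$ and integrate out the independent Gaussian noises $\bm W,\bm Z$. Since $f_S f_K$ is a product over the edge positions in $E(S)\cup E(K)$ with multiplicities determined by the four subsets $E_{\bullet}(S),E_{\circ}(S),E_{\bullet}(K),E_{\circ}(K)$, the conditional expectation factorizes over these positions: doubled monochromatic edges contribute $1+\tfrac{\lambda^2}{n}x_i^2 x_j^2$ (or the $\mu,y$ analogue); single edges contribute $\tfrac{\lambda}{\sqrt n}x_i x_j$ (or $\tfrac{\mu}{\sqrt n} y_i y_j$); and mixed-color coincident edges contribute $\tfrac{\lambda\mu}{n}x_i x_j y_i y_j$. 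Upper-bounding each doubled-edge factor by its constant term $1$ and keeping only the monomial contributions from the single and mixed edges yields the power $n^{-\ell+|E_{\bullet}(S)\cap E_{\bullet}(K)|+|E_{\circ}(S)\cap E_{\circ}(K)|}$ together with the prefactor $\lambda^{|E_{\bullet}(S)\triangle E_{\bullet}(K)|}\mu^{|E_{\circ}(S)\triangle E_{\circ}(K)|}$ appearing in $\mathtt M(S,K)$.

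Next I would take expectation over $(x,y)\sim\pi_*^{\otimes n}$ with the inserted factor $x_u^2 x_v^2$. Because $\pi_*$ factorizes across coordinates, the surviving $x,y$-monomial factorizes across vertices of $V(S)\cup V(K)$, reducing the bound to a product of per-vertex second and fourth moments. A vertex $w \notin V(S)\cap V(K)$ contributes $\mathbb E[X^2]=1$, $\mathbb E[Y^2]=1$, or $\mathbb E[XY]=\rho$, the last case occurring exactly when $w\in\mathsf{diff}(S)\setminus V(K)$ or $w\in\mathsf{diff}(K)\setminus V(S)$; this produces the factor $\rho^{|\mathsf{diff}(S)\setminus V(K)|+|\mathsf{diff}(K)\setminus V(S)|}$ of $\mathtt M(S,K)$. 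A shared vertex lying in $V(S_{\bullet}\cap K_{\bullet})\cup V(S_{\circ}\cap K_{\circ})$ also contributes a pure second-moment factor (at most $1$), while any other shared vertex has a misaligned pattern of incident decorations across $S$ and $K$ and therefore accumulates a monomial of order three or four in $(x_w,y_w)$, bounded via Assumption~\ref{assum-upper-bound}(2) by one factor of $C$. This gives the combinatorial power $C^{|V(S)\cap V(K)|-|V(S_{\bullet}\cap K_{\bullet})\cup V(S_{\circ}\cap K_{\circ})|}$ in $\mathtt M(S,K)$. Finally, the inserted $x_u^2 x_v^2$ raises the monomial order at the leaves $u,v$ by two; since $u,v$ both lie in $V_{\bullet}(S)\cap V_{\bullet}(K)$, the resulting per-leaf expectation is at worst $\mathbb E[X^4]\leq C$ (or a Cauchy--Schwarz controlled mixed moment of the same order), producing the advertised overall prefactor $C^2$.

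The main obstacle I anticipate is the delicate classification of shared vertices into those whose incident decorations are aligned (yielding a pure second-moment, no $C$ incurred) versus misaligned (requiring one factor of $C$), and verifying that this classification matches precisely the set $V(S)\cap V(K) \setminus \big(V(S_{\bullet}\cap K_{\bullet}) \cup V(S_{\circ}\cap K_{\circ})\big)$ appearing in the definition of $\mathtt M(S,K)$. This step is inherited essentially verbatim from the proof of Lemma~\ref{lem-est-cov-f-S-f-K}; once it is in place, the bound for Lemma~\ref{lem-est-cov-f-S-f-K-chain-case} assembles mechanically by multiplying the factorized expressions above and recording the two extra moments at $u$ and $v$.
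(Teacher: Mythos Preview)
Your approach matches the paper's: adapt the proof of Lemma~\ref{lem-est-cov-f-S-f-K}, absorbing the extra $x_u^2 x_v^2$ at the two shared leaves and noting that since $u,v$ have degree $1$ in each path (hence degree $2$ in $S\cup K$), the two inserted squares push the per-leaf moment order only up to $4$, still controlled by Assumption~\ref{assum-upper-bound}(2) and yielding the extra $C^2$.

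The one step that does not hold as written is ``upper-bounding each doubled-edge factor by its constant term $1$'': since $1+\tfrac{\lambda^2}{n}x_i^2x_j^2\ge 1$ pointwise, this is not an upper bound, and there is no deterministic sup-norm on $x_i$ in Assumption~\ref{assum-upper-bound} that would let you absorb the correction trivially. What the paper actually does (already in the proof of Lemma~\ref{lem-est-cov-f-S-f-K} you are invoking) is expand $\prod(1+\tfrac{\lambda^2}{n}x_i^2x_j^2)$ as a sum over subsets $\mathtt E$ of $E_\bullet(S)\cap E_\bullet(K)$ (and similarly $\mathtt F$), bound each term's per-vertex moment using the same case analysis---with at most $2|\mathtt E|+2|\mathtt F|$ additional vertices pushed from second- to fourth-order---and then sum to get a $(1+o(1))$ multiplicative correction. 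Once you replace the truncation by this expansion, your per-vertex classification (including the $C^{|V(S)\cap V(K)|-|V(S_\bullet\cap K_\bullet)\cup V(S_\circ\cap K_\circ)|}$ accounting) and the treatment of the leaves go through exactly as in the paper.
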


The proof of Lemma~\ref{lem-est-cov-f-S-f-K-chain-case} is postponed to Section~\ref{subsec:proof-lem-3.11} of the appendix. To this end, plugging Lemma~\ref{lem-est-cov-f-S-f-K-chain-case} into \eqref{eq-var-Phi-i,j-relax-2}, we have
\begin{align}
    \eqref{eq-var-Phi-i,j-relax-1} &\leq \frac{ C^2 }{ n^{\ell-2} \beta_{\mathcal J}^2 } \sum_{ \substack{ [S], [K] \in \mathcal J \\ \mathsf L(S)=\mathsf L(K)=\{ i,j \} } } n^{ -\ell+|E_{\bullet}(S)\cap E_{\bullet}(K)| +|E_{\circ}(S) \cap E_{\circ}(K)| } \Xi(S)\Xi(K) \mathtt M(S,K) \nonumber  \\
    &= \frac{ C^2 }{ n^{2\ell-2} \beta_{\mathcal J}^2 } \sum_{ \substack{ [S], [K] \in \mathcal J \\ \mathsf L(S)=\mathsf L(K)=\{ i,j \} } } n^{ |E_{\bullet}(S)\cap E_{\bullet}(K)| +|E_{\circ}(S) \cap E_{\circ}(K)| } \Xi(S)\Xi(K) \mathtt M(S,K) \,.  \label{eq-var-Phi-i,j-relax-2}
\end{align}
We now introduce some notations. For $S,K \subset \mathsf K_n$ with $[S],[K] \in \mathcal J(\ell)$, denote 
\begin{equation}{\label{eq-decomposition-S}}
    V(S) = \big\{ v_1,\ldots,v_{\ell+1} \big\}, \ E(S)=\{ (v_1,v_2), \ldots, (v_{\ell},v_{\ell+1}) \} \mbox{ with } v_{1}=u, v_{\ell+1}=v 
\end{equation}
and 
\begin{equation}{\label{eq-decomposition-K}}
    V(K) = \big\{ v_1',\ldots,v_{\ell+1}' \big\}, \ E(K)=\{ (v_1',v_2'), \ldots, (v_{\ell}',v_{\ell+1}') \} \mbox{ with } v_{1}'=u, v_{\ell+1}'=v \,.
\end{equation}
In addition, define $v_{[\mathtt a,\mathtt b]}=\{ v_{\mathtt a}, \ldots, v_{\mathtt b} \}$ and define $S_{[\mathtt a,\mathtt b]}$ to be the subgraph of $S$ induced by $v_{[\mathtt a,\mathtt b]}$. Define $v'_{[\mathtt a,\mathtt b]}$ and $K_{[\mathtt a,\mathtt b]}$ in the similar manner. In addition, given two paths $(S,K)$ with length $\ell$ and $\mathsf L(S)=\mathsf L(K)$, we can uniquely determine a sequence $\mathsf{SEQ}(S,K)=(\mathtt p_1,\mathtt q_1,\ldots,\mathtt p_{\mathtt T}, \mathtt q_{\mathtt T})$ such that
\begin{equation}{\label{eq-def-SEQ(S,K)}}
    \begin{aligned}
        & 1 = \mathtt p_1 \leq \mathtt q_1 < \mathtt p_2 \leq \mathtt q_2 \ldots < \mathtt p_{\mathtt T} \leq \mathtt q_{\mathtt T} = \ell+1 \,, \\
        & V(S) \cap V(K) = v_{ [\mathtt p_1,\mathtt q_1] } \cup \ldots \cup v_{ [\mathtt p_{\mathtt T},\mathtt q_{\mathtt T}] } \,.
    \end{aligned}
\end{equation}
(if $V(S)=V(K)$ we simply let $\mathtt T=1$). Then, it is straightforward to check that there exists a unique sequence $\overline{\mathsf{SEQ}}(S,K)=(\mathtt p_1',\mathtt q_1',\ldots,\mathtt p_{\mathtt T}', \mathtt q_{\mathtt T}')$ such that
\begin{equation}{\label{eq-def-overline-SEQ(S,K)}}
    \begin{aligned}
        & \mathtt p_1'=1, \mathtt q_1'= \mathtt q_1, \mathtt p_{\mathtt T}'=\mathtt p_{\mathtt T}, \mathtt q_{\mathtt T}'=\ell+1 \,, \\
        & \{ \mathtt q_1' - \mathtt p_1', \ldots, \mathtt q_{\mathtt T}'-\mathtt p_{\mathtt T}' \} = \{ \mathtt q_1 - \mathtt p_1, \ldots, \mathtt q_{\mathtt T}-p_{\mathtt T} \} \,,   \\
        & V(S) \cap V(K) = v_{ [\mathtt p_1',\mathtt q_1'] }' \cup \ldots \cup v_{ [\mathtt p_{\mathtt T}',\mathtt q_{\mathtt T}'] }' \,.
    \end{aligned}
\end{equation} 
Finally, we define 
\begin{equation}{\label{eq-def-sigma(S,K)}}
    \begin{aligned}
        & \sigma=\sigma(S,K) \in \{ 0,1 \}^{ [\mathtt p_1,\mathtt q_1-1] \cup \ldots [\mathtt p_{\mathtt T},\mathtt q_{\mathtt T}-1] }: \\
        & \sigma(\mathtt i)= \mathbf 1_{ \{ (v_{\mathtt i},v_{\mathtt i+1}) \in E_{\bullet}(S) \cap E_{\bullet}(K) \} } + \mathbf 1_{ \{ (v_{\mathtt i},v_{\mathtt i+1}) \in E_{\circ}(S) \cap E_{\circ}(K) \} }
    \end{aligned}
\end{equation}
Clearly we have that $(S,K)$ is uniquely determined by 
\begin{align*}
    & \mathsf{SEQ}(S,K), \quad \overline{\mathsf{SEQ}}(S,K), \quad \sigma(S,K), \quad S_{\cap} = \big( S_{[\mathtt p_1,\mathtt q_1]}, \ldots, S_{[\mathtt p_{\mathtt T},\mathtt q_{\mathtt T}]} \big) \,; \\ 
    & S_{\setminus} = \big( S_{[\mathtt q_1,\mathtt p_2]}, \ldots, S_{[\mathtt q_{\mathtt T-1}, \mathtt p_{\mathtt T}]} \big), \quad K_{\setminus} = \big( K_{[\mathtt q_1',\mathtt p_2']}, \ldots, K_{[\mathtt q_{\mathtt T-1}',\mathtt p_{\mathtt T}']} \big) \,. 
\end{align*}
The next lemma shows how to bound $\Xi(S)$, $\Xi(K)$ and $\mathtt M(S,K)$ using the quantity above. Let 
\begin{equation}{\label{eq-def-aleph(sigma)}}
    \aleph(\sigma) = \#\big\{ \mathtt i: \sigma(\mathtt i)=0 \big\} \,.
\end{equation}

\begin{lemma}{\label{lem-bound-Xi-mathtt-M}}
    We have 
    \begin{align}
        & \Xi(S) \leq \rho^{-2\mathtt T} \cdot \Xi(S_{[\mathtt p_1,\mathtt q_1]}) \ldots \Xi(S_{[\mathtt p_{\mathtt T},\mathtt q_{\mathtt T}]}) \cdot \Xi( S_{[\mathtt q_1,\mathtt p_2]} ) \ldots \Xi( S_{[\mathtt q_{\mathtt T-1},\mathtt p_{\mathtt T}]} ) \,, \label{eq-bound-Xi(S)} \\
        & \Xi(K) \leq \rho^{-2\mathtt T} \cdot \Xi(K_{[\mathtt p'_0,\mathtt q_0']}) \ldots \Xi(K_{[\mathtt p_{\mathtt T}',\mathtt q_{\mathtt T}']}) \cdot \Xi( K_{[\mathtt q_1',\mathtt p_2']} ) \Xi( K_{[\mathtt q_{\mathtt T-1}',\mathtt p_{\mathtt T}']} )  \,,  \label{eq-bound-Xi(K)} \\
        & \mathtt M(S,K) \leq \rho^{-4\mathtt T} C^{\aleph(\sigma)+\mathtt T} \cdot \Xi(S_{[\mathtt q_1,\mathtt p_2]}) \Xi(K_{[\mathtt q_1',\mathtt p_2']}) \ldots \Xi(S_{[\mathtt q_{\mathtt T-1},\mathtt p_{\mathtt T}]}) \Xi(K_{[\mathtt q_{\mathtt T-1}',\mathtt p_{\mathtt T}']}) \,. \label{eq-bound-mathtt-M(S,K)}
    \end{align}
\end{lemma}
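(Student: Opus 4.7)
The plan is to exploit the decomposition of $S$ and $K$ into the intersection sub-paths $S_{[\mathtt p_t,\mathtt q_t]}$ and the non-intersection sub-paths $S_{[\mathtt q_t,\mathtt p_{t+1}]}$ (and analogously for $K$), then bound each multiplicative factor of $\Xi$ and of $\mathtt M$ across this decomposition. The key structural observation is that each of the $2(\mathtt T-1)$ interior junction vertices is a leaf of both adjacent sub-paths even though it has degree $2$ in $S$; hence junctions never contribute to the $\mathsf{diff}$-count of any sub-path. For (\ref{eq-bound-Xi(S)})--(\ref{eq-bound-Xi(K)}), edge counts split additively, $|E_\bullet(S)|=\sum_t|E_\bullet(S_{[\mathtt p_t,\mathtt q_t]})|+\sum_t|E_\bullet(S_{[\mathtt q_t,\mathtt p_{t+1}]})|$ and similarly for $E_\circ$; every non-junction vertex lies in a unique sub-path with the same local edges, so its $\mathsf{diff}$-status is preserved, while junctions fail to contribute to any sub-path's $\mathsf{diff}$. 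Therefore $\sum|\mathsf{diff}(\text{sub-path})|\le |\mathsf{diff}(S)|$, and since $\rho\le 1$, $\rho^{|\mathsf{diff}(S)|}\le\rho^{\sum|\mathsf{diff}(\text{sub-path})|}$. Combining yields $\Xi(S)\le\prod\Xi(\text{sub-path})$, which is even stronger than claimed since $\rho^{-2\mathtt T}\ge 1$ is pure slack; the case of $K$ is identical.

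For (\ref{eq-bound-mathtt-M(S,K)}), I bound each factor of $\mathtt M(S,K)$ separately. Any edge of a non-intersection sub-path of $S$ has an endpoint outside $V(K)$, so it lies in $E(S)\setminus E(K)\subset E(S)\triangle E(K)$; the same holds for $K$, and these contributions are disjoint. Thus $|E_\bullet(S)\triangle E_\bullet(K)|\ge\sum_t|E_\bullet(S_{[\mathtt q_t,\mathtt p_{t+1}]})|+\sum_t|E_\bullet(K_{[\mathtt q_t',\mathtt p_{t+1}']})|$, and analogously for $\circ$, so using $\lambda,\mu\le 1$ we dominate the $\lambda$- and $\mu$-factors of $\mathtt M(S,K)$ by the corresponding factors of $\prod_t\Xi(S_{[\mathtt q_t,\mathtt p_{t+1}]})\Xi(K_{[\mathtt q_t',\mathtt p_{t+1}']})$. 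For the $\rho$-factor, one verifies $|\mathsf{diff}(S)\setminus V(K)|=\sum_t|\mathsf{diff}(S_{[\mathtt q_t,\mathtt p_{t+1}]})|$ exactly, since an interior vertex of a non-intersection sub-path of $S$ has identical local edge sets in $S$ and in the sub-path, and junctions (being leaves of sub-paths) belong to neither side of this equality; the analogous identity holds for $K$.

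The main obstacle is bounding the $C$-exponent $|V(S)\cap V(K)| - |V(S_\bullet\cap K_\bullet)\cup V(S_\circ\cap K_\circ)|$ by $\aleph(\sigma)+\mathtt T$. This exponent counts ``strictly mismatched'' vertices $v\in V(S)\cap V(K)$ whose incident edges in $S$ use only one of $\{\bullet,\circ\}$ while those in $K$ use only the other. I will handle such vertices by a charging scheme: if $v$ is interior to an intersection range of $S$, then at least one of its two $S$-edges must disagree in color with the corresponding $K$-edge at $v$, forcing $\sigma(\mathtt i)=0$ at a neighboring position $\mathtt i$; each $\sigma=0$ position is adjacent to at most two vertices, producing a contribution of at most $2\aleph(\sigma)$. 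Vertices sitting at an intersection-range boundary in either $S$ or $K$ number at most $2\mathtt T$, contributing an additional $O(\mathtt T)$. The generous prefactor $\rho^{-4\mathtt T}$ provides enough slack to absorb any remaining junction effects (at most $2(\mathtt T-1)$ per path, where $\mathsf{diff}$ of the whole path picks up vertices that contribute to no sub-path). The delicate bookkeeping required to make the charging scheme airtight — particularly for vertices that are at the boundary of an intersection range in one path but interior to one in the other — is the only real technical difficulty of the proof.
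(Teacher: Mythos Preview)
Your argument for \eqref{eq-bound-Xi(S)}, \eqref{eq-bound-Xi(K)}, and the $\lambda,\mu,\rho$ factors in \eqref{eq-bound-mathtt-M(S,K)} is correct and in fact sharper than the paper's: since junction vertices are leaves of both adjacent sub-paths, they never contribute to any sub-path's $\mathsf{diff}$, so $\sum|\mathsf{diff}(\text{sub-path})|\le|\mathsf{diff}(S)|$ and likewise $|\mathsf{diff}(S)\setminus V(K)|=\sum_t|\mathsf{diff}(S_{[\mathtt q_t,\mathtt p_{t+1}]})|$ exactly. The paper instead proves the looser bounds $|\mathsf{diff}(S)|\ge\sum|\mathsf{diff}(\text{sub-path})|-2\mathtt T$ and $|\mathsf{diff}(S)\setminus V(K)|\ge\sum_t|\mathsf{diff}(S_{[\mathtt q_t,\mathtt p_{t+1}]})|-2\mathtt T$, which is where the $\rho^{-2\mathtt T}$ and $\rho^{-4\mathtt T}$ come from; you are right that they are slack.

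The gap is in the $C$-exponent. Your charging scheme yields only $2\aleph(\sigma)+O(\mathtt T)$, not $\aleph(\sigma)+\mathtt T$, and the ``delicate bookkeeping'' you allude to is never carried out. But no bookkeeping is needed at all. The paper's argument is a single line: if $\sigma(\mathtt i)=1$, then the edge $(v_{\mathtt i},v_{\mathtt i+1})$ lies in $E_\bullet(S)\cap E_\bullet(K)$ or in $E_\circ(S)\cap E_\circ(K)$, and in either case $v_{\mathtt i}\in V(S_\bullet\cap K_\bullet)\cup V(S_\circ\cap K_\circ)$. Thus a mismatched vertex $v_{\mathtt i}$ forces $\sigma(\mathtt i)\ne 1$, meaning either $\sigma(\mathtt i)=0$ or $\mathtt i=\mathtt q_t$ for some $t$; summing gives the exponent $\le\aleph(\sigma)+\mathtt T$ directly. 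Your two-sided charging (``a neighboring position'') is what loses the factor of $2$; consistently charging $v_{\mathtt i}\mapsto\mathtt i$ would recover the paper's bound, but you do not say this.
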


The proof of Lemma~\ref{lem-bound-Xi-mathtt-M} is postponed to Section~\ref{subsec:proof-lem-3.12} of the appendix. In addition, note that \eqref{eq-def-SEQ(S,K)} and \eqref{eq-def-overline-SEQ(S,K)} implies $\Xi(K_{[\mathtt p'_{\mathtt t}+1,\mathtt q_{\mathtt t}']})=\Xi(S_{[\mathtt p_{\mathtt t}+1,\mathtt q_{\mathtt t}]})$ for $1 \leq \mathtt t \leq \mathtt T$. Thus, using Lemma~\ref{lem-bound-Xi-mathtt-M}, we have $\Xi(S)\Xi(K)\mathtt M(S,K)$ is upper-bounded by 
\begin{align*}
     & (\rho^{-8}C)^{\mathtt T} C^{\aleph(\sigma)} \prod_{1\leq \mathtt t \leq \mathtt T} \Xi(S_{[\mathtt p_{\mathtt t},\mathtt q_{\mathtt t}]})^2 \cdot \prod_{1\leq \mathtt t \leq \mathtt T-1} \Xi(S_{[\mathtt q_{\mathtt t},\mathtt p_{\mathtt t+1}]})^2 \prod_{1\leq \mathtt t \leq \mathtt T-1} \Xi(K_{[\mathtt q'_{\mathtt t},\mathtt p_{\mathtt t+1}']})^2 \\
     =\ & (\rho^{-8}C)^{\mathtt T} C^{\aleph(\sigma)} \Xi(S_{\cap})^2 \Xi(S_{\setminus})^2 \Xi(K_{\setminus})^2 \,.
\end{align*}
In addition, note that 
\begin{equation}{\label{eq-transfrom-E_1-E_2}}
    |E_{\bullet}(S) \cap E_{\bullet}(K)| + |E_{\circ}(S) \cap E_{\circ}(K)| = \sum_{ 1 \leq \mathtt t \leq \mathtt T } (\mathtt q_{\mathtt t}-\mathtt p_{\mathtt t}) - \aleph(\sigma) \,.
\end{equation}
We can write \eqref{eq-var-Phi-i,j-relax-2} as 
\begin{align}
    \eqref{eq-var-Phi-i,j-relax-2} &\leq \sum_{\mathtt T \geq 1} \sum_{ \substack{ \mathsf{SEQ}, \overline{\mathsf{SEQ}} \\ S_{\cap}, S_{\setminus}, K_{\setminus}, \sigma } } \frac{ (\rho^{-8}C)^{\mathtt T} C^{\aleph(\sigma)} n^{ \sum_{ 1 \leq \mathtt t \leq \mathtt T } (\mathtt q_{\mathtt t}-\mathtt p_{\mathtt t}) - \aleph(\sigma) } }{ n^{2\ell-2} \beta_{\mathcal J}^2 } \cdot \Xi(S_{\cap})^2 \Xi(S_{\setminus})^2 \Xi(K_{\setminus})^2 \nonumber  \\ 
    &\leq [1+o(1)] \cdot \sum_{\mathtt T \geq 1} \sum_{ \substack{ \mathsf{SEQ}, \overline{\mathsf{SEQ}} \\ S_{\cap}, S_{\setminus}, K_{\setminus} } } \frac{ (\rho^{-8}C)^{\mathtt T} n^{ \sum_{ 1 \leq \mathtt t \leq \mathtt T } (\mathtt q_{\mathtt t}-\mathtt p_{\mathtt t}) } }{ n^{2\ell-2} \beta_{\mathcal J}^2 } \cdot \Xi(S_{\cap})^2 \Xi(S_{\setminus})^2 \Xi(K_{\setminus})^2 \,.  \label{eq-var-Phi-i,j-relax-3}
\end{align}
We will bound \eqref{eq-var-Phi-i,j-relax-3} via the following lemma, thus completing the proof of Lemma~\ref{lem-conditional-var-Phi-i,j}. The proof of the following lemma is postponed to Section~\ref{subsec:proof-lem-3.13} of the appendix.

\begin{lemma}{\label{lem-recovery-most-technical}}
    Suppose that Assumption~\ref{assum-upper-bound} and Equation~\eqref{eq-prelim-assumption-upper-bound} hold and we choose $\ell$ according to \eqref{eq-condition-weak-recovery}. Then $\eqref{eq-var-Phi-i,j-relax-3} \leq O_{\lambda,\mu,\rho,\delta}(1)$. 
\end{lemma}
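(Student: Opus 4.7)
The plan is to estimate \eqref{eq-var-Phi-i,j-relax-3} in three stages: reduce the inner sum over labeled realizations $(S_{\cap}, S_{\setminus}, K_{\setminus})$ for fixed structural data $(\mathtt T, \mathsf{SEQ}, \overline{\mathsf{SEQ}})$ to transfer-matrix products, count the structures for fixed $(\mathtt T, a)$ where $a = \sum_t (\mathtt q_t - \mathtt p_t)$, and then split by $\mathtt T$. For the first stage, a direct vertex count yields $|V(S) \cup V(K)| = 2\ell + 2 - a - \mathtt T$, so pinning $u, v$ leaves at most $n^{2\ell - a - \mathtt T}$ labelings. For each labeling, the sum of $\Xi(S_\cap)^2 \Xi(S_\setminus)^2 \Xi(K_\setminus)^2$ over edge decorations factorizes over the $\mathtt T$ common and $2(\mathtt T - 1)$ disjoint segments. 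For a single path of length $k$, the decoration sum $w_k$ satisfies the transfer-matrix recursion driven by the $2 \times 2$ matrix
\[
  M \;=\; \begin{pmatrix} \lambda^2 & \rho^2 \lambda^2 \\ \rho^2 \mu^2 & \mu^2 \end{pmatrix},
\]
whose top eigenvalue equals exactly $A_+$ from \eqref{eq-def-A-+}; hence $w_k \leq D_0 A_+^k$ for some $D_0 = D_0(\lambda, \mu, \rho)$. Multiplying these bounds, combining with the $n^a / (n^{2\ell - 2} \beta_{\mathcal J}^2)$ prefactor, and using $\beta_{\mathcal J} \asymp A_+^{\ell}$ from Lemma~\ref{lem-bound-beta-mathcal-J}, the per-structure summand is at most $C_1^{\mathtt T}\, n^{2 - \mathtt T}\, A_+^{-a}$ for some $C_1 = C_1(\lambda, \mu, \rho)$.

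For the remaining stages, stars-and-bars gives at most $(a + \mathtt T)^{\mathtt T - 1}\,\ell^{2 \mathtt T - 4}$ pairs $(\mathsf{SEQ}, \overline{\mathsf{SEQ}})$ compatible with fixed $(\mathtt T, a)$ (from $\binom{a + \mathtt T - 1}{\mathtt T - 1}$ compositions of $a$, $\binom{\ell - a - 1}{\mathtt T - 2}$ compositions of $\ell - a$ into positive parts for each of $S$ and $K$, and at most $(\mathtt T - 2)!$ permutations of the interior common-segment lengths). I then split by $\mathtt T$. The case $\mathtt T = 1$ forces $a = \ell$ and $S = K$, contributing $\lesssim n/A_+^{\ell} \lesssim n/(1+\delta)^{\ell} \leq 1/n = o(1)$ via \eqref{eq-condition-weak-recovery} and $A_+ > 1 + \delta$ from Lemma~\ref{lem-bound-beta-mathcal-J}. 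The case $\mathtt T = 2$ gives $n^{2-\mathtt T} = 1$ and structure count $a + 1$, so the $a$-sum $C_1^2 \sum_{a \geq 0} (a+1) A_+^{-a} = O(1)$ converges geometrically. For $\mathtt T \geq 3$, performing the $a$-sum first yields $\sum_a (a+\mathtt T)^{\mathtt T-1} A_+^{-a} \leq (\mathtt T - 1)!\,(A_+/(A_+-1))^{\mathtt T}$, so the per-$\mathtt T$ bound is $C_1^{\mathtt T}\,\mathtt T^{O(\mathtt T)}\,\ell^{2\mathtt T}\,n^{2-\mathtt T}$; since $\ell = O(\log n)$ and $\mathtt T \leq \ell + 1$, this is $n^{2 - \mathtt T + o(1)}$, and summing the resulting geometric series over $\mathtt T \geq 3$ gives $o(1)$. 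Combining the three regimes yields the desired $O_{\lambda,\mu,\rho,\delta}(1)$ bound.

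The delicate point is the sharpness of the per-segment transfer-matrix bound $w_k \leq D_0 A_+^k$: the growth rate must equal \emph{exactly} $A_+$, so that $\prod_{\text{segments}} A_+^{\text{length}} = A_+^{a + 2b} = A_+^{2\ell - a}$ cancels cleanly against $\beta_{\mathcal J}^2 \asymp A_+^{2\ell}$, leaving only the harmless factor $A_+^{-a}$ for the geometric summation. Any slack in this rate would be compounded $\ell$ times and destroy the $\mathtt T = 1$ contribution, which is only $o(1)$ thanks to the tight condition $(1+\delta)^{\ell} \geq n^2$ in \eqref{eq-condition-weak-recovery}. This identification of $A_+$ as the top eigenvalue of $M$ is the same computation that underlies Lemmas~\ref{lem-bound-beta-mathcal-H} and \ref{lem-bound-beta-mathcal-J}, and can be imported directly.
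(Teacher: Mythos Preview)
Your proposal is correct and takes essentially the same approach as the paper. Both split by $\mathtt T$, use the transfer-matrix/$A_+$ growth rate for per-segment decoration sums, and establish that $\mathtt T=1$ contributes $o(1)$ (via $A_+^\ell \geq (1+\delta)^\ell \geq n^2$), $\mathtt T=2$ contributes $O_{\lambda,\mu,\rho,\delta}(1)$ (the dominant term, via a convergent geometric sum in $a$), and $\mathtt T\geq 3$ contributes $o(1)$ (via the $n^{2-\mathtt T}$ decay swamping the polylogarithmic structure count). The only organizational difference is that the paper packages the per-segment bound into an auxiliary lemma that jointly handles vertex labelings and decorations (bounding $\sum_{S_{[\mathtt a,\mathtt b]}} \Xi(S_{[\mathtt a,\mathtt b]})^2 \leq D\, n^{\mathtt b-\mathtt a-1+\mathbf 1_{\mathtt b=\mathtt a}} A_+^{\mathtt b-\mathtt a}$ for fixed endpoints), and then assembles segments by first choosing the joint vertices $(v_{\mathtt p_t}, v_{\mathtt q_t})$, whereas you count all $|V(S)\cup V(K)|-2 = 2\ell-a-\mathtt T$ free vertices globally and then sum decorations. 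These two bookkeepings are equivalent and yield the same exponents.
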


\subsection{Proof of Proposition~\ref{main-prop-recovery-cov}}{\label{subsec:proof-prop-2.14}}

\begin{lemma}{\label{lem-conditional-exp-given-endpoints-cov}}
    We have 
    \begin{align*}
        \mathbb E_{\overline\Pb}\big[ h_S(\bm X,\bm Y) \mid x_u,x_v \big] = n^{-\ell} \Upsilon(S) x_u x_v
    \end{align*}
    for all $[S] \in \mathcal I$ and $\mathsf L(S)=\{ u,v \}$.
\end{lemma}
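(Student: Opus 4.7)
The plan is to imitate the proof of Lemma~\ref{lem-conditional-exp-given-endpoints} step-for-step, with one extra layer of integration to handle the two Gaussian vectors $\bm u,\bm v$ that appear in the Wishart model. Letting $\pi_{u,v}$ denote the conditional law of $(x,y)$ given $(x_u,x_v)$, I would first apply the tower property to write
\begin{align*}
    \mathbb E_{\overline\Pb}\big[ h_S(\bm X,\bm Y) \mid x_u,x_v \big]
    = \mathbb E_{(x,y)\sim \pi_{u,v}}\,\mathbb E_{\bm u,\bm v}\Big[\,\mathbb E\big[ h_S(\bm X,\bm Y) \mid x,y,\bm u,\bm v \big]\,\Big] \,,
\end{align*}
and then kill the Gaussian noise $\bm W,\bm Z$ exactly as in \eqref{eq-mean-Pb-f-S-cov}, reducing the innermost expectation to the ``signal only'' product
\begin{align*}
n^{-\ell}\,\lambda^{|E_{\bullet}(S)|/2}\,\mu^{|E_{\circ}(S)|/2}\,\prod_{(i,j)\in E_{\bullet}(S)} x_i\bm u_j\,\prod_{(i,j)\in E_{\circ}(S)} y_i\bm v_j \,.
\end{align*}
Note that $[S]\in\mathcal I$ forces $\mathsf L(S)\subset V^{\mathsf a}(S)$, so $S$ has even length $2\ell$, giving exactly $2\ell$ edges and explaining the $n^{-\ell}$ prefactor.

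Next I would regroup this product along the bipartition $V(S)=V^{\mathsf a}(S)\sqcup V^{\mathsf b}(S)$. Since $\mathsf L(S)=\{u,v\}\subset V^{\mathsf a}_{\bullet}(S)$ and $\mathsf{diff}(S)\subset V^{\mathsf a}(S)$, every internal $i\in V^{\mathsf a}(S)\setminus\{u,v\}$ has degree two and contributes exactly one of $x_i^2$, $y_i^2$, or $x_iy_i$ according to whether $i\in V^{\mathsf a}_{\bullet}\setminus V^{\mathsf a}_{\circ}$, $i\in V^{\mathsf a}_{\circ}\setminus V^{\mathsf a}_{\bullet}$, or $i\in\mathsf{diff}(S)$. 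Every $j\in V^{\mathsf b}(S)$ is automatically internal and has both incident edges sharing the same decoration, so it contributes either $\bm u_j^2$ or $\bm v_j^2$. The two leaves pull out the explicit factor $x_ux_v$.

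Finally, I would take the expectation factor-by-factor. The identities $\mathbb E[\bm u_j^2]=\mathbb E[\bm v_j^2]=1$ make every $V^{\mathsf b}$ factor collapse to $1$; the moment identities from Assumption~\ref{assum-upper-bound} give $\mathbb E_{\pi_{u,v}}[x_i^2]=\mathbb E_{\pi_{u,v}}[y_i^2]=1$ and $\mathbb E_{\pi_{u,v}}[x_iy_i]=\rho$ for $i\neq u,v$, so the non-$\mathsf{diff}$ internal $V^{\mathsf a}$ factors collapse to $1$ as well, while the $|\mathsf{diff}(S)|$ diff-vertices together contribute $\rho^{|\mathsf{diff}(S)|}$. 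Combining these contributions with the prefactor and unfolding \eqref{eq-def-Upsilon-S} yields the claimed identity $\mathbb E_{\overline\Pb}[h_S\mid x_u,x_v]=n^{-\ell}\Upsilon(S)x_ux_v$.

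The only place requiring care, and essentially the one point of departure from the Wigner argument, is the bookkeeping at $V^{\mathsf b}(S)$: one must observe that the hypothesis $\mathsf{diff}(S)\subset V^{\mathsf a}(S)$ is precisely what prevents any mixed $\bm u_j\bm v_j$ term from appearing (otherwise a $V^{\mathsf b}$ vertex would have one $\bullet$ edge and one $\circ$ edge and thus lie in $\mathsf{diff}(S)$). Because $\bm u,\bm v$ are independent, such a mixed term would vanish in expectation rather than produce a factor of $\rho$, so this structural observation is exactly what makes the $V^{\mathsf b}$-factorization collapse cleanly. Once this is checked, the rest is a routine factor-by-factor computation.
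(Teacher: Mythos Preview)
Your proposal is correct and follows essentially the same approach as the paper's proof: tower property to condition on $(x,y,\bm u,\bm v)$, strip the Gaussian noise as in \eqref{eq-mean-Pb-f-S-cov}, regroup the remaining signal product vertex-by-vertex using $\mathsf L(S)\subset V^{\mathsf a}_{\bullet}(S)$ and $\mathsf{diff}(S)\subset V^{\mathsf a}(S)$, and then compute the factor-by-factor expectation. Your explicit remark that $\mathsf{diff}(S)\subset V^{\mathsf a}(S)$ is precisely what rules out any mixed $\bm u_j\bm v_j$ term at a $V^{\mathsf b}$ vertex is a helpful clarification that the paper leaves implicit.
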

\begin{proof}
    Denote $\pi_{u,v}$ be the law of $(x,y)$ given $x_u,x_v$. We then have
    \begin{align*}
        \mathbb E_{\overline\Pb}\big[ h_S(\bm X,\bm Y) \mid x_u,x_v \big] = \mathbb E_{(x,y)\sim\pi_{u,v},(\bm u,\bm v)}\Big[ \mathbb E_{\overline{\Pb}}\big[ h_S(\bm X,\bm Y) \mid x,y,\bm u,\bm v \big] \Big] \,.
    \end{align*}
    Recall \eqref{eq-def-f-S-cov}, we have
    \begin{align*}
        \mathbb E_{\overline\Pb}\big[ h_S(\bm X,\bm Y) \mid x,y,\bm u,\bm v \big] &= \mathbb E_{\bm Z,\bm W}\Big[ \prod_{(i,j) \in E_{\bullet}(S)} \big( \tfrac{\sqrt{\lambda}}{\sqrt{n}} x_i x_j + \bm W_{i,j} \big) \prod_{(i,j) \in E_{\circ}(S)} \big( \tfrac{\sqrt{\mu}}{\sqrt{n}} y_i y_j + \bm Z_{i,j} \big) \Big] \\
        &= n^{-\ell} \lambda^{\frac{1}{2}|E_{\bullet}(S)|} \mu^{\frac{1}{2}|E_{\circ}(S)|} \prod_{(i,j) \in E_{\bullet}(S)} \big( x_i \bm u_j \big) \prod_{(i,j) \in E_{\circ}(S)} \big( y_i \bm v_j \big) \,.
    \end{align*}
    In addition, recall \eqref{eq-def-V-1-V-2-decorated} and recall that for all $[S] \in \mathcal J$ with $\mathsf L(S)=\{ u,v \}$, we have $\{ u,v \} \in V_{\bullet}(S)$ and $\mathsf{diff}(S) \subset V^{\mathsf a}(S)$. Thus,
    \begin{align*}
        & \prod_{(i,j) \in E_{\bullet}(S)} \big( x_i \bm u_j \big) \prod_{(i,j) \in E_{\circ}(S)} \big( y_i \bm v_j \big) \\
        =\ & x_u x_v \cdot \prod_{ i \in V^{\mathsf a}(S) \setminus \{ u,v \} } \big( x_i^2 \mathbf 1_{ \{ i \in V_{\bullet}(S) \setminus \mathsf{diff}(S) \} } + y_i^2 \mathbf 1_{ \{ i \in V_{\circ}(S) \setminus \mathsf{diff}(S) \} } + x_i y_i \mathbf 1_{ \{ i \in \mathsf{diff}(S) \} } \big) \\
        & \cdot \prod_{ i \in V^{\mathsf b}(S) } \big( \bm u_i^2 \mathbf 1_{ \{ i \in V_{\bullet}(S) \} } + \bm v_i^2 \mathbf 1_{ \{ i \in V_{\circ}(S) \} }  \big) \,.
    \end{align*}
    This yields that
    \begin{align*}
        \mathbb E_{(x,y) \sim \pi_{u,v}}\Big[ \prod_{(i,j) \in E_{\bullet}(S)} \big( x_i \bm u_j \big) \prod_{(i,j) \in E_{\circ}(S)} \big( y_i \bm v_j \big) \Big] = \rho^{|\mathsf{diff}(S)|} x_u x_v \,, 
    \end{align*}
    leading to the desired result (recall \eqref{eq-def-Upsilon-S}).
\end{proof}

Based on Lemma~\ref{lem-conditional-exp-given-endpoints-cov}, we see that 
\begin{align*}
    \mathbb E_{\overline\Pb}[h_S(\bm Y)]=0 \mbox{ for all } [S] \in \mathcal I, \mathsf L(S)=\{ u,v \} \Longrightarrow \mathbb E_{\overline\Pb}\big[ \Psi_{u,v}^{\mathcal I} \big]=0 \,.
\end{align*}
In addition, we have
\begin{align}
    &\mathbb E_{\overline\Pb}\big[ \Psi^{\mathcal I}_{u,v} \mid x_u,x_v \big] \overset{\eqref{eq-def-Phi-i,j-mathcal-I}}{=} \frac{ 1 }{ N^{\ell} n^{-1} \beta_{\mathcal I} } \sum_{[H] \in \mathcal I} \sum_{ \substack{ S \subset \mathsf K_{n,N}: S \cong H \\ \mathsf L(S)=\{ u,v \} } } \Upsilon(J)^2 n^{-\ell} x_u x_v \nonumber \\
    =\ & \frac{ x_u x_v }{ N^{\ell} n^{\ell-1} \beta_{\mathcal I} } \sum_{[H] \in \mathcal I} \Xi(H)^2 x_u x_v \cdot \#\big\{ S \subset \mathsf K_{n,N}: S \cong H, \mathsf L(S)=\{ u,v \} \big\} \nonumber \\
    =\ & \frac{ (1+o(1))x_u x_v }{ N^{\ell} n^{\ell-1} \beta_{\mathcal I} } \sum_{[H] \in \mathcal I} \Upsilon(J)^2 \cdot \frac{ N^{\ell} n^{\ell-1} }{ |\mathsf{Aut}(H)| }  \overset{\eqref{eq-def-beta-mathcal-I}}{=} (1+o(1)) x_u x_v \,.  \label{eq-conditional-mean-Phi-i,j-mathcal-I}
\end{align}
We now bound the variance of $\Psi_{i,j}^{\mathcal I}$ via the following lemma.
\begin{lemma}{\label{lem-conditional-var-Phi-i,j-mathcal-I}}
    Suppose that Assumption~\ref{assum-upper-bound} and Equation~\ref{eq-prelim-assumption-upper-bound} hold and we choose $\ell$ according to \eqref{eq-condition-weak-recovery}. Then
    \begin{equation}{\label{eq-conditional-var-Phi-i,j-mathcal-I}}
        \mathbb E_{\overline\Pb}\Big[ \big( \Psi_{u,v}^{\mathcal I} \big)^2 \Big], \ \mathbb E_{\overline\Pb}\Big[ \big( \Psi_{u,v}^{\mathcal I} \big)^2 x_u^2 x_v^2 \Big] \leq O_{\lambda,\mu,\rho}(1)  \,.
    \end{equation}
\end{lemma}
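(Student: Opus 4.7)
\textbf{Proof plan for Lemma~\ref{lem-conditional-var-Phi-i,j-mathcal-I}.} The argument follows the same template used to establish Lemma~\ref{lem-conditional-var-Phi-i,j} in the Wigner setting, with bipartite paths replacing paths in $\mathsf K_n$. I will focus on bounding $\mathbb E_{\overline\Pb}[(\Psi_{u,v}^{\mathcal I})^2 x_u^2 x_v^2]$; the other bound is analogous (and only easier). By \eqref{eq-def-Phi-i,j-mathcal-I},
\begin{align*}
\mathbb E_{\overline\Pb}\Big[\big(\Psi_{u,v}^{\mathcal I}\big)^2 x_u^2 x_v^2\Big] \;=\; \sum_{\substack{[S],[K]\in\mathcal I \\ \mathsf L(S)=\mathsf L(K)=\{u,v\}}} \frac{\Upsilon(S)\Upsilon(K)}{N^{2\ell}n^{-2}\,\beta_{\mathcal I}^2}\;\mathbb E_{\overline\Pb}\big[h_S h_K\cdot x_u^2 x_v^2\big]\,.
\end{align*}

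The first step is a bipartite analogue of Lemma~\ref{lem-est-cov-f-S-f-K-chain-case}: I will show
\begin{align*}
\mathbb E_{\overline\Pb}\big[h_S h_K \cdot x_u^2 x_v^2\big] \;\leq\; C'\cdot n^{-2\ell+|E_{\bullet}(S)\cap E_{\bullet}(K)|+|E_{\circ}(S)\cap E_{\circ}(K)|}\cdot \mathtt P(S,K)\,.
\end{align*}
The proof mirrors that of Lemma~\ref{lem-est-cov-f-S-f-K-cov}: condition on $(x,y,\bm u,\bm v)$, expand each factor in $h_Sh_K$, and use that the Gaussian noise forces every edge in $E(S)\triangle E(K)$ to contribute its planted mean $\tfrac{\sqrt{\lambda}}{\sqrt n}x_i\bm u_j$ or $\tfrac{\sqrt\mu}{\sqrt n}y_i\bm v_j$; the remaining moments in $x,y,\bm u,\bm v$ are controlled by the fourth-moment bound in Assumption~\ref{assum-upper-bound}, and the extra factor $2^{|V(S)\cap V(K)|-\cdots}$ in $\mathtt P$ versus $\mathtt M$ absorbs the second moments of the Gaussian vectors $\bm u,\bm v$ arising from the $V^{\mathsf b}$-side.

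Second, I decompose each ordered pair $(S,K)$ of bipartite paths sharing endpoints $\{u,v\}$ via $\mathsf{SEQ}(S,K)$, $\overline{\mathsf{SEQ}}(S,K)$, $\sigma(S,K)$, together with the shared pieces $S_{\cap}$ and the disjoint pieces $S_{\setminus},K_{\setminus}$, exactly as in \eqref{eq-def-SEQ(S,K)}--\eqref{eq-def-sigma(S,K)}. The only novelty is that the cutting vertices $v_{\mathtt p_{\mathtt t}},v_{\mathtt q_{\mathtt t}}$ must respect bipartiteness, so each connected fragment in the decomposition is itself a decorated bipartite path in a smaller $\mathcal I_{\star\star}$-family. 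With this in place the bipartite counterpart of Lemma~\ref{lem-bound-Xi-mathtt-M} yields
\begin{align*}
\Upsilon(S)\Upsilon(K)\,\mathtt P(S,K)\;\leq\;(\rho^{-8}\cdot 2C)^{\mathtt T}\,C^{\aleph(\sigma)}\cdot \Upsilon(S_{\cap})^2\,\Upsilon(S_{\setminus})^2\,\Upsilon(K_{\setminus})^2\,,
\end{align*}
and the identity \eqref{eq-transfrom-E_1-E_2} transfers verbatim.

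Plugging these into the double sum and summing first over $\sigma$ produces an expression analogous to \eqref{eq-var-Phi-i,j-relax-3}, with the normalization $n^{2\ell-2}\beta_{\mathcal J}^2$ replaced by $N^{2\ell}n^{-2}\beta_{\mathcal I}^2$. For each choice of $\mathsf{SEQ},\overline{\mathsf{SEQ}}$, summing the disjoint fragments $S_{\setminus},K_{\setminus}$ over bipartite paths of prescribed lengths gives (by the very calculation underlying Lemma~\ref{lem-bound-beta-mathcal-I}) a contribution $O(A_+^{|V(S_{\setminus})|-\mathtt T+|V(K_{\setminus})|-\mathtt T})$ times the matching $(nN)$-factor, while the shared piece $S_{\cap}$ contributes $O(A_+^{|V(S_\cap)|-\mathtt T})\cdot (nN)^{(|V(S_\cap)|-\mathtt T)/2}$. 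These combine so that each unit of fragment length contributes $(A_+/\gamma)^{\pm}$ against the denominator $\beta_{\mathcal I}^2\asymp A_+^{2\ell}$, leaving a genuine geometric decay of rate $(A_+/\gamma)^{-\Omega(1)}$ per extra cut; the condition $A_+>(1+\delta)\gamma$ furnished by Lemma~\ref{lem-bound-beta-mathcal-I} then makes the total sum (over $\mathtt T\geq 1$ and the sequences) $O_{\lambda,\mu,\rho,\gamma}(1)$, which is the bipartite analogue of Lemma~\ref{lem-recovery-most-technical}.

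\textbf{Main obstacle.} The delicate point is the bipartite bookkeeping: one must correctly track which of $\mathtt p_{\mathtt t},\mathtt q_{\mathtt t}$ sit in $V^{\mathsf a}$ versus $V^{\mathsf b}$ so that (i) each fragment has the correct ratio of $n$-vertices to $N$-vertices (giving the right $(nN)$-normalization per fragment), and (ii) each fragment still lies in some $\mathcal I_{\star\star}$-family so that its weighted count matches the $\beta$-quantities controlled by Lemma~\ref{lem-bound-beta-mathcal-I}. The constraint $\mathsf{diff}(H)\subset V^{\mathsf a}(H)$ and the requirement $\mathsf L(S)\subset V_{\bullet}(S)\cap V^{\mathsf a}(S)$ are essential for the junction compatibility; once they are respected, the rest of the argument is a direct transcription of the Wigner analysis carried out in Section~\ref{subsec:proof-prop-2.10}.
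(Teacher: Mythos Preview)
Your proposal is correct and follows essentially the same route as the paper: expand the second moment, prove the bipartite analogue of the $x_u^2x_v^2$-weighted covariance bound (the paper's Lemma~\ref{lem-est-cov-f-S-f-K-chain-case-cov}), reuse the $\mathsf{SEQ}/\overline{\mathsf{SEQ}}/\sigma$ decomposition to control $\Upsilon(S)\Upsilon(K)\mathtt P(S,K)$ (the paper's Lemma~\ref{lem-bound-Upsilon-mathtt-P}), and finish with a geometric summation driven by $A_+/\gamma>1+\delta$ (the paper's Lemma~\ref{lem-recovery-most-technical-cov} via Lemma~\ref{lem-contribution-sub-chain-cov}). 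Your ``main obstacle'' is exactly the point the paper handles by splitting the fragment vertex count into $m_{\mathsf{odd}}$ and $m_{\mathsf{even}}$ in the proof of Lemma~\ref{lem-contribution-sub-chain-cov}, so that each sub-chain lands in some $\mathcal I_{\star\star}$-family and its enumeration picks up the correct $n^{\cdots}N^{\cdots}$ factor.
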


Based on Lemma~\ref{lem-conditional-var-Phi-i,j-mathcal-I}, we can now finish the proof of Proposition~\ref{main-prop-recovery-cov}.

\begin{proof}[Proof of Proposition~\ref{main-prop-recovery-cov} assuming Lemma~\ref{lem-conditional-var-Phi-i,j-mathcal-I}]
    Note that \eqref{eq-conditional-mean-Phi-i,j-mathcal-I} immediately implies that $\mathbb E_{\overline\Pb}[ \Psi^{\mathcal I}_{i,j} \cdot x_i x_j] = 1+o(1)$. Combined with Lemma~\ref{lem-conditional-var-Phi-i,j-mathcal-I}, we have shown Proposition~\ref{main-prop-recovery-cov}.
\end{proof}

The rest part of this subsection is devoted to the proof of Lemma~\ref{lem-conditional-var-Phi-i,j-mathcal-I}. We will only show how bound $\mathbb E_{\overline\Pb}[ (\Psi_{u,v}^{\mathcal I})^2 x_u^2 x_v^2 ]$ and $\mathbb E_{\overline\Pb}[ (\Psi_{u,v}^{\mathcal I})^2 ]$ can be bounded in the same manner. Recall \eqref{eq-def-Phi-i,j-mathcal-I}, we have
\begin{align}
    \mathbb E_{\overline\Pb}\Big[ \big( \Psi_{u,v}^{\mathcal I} \big)^2 x_u^2 x_v^2 \Big] = \sum_{ \substack{ [S],[K] \in \mathcal I \\ \mathsf L(S)=\mathsf L(K)=\{ i,j \} } } \frac{ \Upsilon(S) \Upsilon(K) }{ N^{2\ell} n^{-2} \beta_{\mathcal I}^2 } \mathbb E_{\overline\Pb}\big[ h_S h_K \cdot x_u^2 x_v^2 \big] \,.  \label{eq-var-Phi-i,j-mathcal-I-relax-1}
\end{align}

\begin{lemma}{\label{lem-est-cov-f-S-f-K-chain-case-cov}}
    Recall \eqref{eq-def-E-1-E-2-decorated} and \eqref{eq-def-mathtt-P}. Suppose that Assumption~\ref{assum-upper-bound} holds, we have 
    \begin{align*}
        \mathbb E_{\overline\Pb}\big[ h_S h_K \cdot x_u^2 x_v^2 \big] \leq C^2 n^{ -2\ell+|E_{\bullet}(S)\cap E_{\bullet}(K)| +|E_{\circ}(S) \cap E_{\circ}(K)| } \mathtt P(S,K)  \,.
    \end{align*} 
\end{lemma}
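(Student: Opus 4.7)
The plan is to mirror the strategy of Lemma~\ref{lem-est-cov-f-S-f-K-cov}, and track how the extra factor $x_u^2 x_v^2$ perturbs the bound. First I would condition on $(x, y, \bm u, \bm v)$ and compute $\mathbb E_{\overline\Pb}[h_S h_K \mid x,y,\bm u,\bm v]$ by partitioning $E(S) \cup E(K)$ into the following edge classes: common $\bullet$-edges (each contributing $1 + \tfrac{\lambda}{n}(x_i \bm u_j)^2$); common $\circ$-edges (each contributing $1 + \tfrac{\mu}{n}(y_i \bm v_j)^2$); singly-covered $\bullet$-edges, i.e., those in $E_{\bullet}(S) \triangle E_{\bullet}(K)$ (contributing $\tfrac{\sqrt{\lambda}}{\sqrt n} x_i \bm u_j$); singly-covered $\circ$-edges (contributing $\tfrac{\sqrt{\mu}}{\sqrt n} y_i \bm v_j$); and ``mismatched'' edges appearing as $\bullet$ in one of $S,K$ and as $\circ$ in the other (contributing $\tfrac{\sqrt{\lambda\mu}}{n} x_i \bm u_j y_i \bm v_j$).

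Second I would expand the $(1+\cdot)$ factors so that the conditional expectation becomes a sum of monomials in $(x_i, y_i, \bm u_j, \bm v_j)$ weighted by appropriate powers of $\lambda, \mu, n^{-1/2}$. Integrating out the independent standard Gaussians $(\bm u, \bm v)$, only terms in which every $\bm u_j$ and $\bm v_j$ appears to an even power survive; since paths in $\mathcal I$ have maximum degree $2$ at every vertex, every surviving Gaussian moment is bounded by an absolute constant, and the incompatibility of decorations at a shared vertex $j \in V^{\mathsf b}(S) \cap V^{\mathsf b}(K)$ feeds into the combinatorial factor $(2C)^{|V(S) \cap V(K)| - |V(S_{\bullet} \cap K_{\bullet}) \cup V(S_{\circ} \cap K_{\circ})|}$ in $\mathtt P(S, K)$. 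Next I would integrate out $(x, y) \sim \pi$: for each $i \in V^{\mathsf a}(S) \cup V^{\mathsf a}(K)$ with $i \notin \{u, v\}$, the joint degree of $x_i$ and $y_i$ is at most $4$, and Assumption~\ref{assum-upper-bound} gives bounded moments, while the identity $\mathbb E[x_i y_i] = \rho$ produces a factor $\rho$ at each vertex of $\mathsf{diff}(S) \setminus V(K)$ and $\mathsf{diff}(K) \setminus V(S)$, matching the exponent of $\rho$ in $\mathtt P(S, K)$.

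The new effect specific to this lemma arises at the endpoints: since $u, v \in V_{\bullet}(S) \cap V_{\bullet}(K)$ are leaves of both paths by the definition of $\mathcal I$, the variable $x_u$ (resp.\ $x_v$) naturally appears to degree $2$ in $h_S h_K$, and the extra factor $x_u^2 x_v^2$ promotes the exponent to $4$ at each endpoint. The bound $\mathbb E[x_u^4], \mathbb E[x_v^4] \leq C$ from Assumption~\ref{assum-upper-bound}(2) then contributes the clean prefactor $C^2$, with no other modification compared to the proof of Lemma~\ref{lem-est-cov-f-S-f-K-cov}. Finally, collecting the explicit factors of $n^{-1/2}$ per singly-covered edge, $n^{-1}$ per common edge selected from the expansion, and $n^{-1}$ per mismatched edge, together with $|E(S)| = |E(K)| \in \{2\ell-1, 2\ell\}$ (the unit shift only modifies the exponent by $O(1)$ absorbed into the constant), yields the desired power $-2\ell + |E_{\bullet}(S) \cap E_{\bullet}(K)| + |E_{\circ}(S) \cap E_{\circ}(K)|$. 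The main bookkeeping obstacle is to verify that the combinatorial constant arising at shared vertices whose decorations in $S$ and $K$ are incompatible is correctly captured by the exponent $|V(S) \cap V(K)| - |V(S_{\bullet} \cap K_{\bullet}) \cup V(S_{\circ} \cap K_{\circ})|$ in $\mathtt P(S, K)$; this is precisely the matching already performed in proving Lemma~\ref{lem-est-cov-f-S-f-K-cov}, and it transfers here verbatim since the perturbation by $x_u^2 x_v^2$ is localized at $\{u,v\}$.
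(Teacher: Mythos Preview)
Your proposal is correct and follows essentially the same route as the paper's proof: condition on $(x,y,\bm u,\bm v)$, expand $(1+\cdot)$ factors over common edges, integrate out the Gaussians and then the prior, and observe that the only difference from Lemma~\ref{lem-est-cov-f-S-f-K-cov} is localized at the leaves $u,v$, where the degree in $S\cup K$ is $2$ so that the extra $x_u^2 x_v^2$ pushes the total exponent to at most $4$, giving the $C^2$ prefactor via Assumption~\ref{assum-upper-bound}(2). Two cosmetic remarks: for $[S],[K]\in\mathcal I$ the leaves lie in $V^{\mathsf a}$, so $|E(S)|=|E(K)|=2\ell$ exactly (your parenthetical about $2\ell-1$ is unnecessary); and your ``mismatched edges'' class is already captured by the symmetric-difference products (such an edge lies in both $E_{\bullet}(S)\triangle E_{\bullet}(K)$ and $E_{\circ}(S)\triangle E_{\circ}(K)$), so no separate case is needed.
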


The proof of Lemma~\ref{lem-est-cov-f-S-f-K-chain-case-cov} is postponed to Section~\ref{subsec:proof-lem-3.16} of the appendix. To this end, plugging Lemma~\ref{lem-est-cov-f-S-f-K-chain-case-cov} into \eqref{eq-var-Phi-i,j-mathcal-I-relax-1}, we have
\begin{align}
    \eqref{eq-var-Phi-i,j-mathcal-I-relax-1} &\leq \frac{ C^2 }{ N^{2\ell} n^{-2} \beta_{\mathcal I}^2 } \sum_{ \substack{ [S], [K] \in \mathcal I \\ \mathsf L(S)=\mathsf L(K)=\{ i,j \} } } n^{ -2\ell+|E_{\bullet}(S)\cap E_{\bullet}(K)| +|E_{\circ}(S) \cap E_{\circ}(K)| } \Upsilon(S)\Upsilon(K) \mathtt P(S,K) \nonumber  \\
    &= \frac{ C^2 }{ N^{2\ell} n^{2\ell-2} \beta_{\mathcal I}^2 } \sum_{ \substack{ [S], [K] \in \mathcal I \\ \mathsf L(S)=\mathsf L(K)=\{ i,j \} } } n^{ |E_{\bullet}(S)\cap E_{\bullet}(K)| +|E_{\circ}(S) \cap E_{\circ}(K)| } \Upsilon(S)\Upsilon(K) \mathtt P(S,K) \,.  \label{eq-var-Phi-i,j-mathcal-I-relax-2}
\end{align}
Recall the definition of $\mathsf{SEQ},\overline{\mathsf{SEQ}},\sigma$ and $S_{\cap}, S_{\setminus}, K_{\setminus}$ we introduced in Section~\ref{subsec:proof-prop-2.10}. Again, we will show how to bound $\Upsilon(S)$, $\Upsilon(K)$ and $\mathtt P(S,K)$ using the quantity above.

\begin{lemma}{\label{lem-bound-Upsilon-mathtt-P}}
    Recall \eqref{eq-def-aleph(sigma)}. We have 
    \begin{align}
        & \Upsilon(S) \leq \rho^{-2\mathtt T} \cdot \Upsilon(S_{[\mathtt p_1,\mathtt q_1]}) \ldots \Upsilon(S_{[\mathtt p_{\mathtt T},\mathtt q_{\mathtt T}]}) \cdot \Upsilon( S_{[\mathtt q_1,\mathtt p_2]} ) \ldots \Upsilon( S_{[\mathtt q_{\mathtt T-1},\mathtt p_{\mathtt T}]} ) \,, \label{eq-bound-Upsilon(S)} \\
        & \Upsilon(K) \leq \rho^{-2\mathtt T} \cdot \Upsilon(K_{[\mathtt p'_0,\mathtt q_0']}) \ldots \Upsilon(K_{[\mathtt p_{\mathtt T}',\mathtt q_{\mathtt T}']}) \cdot \Upsilon( K_{[\mathtt q_1',\mathtt p_2']} ) \Upsilon( K_{[\mathtt q_{\mathtt T-1}',\mathtt p_{\mathtt T}']} )  \,,  \label{eq-bound-Upsilon(K)} \\
        & \mathtt P(S,K) \leq \rho^{-4\mathtt T} C^{\aleph(\sigma)+\mathtt T} \cdot \Upsilon(S_{[\mathtt q_1,\mathtt p_2]}) \Upsilon(K_{[\mathtt q_1',\mathtt p_2']}) \ldots \Upsilon(S_{[\mathtt q_{\mathtt T-1},\mathtt p_{\mathtt T}]}) \Upsilon(K_{[\mathtt q_{\mathtt T-1}',\mathtt p_{\mathtt T}']}) \,. \label{eq-bound-mathtt-P(S,K)}
    \end{align}
\end{lemma}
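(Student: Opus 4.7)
The proof will parallel that of Lemma~\ref{lem-bound-Xi-mathtt-M} almost verbatim, exploiting the fact that under \eqref{eq-def-SEQ(S,K)}--\eqref{eq-def-sigma(S,K)} the edge sets of the common sub-paths $\{S_{[\mathtt p_t,\mathtt q_t]}\}$ and the disjoint sub-paths $\{S_{[\mathtt q_t,\mathtt p_{t+1}]}\}$ partition $E(S)$ exactly (and analogously for $K$ with primed indexing). The only bipartite-specific input is that $\mathsf{diff}(H) \subset V^{\mathsf a}(H)$ for every $[H] \in \mathcal I$, which simplifies rather than complicates the vertex bookkeeping. From the edge partition, the $\lambda^{1/2}$-- and $\mu^{1/2}$--exponents in $\Upsilon$ distribute multiplicatively across segments, leaving only the $\rho$-- and (for $\mathtt P$) the $(2C)$--exponents to control.

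The first step is \eqref{eq-bound-Upsilon(S)}. I plan to show
\[
    |\mathsf{diff}(S)| \;\ge\; \sum_{t=1}^{\mathtt T} \big|\mathsf{diff}(S_{[\mathtt p_t,\mathtt q_t]})\big| + \sum_{t=1}^{\mathtt T-1} \big|\mathsf{diff}(S_{[\mathtt q_t,\mathtt p_{t+1}]})\big|,
\]
which, combined with $\rho \in (0,1)$ and the trivial slack $\rho^{-2\mathtt T} \ge 1$, yields the claim. The inequality is verified vertex-by-vertex: a non-cut vertex $v$ lies in a unique segment $T$ and has both of its $S$--incident edges inside $T$, so $v \in \mathsf{diff}(S) \iff v \in \mathsf{diff}(T)$; a cut vertex has exactly one incident edge in each adjacent segment and therefore cannot lie in $\mathsf{diff}(T)$ for any $T$, even if it belongs to $\mathsf{diff}(S)$. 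The bound \eqref{eq-bound-Upsilon(K)} follows by the same argument on $K$.

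The second step is \eqref{eq-bound-mathtt-P(S,K)}, which I plan to prove by tracking the four exponents in \eqref{eq-def-mathtt-P} separately. The edges contributing to $|E_\bullet(S) \triangle E_\bullet(K)|$ and $|E_\circ(S) \triangle E_\circ(K)|$ come either from the disjoint sub-paths $\{S_{[\mathtt q_t,\mathtt p_{t+1}]}, K_{[\mathtt q'_t,\mathtt p'_{t+1}]}\}$ or from the $\aleph(\sigma)$ decoration-mismatched positions of the common sub-paths; this accounts for the $\lambda^{1/2},\mu^{1/2}$ factors in $\mathtt P$ as the corresponding $\Upsilon$'s of the disjoint sub-paths together with an extra factor bounded by $C^{\aleph(\sigma)}$. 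Next, every vertex in $\mathsf{diff}(S) \setminus V(K)$ (and symmetrically in $\mathsf{diff}(K) \setminus V(S)$) lies in some disjoint sub-path, so applying the cut-vertex argument of the first step to each side bounds the $\rho$--exponent by the $\Upsilon$-products of the disjoint sub-paths of $S$ and $K$ up to a combined slack $\rho^{-4\mathtt T}$. Finally, the exponent $|V(S) \cap V(K)| - |V(S_\bullet \cap K_\bullet) \cup V(S_\circ \cap K_\circ)|$ counts common vertices isolated in the decoration-preserving common sub-graph, which are at most the $2(\mathtt T-1)$ cut vertices plus the $O(\aleph(\sigma))$ endpoints of mismatched common-segment edges, yielding an exponent at most $\aleph(\sigma) + \mathtt T$ (after absorbing a constant slack into $C^{\mathtt T}$). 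Collecting these four estimates produces \eqref{eq-bound-mathtt-P(S,K)}.

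The main obstacle will be the careful bookkeeping in the last step: avoiding double-counting between cut-vertex losses and common-segment mismatches when bounding the $(2C)$--exponent, and verifying that the $\rho$--deficits from cut vertices in both $S$ and $K$ can be jointly absorbed by the single $\rho^{-4\mathtt T}$ factor. This is the same delicate point that arises in Lemma~\ref{lem-bound-Xi-mathtt-M}; the bipartite constraint $\mathsf{diff}(H) \subset V^{\mathsf a}(H)$ should actually ease it, since only $\mathsf a$--side cut vertices can ever contribute to any $\rho$--deficit.
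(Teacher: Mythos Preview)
Your proposal is correct and follows the same route as the paper, which simply states that the proof is ``almost identical to the proof of Lemma~\ref{lem-bound-Xi-mathtt-M}'' and omits the details. Your outline in fact fills in those details faithfully; the slightly sharper $\mathsf{diff}$-count inequality you derive in the first step (cut vertices are leaves in each adjacent segment and hence cannot lie in any segment-$\mathsf{diff}$) is correct and subsumes the paper's version \eqref{eq-inequality-Dis(S)} of the same inequality, with the $\rho^{-2\mathtt T}$ slack rendering the difference immaterial.
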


The proof of Lemma~\ref{lem-bound-Upsilon-mathtt-P} is almost identical to the proof of Lemma~\ref{lem-bound-Xi-mathtt-M}, so we will omit further details here. In addition, note that \eqref{eq-def-SEQ(S,K)} and \eqref{eq-def-overline-SEQ(S,K)} implies $\Upsilon(K_{[\mathtt p'_{\mathtt t}+1,\mathtt q_{\mathtt t}']})=\Upsilon(S_{[\mathtt p_{\mathtt t}+1,\mathtt q_{\mathtt t}]})$ for $1 \leq \mathtt t \leq \mathtt T$. Thus, using Lemma~\ref{lem-bound-Upsilon-mathtt-P}, we have $\Upsilon(S)\Upsilon(K)\mathtt P(S,K)$ is upper-bounded by 
\begin{align*}
     & (2\rho^{-8}C^2)^{\mathtt T} (2C)^{\aleph(\sigma)} \prod_{1\leq \mathtt t \leq \mathtt T} \Upsilon(S_{[\mathtt p_{\mathtt t},\mathtt q_{\mathtt t}]})^2 \cdot \prod_{1\leq \mathtt t \leq \mathtt T-1} \Upsilon(S_{[\mathtt q_{\mathtt t},\mathtt p_{\mathtt t+1}]})^2 \prod_{1\leq \mathtt t \leq \mathtt T-1} \Upsilon(K_{[\mathtt q'_{\mathtt t},\mathtt p_{\mathtt t+1}']})^2 \\
     =\ & (2\rho^{-8}C^2)^{\mathtt T} (2C)^{\aleph(\sigma)} \Upsilon(S_{\cap})^2 \Upsilon(S_{\setminus})^2 \Upsilon(K_{\setminus})^2 \,.
\end{align*}
In addition, recall \eqref{eq-transfrom-E_1-E_2}, we have
\begin{align}
    \eqref{eq-var-Phi-i,j-mathcal-I-relax-2} &\leq \sum_{\mathtt T \geq 1} \sum_{ \substack{ \mathsf{SEQ}, \overline{\mathsf{SEQ}} \\ S_{\cap}, S_{\setminus}, K_{\setminus}, \sigma } } \frac{ (2\rho^{-8}C^2)^{\mathtt T} (2C)^{\aleph(\sigma)} n^{ \sum_{ 1 \leq \mathtt t \leq \mathtt T } (\mathtt q_{\mathtt t}-\mathtt p_{\mathtt t}) - \aleph(\sigma) } }{ N^{2\ell} n^{2\ell-2} \beta_{\mathcal I}^2 } \cdot \Upsilon(S_{\cap})^2 \Upsilon(S_{\setminus})^2 \Upsilon(K_{\setminus})^2 \nonumber  \\ 
    &\leq [1+o(1)] \cdot \sum_{\mathtt T \geq 1} \sum_{ \substack{ \mathsf{SEQ}, \overline{\mathsf{SEQ}} \\ S_{\cap}, S_{\setminus}, K_{\setminus} } } \frac{ (2\rho^{-8}C^2)^{\mathtt T} n^{ \sum_{ 1 \leq \mathtt t \leq \mathtt T } (\mathtt q_{\mathtt t}-\mathtt p_{\mathtt t}) } }{ N^{2\ell} n^{2\ell-2} \beta_{\mathcal I}^2 } \cdot \Upsilon(S_{\cap})^2 \Upsilon(S_{\setminus})^2 \Upsilon(K_{\setminus})^2 \,.  \label{eq-var-Phi-i,j-mathcal-I-relax-3}
\end{align}

Based on \eqref{eq-var-Phi-i,j-mathcal-I-relax-3}, it suffices to show the following lemma, whose proof is incorporated in Section~\ref{subsec:proof-lem-3.18} of the appendix.

\begin{lemma}{\label{lem-recovery-most-technical-cov}}
    Suppose that Assumption~\ref{assum-upper-bound} and Equation~\eqref{eq-prelim-assumption-upper-bound} hold, and we choose $\ell$ according to \eqref{eq-condition-weak-recovery}. Then $\eqref{eq-var-Phi-i,j-mathcal-I-relax-3} \leq O_{\lambda,\mu,\rho,\gamma}(1)$. 
\end{lemma}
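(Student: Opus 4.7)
The plan is to adapt the proof strategy of the analogous Wigner-setting Lemma~\ref{lem-recovery-most-technical} to the bipartite structure of $\mathcal I$. I would first group the sum in \eqref{eq-var-Phi-i,j-mathcal-I-relax-3} by the value of $\mathtt T\geq 1$ and by the interleaving pattern $(\mathsf{SEQ},\overline{\mathsf{SEQ}})$, writing $\alpha_{\mathtt t}=\mathtt q_{\mathtt t}-\mathtt p_{\mathtt t}$ for the lengths of the $\mathtt T$ overlap sub-paths and $\beta_{\mathtt t}=\mathtt p_{\mathtt t+1}-\mathtt q_{\mathtt t}$ for the $\mathtt T-1$ gap sub-paths on each of $S$ and $K$; write $o=\sum_{\mathtt t}\alpha_{\mathtt t}$ and $g=\sum_{\mathtt t}\beta_{\mathtt t}$, so $o+g=2\ell$.

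For each pattern, $\Upsilon(S_{\cap})^2\Upsilon(S_{\setminus})^2\Upsilon(K_{\setminus})^2$ is multiplicative over the $3\mathtt T-2$ decorated sub-paths, so the sum factors. The key piecewise estimate, parallel to Lemma~\ref{lem-bound-beta-mathcal-I}, is that for a bipartite sub-path of edge length $r$ with fixed endpoints on specified sides of the bipartition, $\sum_{S}\Upsilon(S)^2\leq D\cdot n^{a(r)}N^{b(r)}A_+^{r/2}$, where $a(r)+b(r)=r-1$ is the split of internal vertices between the two sides. Combining these over all pieces and using $\beta_{\mathcal I}\geq D^{-1}A_+^{\ell}$, the contribution of a fixed pattern to \eqref{eq-var-Phi-i,j-mathcal-I-relax-3} is bounded by
\begin{equation*}
    D^{O(\mathtt T)}(2\rho^{-8}C^2)^{\mathtt T}\cdot n^{2-\mathtt T}\cdot(\gamma/A_+)^{\ell-g/2}\,.
\end{equation*}
Using $\gamma/A_+\leq(1+\delta)^{-1}$ and $(1+\delta)^{\ell}\geq n^{2}$ from \eqref{eq-condition-weak-recovery}, this simplifies to $M^{\mathtt T}n^{-\mathtt T}(1+\delta)^{g/2}$ for an absolute constant $M=M(\lambda,\mu,\rho,\gamma)$.

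Finally, I would sum over patterns and over $\mathtt T$. For fixed $\mathtt T\geq 2$, the number of patterns with given $g$ is at most $\ell^{O(\mathtt T)}$, and summing $(1+\delta)^{g/2}\cdot(1+\delta)^{-\ell}\leq(1+\delta)^{-(\ell-g/2)}$ over $g\in[0,2\ell]$ forms a convergent geometric-type series, as can be seen by setting $k=\ell-g/2$ and noting $\sum_{k\geq 0}(2k+1)^{O(\mathtt T)}(1+\delta)^{-k}=O_{\delta,\mathtt T}(1)$. This yields per-$\mathtt T$ contribution $O_{\delta,\mathtt T}(1)\cdot(M\ell^{O(1)}/n)^{\mathtt T}$, and summing over $\mathtt T\geq 2$ converges geometrically for large $n$. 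The $\mathtt T=1$ case forces $S=K$, and a direct calculation gives contribution at most $Dn\gamma^{\ell}/A_+^{\ell}\leq D(1+\delta)^{-\ell}n\leq D/n=o(1)$. Altogether, $\eqref{eq-var-Phi-i,j-mathcal-I-relax-3}\leq O_{\lambda,\mu,\rho,\gamma}(1)$.

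The main obstacle I expect is the parity-dependent bookkeeping of $a(r),b(r)$: in a bipartite path the side (A or B) of each sub-path endpoint is determined by the parity of its position index, and these parities interlock across adjacent pieces through shared junction vertices. Verifying that the combined labeling count over all pieces of $S_{\cap},S_{\setminus},K_{\setminus}$ equals $(nN)^{(2\ell+g-\mathtt T)/2}$ up to parity-dependent constants that can be absorbed into $D^{O(\mathtt T)}$ requires a careful but routine case analysis on the bipartite types of overlap and gap pieces. Once this is settled, the geometric convergence drives by $A_+>(1+\delta)\gamma$ (Lemma~\ref{lem-bound-beta-mathcal-I}) closes the bound.
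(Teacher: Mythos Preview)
Your overall strategy matches the paper's: establish a sub-path estimate (the paper packages this as Lemma~\ref{lem-contribution-sub-chain-cov}, giving $\sum \Upsilon(S_{[\mathtt a,\mathtt b]})^2\leq D' n^{\mathtt b-\mathtt a-1}(A_+/\gamma)^{(\mathtt b-\mathtt a)/2}$ with all bipartite parity bookkeeping absorbed into $D'$), then decompose by $\mathtt T$ and exploit the geometric decay in the overlap length $o=2\ell-g$. Your per-pattern bound $D^{O(\mathtt T)}n^{2-\mathtt T}(\gamma/A_+)^{\ell-g/2}$ is correct.

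The gap is in your final summation. Your claim that the number of $(\mathsf{SEQ},\overline{\mathsf{SEQ}})$ with given $k=\ell-g/2$ is at most $(2k+1)^{O(\mathtt T)}$ is false for $\mathtt T\geq 3$: while the overlap-length tuple $(\alpha_1,\ldots,\alpha_{\mathtt T})$ summing to $o=2k$ has at most $(o+1)^{\mathtt T-1}$ choices, you must also choose the $\mathtt T-1$ gap lengths $\beta_{\mathtt t}$ summing to $g$ in each of $\mathsf{SEQ}$ and $\overline{\mathsf{SEQ}}$, and this contributes $g^{O(\mathtt T)}$, which for small $k$ is $\ell^{O(\mathtt T)}$, not $(2k+1)^{O(\mathtt T)}$. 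Worse, your asserted per-$\mathtt T$ contribution $O_{\delta,\mathtt T}(1)\cdot(M\ell^{O(1)}/n)^{\mathtt T}$ cannot be an upper bound: for $\mathtt T=2$ it would give $o(1)$, but the $\mathtt T=2$ contribution is in fact the dominant $\Theta(1)$ term (already the single pattern with $o=0$ contributes $M^2 n^{0}(\gamma/A_+)^{0}=M^2$). The mistake traces to applying $(1+\delta)^{-\ell}\leq n^{-2}$ twice: once to pass from $n^{2-\mathtt T}$ to $n^{-\mathtt T}$, and then implicitly again when you write ``summing $(1+\delta)^{g/2}\cdot(1+\delta)^{-\ell}$''.

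The paper's fix, which your outline nearly reaches, is to split $\mathtt T=2$ from $\mathtt T\geq 3$. For $\mathtt T=2$ there is a single gap, so the pattern count with given $o$ is $\leq o+1$, and $\sum_{o\geq 0}(o+1)(\gamma/A_+)^{o/2}=O_{\lambda,\mu,\rho,\gamma}(1)$ converges directly. For $\mathtt T\geq 3$ the crude $\ell^{O(\mathtt T)}$ pattern count is fine because $n^{2-\mathtt T}\leq n^{-1}$ absorbs it.
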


\subsection{Proof of Theorems~\ref{MAIN-THM-recovery} and \ref{MAIN-THM-recovery-cov}}{\label{subsec:proof-thm-2.11}}

With Propositions~\ref{main-prop-recovery} and \ref{main-prop-recovery-cov}, we are ready to prove Theorems~\ref{MAIN-THM-recovery} and \ref{MAIN-THM-recovery-cov}.
\begin{proof}[Proof of Theorems~\ref{MAIN-THM-recovery} and \ref{MAIN-THM-recovery-cov}]
    We first show Theorem~\ref{MAIN-THM-recovery}. Note by our definition of $\widehat x$ we have $|\widehat{x}_u| \leq R^4$ and thus $\| \widehat x \|^2 \leq R^8 n$. In addition, from Proposition~\ref{main-prop-recovery} and a simple Chebyshev's inequality we have 
    \begin{align*}
        \Pb\Big( \widehat x_u \neq \Phi_{u,w}^{\mathcal J} \Big) = \Pb\Big( |\Phi_{u,w}^{\mathcal J}| \geq R^4 \Big) \leq \frac{ \mathbb E_{\Pb}[ (\Phi_{u,w}^{\mathcal J})^2 ] }{ (R^4)^2 } \leq R^{-4} \,.
    \end{align*}
    Thus, we have
    \begin{align*}
        \mathbb E\big[ \widehat x_u \cdot x_u x_w \big] &= \mathbb E\big[ \Phi_{u,w}^{\mathcal J} \cdot x_u x_w \big] - \mathbb E\Big[ (\Phi_{u,w}^{\mathcal J}- \widehat x_u) \cdot x_u x_w \Big] \\
        &\overset{\text{Proposition~\ref{main-prop-recovery}}}{\geq} 1+o(1) - \mathbb E\Big[ \mathbf 1_{ \{ |\Phi_{u,w}^{\mathcal J}|>R^4 \} } \cdot \big| \Phi_{u,w}^{\mathcal J} x_u x_w \big| \Big] \\
        &\leq 1+o(1) - \Pb\Big( |\Phi_{u,w}^{\mathcal J}|>R^4 \Big)^{1/2} \cdot \mathbb E_{\Pb}\Big[ \Phi_{u,w}^{\mathcal J} x_u^2 x_w^2 \Big]^{1/2} \\
        &\overset{\text{Proposition~\ref{main-prop-recovery}}}{\geq} 1+o(1) - \big( R^{-4} \cdot R \big)^{1/2} \geq \frac{1}{2} \,,
    \end{align*}
    and thus $\mathbb E_{\Pb}[ |\langle \widehat x,x \rangle| ] \geq \Omega(1) \cdot n$. In addition, from Assumption~\ref{assum-upper-bound} we see that
    \begin{align*}
        \mathbb E_{\Pb}\big[ \| \widehat x \|^2 \| x \|^2 \big] \leq R^8 n \cdot \mathbb E_{\Pb}\big[ \| x \|^2 \big] \leq O_{\lambda,\mu,\rho,\delta}(1) \cdot n^2 \,.
    \end{align*}
    Thus, we have
    \begin{align*}
        \mathbb E_{\Pb}\Big[ \tfrac{ |\langle \widehat x,x \rangle| }{ \|\widehat x\| \|x\| } \Big] \geq \mathbb E_{\Pb}\Big[ \tfrac{ \langle \widehat x,x \rangle^2 }{ \|\widehat x\|^2 \|x\|^2 } \Big] \geq \mathbb E_{\Pb}\big[ |\langle \widehat x,x \rangle| \big]^2 \cdot \mathbb E_{\Pb}\big[ \| \widehat x \|^2 \| x \|^2 \big]^{-1} \geq \Omega_{\lambda,\mu,\rho,\delta}(1) \,,
    \end{align*}
    where the first inequality follows from $\frac{|\langle \widehat x,x \rangle|}{\|\widehat x\|\|x\|} \in (0,1)$ and the second inequality follows from Cauchy-Schwartz inequality. We can derive Theorem~\ref{MAIN-THM-recovery-cov} from Proposition~\ref{main-prop-recovery-cov} in the same manner.
\end{proof}

\section{Approximate statistics by color coding}{\label{sec:hypergraph-color-coding}}

In this section, we provide an efficient algorithm to approximately compute the detection statistic $f_{\mathcal H},h_{\mathcal G}$ given in \eqref{eq-def-f-mathcal-H}, \eqref{eq-def-f-mathcal-G} and the recovery statistic $\{ \Phi^{\mathcal J}_{u,w}\},\{\Psi^{\mathcal I}_{u,w} \}$ given in \eqref{eq-def-Phi-i,j-mathcal-J}, \eqref{eq-def-Phi-i,j-mathcal-I}, using the idea of color coding.

\subsection{Approximate the detection statistics}{\label{subsec:approx-detection}}

\subsubsection{The correlated spiked Wigner model}{\label{subsubsec:approx-detection-Wigner}}

Recall that we define $\mathcal H$ to be the collection of all decorated cycles with length $\ell$. By applying the color coding method, we first approximately count the signed subgraphs that are isomorphic to a query decorated cycle in $\mathcal H$ with $\ell$ edges and $\ell$ vertices. Specifically, recall \eqref{eq-def-correlated-spike-specific}, we generate a random coloring $\tau:[n] \to [\ell]$ that assigns a color to each vertex of $[n]$ from the color set $\ell$ independently and uniformly at random. Given any $V \subset [n]$, let $\chi_{\tau}(V)$ indicate that $\tau(V)$ is colorful, i.e., $\tau(x) \neq \tau(y)$ for any distinct $x,y \in V$. In particular, if $|V|=mp\ell$, then $\chi_{\tau}(V)=1$ with probability
\begin{equation}{\label{eq-def-r}}
    r := \frac{ \ell! }{ (\ell)^{\ell} } = e^{-\Theta(\ell)} \,.
\end{equation}
For any unlabeled decorated graph $[H]$ with $\ell$ vertices, we define
\begin{equation}{\label{eq-def-g-S}}
    \mathfrak F_{H}(\bm X,\bm Y,\tau) := \sum_{ S \subset \mathsf K_n, S \cong H } \chi_{\tau}(V(S)) \prod_{(i,j) \in E_{\bullet}(S)} \bm X_{i,j} \prod_{(i,j) \in E_{\circ}(S)} \bm Y_{i,j} \,.
\end{equation}
Then 
\begin{equation}{\label{eq-averaging-over-coloring}}
    \mathbb E\big[ \mathfrak F_{H}(\bm X,\bm Y,\tau) \mid \bm X,\bm Y \big] = r\sum_{S \cong H} f_S(\bm X,\bm Y)
\end{equation}
where $f_S(\bm X,\bm Y)$ was defined in \eqref{eq-def-f-S}. To further obtain an accurate approximation of $\sum f_S(\bm X,\bm Y)$, we average over multiple copies of $\mathfrak F_H(\bm X,\bm Y,\tau)$ by generating $t$ independent random colorings, where
\begin{align*}
    t := \lceil 1/r \rceil \,.
\end{align*}
Next, we plug in the averaged subgraph count to approximately compute $f_{\mathcal H}(\bm X,\bm Y)$. Specifically, we generate $t$ random colorings $ \{ \tau_{\mathtt k} \}_{\mathtt k=1}^{t}$ which are independent copies of $\tau$ that map $[n]$ to $[\ell]$. Then, we define
\begin{equation}{\label{eq-def-widetilde-f-H}}
    \widetilde{f}_{\mathcal H} := \frac{1}{ \sqrt{ n^{\ell}\beta_{\mathcal H} }}  \sum_{ [H] \in \mathcal H } \Xi(H) \Big( \frac{1}{tr} \sum_{\mathtt k=1}^{t} \mathfrak F_H(\bm X,\bm Y,\tau_{\mathtt k}) \Big) \,.
\end{equation}
The following result shows that $\widetilde{f}_{\mathcal H}(\bm X,\bm Y)$ well approximates $f_{\mathcal H}(\bm X,\bm Y)$ in a relative sense.

\begin{proposition}{\label{prop-approximate-detection-statistics}}
    Suppose Assumption~\ref{assum-upper-bound} and Equation~\ref{eq-prelim-assumption-upper-bound} hold and we choose $\ell$ according to \eqref{eq-condition-strong-detection}. Then under both $\Pb$ and $\Qb$
    \begin{align}{\label{eq-approximate-detection-statistics}}
        \frac{ \widetilde{f}_{\mathcal H} - f_{\mathcal H} }{ \mathbb E_{\Pb}[f_{\mathcal H}] } \overset{L_2}{\longrightarrow} 0  \,.
    \end{align}
\end{proposition}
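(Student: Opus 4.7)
The plan is to observe that $\widetilde f_{\mathcal H}$ is a conditionally unbiased estimator of $f_{\mathcal H}$ given $(\bm X,\bm Y)$, and then to bound its conditional variance over the random colorings. Indeed, by \eqref{eq-averaging-over-coloring} and the independence of $\{\tau_k\}$ we have $\mathbb E_\tau[\widetilde f_{\mathcal H}\mid \bm X,\bm Y]=f_{\mathcal H}$ and
\[
\mathbb E[(\widetilde f_{\mathcal H}-f_{\mathcal H})^{2}] \;=\; \frac{1}{t\,n^{\ell}\beta_{\mathcal H}\,r^{2}}\,\mathbb E\!\big[\operatorname{Var}_{\tau}[\Phi_{\tau_{1}}\mid\bm X,\bm Y]\big],
\]
where $\Phi_{\tau}:=\sum_{S:[S]\in\mathcal H}\Xi(S)\,\chi_{\tau}(V(S))\,f_{S}(\bm X,\bm Y)$. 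Since $\mathbb E_{\mathbb P}[f_{\mathcal H}]^{2}=(1+o(1))\beta_{\mathcal H}=\omega(1)$ by Lemmas~\ref{lem-mean-var-f-H-part-1} and~\ref{lem-bound-beta-mathcal-H}, and $tr\in[1,1+r]$, it will suffice to show $\mathbb E[\operatorname{Var}_{\tau}[\Phi_{\tau_{1}}]]\leq r\cdot o(n^{\ell}\beta_{\mathcal H}^{2})$ under both $\mathbb P$ and $\mathbb Q$.

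The next step is to exploit a cancellation. Setting $p(S,S'):=\mathbb P(\chi_{\tau_{1}}(V(S))=\chi_{\tau_{1}}(V(S'))=1)$ and using that $|V(S)|=\ell$ matches the number of colors, a direct computation yields $p(S,S')=r^{2}$ whenever $V(S)\cap V(S')=\emptyset$ and $p(S,S')\leq r$ always; hence
\[
\operatorname{Var}_{\tau}[\Phi_{\tau_{1}}] \;=\; \sum_{\substack{S,S':[S],[S']\in\mathcal H\\V(S)\cap V(S')\neq\emptyset}}\big(p(S,S')-r^{2}\big)\,\Xi(S)\Xi(S')\,f_{S} f_{S'},
\]
so that $|\mathbb E[\operatorname{Var}_{\tau}[\Phi_{\tau_{1}}]]|\leq r\sum_{V(S)\cap V(S')\neq\emptyset}\Xi(S)\Xi(S')\,|\mathbb E[f_{S} f_{S'}]|$. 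Under $\mathbb Q$, the orthogonality \eqref{eq-standard-orthogonal} collapses the sum onto the diagonal $S=S'$, giving $r\sum_{S}\Xi(S)^{2}=(1+o(1))\,r\,n^{\ell}\beta_{\mathcal H}$ via \eqref{eq-enumerate-H-in-K-n} and \eqref{eq-def-beta-mathcal-H}, which yields $\mathbb E_{\mathbb Q}[(\widetilde f_{\mathcal H}-f_{\mathcal H})^{2}]=O(1)=o(\mathbb E_{\mathbb P}[f_{\mathcal H}]^{2})$ as required.

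The case of $\mathbb P$ is where I expect the main difficulty. I would write $\mathbb E_{\mathbb P}[f_{S} f_{S'}]=\operatorname{Cov}_{\mathbb P}(f_{S},f_{S'})+\Xi(S)\Xi(S')n^{-\ell}$ using \eqref{eq-mean-Pb-f-S}, and handle the two pieces separately. For the covariance piece, the bound from Lemma~\ref{lem-est-cov-f-S-f-K} dominates $|\operatorname{Cov}_{\mathbb P}(f_{S},f_{S'})|$ by a positive quantity supported on overlapping pairs, and the weighted sum $\sum\Xi(S)\Xi(S')|\operatorname{Cov}_{\mathbb P}(f_{S},f_{S'})|$ is essentially the quantity controlled in the proof of Lemma~\ref{lem-mean-var-f-H-part-2} to be $o(n^{\ell}\beta_{\mathcal H}^{2})$. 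For the mean-product piece, the sum $n^{-\ell}\sum_{V(S)\cap V(S')\neq\emptyset}\Xi(S)^{2}\Xi(S')^{2}$ is bounded by a crude count: there are $O(2^{\ell}n^{\ell})$ labeled decorated cycles in $\mathsf K_n$, and $O(\ell\cdot 2^{\ell}\cdot n^{\ell-1})$ of them share a vertex with a fixed one, so the number of overlapping ordered pairs is $O(\ell\,4^{\ell}\,n^{2\ell-1})$; combined with $\Xi(S)^{2}\Xi(S')^{2}\leq 1$ from \eqref{eq-prelim-assumption-upper-bound} and $n^{\ell}\beta_{\mathcal H}^{2}\gtrsim n^{\ell}A_{+}^{2\ell}/\ell^{4}$ from Lemma~\ref{lem-bound-beta-mathcal-H}, this is $n^{-1+o(1)}\cdot n^{\ell}\beta_{\mathcal H}^{2}=o(n^{\ell}\beta_{\mathcal H}^{2})$ under $\ell=o(\log n/\log\log n)$. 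Combining the two pieces and dividing by $n^{\ell}\beta_{\mathcal H}\,t r^{2}\asymp n^{\ell}\beta_{\mathcal H}\,r$ gives $\mathbb E_{\mathbb P}[(\widetilde f_{\mathcal H}-f_{\mathcal H})^{2}]=o(\beta_{\mathcal H})=o(\mathbb E_{\mathbb P}[f_{\mathcal H}]^{2})$, concluding the argument.
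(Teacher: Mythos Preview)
Your approach is correct and mirrors the paper's: both reduce to bounding the overlap sum $\sum_{V(S)\cap V(S')\neq\emptyset}\Xi(S)\Xi(S')\,\mathbb E[f_Sf_{S'}]$ via the coloring variance (your bound $|p(S,S')-r^{2}|\leq r$ divided by $tr^{2}$ is exactly the paper's $\tfrac{1}{tr}\leq 1$), and then invoke orthogonality under $\mathbb Q$ and the estimates of Section~\ref{subsec:proof-prop-2.3} under $\mathbb P$.

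The one difference is your split of $\mathbb E_{\mathbb P}[f_Sf_{S'}]$ into covariance plus mean-product, which is an unnecessary detour. The paper observes directly that $\mathbb E_{\mathbb P}[f_Sf_{S'}]\geq 0$ (clear from the expansion~\eqref{eq-cov-f-S-f-K-simplify-1}) and that the \emph{proof} of Lemma~\ref{lem-est-cov-f-S-f-K} actually bounds this full expectation, not just the covariance, by $n^{-\ell+|E_\bullet(S)\cap E_\bullet(S')|+|E_\circ(S)\cap E_\circ(S')|}\mathtt M(S,S')$; hence the overlap sum is immediately dominated by~\eqref{eq-var-Pb-f-H-relax-2}, already shown $o(1)$ via Lemma~\ref{lem-detection-most-technical}. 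This route both makes your crude mean-product count redundant and sidesteps the small imprecision in your appeal to Lemma~\ref{lem-est-cov-f-S-f-K} for $|\operatorname{Cov}_{\mathbb P}|$---as stated that lemma gives only a one-sided bound on $\operatorname{Cov}_{\mathbb P}$, though the issue is cosmetic since what you really need is control of the (non-negative) expectation itself.
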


The proof of Proposition~\ref{prop-approximate-detection-statistics} is postponed to Section~\ref{subsec:proof-prop-4.1} of the appendix. Since the convergence in $L_2$ implies the convergence in probability, as an immediate corollary of Theorem~\ref{MAIN-THM-detection} and Proposition~\ref{prop-approximate-detection-statistics}, we conclude that the approximate test statistic $\widetilde{f}_{\mathcal H}$ succeeds under the same condition as the original test statistic $f_{\mathcal H}$. 

Now we show that $\widetilde{f}_{\mathcal H}$ can be calculated efficiently, and the key is to show the following lemma.

\begin{lemma}{\label{lem-color-coding}}
    For any coloring $\tau:[n] \to [\ell]$ and any graph $[H] \in \mathcal H$, there exists an algorithm (see Algorithm~\ref{alg:dynamic-programming}) that computes $\mathfrak F_H(\bm X,\bm Y,\tau)$ in time $O(n^{2+o(1)})$.
\end{lemma}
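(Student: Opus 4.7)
The plan is to apply color-coded subset dynamic programming based on a trace identity, in the spirit of the Alon--Yuster--Zwick approach. First I would introduce the weighted adjacency matrices $A^\bullet := \bm X$ and $A^\circ := \bm Y$ (with zero diagonal), and for each color $c \in [\ell]$ the diagonal selector $D_c \in \mathbb R^{n \times n}$ with $(D_c)_{u,u} = \mathbf 1_{\tau(u)=c}$. Fix a cyclic labeling $v_1,\ldots,v_\ell$ of $V(H)$ with decoration $\gamma_j \in \{\bullet,\circ\}$ on the edge $(v_j, v_{j+1 \bmod \ell})$.

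The first step is to derive the trace identity
\begin{equation*}
|\mathsf{Aut}(H)| \cdot \mathfrak F_H(\bm X,\bm Y,\tau) \ =\ \sum_{\sigma\in S_\ell} \mathsf{tr}\bigl( D_{\sigma(1)} A^{\gamma_1} D_{\sigma(2)} A^{\gamma_2} \cdots D_{\sigma(\ell)} A^{\gamma_\ell} \bigr),
\end{equation*}
which follows by expanding $\chi_\tau(V(S)) = \sum_{\sigma\in S_\ell}\prod_j \mathbf 1_{\tau(u_j)=\sigma(j)}$ over ordered cyclic traversals $(u_1,\ldots,u_\ell)$ and noting that each unordered copy $S\cong H$ contributes exactly $|\mathsf{Aut}(H)|$ decoration-preserving traversals (one per isomorphism $H \to S$).

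Next, to avoid enumerating all $\ell!$ permutations, for each $C \subseteq [\ell]$ of size $k$ I would introduce the $n \times n$ matrix
\begin{equation*}
M_C \ :=\ \sum_{\sigma:[k]\to C\ \text{bijection}} D_{\sigma(1)} A^{\gamma_1} D_{\sigma(2)} A^{\gamma_2} \cdots D_{\sigma(k)} A^{\gamma_k},
\end{equation*}
with base $M_\emptyset = \mathbb I$. Splitting each bijection by its final value yields the subset recursion $M_C = \sum_{c\in C} M_{C\setminus\{c\}} \cdot \bigl(D_c A^{\gamma_{|C|}}\bigr)$, and the target statistic is recovered via $\mathfrak F_H = \mathsf{tr}(M_{[\ell]})/|\mathsf{Aut}(H)|$.

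For the running-time analysis, the total number of $n \times n$ matrix multiplications summed across all subsets is $\sum_{C \subseteq [\ell]} |C| = \ell \cdot 2^{\ell-1}$, which is $n^{o(1)}$ under the assumption $\ell = o(\log n/\log\log n)$ from \eqref{eq-condition-strong-detection}. Since each multiplication takes $O(n^\omega)$ time, the total cost is $n^{\omega + o(1)} = n^{2+o(1)}$ after absorbing $\omega - 2$ into the sub-polynomial slack. The main obstacle I anticipate is the correctness bookkeeping for the trace identity: one must carefully verify that the sum over $\sigma \in S_\ell$ over-counts each unordered copy $S \cong H$ by exactly $|\mathsf{Aut}(H)|$, which requires matching decoration-preserving rotations and reflections of the cyclic traversal against the automorphism group of $H$, rather than by a $2\ell$ factor or some other variant that depends on $H$'s internal symmetries.
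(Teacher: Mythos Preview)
Your approach is essentially the paper's Algorithm~\ref{alg:dynamic-programming} rewritten in matrix form: the paper's table $Y_k(x,y;c_1,c_2;C)$ is (a color-refined version of) the $(x,y)$-entry of your $M_C$, and the recursion ``extend the path by one edge and append one new color'' is identical. Your trace identity and the $|\mathsf{Aut}(H)|$ over-count are correct --- the ordered decoration-preserving traversals $(u_1,\dots,u_\ell)$ of $S$ that match the edge-decoration sequence $(\gamma_1,\dots,\gamma_\ell)$ of $H$ are exactly the isomorphisms $H\to S$, and there are $|\mathsf{Aut}(H)|$ of them whenever $S\cong H$. So the obstacle you flag at the end is not actually an issue.

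The genuine gap is in the running-time step. Your bound of $n^{\omega+o(1)}$ for $\ell\,2^{\ell-1}$ dense $n\times n$ matrix products is correct, but ``absorbing $\omega-2$ into the sub-polynomial slack'' is not valid: $\omega-2$ is a fixed nonnegative constant (currently only known to satisfy $\omega<2.372$), so $n^{\omega-2}$ is not $n^{o(1)}$. Thus your argument yields $n^{\omega+o(1)}$, not the claimed $n^{2+o(1)}$. It is worth noting that the paper's own entry-wise analysis has a corresponding slip: computing each $Y_k(x,y;c_1,c_2;C)$ via the recursion in Algorithm~\ref{alg:dynamic-programming} requires an inner sum over $z\in[n]$ and $c\in C$, contributing an extra factor of $n$ beyond the $n^2$ recorded there, so the entry-wise DP is really $n^{3+o(1)}$ rather than $n^{2+o(1)}$. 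Either route gives $n^{O(1)}$, which is all that is used downstream in Proposition~\ref{prop-running-time-detection}, but neither your argument nor the paper's, as written, substantiates the sharper exponent $2+o(1)$ stated in the lemma.
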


The proof of Lemma~\ref{lem-color-coding} is postponed to Section~\ref{subsec:proof-lem-4.2} of the appendix. Based on Lemma~\ref{lem-color-coding}, we show that the approximate test statistic $\widetilde{f}$ can be computed efficiently by the following algorithm.
\begin{breakablealgorithm}{\label{alg:cal-widetilde-f}}
    \caption{Computation of $\widetilde{f}_{\mathcal H}$}
    \begin{algorithmic}[1]
    \STATE {\bf Input:} Symmetric matrices $\bm X,\bm Y$, signal-to-noise ratio $\lambda,\mu$, correlation $\rho$, and integer parameter $\ell$.
    \STATE List all non-isomorphic unlabeled decorated cycles with length $\ell$. Denote this list as $\mathcal H$.
    \STATE For each $[H] \in \mathcal H$, compute $\operatorname{Aut}(H)$. 
    \STATE Generate i.i.d.\ random colorings $\{ \tau_{\mathtt k} : 1 \leq \mathtt k \leq t \}$ that map $[n]$ to $[\ell]$ uniformly at random. 
    \FOR{each $1 \leq \mathtt k \leq t$}
    \STATE For each $[H] \in \mathcal H$, compute $\mathfrak F_{H}(\bm X,\bm Y,\tau_{\mathtt k})$ via Algorithm~\ref{alg:dynamic-programming}. 
    \ENDFOR
    \STATE Compute $\widetilde{f}_{\mathcal H}(\bm X,\bm Y)$ according to \eqref{eq-def-widetilde-f-H}.
    \STATE {\bf Output:} $\widetilde{f}_{\mathcal H}(\bm X,\bm Y)$.
    \end{algorithmic}
\end{breakablealgorithm}

\begin{proposition}{\label{prop-running-time-detection}}
    Suppose we choose $\ell$ according to \eqref{eq-condition-strong-detection}, Algorithm~\ref{alg:cal-widetilde-f} computes $\widetilde{f}_{\mathcal H}(\bm X,\bm Y)$ in time $n^{2+o(1)}$. 
\end{proposition}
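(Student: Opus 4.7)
The plan is to show that each step of Algorithm~\ref{alg:cal-widetilde-f} runs in time $n^{2+o(1)}$ under the assumption $\ell = o(\log n / \log\log n)$, with the dominant cost being the computation of $\mathfrak F_H(\bm X, \bm Y, \tau_{\mathtt k})$ for every $[H] \in \mathcal H$ and every coloring $\tau_{\mathtt k}$. The key observation that makes everything work is that $\ell = o(\log n / \log\log n)$ implies $e^{O(\ell)} = n^{O(1/\log\log n)} = n^{o(1)}$, so any factor polynomial in $\ell!$ or exponential in $\ell$ will be absorbed into the $n^{o(1)}$ slack.

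First, I would bound $|\mathcal H|$ and $t$. Since a decorated cycle of length $\ell$ has $\ell$ edges each carrying a $\{\bullet,\circ\}$ label, and the underlying cycle graph has automorphism group of size $2\ell$, we have $|\mathcal H| \leq 2^\ell = n^{o(1)}$. By \eqref{eq-def-r} we also have $t = \lceil 1/r \rceil = \lceil \ell^\ell / \ell! \rceil = e^{\Theta(\ell)} = n^{o(1)}$. Listing all non-isomorphic decorated cycles and computing their automorphism groups can be done via brute enumeration of decorations on a fixed cycle of length $\ell$, modding out by the dihedral group, which takes time $\mathrm{poly}(\ell) \cdot 2^\ell = n^{o(1)}$. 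Sampling the $t$ independent colorings $\tau_1,\ldots,\tau_t$ each requires $n$ uniform draws from $[\ell]$, for a total cost of $O(nt) = n^{1+o(1)}$.

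The main cost lies in the nested loop: for each of the $t$ colorings and each of the $|\mathcal H|$ decorated cycles, Algorithm~\ref{alg:cal-widetilde-f} invokes Algorithm~\ref{alg:dynamic-programming} to compute $\mathfrak F_H(\bm X,\bm Y,\tau_{\mathtt k})$. By Lemma~\ref{lem-color-coding}, each such call takes time $O(n^{2+o(1)})$. Hence the total cost of this step is
\begin{equation*}
    t \cdot |\mathcal H| \cdot n^{2+o(1)} \leq n^{o(1)} \cdot n^{o(1)} \cdot n^{2+o(1)} = n^{2+o(1)} \,.
\end{equation*}
Finally, assembling $\widetilde f_{\mathcal H}$ via the formula \eqref{eq-def-widetilde-f-H} requires one more sum of $|\mathcal H| \cdot t = n^{o(1)}$ precomputed terms with constant-time arithmetic per term, which is negligible compared to the $n^{2+o(1)}$ bound above.

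There is no real obstacle here since all the heavy lifting has already been done in Lemma~\ref{lem-color-coding}; the proof is essentially bookkeeping. The only subtlety worth double-checking is that every combinatorial factor that depends on $\ell$ (such as $|\mathcal H|$, $t$, the cost of listing cycles, and the cost of computing $\operatorname{Aut}(H)$) is at most $e^{O(\ell)}$ or $\ell^{O(\ell)}$, which becomes $n^{o(1)}$ precisely because $\ell$ satisfies \eqref{eq-condition-strong-detection}. This confirms the claimed $n^{2+o(1)}$ runtime.
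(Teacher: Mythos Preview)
Your proposal is correct and follows essentially the same approach as the paper: bound $|\mathcal H|$ and $t$ by $n^{o(1)}$ using $\ell = o(\log n / \log\log n)$, invoke Lemma~\ref{lem-color-coding} for the $n^{2+o(1)}$ cost of each call to Algorithm~\ref{alg:dynamic-programming}, and sum. Your version is slightly more thorough in accounting for the coloring-generation and final-assembly steps, but the argument is the same bookkeeping on top of Lemma~\ref{lem-color-coding}.
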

\begin{proof}
    Listing all non-isomorphic, unrooted unlabeled hypergraphs in $\mathcal H$ takes time $2^{\ell}=n^{o(1)}$. Since $[H] \in \mathcal H$ are cycles, calculating $\mathsf{Aut}(H)$ for all $[H]\in \mathcal H$ takes time $|\mathcal H| \cdot O(\ell)=n^{o(1)}$. For all $1 \leq i \leq t$, calculating $\mathfrak F_{H}(\bm X,\bm Y,\tau_i)$ takes time $n^{2+o(1)}$. Thus, the total running time of Algorithm~\ref{alg:cal-widetilde-f} is 
    \begin{equation*}
        O(n^{o(1)}) + O(n^{o(1)}) + 2t \cdot O(n^{2+o(1)}) = O(n^{2+o(1)}) \,. \qedhere
    \end{equation*}
\end{proof}

We complete this subsection by pointing out that Theorem~\ref{MAIN-THM-detection-algorithmic} follows directly from Proposition~\ref{prop-approximate-detection-statistics} and Proposition~\ref{prop-running-time-detection}.

\subsubsection{The correlated spiked Wishart model}{\label{subsubsec:approx-detection-cov}}

Recall that we define $\mathcal G$ to be the collection of all decorated, bipartite cycles with length $2\ell$. Again, we first approximately count the signed subgraphs that are isomorphic to a query decorated, bipartite cycle in $\mathcal G$ with $2\ell$ edges. Specifically, recall \eqref{eq-def-correlated-spike-covariance-specific}, we generate random colorings $\tau:[n],[N] \to [2\ell]$ that assigns a color to each vertex of $[n],[N]$ from the color set $2\ell$ independently and uniformly at random. Given any $V=V^{\mathsf a} \sqcup V^{\mathsf b}$ such that $V^{\mathsf a} \subset [n],V^{\mathsf b} \subset [N]$, let $\chi_{\tau}(V)$ indicate that $\tau(V)$ is colorful, i.e., $\tau(x) \neq \tau(y)$ for any distinct $x,y \in V$. In particular, if $|V|=2\ell$, then $\chi_{\tau}(V)=1$ with probability
\begin{equation}{\label{eq-def-r-cov}}
    r := \frac{ (2\ell!) }{ (2\ell)^{2\ell} } = e^{-\Theta(\ell)} \,.
\end{equation}
For any unlabeled, decorated, bipartite graph $[H]$ with $\ell$ vertices, we define
\begin{equation}{\label{eq-def-g-S-cov}}
    \mathfrak G_{H}(\bm X,\bm Y,\tau) := \sum_{ S \subset \mathsf K_{n,N}, S \cong H } \chi_{\tau,\tau'}(V(S)) \prod_{(i,j) \in E_{\bullet}(S)} \bm X_{i,j} \prod_{(i,j) \in E_{\circ}(S)} \bm Y_{i,j} \,.
\end{equation}
Then 
\begin{equation}{\label{eq-averaging-over-coloring-cov}}
    \mathbb E\big[ \mathfrak G_{H}(\bm X,\bm Y,\tau) \mid \bm X,\bm Y \big] = r\sum_{S \cong H} h_S(\bm X,\bm Y)
\end{equation}
where $h_S(\bm X,\bm Y)$ was defined in \eqref{eq-def-f-S-cov}. To further obtain an accurate approximation of $\sum (\bm X,\bm Y)$, we average over multiple copies of $\mathfrak G_H(\bm X,\bm Y,\tau)$ by generating $t$ independent random colorings, where
\begin{align*}
    t := \lceil 1/r \rceil \,.
\end{align*}
Next, we plug in the averaged subgraph count to approximately compute $h_{\mathcal G}(\bm X,\bm Y)$. Specifically, we generate $t$ random colorings $ \{ \tau_{\mathtt k} \}_{\mathtt k=1}^{t}$ which are independent copies of $\tau$ that map $[n],[N]$ to $[2\ell]$. Then, we define
\begin{equation}{\label{eq-def-widetilde-f-mathcal-G}}
    \widetilde{h}_{\mathcal G} := \frac{1}{ \sqrt{ N^{\ell} n^{\ell} \beta_{\mathcal G} }}  \sum_{ [H] \in \mathcal G } \Upsilon(H) \Big( \frac{1}{tr} \sum_{\mathtt k=1}^{t} \mathfrak G_H(\bm X,\bm Y,\tau_{\mathtt k}) \Big) \,.
\end{equation}
The following result shows that $\widetilde{h}_{\mathcal G}(\bm X,\bm Y)$ well approximates $h_{\mathcal G}(\bm X,\bm Y)$ in a relative sense.

\begin{proposition}{\label{prop-approximate-detection-statistics-cov}}
    Suppose Assumption~\ref{assum-upper-bound} and Equation~\ref{eq-prelim-assumption-upper-bound} hold and we choose $\ell$ according to \eqref{eq-condition-strong-detection}. Then under both $\overline\Pb$ and $\overline\Qb$
    \begin{align}{\label{eq-approximate-detection-statistics-cov}}
        \frac{ \widetilde{h}_{\mathcal G} - h_{\mathcal G} }{ \mathbb E_{\overline\Pb}[h_{\mathcal G}] } \overset{L_2}{\longrightarrow} 0  \,.
    \end{align}
\end{proposition}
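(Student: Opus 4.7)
Plan. The strategy mirrors the Wigner-case argument (Proposition~\ref{prop-approximate-detection-statistics}). The identity \eqref{eq-averaging-over-coloring-cov} shows that $\widetilde h_{\mathcal G}$ is a conditionally unbiased estimator of $h_{\mathcal G}$: $\mathbb E[\widetilde h_{\mathcal G} \mid \bm X, \bm Y] = h_{\mathcal G}$. The law of total variance then reduces matters to controlling $\mathbb E[\operatorname{Var}(\widetilde h_{\mathcal G} \mid \bm X, \bm Y)]$, and by independence of the $t$ colorings $\{\tau_k\}_{k=1}^t$,
\[
\operatorname{Var}(\widetilde h_{\mathcal G} \mid \bm X, \bm Y) = \frac{1}{t r^2 N^\ell n^\ell \beta_{\mathcal G}} \operatorname{Var}_\tau\Big(\sum_{[H] \in \mathcal G} \Upsilon(H)\, \mathfrak G_H(\bm X, \bm Y, \tau)\Big).
\]

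The key algebraic step is to expand the single-coloring variance using the colorfulness covariance. Setting $W(\tau) := \sum_{[H]} \Upsilon(H)\, \mathfrak G_H(\bm X, \bm Y, \tau)$ and $m := |V(S) \cap V(K)|$, a direct calculation gives $\mathbb E_\tau[\chi_\tau(V(S))\chi_\tau(V(K))] = r\,\phi(m)$ with $\phi(m) := (2\ell - m)!/(2\ell)^{2\ell - m}$ and in particular $\phi(0) = r$. Combined with $\mathbb E_\tau[W] = r\sum_S \Upsilon(S)h_S$, this yields
\[
\operatorname{Var}_\tau(W) = r\sum_{\substack{S, K \subset \mathsf K_{n,N}:\ [S],[K] \in \mathcal G \\ V(S) \cap V(K) \neq \emptyset}} \big(\phi(m) - r\big)\,\Upsilon(S)\Upsilon(K)\, h_S(\bm X, \bm Y)\, h_K(\bm X, \bm Y),
\]
where the restriction to intersecting vertex sets is \emph{automatic} from the vanishing of the $m = 0$ coefficient. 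Using the trivial bound $\phi(m) - r \leq 1$ together with $tr \geq 1$, the proposition reduces to showing that
\[
\frac{1}{N^\ell n^\ell \beta_{\mathcal G}} \sum_{V(S) \cap V(K) \neq \emptyset} \Upsilon(S)\Upsilon(K)\,\mathbb E[h_S h_K] = o\big(\mathbb E_{\overline\Pb}[h_{\mathcal G}]^2\big)
\]
under both $\overline\Pb$ and $\overline\Qb$.

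Under $\overline\Qb$, orthogonality \eqref{eq-standard-orthogonal-cov} collapses the sum to the diagonal $S = K$, giving a bound of order $O(1)$; dividing by $\mathbb E_{\overline\Pb}[h_{\mathcal G}]^2 = (1+o(1))\gamma^{-\ell}\beta_{\mathcal G} = \omega(1)$ (via Lemmas~\ref{lem-mean-var-f-mathcal-G-part-1} and \ref{lem-bound-beta-mathcal-G}) yields $o(1)$. Under $\overline\Pb$, I would split $\mathbb E_{\overline\Pb}[h_S h_K] = \mathbb E[h_S]\mathbb E[h_K] + \operatorname{Cov}_{\overline\Pb}(h_S, h_K)$. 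The covariance contribution is exactly $N^\ell n^\ell \beta_{\mathcal G}\cdot \operatorname{Var}_{\overline\Pb}[h_{\mathcal G}]$, which Lemma~\ref{lem-mean-var-f-mathcal-G-part-2} (powered by Lemma~\ref{lem-est-cov-f-S-f-K-cov}) already bounds by $o(\mathbb E_{\overline\Pb}[h_{\mathcal G}]^2 \cdot N^\ell n^\ell \beta_{\mathcal G})$. For the product contribution, \eqref{eq-mean-Pb-f-S-cov} gives $\mathbb E[h_S] = \Upsilon(S)n^{-\ell}$, so the sum reduces to $n^{-2\ell}\sum_{V(S)\cap V(K)\neq\emptyset}\Upsilon(S)^2\Upsilon(K)^2$; a routine union bound shows that only an $O(\ell^2/n)$-fraction of pairs $(S,K)$ intersect, producing an $O(\ell^2/n)\cdot\mathbb E_{\overline\Pb}[h_{\mathcal G}]^2 = o(\mathbb E_{\overline\Pb}[h_{\mathcal G}]^2)$ term under our choice of $\ell$ in \eqref{eq-condition-strong-detection}. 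The main obstacle is the exact cancellation identity producing the overlap-restricted variance formula; once this bipartite analog of the Wigner calculation is in hand, everything else reuses the moment bounds already established in Section~\ref{subsec:proof-prop-2.7}.
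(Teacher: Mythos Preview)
Your proposal is correct and follows essentially the same route as the paper. The paper's proof is slightly more direct: after obtaining the overlap-restricted bound, it does not split $\mathbb E_{\overline\Pb}[h_S h_K]$ into product and covariance parts but instead invokes the pointwise bound $\mathbb E_{\overline\Pb}[h_S h_K] \leq n^{-2\ell+|E_\bullet(S)\cap E_\bullet(K)|+|E_\circ(S)\cap E_\circ(K)|}\mathtt P(S,K)$ (which is exactly what the proof of Lemma~\ref{lem-est-cov-f-S-f-K-cov} establishes) and recognizes the resulting sum as \eqref{eq-var-Pb-f-mathcal-G-relax-2}, already shown to be $o(1)\cdot\mathbb E_{\overline\Pb}[h_{\mathcal G}]^2$; this avoids your separate $O(\ell^2/n)$ overlap-fraction calculation for the product term. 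Both arguments rely on the same implicit fact that $\mathbb E_{\overline\Pb}[h_S h_K]\geq 0$ (needed to pass from $\phi(m)-r\leq 1$ to an upper bound on the sum), which the paper asserts in the Wigner analogue and carries over here.
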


The proof of Proposition~\ref{prop-approximate-detection-statistics-cov} is incorporated in Section~\ref{subsec:proof-prop-4.4} of the appendix. Since the convergence in $L_2$ implies the convergence in probability, as an immediate corollary of Theorem~\ref{MAIN-THM-detection-cov} and Proposition~\ref{prop-approximate-detection-statistics-cov}, we conclude that the approximate test statistic $\widetilde{h}_{\mathcal G}$ succeeds under the same condition as the original test statistic $h_{\mathcal G}$. 

Now we show that $\widetilde{h}_{\mathcal G}$ can be calculated efficiently, and the key is to show the following lemma.

\begin{lemma}{\label{lem-color-coding-cov}}
    For any colorings $\tau:[n],[N] \to [2\ell]$ and any hypergraph $[H] \in \mathcal G$, there exists an algorithm (see Algorithm~\ref{alg:dynamic-programming-cov}) that computes $\mathfrak G_H(\bm X,\bm Y,\tau)$ in time $O(n^{2+o(1)})$.
\end{lemma}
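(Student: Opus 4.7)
The plan is to adapt the color-coding dynamic program from the proof of Lemma~\ref{lem-color-coding} to the bipartite cycle setting of $\mathcal{G}$. The additional ingredients to handle are that (i) vertices of $H$ alternate between $V^{\mathsf{a}}(H)$ and $V^{\mathsf{b}}(H)$, so the former are mapped into $[n]$ and the latter into $[N]$, (ii) each edge carries a decoration in $\{\bullet, \circ\}$ determining whether its weight comes from $\bm{X}$ or $\bm{Y}$, and (iii) the side-constraint $\mathsf{diff}(H) \subset V^{\mathsf{a}}(H)$ restricts the interaction between decorations and vertex-sides.

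First I would fix a vertex labeling $v_1, v_2, \ldots, v_{2\ell}$ going around the cycle with $v_1 \in V^{\mathsf{a}}(H)$ and write $M_i \in \{\bm{X}, \bm{Y}\}$ for the weight matrix associated to the edge $(v_i, v_{i+1 \bmod 2\ell})$. The sum in \eqref{eq-def-g-S-cov} equals, up to a factor of $|\mathsf{Aut}(H)|$, a sum over colorful injections $\sigma: V(H) \hookrightarrow [n] \sqcup [N]$ that respect the bipartite structure. Since the number of colors $2\ell$ equals $|V(H)|$, each such injection is specified by (a) a permutation $\pi: [2\ell] \to [2\ell]$ satisfying the bipartite consistency $\sigma(v_i) \in [n]$ iff $i$ is odd, and (b) a choice $\sigma(v_i) \in \tau^{-1}(\pi(i))$ within each color class. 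Marginalizing over (b) collapses the inner sum to a color-restricted cyclic trace, yielding
\[
\mathfrak{G}_H(\bm{X},\bm{Y},\tau) \;=\; \frac{1}{|\mathsf{Aut}(H)|} \sum_{\pi \text{ admissible}} \operatorname{tr}\!\Big( M_1^{(\pi(1),\pi(2))} M_2^{(\pi(2),\pi(3))} \cdots M_{2\ell}^{(\pi(2\ell),\pi(1))} \Big),
\]
where $M_i^{(c,c')}$ denotes $M_i$ with rows restricted to $\tau^{-1}(c)$ and columns to $\tau^{-1}(c')$.

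Next, I would evaluate each such color-restricted trace via the dynamic program of Algorithm~\ref{alg:dynamic-programming-cov}, which is the direct bipartite analog of the path-DP from Algorithm~\ref{alg:dynamic-programming}: traverse the cycle one edge at a time, propagating partial colorful-path weights through $2\ell$ successive matrix-vector multiplications against the restricted matrices $M_i^{(\pi(i),\pi(i+1))}$ (each costing $O(nN) = O(n^2)$), and close the cycle by a final inner product at the starting color class. As in Lemma~\ref{lem-color-coding}, this yields $n^{2+o(1)}$ work per admissible permutation $\pi$.

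Finally, the number of admissible permutations is at most $(2\ell)!$; since $\ell = o(\log n / \log\log n)$ under \eqref{eq-condition-strong-detection}, we have $(2\ell)! \leq (2\ell)^{2\ell} = e^{o(\log n)} = n^{o(1)}$, so the total cost is $n^{2+o(1)}$ as claimed. The main technical obstacle will be the careful bookkeeping in the reduction step: one must verify that the admissibility conditions on $\pi$ (the bipartite split together with the constraint $\mathsf{diff}(H) \subset V^{\mathsf{a}}(H)$) produce exactly the intended sum over $S \cong H$, and that division by $|\mathsf{Aut}(H)|$ is handled correctly. These steps closely parallel the Wigner case in Lemma~\ref{lem-color-coding}, and the admissibility restrictions only remove terms from the summation, so the complexity bound is unaffected.
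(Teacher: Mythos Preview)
Your decomposition of $\mathfrak G_H$ into color–restricted cyclic traces indexed by permutations $\pi$ of $[2\ell]$ is valid, and since $(2\ell)!=n^{o(1)}$ under \eqref{eq-condition-strong-detection} this is a legitimate alternative to the paper's DP over color \emph{sets}. The gap is in your per–trace cost. You claim each trace
\[
\operatorname{tr}\!\Big(M_1^{(\pi(1),\pi(2))}\,M_2^{(\pi(2),\pi(3))}\cdots M_{2\ell}^{(\pi(2\ell),\pi(1))}\Big)
\]
can be obtained by ``$2\ell$ successive matrix-vector multiplications, each costing $O(nN)$'' followed by ``a final inner product at the starting color class''. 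But a single chain of matrix-vector products starting from a vector $v$ produces $(M_1\cdots M_{2\ell})v$, and one inner product with some $u$ then gives the bilinear form $u^{\top}(M_1\cdots M_{2\ell})v$, not the trace $\sum_i e_i^{\top}(M_1\cdots M_{2\ell})e_i$. To recover the trace one must either launch a separate chain from each basis vector $e_i$ in the starting color class, or maintain the full matrix product; in either case the \emph{starting vertex} must be carried through the entire computation. Your matrix-vector description is the right picture for colorful \emph{paths}, not colorful \emph{cycles}.

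This is exactly what the paper's Algorithm~\ref{alg:dynamic-programming-cov} does differently: its DP state $Y_k(x,y;c_1,c_2;C)$ keeps \emph{both} endpoints $x,y\in[n]$ (together with the set $C$ of colors used so far), and the final step closes the cycle by summing over $x,y$ jointly. The paper also exploits $\mathsf{diff}(H)\subset V^{\mathsf a}(H)$ to pair up the two edges through each $V^{\mathsf b}$-vertex, so one recursion step advances two edges at a time via an intermediate $w\in[N]$ and $z\in[n]$; it then simply defers to the analysis of Lemma~\ref{lem-color-coding}. A smaller point: there is no ``admissibility'' condition on $\pi$ coming from bipartiteness, since each color has preimages in both $[n]$ and $[N]$; the bipartite structure only dictates which of $[n]$ or $[N]$ the restriction $\tau^{-1}(\pi(i))$ is taken in at position $i$.
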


The proof of Lemma~\ref{lem-color-coding} is postponed to Section~\ref{subsec:proof-lem-4.5} of the appendix. Based on Lemma~\ref{lem-color-coding-cov}, we show that the approximate test statistic $\widetilde{h}_{\mathcal G}$ can be computed efficiently by the following algorithm.
\begin{breakablealgorithm}{\label{alg:cal-widetilde-f-mathcal-G}}
    \caption{Computation of $\widetilde{h}_{\mathcal G}$}
    \begin{algorithmic}[1]
    \STATE {\bf Input:} $n*N$ matrices $\bm X,\bm Y$ with $n=\gamma N$, signal-to-noise ratio $\lambda,\mu$, correlation $\rho$, and integer parameter $\ell$.
    \STATE List all non-isomorphic unlabeled decorated, bipartite cycles with length $2\ell$. Denote this list as $\mathcal G$.
    \STATE For each $[H] \in \mathcal G$, compute $\operatorname{Aut}(H)$. 
    \STATE Generate i.i.d.\ random colorings $\{ \tau_{\mathtt k} : 1 \leq \mathtt k \leq t \}$ that map $[n],[N]$ to $[2\ell]$ uniformly at random. 
    \FOR{each $1 \leq \mathtt k \leq t$}
    \STATE For each $[H] \in \mathcal G$, compute $\mathfrak G_{H}(\bm X,\bm Y,\tau_{\mathtt k})$ via Algorithm~\ref{alg:dynamic-programming}. 
    \ENDFOR
    \STATE Compute $\widetilde{h}_{\mathcal G}(\bm X,\bm Y)$ according to \eqref{eq-def-widetilde-f-mathcal-G}.
    \STATE {\bf Output:} $\widetilde{h}_{\mathcal G}(\bm X,\bm Y)$.
    \end{algorithmic}
\end{breakablealgorithm}

\begin{proposition}{\label{prop-running-time-detection-cov}}
    Suppose we choose $\ell$ according to \eqref{eq-condition-strong-detection}, Algorithm~\ref{alg:cal-widetilde-f-mathcal-G} computes $\widetilde{h}_{\mathcal G}(\bm X,\bm Y)$ in time $n^{2+o(1)}$. 
\end{proposition}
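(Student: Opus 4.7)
The plan is to mirror the running-time analysis of Proposition~\ref{prop-running-time-detection} line by line, accounting for the bipartite, length-$2\ell$ structure of cycles in $\mathcal G$ instead of the one-sided length-$\ell$ cycles in $\mathcal H$. I will walk through each step of Algorithm~\ref{alg:cal-widetilde-f-mathcal-G} and bound its contribution to the total running time, then sum everything up.

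First, I would argue that the enumeration step (listing all non-isomorphic unlabeled decorated bipartite cycles of length $2\ell$ subject to $\mathsf{diff}(H)\subset V^{\mathsf a}(H)$) takes only $n^{o(1)}$ time: a decorated bipartite cycle of length $2\ell$ is determined, up to isomorphism, by a cyclic/dihedral word in $\{\bullet,\circ\}$ of length $2\ell$, so there are at most $2^{2\ell}$ candidate patterns, which by the choice \eqref{eq-condition-strong-detection} is $n^{o(1)}$. Enumerating these patterns and rejecting those that violate the bipartite or $\mathsf{diff}$ constraints can be done in time linear in the list size, hence $n^{o(1)}$. Similarly, for each such $[H]\in\mathcal G$ the automorphism group is a subgroup of the dihedral group of order $4\ell$ (since the underlying cycle has that automorphism group), so $|\mathsf{Aut}(H)|$ can be computed by brute force in time $O(\ell)$, giving total $|\mathcal G|\cdot O(\ell)=n^{o(1)}$ for this step.

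Next, the sampling loop generates $t=\lceil 1/r\rceil$ independent uniform colorings $\tau_{\mathtt k}:[n]\sqcup[N]\to[2\ell]$, and by the definition of $r$ in \eqref{eq-def-r-cov} together with $\ell=o(\tfrac{\log n}{\log\log n})$ we have $t=e^{\Theta(\ell)}=n^{o(1)}$. Generating each coloring takes $O(n+N)=O(n)$ time, so this step contributes $n^{1+o(1)}$ total. Inside the loop, for each coloring $\tau_{\mathtt k}$ and each $[H]\in\mathcal G$ we invoke Lemma~\ref{lem-color-coding-cov} (via Algorithm~\ref{alg:dynamic-programming-cov}) to compute $\mathfrak G_H(\bm X,\bm Y,\tau_{\mathtt k})$ in time $O(n^{2+o(1)})$; summing over the $n^{o(1)}$ choices of $[H]$ and the $n^{o(1)}$ choices of $\tau_{\mathtt k}$ yields $n^{2+o(1)}$ total.

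Finally, assembling $\widetilde{h}_{\mathcal G}(\bm X,\bm Y)$ from the precomputed counts via \eqref{eq-def-widetilde-f-mathcal-G} amounts to $O(|\mathcal G|\cdot t)=n^{o(1)}$ multiplications and additions, which is negligible. Adding up the four contributions gives
\[
n^{o(1)}+n^{o(1)}+n^{2+o(1)}+n^{o(1)}=n^{2+o(1)},
\]
completing the bound. There is no substantive obstacle here beyond verifying that the constants $r$ and $t$ are indeed $n^{o(1)}$ under \eqref{eq-condition-strong-detection} and that the dominant cost comes entirely from the color-coding subroutine guaranteed by Lemma~\ref{lem-color-coding-cov}.
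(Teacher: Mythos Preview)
Your proposal is correct and follows essentially the same approach as the paper's proof: bound the enumeration of $\mathcal G$ by $2^{2\ell}=n^{o(1)}$, compute automorphisms in $|\mathcal G|\cdot O(\ell)=n^{o(1)}$, invoke Lemma~\ref{lem-color-coding-cov} for each of the $t=n^{o(1)}$ colorings and each $[H]\in\mathcal G$, and sum to $n^{2+o(1)}$. Your write-up is slightly more detailed (e.g., you explicitly account for the coloring-generation cost), but the structure and key estimates are identical.
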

\begin{proof}
    Listing all non-isomorphic, unrooted unlabeled hypergraphs in $\mathcal G$ takes time $2^{2\ell}=n^{o(1)}$. Since $[H] \in \mathcal G$ are cycles, calculating $\mathsf{Aut}(H)$ for all $[H]\in \mathcal G$ takes time $|\mathcal G| \cdot O(\ell)=n^{o(1)}$. For all $1 \leq i \leq t$, calculating $\mathfrak G_{H}(\bm X,\bm Y,\tau_i)$ takes time $n^{2+o(1)}$. Thus, the total running time of Algorithm~\ref{alg:cal-widetilde-f-mathcal-G} is 
    \begin{equation*}
        O(n^{o(1)}) + O(n^{o(1)}) + 2t \cdot O(n^{2+o(1)}) = O(n^{2+o(1)}) \,. \qedhere
    \end{equation*}
\end{proof}

We complete this subsection by pointing out that Theorem~\ref{MAIN-THM-detection-algorithmic-cov} follows directly from Proposition~\ref{prop-approximate-detection-statistics-cov} and Proposition~\ref{prop-running-time-detection-cov}.

\subsection{Approximate the recovery statistics}{\label{subsec:approx-recovery}}

\subsubsection{The correlated spiked Wigner model}{\label{subsec:approx-recovery-Wigner}}

Similar as in Section~\ref{subsec:approx-detection}, we generate a random coloring $\xi:[n] \to [\ell+1]$ that assigns a color to each vertex of $[n]$ from the color set $\ell+1$ independently and uniformly at random. Given any $V \subset [n]$, let $\chi_{\xi}(V)$ indicate that $\xi(V)$ is colorful, i.e., $\xi(x) \neq \xi(y)$ for any distinct $x,y \in V$. In particular, if $|V|=mp\ell$, then $\chi_{\xi}(V)=1$ with probability
\begin{equation}{\label{eq-def-varsigma}}
    \varkappa := \frac{ (\ell+1)! }{ (\ell+1)^{\ell+1} } \,.
\end{equation}
For any unlabeled decorated graph $[H] \in \mathcal J$ with $\ell+1$ vertices, we define
\begin{equation}{\label{eq-def-g-J-xi}}
    \mathfrak L_{H}(\bm X,\bm Y,\xi) := \sum_{ \substack{ S \subset \mathsf K_n: S \cong H \\ \mathsf{L}(S)=\{ u,v \} } } \chi_{\xi}(V(S)) \prod_{(i,j) \in E_{\bullet}(S)} \bm X_{i,j} \prod_{(i,j) \in E_{\circ}(S)} \bm Y_{i,j} \,.
\end{equation}
Then 
\begin{equation}{\label{eq-averaging-over-coloring-recovery}}
    \mathbb E\big[ \mathfrak L_{H}(\bm X,\bm Y,\xi) \mid \bm X,\bm Y \big] = \varkappa \sum_{ \substack{ S \subset \mathsf K_n: S \cong H \\ \mathsf{L}(S)=\{ u,v \} } } f_S(\bm X,\bm Y)
\end{equation}
where $f_S(\bm X,\bm Y)$ was defined in \eqref{eq-def-f-S}. Next, we define $t=\lceil \frac{1}{\varkappa} \rceil$ and generate $t$ random colorings $ \{ \xi_{\mathtt k} \}_{\mathtt k=1}^{t}$ which are independent copies of $\xi$ that map $[n]$ to $[\ell+1]$. Then, we define
\begin{equation}{\label{eq-def-widetilde-Phi-H}}
    \widetilde{\Phi}^{\mathcal J}_{u,v} := \frac{1}{ n^{\frac{\ell}{2}-1}\beta_{\mathcal J} } \sum_{[H] \in \mathcal J} \Xi(H) \Big( \frac{1}{t\varkappa} \sum_{\mathtt k=1}^{t} \mathfrak L_H(\bm X,\bm Y,\xi_{\mathtt k}) \Big) \,.
\end{equation}
Moreover, the following result bounds the approximation error under
the same conditions as those in Proposition~\ref{main-prop-recovery} for the second moment calculation.

\begin{proposition}{\label{prop-approximate-recovery-statistics}}
    Suppose Assumption~\ref{assum-upper-bound} and Equation~\ref{eq-prelim-assumption-upper-bound} hold and we choose $\ell$ according to \eqref{eq-condition-weak-recovery}. Then there exists $R=O_{\lambda,\mu,\rho,\delta}(1)$ such that
    \begin{equation}{\label{eq-approximate-recovery-statistics}}
        \mathbb E_{\Pb}\Big[ \widetilde{\Phi}_{u,v}^{\mathcal J} \cdot x_u x_v \Big] = 1+o(1) \mbox{ and } \mathbb E_{\Pb}\Big[ \big(\widetilde{\Phi}_{u,v}^{\mathcal J}\big)^2 \Big], \ \mathbb E_{\Pb}\Big[ \big(\widetilde{\Phi}_{u,v}^{\mathcal J}\big)^2 x_u^2 x_v^2 \Big] \leq R \,.
    \end{equation}
\end{proposition}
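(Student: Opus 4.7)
The plan is to mimic the strategy used for Proposition~\ref{prop-approximate-detection-statistics} in the detection setting, adapted to the two-point recovery statistic. The starting point is that the color coding construction is exactly unbiased: by \eqref{eq-averaging-over-coloring-recovery} and the i.i.d.\ choice of the colorings $\{\xi_{\mathtt k}\}$, one has
\begin{align*}
    \mathbb E_{\xi_1,\ldots,\xi_t}\big[ \widetilde{\Phi}^{\mathcal J}_{u,v} \bigm| \bm X,\bm Y \big] = \Phi^{\mathcal J}_{u,v} \,.
\end{align*}
The mean identity in \eqref{eq-approximate-recovery-statistics} then follows immediately from the tower property together with Proposition~\ref{main-prop-recovery}. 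For the second moment bounds, the conditional variance decomposition yields, for any nonnegative $w \in \{1, x_u^2 x_v^2\}$,
\begin{align*}
    \mathbb E_{\Pb,\xi}\big[ (\widetilde{\Phi}^{\mathcal J}_{u,v})^2 \cdot w \big] = \mathbb E_{\Pb}\big[ (\Phi^{\mathcal J}_{u,v})^2 \cdot w \big] + \mathbb E_{\Pb,\xi}\Big[ \big( \widetilde{\Phi}^{\mathcal J}_{u,v} - \Phi^{\mathcal J}_{u,v} \big)^2 \cdot w \Big] \,,
\end{align*}
so since the first term is already $O_{\lambda,\mu,\rho,\delta}(1)$ by Proposition~\ref{main-prop-recovery}, the whole task reduces to bounding the ``color-coding variance'' on the right by an absolute constant.

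To do this, write $\widetilde{\Phi}^{\mathcal J}_{u,v} = \tfrac{1}{t}\sum_{\mathtt k=1}^{t} Z_{\mathtt k}$ with $Z_{\mathtt k} := \tfrac{1}{n^{\ell/2-1}\beta_{\mathcal J}\varkappa} \sum_{[H] \in \mathcal J} \Xi(H) \mathfrak L_H(\bm X,\bm Y,\xi_{\mathtt k})$. Since the $\xi_{\mathtt k}$ are i.i.d.\ with conditional mean $\mathbb E_{\xi_{\mathtt k}}[Z_{\mathtt k}\mid \bm X,\bm Y] = \Phi^{\mathcal J}_{u,v}$,
\begin{align*}
    \mathbb E_{\Pb,\xi}\Big[ \big( \widetilde{\Phi}^{\mathcal J}_{u,v} - \Phi^{\mathcal J}_{u,v} \big)^2 \cdot w \Big] = \tfrac{1}{t} \mathbb E_{\Pb}\Big[ \mathrm{Var}_{\xi_1}\big( Z_1 \mid \bm X,\bm Y \big) \cdot w \Big] \leq \tfrac{1}{t} \mathbb E_{\Pb,\xi_1}\big[ Z_1^2 \cdot w \big] \,.
\end{align*}
Expanding $Z_1^2$ into a double sum over pairs $(S,K)$ and using the elementary inequality $\mathbb E_{\xi_1}[\chi_{\xi_1}(V(S))\chi_{\xi_1}(V(K))] \leq \mathbb E_{\xi_1}[\chi_{\xi_1}(V(S))] = \varkappa$ (joint colorfulness implies colorfulness of $V(S)$ alone), one obtains
\begin{align*}
    \mathbb E_{\Pb,\xi_1}\big[ Z_1^2 \cdot w \big] \leq \tfrac{1}{\varkappa} \sum_{ \substack{ [S],[K]\in\mathcal J \\ \mathsf L(S)=\mathsf L(K)=\{u,v\} } } \tfrac{\Xi(S)\Xi(K)}{n^{\ell-2}\beta_{\mathcal J}^2} \big| \mathbb E_{\Pb}[f_S f_K \cdot w] \big| \,.
\end{align*}
The right-hand side is, up to the factor $1/\varkappa$, exactly the sum already bounded by $O_{\lambda,\mu,\rho,\delta}(1)$ in the proof of Proposition~\ref{main-prop-recovery} via Lemmas~\ref{lem-est-cov-f-S-f-K-chain-case}, \ref{lem-bound-Xi-mathtt-M} and \ref{lem-recovery-most-technical}. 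Since $t = \lceil 1/\varkappa \rceil$, the prefactor $\tfrac{1}{t\varkappa}$ is at most $1$, so the color-coding variance is $O_{\lambda,\mu,\rho,\delta}(1)$, which completes \eqref{eq-approximate-recovery-statistics} after enlarging $R$ by a constant factor.

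The one technical point requiring care is that Lemma~\ref{lem-est-cov-f-S-f-K-chain-case} is phrased as a one-sided inequality on the possibly-signed moment $\mathbb E_{\Pb}[f_S f_K \cdot w]$, while the triangle-inequality step above needs the analogous bound on its absolute value. This is not a genuine obstacle: the proof of Lemma~\ref{lem-est-cov-f-S-f-K-chain-case} proceeds by first taking expectation over the independent Gaussian noise $\bm W,\bm Z$ (which only produces nonnegative monomials in the spike entries) and then applying the fourth-moment bound of Assumption~\ref{assum-upper-bound} term-by-term, so the same estimate controls $|\mathbb E_{\Pb}[f_S f_K \cdot w]|$ verbatim. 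Once this observation is recorded, the cascade of Lemmas~\ref{lem-bound-Xi-mathtt-M} and \ref{lem-recovery-most-technical} transfers unchanged, and the argument closes.
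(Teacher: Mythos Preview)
Your proof is correct and follows essentially the same approach as the paper. Both arguments establish unbiasedness via the tower property and then reduce the second moment to the double sum $\sum_{S,K}\frac{\Xi(S)\Xi(K)}{n^{\ell-2}\beta_{\mathcal J}^2}\,|\mathbb E_{\Pb}[f_Sf_K\cdot w]|$ already controlled in Section~\ref{subsec:proof-prop-2.10}; the only cosmetic difference is that you route through an explicit conditional-variance decomposition and the i.i.d.\ structure of the $Z_{\mathtt k}$'s, whereas the paper expands $(\widetilde{\Phi}^{\mathcal J}_{u,v})^2$ directly and bounds the coloring factor termwise. Your explicit remark that Lemma~\ref{lem-est-cov-f-S-f-K-chain-case} in fact controls the absolute value (via term-by-term bounds $|\mathbb E[x_i^a y_i^b]|\le C$ from Assumption~\ref{assum-upper-bound}) is exactly the point the paper uses implicitly when it writes $|\mathbb E_{\Pb}[f_Sf_K]|$ in~\eqref{eq-prop-approx-Phi-part-2}.
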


The proof of Proposition~\ref{prop-approximate-recovery-statistics} is postponed to Section~\ref{subsec:proof-prop-4.4} of the appendix. Now we show the approximate subgraph counts $\mathfrak L_{H}(\bm X,\bm Y,\xi)$ can be computed efficiently via the following algorithm. And (similar as in Section~\ref{subsec:approx-detection}) the key is to show the following lemma.

\begin{lemma}{\label{lem-color-coding-recovery}}
    For any coloring $\xi:[n] \to [\ell+1]$ and any hypergraph $[H] \in \mathcal J$, there exists an algorithm (see Algorithm~\ref{alg:dynamic-programming}) that computes $\mathfrak L_H(\bm X,\bm Y,\xi)$ in time $O(n^{T+o(1)})$ for some constant $T=O(1) \cdot (\tfrac{\ell}{\log n}+1 )$.
\end{lemma}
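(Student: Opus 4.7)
The plan is to reduce the computation of $\mathfrak L_H(\bm X,\bm Y,\xi)$ to a standard color-coding dynamic programming for paths, in the style of Alon--Yuster--Zwick. Fix a canonical labeling of the decorated path $H$ with vertices $v_1,\ldots,v_{\ell+1}$ so that $v_1$ plays the role of the leaf $u$ and $v_{\ell+1}$ the role of $v$, and let the edge $e_k=(v_k,v_{k+1})$ carry the decoration $\gamma(e_k)\in\{\bullet,\circ\}$. Writing $M_k=\bm X$ if $\gamma(e_k)=\bullet$ and $M_k=\bm Y$ if $\gamma(e_k)=\circ$, the quantity $\mathfrak L_H(\bm X,\bm Y,\xi)$ can be rewritten, up to a factor $|\mathsf{Aut}(H)|^{-1}$ that accounts for the (at most two) orderings of an unlabeled decorated path, as
\begin{equation*}
    |\mathsf{Aut}(H)|^{-1}\sum_{\substack{(w_1,\ldots,w_{\ell+1})\in[n]^{\ell+1}\\ w_1=u,\,w_{\ell+1}=v,\,\xi\text{-colorful}}} \prod_{k=1}^{\ell} M_k(w_k,w_{k+1}) \,.
\end{equation*}

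To evaluate this sum efficiently, the plan is to introduce a DP table
\begin{equation*}
    D[k,w,C] := \sum_{\substack{(w_1,\ldots,w_k):\,w_1=u,\,w_k=w\\ \xi(\{w_1,\ldots,w_k\})=C}} \prod_{j=1}^{k-1} M_j(w_j,w_{j+1})
\end{equation*}
indexed by $k\in\{1,\ldots,\ell+1\}$, $w\in[n]$, and a subset $C\subseteq[\ell+1]$ with $|C|=k$ and $\xi(w)\in C$. The base case is $D[1,u,\{\xi(u)\}]=1$ and all other entries zero; for $k\geq 2$ the recurrence is
\begin{equation*}
    D[k,w,C] = \mathbf 1_{\{\xi(w)\in C\}}\sum_{w'\in[n]:\,\xi(w')\in C\setminus\{\xi(w)\}} D[k-1,w',C\setminus\{\xi(w)\}]\cdot M_{k-1}(w',w) \,.
\end{equation*}
After the table is filled, we read off $|\mathsf{Aut}(H)|^{-1}\cdot D[\ell+1,v,[\ell+1]]$, possibly averaging with the analogous run in which the roles of $u$ and $v$ are swapped, to obtain $\mathfrak L_H(\bm X,\bm Y,\xi)$. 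Correctness follows by induction on $k$, since the recurrence exactly mirrors the extension of a colorful partial path by one vertex.

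For the running time, the number of DP states is at most $(\ell+1)\cdot n\cdot 2^{\ell+1}$ and each update requires $O(n)$ arithmetic operations, so the DP finishes in $O(n^2\cdot 2^{\ell+1}\cdot \ell)$ time. Under the condition \eqref{eq-condition-weak-recovery} we have $\ell=O_\delta(\log n)$, so $2^{\ell+1}\leq n^{O(\ell/\log n)}$, giving the total running time $n^{T+o(1)}$ with $T=2+O(\ell/\log n)=O(1)\cdot(\ell/\log n+1)$, as claimed. The only subtle point in the argument is the bookkeeping with $|\mathsf{Aut}(H)|$: since $H$ is an unlabeled decorated path, this automorphism group has order one or two, and the adjustment above ensures that each labeled subgraph $S$ with $\mathsf{L}(S)=\{u,v\}$ and $S\cong H$ is counted with the correct multiplicity.
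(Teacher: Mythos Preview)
Your approach is essentially the same as the paper's: a color-coding dynamic program over the decorated path, tracking the current endpoint and the set of colors used so far, with the final answer read off at the prescribed leaves and divided by $|\mathsf{Aut}(H)|$. The paper's Algorithm~\ref{alg:dynamic-programming-recovery} fills a table of the same shape (with a slightly different indexing convention) and arrives at the same running-time bound $n^2\cdot 2^{\ell+1}\cdot \text{poly}(\ell)=n^{T+o(1)}$.

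One point needs correcting. Your displayed identity
\[
\mathfrak L_H=|\mathsf{Aut}(H)|^{-1}\sum_{\substack{(w_1,\ldots,w_{\ell+1})\\ w_1=u,\ w_{\ell+1}=v,\ \xi\text{-colorful}}}\prod_{k=1}^{\ell}M_k(w_k,w_{k+1})
\]
is not right as written. The sum on the right enumerates only embeddings of $H$ that send $v_1\mapsto u$. When $|\mathsf{Aut}(H)|=1$ (non-palindromic decoration sequence) this misses every $S$ whose unique isomorphism to $H$ sends $v_1\mapsto v$; when $|\mathsf{Aut}(H)|=2$ it already equals $\mathfrak L_H$, so the extra factor $1/2$ is wrong. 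The correct relation---which is also what underlies the paper's correctness argument, carried over from the cycle case---is
\[
\mathfrak L_H=\frac{1}{|\mathsf{Aut}(H)|}\Big(D_u[\ell+1,v,[\ell+1]]+D_v[\ell+1,u,[\ell+1]]\Big),
\]
i.e., sum over \emph{both} orderings $(w_1,w_{\ell+1})\in\{(u,v),(v,u)\}$ (equivalently, over all embeddings with leaf set $\{u,v\}$) and then divide by $|\mathsf{Aut}(H)|$. Your later remark about ``possibly averaging with the analogous run in which the roles of $u$ and $v$ are swapped'' is the right instinct, but it should be \emph{summing} rather than averaging, and it is \emph{necessary} (not optional) whenever the decoration sequence is not palindromic. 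This does not affect the running time.
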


The proof of Lemma~\ref{lem-color-coding-recovery} is postponed to Section~\ref{subsec:proof-lem-4.8} of the appendix. Based on Lemma~\ref{lem-color-coding-recovery}, we show that the approximate test statistic $\widetilde{f}$ can be computed efficiently.
\begin{breakablealgorithm}{\label{alg:cal-widetilde-Phi}}
    \caption{Computation of $\widetilde{\Phi}^{\mathcal J}_{u,v}$}
    \begin{algorithmic}[1]
    \STATE {\bf Input:} Symmetric matrices $\bm X,\bm Y$, signal-to-noise ratio $\lambda,\mu$, correlation $\rho$, and integer parameter $\ell$.
    \STATE List all unlabeled decorated paths in $\mathcal J=\mathcal J(\ell)$. 
    \STATE For each $[H] \in \mathcal J$, compute $\operatorname{Aut}(H)$. 
    \STATE Generate i.i.d.\ random colorings $\{ \xi_{\mathtt k} : 1 \leq \mathtt k \leq t \}$ that map $[n]$ to $[\ell+1]$ uniformly at random. 
    \FOR{each $1 \leq \mathtt k \leq t$}
    \STATE For each $[H] \in \mathcal J$, compute $\mathfrak L_{H}(\bm X,\bm Y,\xi_{\mathtt k})$ via Algorithm~\ref{alg:dynamic-programming}. 
    \ENDFOR
    \STATE Compute $\widetilde{\Phi}^{\mathcal J}_{u,v}(\bm X,\bm Y)$ according to \eqref{eq-def-widetilde-Phi-H}.
    \STATE {\bf Output:} $\{ \widetilde{\Phi}^{\mathcal J}_{u,v}(\bm X,\bm Y): 1 \leq u,v \leq n \}$.
    \end{algorithmic}
\end{breakablealgorithm}

\begin{proposition}{\label{prop-running-time-recovery}}
    Suppose we choose $\ell$ according to \eqref{eq-condition-weak-recovery}, Algorithm~\ref{alg:cal-widetilde-Phi} computes $\{ \widetilde{\Phi}^{\mathcal J}_{u,v}(\bm X,\bm Y): 1 \leq u,v \leq n \}$ in time $O(n^{T+o(1)})$. 
\end{proposition}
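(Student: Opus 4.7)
The plan is to bound the running time of each step of Algorithm~\ref{alg:cal-widetilde-Phi} separately and verify that the dominant contribution is $n^{T+o(1)}$ for a suitable constant $T$. The argument closely parallels the proof of Proposition~\ref{prop-running-time-detection}, with the caveat that under the recovery regime \eqref{eq-condition-weak-recovery} we have $\ell = \Theta_{\delta}(\log n)$ rather than $\ell = o(\log n / \log\log n)$, so several quantities that were $n^{o(1)}$ in the detection argument become $n^{O(1)}$ here; these will all be absorbed into the final constant $T$.

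First, under \eqref{eq-condition-weak-recovery}, Stirling's formula gives $\varkappa^{-1} = (\ell+1)^{\ell+1}/(\ell+1)! = \Theta(e^{\ell+1}/\sqrt{\ell+1}) = n^{O(1)}$, hence $t = n^{O(1)}$. Since every $[H] \in \mathcal J$ is determined (up to the reflection symmetry of the underlying path) by its decoration sequence in $\{\bullet,\circ\}^{\ell}$, we have $|\mathcal J| \leq 2^{\ell} = n^{O(1)}$; enumerating $\mathcal J$ and computing $\mathsf{Aut}(H) \in \{1,2\}$ for each $[H] \in \mathcal J$ takes $O(\ell \cdot 2^{\ell}) = n^{O(1)}$ time. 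Generating the $t$ colorings $\xi_1,\ldots,\xi_t:[n] \to [\ell+1]$ takes $O(tn) = n^{1+O(1)}$.

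The dominant contribution comes from the main loop: for each $1 \leq \mathtt k \leq t$ and each $[H] \in \mathcal J$, we invoke Algorithm~\ref{alg:dynamic-programming} to compute $\mathfrak L_{H}(\bm X,\bm Y,\xi_{\mathtt k})$ simultaneously for every choice of leaf-pair $(u,v) \in [n] \times [n]$. By Lemma~\ref{lem-color-coding-recovery}, each such invocation runs in time $O(n^{T_0+o(1)})$ with $T_0 = O(1)\cdot(\ell/\log n + 1) = O_{\delta}(1)$. Multiplying by the $t \cdot |\mathcal J| = n^{O(1)}$ iterations gives a total loop cost of $n^{T_0+O(1)+o(1)}$, which we rewrite as $n^{T+o(1)}$ for a constant $T=T(\lambda,\mu,\rho)$; the dependence on $\delta$ disappears because $\delta$ itself is a function of $\lambda,\mu,\rho$ via Lemma~\ref{lem-bound-beta-mathcal-J}.

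Finally, once the subgraph-count values $\mathfrak L_H(\bm X,\bm Y,\xi_{\mathtt k})$ are tabulated, assembling $\widetilde{\Phi}^{\mathcal J}_{u,v}$ for all $u,v \in [n]$ via \eqref{eq-def-widetilde-Phi-H} costs $O(n^{2}\cdot t \cdot |\mathcal J|) = n^{2+O(1)}$, which is dominated by the loop contribution. Summing the per-step bounds yields the claimed running time. There is no real obstacle here, since the proposition is essentially a bookkeeping exercise on top of Lemma~\ref{lem-color-coding-recovery}; all nontrivial algorithmic content (in particular, the fact that a single dynamic-programming sweep produces $\mathfrak L_H$ for every endpoint pair simultaneously) is deferred to the proof of that lemma in Section~\ref{subsec:proof-lem-4.8}.
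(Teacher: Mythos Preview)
Your proposal is correct and follows essentially the same approach as the paper's own proof: both are bookkeeping arguments on top of Lemma~\ref{lem-color-coding-recovery}, bounding the enumeration of $\mathcal J$, the computation of automorphism counts, and the $t \cdot |\mathcal J|$ dynamic-programming invocations, then absorbing all $n^{O(1)}$ factors into the constant $T$. Your version is in fact somewhat more explicit than the paper's (which writes the terse $2t \cdot O(n^{T+o(1)}) = O(n^{T+o(1)})$ without spelling out that $t = n^{O(1)}$ gets absorbed into $T$), and you correctly note that the single DP sweep already tabulates the values needed for every leaf-pair $(u,v)$.
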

\begin{proof}
    Listing all unlabeled decorated paths in $\mathcal J$ takes time $2^{\ell}=O(n^{T})$. Calculating $\mathsf{Aut}(H)$ for $[H]\in \mathcal J$ takes time $|\mathcal H| \cdot O(\ell) = O(n^{T+o(1)})$. For all $1 \leq \mathtt k \leq t$, calculating $\mathfrak L_{H}(\bm X,\bm Y,\xi_{\mathtt k})$ takes time $n^{T+o(1)}$. Thus, the total running time of Algorithm~\ref{alg:cal-widetilde-f} is 
    \begin{equation*}
        O(n^{T}) + O(n^{T+o(1)}) + 2t \cdot O(n^{T+o(1)}) = O(n^{T+o(1)}) \,. \qedhere
    \end{equation*}
\end{proof}
We finish this subsection by pointing out that Theorem~\ref{MAIN-THM-recovery} directly follows from Proposition~\ref{prop-approximate-recovery-statistics} and Proposition~\ref{prop-running-time-recovery}.

\subsubsection{The correlated spiked Wishart model}{\label{subsec:approx-recovery-cov}}

We similarly generate a random coloring $\xi:[n],[N] \to [2\ell+1]$ that assigns a color to each vertex of $[n],[N]$ from the color set $2\ell+1$ independently and uniformly at random. Given any $V=(V^{\mathsf a},V^{\mathsf b}) \subset [n] \times [N]$, let $\chi_{\xi}(V)$ indicate that $\xi(V)$ is colorful, i.e., $\xi(x) \neq \xi(y)$ for any distinct $x,y \in V$. In particular, if $|V|=2\ell+1$, then $\chi_{\xi}(V)=1$ with probability
\begin{equation}{\label{eq-def-varsigma-cov}}
    \varkappa := \frac{ (2\ell+1)! }{ (2\ell+1)^{2\ell+1} } \,.
\end{equation}
For any unlabeled decorated graph $[H] \in \mathcal I$ with $2\ell+1$ vertices, we define
\begin{equation}{\label{eq-def-g-J-xi-cov}}
    \mathfrak R_{H}(\bm X,\bm Y,\xi) := \sum_{ \substack{ S \subset \mathsf K_n: S \cong H \\ \mathsf{L}(S)=\{ u,v \} } } \chi_{\xi}(V(S)) \prod_{(i,j) \in E_{\bullet}(S)} \bm X_{i,j} \prod_{(i,j) \in E_{\circ}(S)} \bm Y_{i,j} \,.
\end{equation}
Then 
\begin{equation}{\label{eq-averaging-over-coloring-recovery-cov}}
    \mathbb E\big[ \mathfrak R_{H}(\bm X,\bm Y,\xi) \mid \bm X,\bm Y \big] = \varkappa \sum_{ \substack{ S \subset \mathsf K_n: S \cong H \\ \mathsf{L}(S)=\{ u,v \} } } h_S(\bm X,\bm Y)
\end{equation}
where $f_S(\bm X,\bm Y)$ was defined in \eqref{eq-def-f-S-cov}. Next, we define $t=\lceil \frac{1}{\varkappa} \rceil$ and generate $t$ random colorings $ \{ \xi_{\mathtt k} \}_{\mathtt k=1}^{t}$ which are independent copies of $\xi$ that map $[n],[N]$ to $[2\ell+1]$. Then, we define
\begin{equation}{\label{eq-def-widetilde-Phi-mathcal-J}}
    \widetilde{\Phi}^{\mathcal I}_{u,v} := \frac{1}{ N^{\ell} n^{-1} \beta_{\mathcal I} } \sum_{[H] \in \mathcal I} \Upsilon(H) \Big( \frac{1}{t\varkappa} \sum_{\mathtt k=1}^{t} \mathfrak R_H(\bm X,\bm Y,\xi_{\mathtt k}) \Big) \,.
\end{equation}
Moreover, the following result bounds the approximation error under
the same conditions as those in Proposition~\ref{main-prop-recovery-cov} for the second moment calculation.

\begin{proposition}{\label{prop-approximate-recovery-statistics-cov}}
    Suppose Assumption~\ref{assum-upper-bound} and Equation~\eqref{eq-prelim-assumption-upper-bound} hold and we choose $\ell$ according to \eqref{eq-condition-weak-recovery}. Then there exists $R=O_{\lambda,\mu,\rho,\iota}(1)$ such that
    \begin{equation}{\label{eq-approximate-recovery-statistics-cov}}
        \mathbb E_{\overline\Pb}\Big[ \widetilde{\Phi}_{u,v}^{\mathcal I} \cdot x_u x_v \Big] = 1+o(1) \mbox{ and } \mathbb E_{\overline\Pb}\Big[ \big(\widetilde{\Phi}_{u,v}^{\mathcal I}\big)^2 \Big], \ \mathbb E_{\overline\Pb}\Big[ \big(\widetilde{\Phi}_{u,v}^{\mathcal I}\big)^2 x_u^2 x_v^2 \Big] \leq R \,.
    \end{equation}
\end{proposition}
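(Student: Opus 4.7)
The plan is to exploit the fact that $\widetilde{\Phi}^{\mathcal I}_{u,v}$ is an unbiased estimator of $\Psi^{\mathcal I}_{u,v}$ in the coloring randomness, and then transfer the moment bounds of Proposition~\ref{main-prop-recovery-cov} via a conditional variance decomposition together with the inequality $t\varkappa \geq 1$. This mirrors the argument behind Proposition~\ref{prop-approximate-recovery-statistics}, whose proof template I expect to reuse almost verbatim.

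First, unwinding \eqref{eq-averaging-over-coloring-recovery-cov} and \eqref{eq-def-widetilde-Phi-mathcal-J}, I would verify the key identity
\[
\mathbb E\big[\widetilde{\Phi}^{\mathcal I}_{u,v}\,\big|\,\bm X,\bm Y\big]=\Psi^{\mathcal I}_{u,v},
\]
since each of the $t$ independent colorings contributes conditional expectation $\varkappa\sum_{S \cong H,\ \mathsf L(S)=\{u,v\}} h_S$, and the prefactor $1/(t\varkappa)$ exactly cancels. Taking expectation under $\overline\Pb$ and using the tower property together with Proposition~\ref{main-prop-recovery-cov} immediately gives $\mathbb E_{\overline\Pb}[\widetilde{\Phi}^{\mathcal I}_{u,v}\cdot x_u x_v]=1+o(1)$.

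For the second moments, I would set $Y := \tfrac{1}{N^{\ell} n^{-1}\beta_{\mathcal I}\varkappa}\sum_{[H]\in\mathcal I}\Upsilon(H)\,\mathfrak R_H(\bm X,\bm Y,\xi_1)$, so that conditionally on $(\bm X,\bm Y)$, $\widetilde{\Phi}^{\mathcal I}_{u,v}$ is the average of $t$ i.i.d.\ copies of $Y$. The conditional variance decomposition then yields
\[
\mathbb E_{\overline\Pb}\big[(\widetilde{\Phi}^{\mathcal I}_{u,v})^2 x_u^2 x_v^2\big]
= \mathbb E_{\overline\Pb}\big[(\Psi^{\mathcal I}_{u,v})^2 x_u^2 x_v^2\big]
+ \tfrac{1}{t}\mathbb E_{\overline\Pb}\big[\operatorname{Var}(Y\mid \bm X,\bm Y)\cdot x_u^2 x_v^2\big],
\]
and the first term is $O_{\lambda,\mu,\rho,\gamma}(1)$ by Proposition~\ref{main-prop-recovery-cov}. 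For the residual variance term, I would expand $Y^2$, integrate the single coloring $\xi_1$ first, and use the elementary inequality $\Pr(\chi_{\xi}(V(S))\chi_{\xi}(V(K))=1)\leq \Pr(\chi_{\xi}(V(S))=1)=\varkappa$ to bound $\mathbb E_{\overline\Pb}[Y^2 x_u^2 x_v^2]$ by $1/\varkappa$ times the sum of $|\Upsilon(S)\Upsilon(K)|\cdot|\mathbb E_{\overline\Pb}[h_S h_K\, x_u^2 x_v^2]|/(N^{\ell}n^{-1}\beta_{\mathcal I})^2$ over all $S,K$. This latter sum is exactly what is controlled in Section~\ref{subsec:proof-prop-2.14} (through Lemma~\ref{lem-est-cov-f-S-f-K-chain-case-cov}, Lemma~\ref{lem-bound-Upsilon-mathtt-P} and Lemma~\ref{lem-recovery-most-technical-cov}), and its bound is $O_{\lambda,\mu,\rho,\gamma}(1)$. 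Multiplying by $1/t$ and using $t\varkappa\geq 1$ yields the claimed $O_{\lambda,\mu,\rho,\gamma}(1)$ bound. The bound on $\mathbb E_{\overline\Pb}[(\widetilde{\Phi}^{\mathcal I}_{u,v})^2]$ is obtained identically upon dropping $x_u^2 x_v^2$ throughout.

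The only real obstacle is checking that the bound from Section~\ref{subsec:proof-prop-2.14} is robust under replacing the signed double sum over $(S,K)$ by its absolute-value analogue. Inspecting the chain Lemma~\ref{lem-est-cov-f-S-f-K-chain-case-cov}, Lemma~\ref{lem-bound-Upsilon-mathtt-P}, and Lemma~\ref{lem-recovery-most-technical-cov}, every step already bounds the integrand termwise and never exploits sign cancellation, so the absolute-value sum is controlled by the same expression. Consequently no new technical estimate beyond those already established is required, and the proof reduces to carefully inserting the color-coding factors and invoking $t\varkappa\geq 1$.
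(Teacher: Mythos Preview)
Your proposal is correct and follows essentially the same approach as the paper's proof: both establish $\mathbb E[\widetilde{\Phi}^{\mathcal I}_{u,v}\mid \bm X,\bm Y]=\Psi^{\mathcal I}_{u,v}$ via \eqref{eq-averaging-over-coloring-recovery-cov}, then bound the second moments by controlling the coloring-factor product (the paper bounds $\mathbb E_\xi[A_S A_K]$ directly, you phrase it as a conditional variance decomposition plus $t\varkappa\geq 1$) and reducing to the same absolute-value double sum handled in Section~\ref{subsec:proof-prop-2.14}. Your observation that the chain of Lemmas~\ref{lem-est-cov-f-S-f-K-chain-case-cov}--\ref{lem-recovery-most-technical-cov} bounds terms in absolute value and never uses sign cancellation is exactly the point needed, and the paper relies on it implicitly.
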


The proof of Proposition~\ref{prop-approximate-recovery-statistics-cov} is postponed to Section~\ref{subsec:proof-prop-4.10} of the appendix. Now we show the approximate subgraph counts $\mathfrak R_{H}(\bm X,\bm Y,\xi)$ can be computed efficiently via the following algorithm. And (similar as in Section~\ref{subsubsec:approx-detection-cov}) the key is to show the following lemma.

\begin{lemma}{\label{lem-color-coding-recovery-cov}}
    For any coloring $\xi:[n] \to [2\ell+1]$ and any hypergraph $[H] \in \mathcal I$, there exists an algorithm (see Algorithm~\ref{alg:dynamic-programming}) that computes $\mathfrak R_H(\bm X,\bm Y,\xi)$ in time $O(n^{T+o(1)})$ for some constant $T=O(1) \cdot (\tfrac{\ell}{\log n}+1 )$.
\end{lemma}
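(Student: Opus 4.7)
The plan is to adapt the classical color-coding dynamic programming technique (as used in \cite{AYZ95} and in the proof of Lemma~\ref{lem-color-coding-recovery}) to the bipartite, edge-decorated setting with prescribed leaves $\{u,v\}$. Since any $[H] \in \mathcal I$ is a decorated bipartite path with both leaves lying in $V^{\mathsf a}_\bullet(H)$, its vertices admit a linear order $v_1,\ldots,v_m$ (unique up to reversal), where $m\in\{2\ell,2\ell+1\}$, with $v_1,v_m \in V^{\mathsf a}(H)$ and the bipartition side alternating along the path. I would first fix such an ordering, recording the decoration $\gamma_i=\gamma(v_i,v_{i+1})\in\{\bullet,\circ\}$ of each edge and the side $\sigma_i\in\{\mathsf a,\mathsf b\}$ of each vertex.

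Second, I would set up a DP table $D_u[k,w,C]$ indexed by the length $1\leq k\leq m$ of a partial embedding, the current endpoint $w\in[n]$ or $w\in[N]$ (according to $\sigma_k$), and the color set $C\subset[2\ell+1]$ consumed so far, defined by
\[
D_u[k,w,C] \;=\; \sum_{\phi}\; \prod_{i=1}^{k-1} M_{\gamma_i}\bigl(\phi(v_i),\phi(v_{i+1})\bigr) \,,
\]
where the sum runs over injective side-respecting $\phi:\{v_1,\ldots,v_k\}\to[n]\sqcup[N]$ with $\phi(v_1)=u$, $\phi(v_k)=w$, and $\xi(\phi(v_1)),\ldots,\xi(\phi(v_k))$ pairwise distinct with union equal to $C$; here $M_\bullet=\bm X$ and $M_\circ=\bm Y$. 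The transitions are standard: $D_u[1,u,\{\xi(u)\}]=1$, and for $k\geq 2$,
\[
D_u[k,w,C] \;=\; \mathbf 1_{\{\xi(w)\in C\}}\sum_{w'} M_{\gamma_{k-1}}(w',w)\cdot D_u[k-1,w',C\setminus\{\xi(w)\}]\,,
\]
with $w'$ ranging over the side opposite to $\sigma_k$. The output at leaf $v$ is
\[
\mathfrak R_H(\bm X,\bm Y,\xi) \;=\; \tfrac{1}{\alpha(H)} \sum_{\substack{C\subset[2\ell+1]\\|C|=m,\ \xi(u),\xi(v)\in C}} D_u[m,v,C]\,,
\]
where $\alpha(H)\in\{1,2\}$ compensates for whether reading the decorated path forwards or backwards yields the same labeled structure, so that each $S\cong H$ with $\mathsf L(S)=\{u,v\}$ is counted exactly once.

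For the time complexity, the DP has $O(m\cdot n\cdot 2^m)$ states with $O(n)$ transitions each, giving $O(\ell\cdot n^2\cdot 2^{2\ell+1})$ time to produce all entries $\{D_u[m,v,C]\}_{v,C}$ from a fixed source $u$; summing over the color sets yields $\mathfrak R_H(\bm X,\bm Y,\xi)$ for that choice of $u$ and all $v$. Under \eqref{eq-condition-weak-recovery}, $\ell=O(\log n)$, so $2^{2\ell+1}=n^{O(1)}$ and the total running time is $n^{T+o(1)}$ with $T=O(1)\cdot(\ell/\log n+1)$ as claimed.

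The main obstacle, more conceptual than computational, will be the bookkeeping required to handle (i) the orientation ambiguity of the path so that each unordered pair $\mathsf L(S)=\{u,v\}$ and each $S\cong H$ is counted with the right multiplicity via the factor $\alpha(H)$, and (ii) threading the bipartite side-alternation constraint through the DP state to prevent conflating vertices in $[n]$ with vertices in $[N]$. Both are routine but need careful definition; once in place, the correctness of the DP reduces to a straightforward induction on $k$, and the time bound follows from the state-and-transition count above.
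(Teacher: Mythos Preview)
Your proposal is correct and takes essentially the same approach as the paper: a standard color-coding dynamic program along the linearly ordered path, with state given by current position, current endpoint, and consumed color set. The paper's Algorithm~\ref{alg:dynamic-programming-recovery-cov} differs only cosmetically (it advances two bipartite edges at a time and indexes by both endpoint colors), and its proof explicitly defers to the analogous earlier lemma, so your one-edge-at-a-time DP with the $\alpha(H)=|\mathsf{Aut}(H)|\in\{1,2\}$ correction is equivalent.
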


The proof of Lemma~\ref{lem-color-coding-recovery-cov} is postponed to Section~\ref{subsec:proof-lem-4.11} of the appendix. 
Based on Lemma~\ref{lem-color-coding-recovery-cov}, we show that the approximate test statistic $\widetilde{\Phi}$ can be computed efficiently.
\begin{breakablealgorithm}{\label{alg:cal-widetilde-Phi-mathcal-I}}
    \caption{Computation of $\widetilde{\Phi}^{\mathcal I}_{u,v}$}
    \begin{algorithmic}[1]
    \STATE {\bf Input:} Symmetric matrices $\bm X,\bm Y$, signal-to-noise ratio $\lambda,\mu$, correlation $\rho$, and integer parameter $\ell$.
    \STATE List all unlabeled decorated paths in $\mathcal I=\mathcal I(\ell)$. 
    \STATE For each $[H] \in \mathcal I$, compute $\operatorname{Aut}(H)$. 
    \STATE Generate i.i.d.\ random colorings $\{ \xi_{\mathtt k} : 1 \leq \mathtt k \leq t \}$ that map $[n],[N]$ to $[2\ell+1]$ uniformly at random. 
    \FOR{each $1 \leq \mathtt k \leq t$}
    \STATE For each $[H] \in \mathcal I$, compute $\mathfrak R_{H}(\bm X,\bm Y,\xi_{\mathtt k})$ via Algorithm~\ref{alg:dynamic-programming-recovery-cov}. 
    \ENDFOR
    \STATE Compute $\widetilde{\Phi}^{\mathcal I}_{u,v}(\bm X,\bm Y)$ according to \eqref{eq-def-widetilde-Phi-mathcal-J}.
    \STATE {\bf Output:} $\{ \widetilde{\Phi}^{\mathcal I}_{u,v}(\bm X,\bm Y): 1 \leq u,v \leq n \}$.
    \end{algorithmic}
\end{breakablealgorithm}

\begin{proposition}{\label{prop-running-time-recovery-cov}}
    Suppose we choose $\ell$ according to \eqref{eq-condition-weak-recovery}, Algorithm~\ref{alg:cal-widetilde-Phi-mathcal-I} computes $\{ \widetilde{\Phi}^{\mathcal I}_{u,v}(\bm X,\bm Y): 1 \leq u,v \leq n \}$ in time $O(n^{T+o(1)})$. 
\end{proposition}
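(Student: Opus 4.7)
The plan is to mirror the proof of Proposition~\ref{prop-running-time-recovery}, breaking down the total running time of Algorithm~\ref{alg:cal-widetilde-Phi-mathcal-I} into three separate contributions and bounding each by $O(n^{T+o(1)})$. The structural setup here is essentially the same as in the Wigner case; the only real differences are that the graphs in $\mathcal I(\ell)$ are bipartite paths on at most $2\ell+1$ vertices (rather than paths on $\ell+1$ vertices), the coloring uses the palette $[2\ell+1]$, and the subgraph counting step invokes Lemma~\ref{lem-color-coding-recovery-cov} instead of Lemma~\ref{lem-color-coding-recovery}.

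First I would handle the combinatorial preprocessing: enumerate all non-isomorphic decorated bipartite paths in $\mathcal I(\ell)$ and compute $\mathsf{Aut}(H)$ for each. Since each such graph has at most $2\ell+1$ vertices and each edge carries a decoration from $\{\bullet,\circ\}$, we have $|\mathcal I| \leq 2^{2\ell+1} \cdot \mathrm{poly}(\ell)$, which is $n^{o(1)}$ under condition \eqref{eq-condition-weak-recovery} that $\ell = O_\delta(\log n)$. Because each $H \in \mathcal I$ is a path, $\mathsf{Aut}(H)$ can be read off in $O(\ell)$ time, so the entire enumeration and automorphism-computation phase contributes only $n^{o(1)}$.

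Next I would control the main cost, which comes from evaluating $\mathfrak R_H(\bm X, \bm Y, \xi_{\mathtt k})$ for all $[H]\in\mathcal I$ and all $1 \leq \mathtt k \leq t$. By \eqref{eq-def-varsigma-cov} and Stirling's formula, $t = \lceil 1/\varkappa \rceil = (2\ell+1)^{2\ell+1}/(2\ell+1)! = e^{\Theta(\ell)}$, which is $n^{o(1)}$ under \eqref{eq-condition-weak-recovery}. By Lemma~\ref{lem-color-coding-recovery-cov}, each single invocation of the color-coding dynamic program produces $\mathfrak R_H(\bm X, \bm Y, \xi_{\mathtt k})$ in time $O(n^{T+o(1)})$ with $T = O(1)\cdot(\tfrac{\ell}{\log n}+1)$. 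Multiplying by $|\mathcal I| \cdot t = n^{o(1)}$, this phase totals $O(n^{T+o(1)})$.

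Finally, assembling $\widetilde\Phi^{\mathcal I}_{u,v}$ for all pairs $(u,v) \in [n]^2$ according to \eqref{eq-def-widetilde-Phi-mathcal-J} adds only $O(n^2) \cdot n^{o(1)}$, which is absorbed into the bound since $T \geq 2$. Summing the three contributions yields the claimed total runtime of $O(n^{T+o(1)})$. I do not expect a genuine obstacle: all substantive combinatorial and algorithmic work is already carried by Lemma~\ref{lem-color-coding-recovery-cov}, and the only bookkeeping point to check is that the extra factor of $|\mathcal I| \cdot t = n^{o(1)}$ does not affect the exponent $T$.
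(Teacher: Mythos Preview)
Your approach mirrors the paper's proof in structure. However, there is a quantitative slip that invalidates the intermediate bookkeeping: under the recovery condition \eqref{eq-condition-weak-recovery}, the requirement $(1+\delta)^\ell \geq n^2$ forces $\ell \geq \tfrac{2\log n}{\log(1+\delta)}$, so $\ell = \Theta(\log n)$, not merely $\ell = O(\log n)$ with a small implied constant. Consequently $|\mathcal I| = 2^{O(\ell)}$ and $t = e^{\Theta(\ell)}$ are both $n^{\Theta(1)}$, \emph{not} $n^{o(1)}$ as you claim. (You may be conflating this with the detection condition \eqref{eq-condition-strong-detection}, where $\ell = o(\log n / \log\log n)$ does yield $n^{o(1)}$.)

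The paper handles this by writing $2^\ell = O(n^T)$ rather than $n^{o(1)}$; the point is that $T = O(1)\cdot(\tfrac{\ell}{\log n}+1)$ is precisely calibrated so that exponential-in-$\ell$ factors are polynomial in $n$ and absorbed into $n^T$. Your final conclusion survives because $|\mathcal I|\cdot t = n^{O(1)}$ can be absorbed into the unspecified constant $T$, but the assertion ``$|\mathcal I|\cdot t = n^{o(1)}$, this phase totals $O(n^{T+o(1)})$'' is false as written and must be rephrased to route these factors through $T$ rather than through the $o(1)$ term.
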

\begin{proof}
    Listing all unlabeled decorated paths in $\mathcal I$ takes time $2^{\ell}=O(n^{T})$. Calculating $\mathsf{Aut}(H)$ for $[H]\in \mathcal I$ takes time $|\mathcal I| \cdot O(\ell) = O(n^{T+o(1)})$. For all $1 \leq \mathtt k \leq t$, calculating $\mathfrak R_{H}(\bm X,\bm Y,\xi_{\mathtt k})$ takes time $n^{T+o(1)}$. Thus, the total running time of Algorithm~\ref{alg:cal-widetilde-f} is 
    \begin{equation*}
        O(n^{T}) + O(n^{T+o(1)}) + 2t \cdot O(n^{T+o(1)}) = O(n^{T+o(1)}) \,. \qedhere
    \end{equation*}
\end{proof}

We finish this subsection by pointing out that Theorem~\ref{MAIN-THM-recovery-algorithmic-cov} directly follows from Proposition~\ref{prop-approximate-recovery-statistics-cov} and Proposition~\ref{prop-running-time-recovery-cov}.

\section{Low-degree hardness in the subcritical regime}{\label{sec:lower-bound}}

In this section we formally state and prove the results in Theorem~\ref{MAIN-THM-lower-bound-informal}. Inspired by \cite{KWB22}, when studying the correlated spiked Wigner model \eqref{eq-def-correlated-spike-specific} in this section we will instead focus on its slightly modified variant:
\begin{equation}{\label{eq-def-correlated-spiked-model-modify}}
    \widehat{\bm X} = \tfrac{\lambda}{\sqrt{2n}} xx^{\top} + \widehat{\bm W}, \quad \widehat{\bm Y} = \tfrac{\mu}{\sqrt{2n}} yy^{\top} + \widehat{\bm Z} \,,
\end{equation}
where $(x,y) \sim \pi$ for some prior $\pi$ and $\{ \widehat{\bm W}_{i,j}, \widehat{\bm Z}_{i,j}: 1 \leq i,j \leq n \}$ are i.i.d.\ standard normal variables. We let $\widehat{\Pb}$ be the law of $(\widehat{\bm X},\widehat{\bm Y})$ under \eqref{eq-def-correlated-spiked-model-modify} and let $\widehat{\Qb}$ be the law of $(\widehat{\bm W},\widehat{\bm Z})$. It is clear that the detection/recovery problem with respect to $(\widehat{\bm X},\widehat{\bm Y})$ are \emph{computationally easier than} the detection/recovery problem with respect to $(\bm X,\bm Y)$, since we have\footnote{In fact, we can use similar arguments as in \cite[Section~A.2]{KWB22} to show these two models are computationally equivalent.}
\begin{align*}
    \Big( \tfrac{1}{\sqrt{2}}( \widehat{\bm X}+\widehat{\bm X}^{\top} ), \tfrac{1}{\sqrt{2}}( \widehat{\bm Y}+\widehat{\bm Y}^{\top} ) \big) \overset{d}{=} \big( \bm X,\bm Y \big) \,.
\end{align*}
Thus, we only need to show the hardness of the detection/recovery problem in the modified model \eqref{eq-def-correlated-spiked-model-modify}. Now we state our assumptions of the prior $\pi$ in the upper bound, which will be assumed throughout the rest of this section.
\begin{assum}{\label{assum-lower-bound}}
    Suppose that $\pi=\pi_*^{\otimes n}$, where $\pi_*$ is the law of a pair of correlated variables $(\mathbb X,\mathbb Y)$ satisfying the following conditions: 
    \begin{enumerate}
        \item[(1)] $\mathbb E[\mathbb X]=\mathbb E[\mathbb Y]=0$, $\mathbb E[\mathbb X^2]=\mathbb E[\mathbb Y^2]=1$, and $\mathbb E[\mathbb X\mathbb Y]=\rho^2$. 
        \item[(2)] $\mathbb X$ and $\mathbb Y$ are $C'$-subgaussian for some constant $C'$.
        \item[(3)] Suppose that $(\mathbb X,\mathbb Y),(\mathbb X',\mathbb Y') \sim \pi$ with $(\mathbb X,\mathbb Y) \perp (\mathbb X',\mathbb Y')$. Then 
        \begin{align*}
            \mathbb E\big[ (\mathbb X\mathbb X')^a (\mathbb Y\mathbb Y')^b \big] = \mathbb E\big[ \mathbb X^a \mathbb Y^b \big]^2 \geq \mathbb E\big[ \mathbb X^a \big]^2 \mathbb E\big[ \mathbb Y^b \big]^2
        \end{align*}
        for all $a,b \in \mathbb N$.
    \end{enumerate}
    Note that condition~(1) above is consistent with \eqref{eq-moment-pi}.
\end{assum}
\begin{remark}
    We remark that Items~(1) and (2) are standard assumptions for proving low-degree lower bound on PCA problems (see, e.g., \cite{KWB22} and \cite{DK25+}). In contrast, Item~(3) is a technical assumption which will be useful in our interpolation arguments. Nevertheless, it is straightforward to check that our assumption still captures some most canonical priors $\pi$, including the correlated Gaussian prior, the correlated Rademacher prior, and the correlated sparse Rademacher prior introduced in Section~\ref{sec:intro}. We do believe that it is possible to replace Item~(3) with some mild moment conditions, although it would require a more careful analysis in our interpolation argument.
\end{remark}

The rest of this section is organized as follows. In Section~\ref{subsec:LDP-framework}, we introduce the low-degree polynomial framework and rigorously state our computational lower bound (see Theorems~\ref{Main-thm-lower-bound} and \ref{Main-thm-lower-bound-covariance}). In Sections~\ref{subsec:add-Gaussian-model} and \ref{subsec:cov-Gaussian-model}, we use the known results in Gaussian additive model and covariance model to derive a explicit formula of the low-degree advantage. In Sections~\ref{subsec:proof-thm-5.4} and \ref{subsec:proof-thm-5.5} we present the proof of Theorems~\ref{Main-thm-lower-bound} and \ref{Main-thm-lower-bound-covariance}, respectively.

\subsection{Low-degree polynomial framework}{\label{subsec:LDP-framework}}

Inspired by the sum-of-squares hierarchy, the low-degree polynomial method offers a promising framework for establishing computational lower bounds in high-dimensional inference problems \cite{BHK+19, HS17, HKP+17, Hop18}. Roughly speaking, to study the computational efficiency of the hypothesis testing problem between two probability measures $\Pb$ and $\Qb$, the idea of this framework is to find a low-degree polynomial $f$ (usually with degree $O(\log n)$ where $n$ is the size of data) that separates $\Pb$ and $\Qb$ in a certain sense. More precisely, let $\mathbb R[\bm X,\bm Y]_{\leq D}$ denote the set of multivariate polynomials in the entries of $(\bm X,\bm Y)$ with degree at most $D$. With a slight abuse of notation, we will often say ``a polynomial'' to mean a sequence of polynomials $f=f_n \in \mathbb R[\bm X,\bm Y]_{\leq D}$, one for each problem size $n$; the degree $D=D_n$ of such a polynomial may scale with $n$. To probe the computational threshold for testing between two sequences of probability measures $\Pb$ and $\Qb$, we consider the following notions of strong separation and weak separation defined in \cite[Definition~1.6]{BEH+22}.

\begin{DEF}{\label{def-strong-separation}}
    Let $f \in \mathbb{R}[\bm X,\bm Y]_{\leq D}$ be a polynomial.
    \begin{itemize}
        \item We say $f$ strongly separates $\Pb_n$ and $\Qb_n$ if as $n \to \infty$ 
        \begin{align*}
            \sqrt{ \max\big\{ \operatorname{Var}_{\Pb}(f({\bm X},{\bm Y})), \operatorname{Var}_{\Qb}(f({\bm X},{\bm Y})) \big\} } = o\big( \big| \mathbb{E}_{\Pb}[f({\bm X},{\bm Y})] - \mathbb{E}_{\Qb}[f({\bm X},{\bm Y})] \big| \big) \,;
        \end{align*}
        \item We say $f$ weakly separates $\Pb_n$ and $\Qb_n$ if as $n \to \infty$ 
        \begin{align*}
            \sqrt{ \max\big\{ \operatorname{Var}_{\Pb}(f({\bm X},{\bm Y})), \operatorname{Var}_{\Qb}(f({\bm X},{\bm Y})) \big\} } = O\big( \big| \mathbb{E}_{\Pb}[f({\bm X},{\bm Y})] - \mathbb{E}_{\Qb}[f({\bm X},{\bm Y})] \big| \big) \,.
        \end{align*}
    \end{itemize}
\end{DEF}
See \cite{BEH+22} for a detailed discussion of why these conditions are natural for hypothesis testing. In particular, according to Chebyshev's inequality, strong separation implies that we can threshold $f(A,B)$ to test $\Pb$ against $\Qb$ with vanishing type-I and type-II errors. The key quantity of showing the impossibility of separation is the low-degree advantage
\begin{equation}{\label{eq-def-low-deg-adv}}
    \chi^2_{\leq D}(\Pb;\Qb) := \sup_{ \operatorname{deg}(f) \leq D } \Bigg\{ \frac{ \mathbb E_{\Pb}[f] }{ \sqrt{\mathbb E_{\Qb}[f^2]} } \Bigg\} \,.
\end{equation}
(If we remove the restriction to low-degree polynomials, then this expression gives the classical chi-square divergence $\chi^2(\Pb;\Qb)$ between $\Pb$ and $\Qb$.)
The low-degree polynomial method suggests that, if $\chi^2_{\leq D}(\Pb;\Qb )=O(1)$, then no algorithm with running time $e^{\widetilde{\Theta}(D)}$ can strongly distinguish $\Pb$ and $\Qb$ (here $\widetilde{\Theta}(\cdot)$ means having the same order up to poly-logarithmic factors).\footnote{To prevent the readers from being overly optimistic for this conjecture, we point out a recent work \cite{BHJK25} that finds a counterexample for this conjecture. We therefore clarify that the low-degree framework is expected to be optimal for a certain, yet imprecisely defined, class of ``high-dimensional'' problems. Despite these important caveats, we still believe that analyzing low-degree polynomials remains highly meaningful for our setting, as it provides a benchmark for robust algorithmic performance. We refer the reader to the survey \cite{Wein25+} for a more detailed discussion of these subtleties.} The appeal of this framework lies in its ability to yield tight computational lower bounds for a wide range of problems, including detection and recovery problems such as planted clique, planted dense subgraph, community detection, sparse-PCA and graph alignment (see \cite{HS17, HKP+17, Hop18, KWB22, Li25+, SW22, DMW23+, MW25, DKW22, BEH+22, DDL23+, CDGL24+}), optimization problems such as maximal independent sets in sparse random graphs \cite{GJW24, Wein22}, refutation problems such as certification of RIP and lift-monotone properties of random regular graphs \cite{WBP16, BKW20, KY24}, and constraint satisfaction problems such as random $k$-SAT \cite{BH22}.

Using this framework, we can now state our computational lower bound as follows.

\begin{thm}{\label{Main-thm-lower-bound}}
    Suppose Assumption~\ref{assum-lower-bound} is satisfied and $F(\lambda,\mu,\rho,1)<1-\epsilon$ for some constant $\epsilon>0$. Then for $\widehat\Pb,\widehat\Qb$ defined in \eqref{eq-def-correlated-spiked-model-modify}, we have $\chi^2_{\leq D}(\widehat\Pb;\widehat\Qb)= O_{\lambda,\mu,\rho}(1)$ for all $D=n^{o(1)}$. In particular, no degree-$D$ polynomial can strongly separate $\widehat\Pb$ and $\widehat\Qb$ when $D=n^{o(1)}$.
\end{thm}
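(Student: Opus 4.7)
The plan is to bound the low-degree advantage by first reducing to a moment-generating-function (MGF) calculation on the signal–signal inner product, and then comparing that MGF against a tractable Gaussian surrogate via moment matching. First, since the modified model \eqref{eq-def-correlated-spiked-model-modify} is a Gaussian additive model with signal $\Theta=(\tfrac{\lambda}{\sqrt{2n}}xx^{\top},\tfrac{\mu}{\sqrt{2n}}yy^{\top})$ and iid standard Gaussian noise, I would invoke the standard low-degree bound for Gaussian additive models from \cite{KWB22} to get
\begin{align*}
    1+\chi^2_{\leq D}(\widehat\Pb;\widehat\Qb)^2 \,\leq\, \mathbb E\big[\exp_{\leq D}\big(\langle \Theta,\Theta'\rangle\big)\big] \,=\, \mathbb E\Big[\exp_{\leq D}\Big(\tfrac{\lambda^2\langle x,x'\rangle^2+\mu^2\langle y,y'\rangle^2}{2n}\Big)\Big],
\end{align*}
where $(x,y),(x',y')$ are iid copies from $\pi$.

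Next, I would analyze the tractable Gaussian surrogate obtained by replacing $\big(n^{-1/2}\langle x,x'\rangle,n^{-1/2}\langle y,y'\rangle\big)$ with its CLT limit: a jointly standard normal pair $(U,V)$ with $\operatorname{Cov}(U,V)=\rho^2$. Applying the identity $\mathbb E[\exp(\tfrac{1}{2}z^{\top}Az)] = \det(I-\Sigma A)^{-1/2}$ to $z=(U,V)^{\top}\sim \mathcal N(0,\Sigma)$ with $A=\operatorname{diag}(\lambda^2,\mu^2)$ yields
\begin{align*}
    \mathbb E\Big[\exp\Big(\tfrac{\lambda^2 U^2+\mu^2 V^2}{2}\Big)\Big] \,=\, \big[(1-\lambda^2)(1-\mu^2)-\rho^4\lambda^2\mu^2\big]^{-1/2},
\end{align*}
which is finite and bounded provided $\lambda^2,\mu^2 < 1$ and $(1-\lambda^2)(1-\mu^2)>\rho^4\lambda^2\mu^2$. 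A short algebraic manipulation shows this system is equivalent to $F(\lambda,\mu,\rho,1)<1$: the third branch of $F$ rewrites via partial fractions as $\tfrac{\lambda^2\rho^2}{1-\lambda^2+\lambda^2\rho^2}+\tfrac{\mu^2\rho^2}{1-\mu^2+\mu^2\rho^2}$, and clearing denominators recovers the determinant condition. Thus under $F<1-\epsilon$, the truncated Gaussian surrogate $M_D:=\mathbb E[\exp_{\leq D}(\tfrac{\lambda^2 U^2+\mu^2 V^2}{2})]$ is $O_{\lambda,\mu,\rho,\epsilon}(1)$ uniformly in $D$.

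The remaining task is to transfer the Gaussian bound back to the true $\pi$-moments. Expanding $\exp_{\leq D}$ and the inner products, it suffices to establish that for each $a+b=k\leq D$,
\begin{align*}
    n^{-k}\,\mathbb E_{\pi}\big[\langle x,x'\rangle^{2a}\langle y,y'\rangle^{2b}\big] \,=\, \big(1+o(1)\big)\,\mathbb E\big[U^{2a}V^{2b}\big],
\end{align*}
and then to sum these with the combinatorial weights inherited from $\exp_{\leq D}$. The $\pi$-moment expands as $\sum\prod_{i\in[n]}\mathbb E_{\pi_*}[\mathbb X^{s_i}\mathbb Y^{t_i}]^2$ over multiplicity patterns $(s_i,t_i)$, and the leading-order patterns (each coordinate visited at most twice) reproduce the Wick-pairing formula for $\mathbb E[U^{2a}V^{2b}]$ using only the matched moments from Assumption~\ref{assum-lower-bound}(1). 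Each additional coincidence among indices contributes an extra factor of $O(n^{-1/2})$ controlled by the subgaussian moment bound in Assumption~\ref{assum-lower-bound}(2), while Assumption~\ref{assum-lower-bound}(3) guarantees every single-coordinate factor is nonnegative so no sign cancellations can undermine the comparison. Summing these residuals gives total error $n^{-\Omega(1)}$ for $D=n^{o(1)}$, hence $\mathbb E[\exp_{\leq D}(\cdots)]=(1+o(1))M_D=O(1)$.

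The hard part will be the Lindeberg-type bookkeeping in this third step: the number of multiplicity patterns on $[n]$ contributing to a $2k$-th mixed moment grows factorially in $k$, and a naive triangle inequality would easily swamp the $n^{-1/2}$ gain per extra coincidence. A careful stratification by the number of coordinates visited more than twice --- combined with the subgaussian moment bounds --- is needed to absorb the factorial blow-up, which is exactly what restricts the argument to the regime $D=n^{o(1)}$. This is precisely the step highlighted as ``delicate'' in the introduction, and it is what distinguishes the present correlated-prior analysis from the single-matrix argument in \cite{KWB22}.
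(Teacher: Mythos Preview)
Your proposal is correct and follows the same three-step structure as the paper: reduce via the Gaussian additive formula to the truncated MGF of $\tfrac{\lambda^2\langle x,x'\rangle^2+\mu^2\langle y,y'\rangle^2}{2n}$, control the Gaussian surrogate exactly, then transfer back. Your determinant computation in step~2 is equivalent to (and slightly slicker than) the paper's explicit decomposition $U=\rho Z+\sqrt{1-\rho^2}\,X$, $V=\rho Z+\sqrt{1-\rho^2}\,Y$ followed by iterated Gaussian integration (Lemma~\ref{lem-correlated-Gaussian-moment}). The one substantive implementation difference is in step~3: rather than stratifying directly over multiplicity patterns as you describe, the paper runs a true Lindeberg interpolation (Lemma~\ref{lem-compare-moments-Gaussian-non-Gaussian}), replacing $(X_t,Y_t)\leftrightarrow(\zeta_t,\eta_t)$ one coordinate at a time and bounding each swap by $n^{-3/2+o(1)}\cdot\mathbb E[U_t^{2k}V_t^{2\ell}]$. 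This sidesteps the factorial bookkeeping over partitions that you anticipate; the factorial growth is instead absorbed into the interpolated moments via a monotonicity claim $\mathbb E[U_{t,*}^{2k-a}V_{t,*}^{2\ell-b}]\leq(1+o(1))\,\mathbb E[U_{t,*}^{2k}V_{t,*}^{2\ell}]$, and it is precisely here that Assumption~\ref{assum-lower-bound}(3) enters (to guarantee $U_{t,*}^a$ and $V_{t,*}^b$ are positively correlated along the entire interpolation path, not merely that individual coordinate factors are nonnegative). Your direct-expansion route should also work, but the sequential swap is cleaner and makes the restriction $D=n^{o(1)}$ arise transparently from the per-step error $n^{-3/2+o(1)}$ summed over $n$ coordinates.
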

\begin{thm}{\label{Main-thm-lower-bound-covariance}}
    Suppose Assumption~\ref{assum-lower-bound} is satisfied and $F(\lambda,\mu,\rho,\gamma)<1-\epsilon$ for some constant $\epsilon>0$, where $\gamma=\tfrac{n}{N}$. Then for $\overline\Pb,\overline\Qb$ defined in Definition~\ref{def-correlated-spike-covariance-specific}, we have $\chi^2_{\leq D}(\overline\Pb;\overline\Qb)= O_{\lambda,\mu,\rho,\gamma}(1)$ for all $D=n^{o(1)}$. In particular, no degree-$D$ polynomial can strongly separate $\overline\Pb$ and $\overline\Qb$ when $D=n^{o(1)}$.
\end{thm}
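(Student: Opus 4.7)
The plan is to adapt the blueprint used to prove Theorem~\ref{Main-thm-lower-bound} to the covariance setting, replacing the additive Gaussian machinery from Section~\ref{subsec:add-Gaussian-model} with its covariance analogue developed in Section~\ref{subsec:cov-Gaussian-model}. Concretely, the first step will invoke the standard formula for the chi-square divergence of a spiked Wishart model against white noise, combined with the fact that $\bm X$ and $\bm Y$ are conditionally independent given $(x,y,\bm u,\bm v)$, to reduce the low-degree advantage to a bound of the shape
\begin{align*}
    \chi^2_{\leq D}(\overline\Pb;\overline\Qb) \leq \mathbb E_{(x,y),(x',y')\sim\pi}\Big[\Xi_{\leq D}\Big(\tfrac{\langle x,x'\rangle}{n},\tfrac{\langle y,y'\rangle}{n}\Big)\Big],
\end{align*}
where $\Xi_{\leq D}$ denotes the polynomial obtained by truncating the Taylor expansion of $(1-\lambda^2 a^2)^{-N/2}(1-\mu^2 b^2)^{-N/2}$ to total degree $\lesssim D$ in $(a,b)$. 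The product structure here is a direct consequence of the conditional independence of the two Wishart blocks, and the Wishart-style determinantal identity applied to the rank-one matrices $x(x')^{\top}$ and $y(y')^{\top}$.

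The second step is a Gaussian approximation: the coordinatewise central limit theorem suggests that, for the purpose of low-order moments, the rescaled overlap $(\langle x,x'\rangle/\sqrt n,\langle y,y'\rangle/\sqrt n)$ should behave like a centred bivariate Gaussian pair $(U,V)$ with covariance $\left(\begin{smallmatrix}1 & \rho^2\\ \rho^2 & 1\end{smallmatrix}\right)$. Substituting this surrogate and using $N=n/\gamma$, the previous bound will collapse to a quantity of the form $\mathbb E_{(U,V)}\!\left[\exp_{\leq D}\!\left(\tfrac{\lambda^2 U^2+\mu^2 V^2}{2\gamma}\right)\right]$, which exactly matches (after $\gamma^{-1}$ rescaling) the Wigner-case expression \eqref{eq-low-deg-adv-handwaving}. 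A direct Gaussian integration shows this is bounded uniformly in $D$ precisely when the $2\times 2$ matrix $I_2-\tfrac{1}{\gamma}\operatorname{diag}(\lambda^2,\mu^2)\left(\begin{smallmatrix}1 & \rho^2\\ \rho^2 & 1\end{smallmatrix}\right)$ is positive definite, a condition which a short eigenvalue calculation reduces to $F(\lambda,\mu,\rho,\gamma)<1$.

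The third step will rigorously justify the Gaussian replacement via a Lindeberg-style swap argument: the coordinate pairs $(x_i,y_i)$ and $(x_i',y_i')$ are replaced one index at a time by the correlated Gaussian surrogate, and the resulting differences are bounded via Taylor expansion together with the subgaussian moment bounds in Assumption~\ref{assum-lower-bound}(2). The monotonicity hypothesis in Assumption~\ref{assum-lower-bound}(3) ensures each swap yields a one-sided change in the relevant mixed moments $\mathbb E[(\mathbb X\mathbb X')^a(\mathbb Y\mathbb Y')^b]$, so the Gaussian expectation computed in the second step indeed serves as an upper bound for the first-step quantity.

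The principal obstacle, relative to the additive/Wigner setting, is the significantly larger Taylor coefficients of $(1-\lambda^2 a^2)^{-N/2}$, which grow like $N^k/k!$ rather than the $1/k!$ decay of the exponential. These interact with the $O(\sqrt n)$ scale of the overlaps to produce bounded contributions of order $(\lambda^2/\gamma)^k$, but this tight cancellation leaves essentially no slack to absorb errors introduced by the Lindeberg swap. The main technical effort will therefore be to track these high-degree terms carefully and verify that the constraint $D=n^{o(1)}$ keeps the aggregate swap error $o(1)$, closely paralleling (but with slightly sharper control on the coefficient growth) the analysis carried out in Section~\ref{subsec:proof-thm-5.4} for the Wigner case.
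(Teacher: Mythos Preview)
Your proposal is correct and follows essentially the same three-step blueprint as the paper: reduce the low-degree advantage to a function of the replica overlaps $(\langle x,x'\rangle,\langle y,y'\rangle)$, pass to the Gaussian surrogate $(U,V)$ with correlation $\rho^2$, and justify via Lindeberg's exchange using Assumption~\ref{assum-lower-bound}. You also correctly identify the key technical wrinkle relative to the Wigner case---the $N^k/k!$ growth in the Taylor coefficients---and how it interacts with the $\gamma^{-1}$ scaling to produce the threshold $F(\lambda,\mu,\rho,\gamma)=1$.

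The one place where your route differs slightly from the paper's is in Step~1. You propose applying Proposition~\ref{prop-low-deg-Adv-add-Gaussian} to the joint additive model with spike $(\tfrac{\sqrt\lambda}{\sqrt n}x\bm u^\top,\tfrac{\sqrt\mu}{\sqrt n}y\bm v^\top)$ and then marginalizing over the Gaussian factors $\bm u,\bm u',\bm v,\bm v'$ to arrive at the truncated Taylor expansion of $(1-\lambda^2 a^2)^{-N/2}(1-\mu^2 b^2)^{-N/2}$. The paper instead works directly in the Hermite basis (Lemma~\ref{lem-bound-Adv-cov-relax-1}, following \cite{BKW20}) to obtain the explicit function $\varphi_D$ of \eqref{eq-def-varphi-leq-D}, and then proves a separate coefficient bound (Lemma~\ref{lem-bound-coeff-varphi-D}) to recover the $\tfrac{2^kN^k}{k!}$ growth. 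The two bounds agree to leading order, so either path works; the Hermite route has the advantage that the degree-$D$ truncation is built into the basis from the start, whereas in your route one must check that marginalizing $\bm u,\bm u'$ inside the truncated exponential respects the degree constraint. After Step~1 the two proofs are identical: both invoke Lemma~\ref{lem-compare-moments-Gaussian-non-Gaussian} for the Lindeberg swap and conclude with the Gaussian calculation (Lemma~\ref{lem-correlated-Gaussian-cov}).
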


\subsection{Additive Gaussian model}{\label{subsec:add-Gaussian-model}}

It turns out that convenient and elegant closed forms for the low-degree advantage is known for the special case of models with additive Gaussian noise. Let us first state a general result for a general such model.

\begin{DEF}[Additive Gaussian noise model]{\label{def-add-Gaussian}}
    Let $X \sim \mathcal X=\mathcal X_M$ for some probability measure $\mathcal X$ over $\mathbb R^M$ and $Z \sim \mathcal N(0,\Sigma)$ for some $\Sigma \in \mathbb R^{M*M}$ with $\Sigma\succ 0$. The associated additive Gaussian model consists of the probability measures $\Pb=\Pb_M$ and $\Qb=\Qb_M$ given by:
    \begin{itemize}
        \item To sample $Y \sim \Pb$, observe $Y=X+Z$.
        \item To sample $Y \sim \Qb$, observe $Y=Z$.
    \end{itemize}
\end{DEF}
\begin{proposition}{\label{prop-low-deg-Adv-add-Gaussian}}
    Suppose in the above setting $\Sigma=\mathbb I_M$. Then
    \begin{equation}{\label{eq-chi-square-add-Gaussian}}
        \chi^2(\Pb;\Qb) = \mathbb E_{ X_1,X_2 \sim \mathcal X, X_1 \perp X_2 } \Big[ \exp( \langle X_1,X_2 \rangle ) \Big]
    \end{equation}
    and 
    \begin{equation}{\label{eq-low-deg-Adv-add-Gaussian}}
        \chi^2_{\leq D}(\Pb;\Qb) = \mathbb E_{ X_1,X_2 \sim \mathcal X, X_1 \perp X_2 } \Big[ \exp_{\leq D}( \langle X_1,X_2 \rangle ) \Big] \,,
    \end{equation}
    where we define 
    \begin{equation}{\label{eq-exp-leq-D}}
        \exp_{\leq D}(x) = \sum_{k=0}^{D} \frac{ x^k }{ k! } \,.
    \end{equation}
\end{proposition}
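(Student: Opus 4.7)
The plan is to obtain both identities from a Hermite-expansion analysis of the likelihood ratio $L(y) := \frac{d\widehat\Pb}{d\widehat\Qb}(y)$. Since $Y = X + Z$ with $Z\sim\mathcal N(0,\mathbb I_M)$ and $X\sim\mathcal X$, a direct density computation gives
\begin{equation*}
    L(y) \;=\; \mathbb E_{X\sim\mathcal X}\Big[\exp\!\big(\langle y,X\rangle - \tfrac12\|X\|^2\big)\Big]\,.
\end{equation*}
I would then note that the quantity in \eqref{eq-def-low-deg-adv} has a standard Hilbert-space interpretation: by Cauchy--Schwarz in $L^2(\Qb)$, for any polynomial $f$ with $\deg f\leq D$,
\begin{equation*}
    \frac{\mathbb E_{\Pb}[f]}{\sqrt{\mathbb E_{\Qb}[f^2]}} \;=\; \frac{\langle f,L\rangle_{\Qb}}{\|f\|_{\Qb}} \;\leq\; \|L^{\leq D}\|_{\Qb}\,,
\end{equation*}
with equality attained by $f = L^{\leq D}$, the orthogonal projection of $L$ onto $\mathbb R[Y]_{\leq D}$. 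Hence $\chi^2_{\leq D}(\Pb;\Qb) = \|L^{\leq D}\|_{\Qb}^2$, and the full chi-square (no degree truncation) equals $\|L\|_{\Qb}^2$.

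The first step is then to compute $\|L\|_{\Qb}^2$ directly: writing $L$ as an expectation over an independent copy $X_1$ and multiplying out, Fubini yields
\begin{equation*}
    \|L\|_{\Qb}^2 \;=\; \mathbb E_{X_1,X_2}\,\mathbb E_{Y\sim\mathcal N(0,\mathbb I_M)}\Big[\exp\!\big(\langle Y, X_1+X_2\rangle - \tfrac12\|X_1\|^2 - \tfrac12\|X_2\|^2\big)\Big]\,,
\end{equation*}
and the inner Gaussian integral evaluates to $\exp(\tfrac12\|X_1+X_2\|^2)$, leaving exactly $\mathbb E[\exp(\langle X_1,X_2\rangle)]$, which proves \eqref{eq-chi-square-add-Gaussian}.

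For the truncated version, I would expand $L$ in the orthonormal basis of normalized (probabilist) Hermite polynomials $\{\tilde H_\alpha\}_{\alpha\in\mathbb N^M}$ of $L^2(\Qb)$. The generating-function identity $\exp(yt - t^2/2) = \sum_{n\geq 0} t^n \tilde H_n(y)/\sqrt{n!}$, applied coordinatewise, gives
\begin{equation*}
    \exp\!\big(\langle y,X\rangle - \tfrac12\|X\|^2\big) \;=\; \sum_{\alpha\in\mathbb N^M} \frac{X^\alpha}{\sqrt{\alpha!}}\, \tilde H_\alpha(y)\,,
\end{equation*}
so after taking expectation over $X$, $L = \sum_\alpha \frac{\mathbb E[X^\alpha]}{\sqrt{\alpha!}}\tilde H_\alpha$. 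Since $\tilde H_\alpha$ has degree $|\alpha|$, the projection onto degree-$\leq D$ polynomials retains precisely the terms with $|\alpha|\leq D$, and Parseval gives
\begin{equation*}
    \chi^2_{\leq D}(\Pb;\Qb) \;=\; \sum_{|\alpha|\leq D} \frac{\mathbb E[X^\alpha]^2}{\alpha!} \;=\; \sum_{|\alpha|\leq D} \frac{\mathbb E_{X_1,X_2}[(X_1\odot X_2)^\alpha]}{\alpha!}\,.
\end{equation*}

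The final step is a multinomial identity: for any $k\geq 0$, expanding $\langle X_1,X_2\rangle^k$ yields
\begin{equation*}
    \frac{\langle X_1,X_2\rangle^k}{k!} \;=\; \sum_{|\alpha|=k} \frac{(X_1\odot X_2)^\alpha}{\alpha!}\,,
\end{equation*}
so summing over $k\leq D$ gives $\exp_{\leq D}(\langle X_1,X_2\rangle) = \sum_{|\alpha|\leq D} (X_1\odot X_2)^\alpha/\alpha!$, and taking expectation produces \eqref{eq-low-deg-Adv-add-Gaussian}. There is no substantive obstacle in this argument; the only points requiring care are the Hilbert-space reinterpretation of the sup-definition of $\chi^2_{\leq D}$ (which legitimizes the Parseval step) and the multinomial identity that converts the sum over multi-indices into the truncated exponential of the scalar inner product.
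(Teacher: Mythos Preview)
The paper does not prove this proposition; it is stated without proof as a known result from the low-degree literature (cf.\ the references [KWB22], [BKW20] cited nearby). Your argument is correct and is exactly the standard proof: compute the likelihood ratio, recognize $\chi^2_{\leq D}$ as the $L^2(\Qb)$-norm of its projection onto degree-$\leq D$ polynomials, expand in the Hermite basis via the generating function, apply Parseval, and collapse the multi-index sum with the multinomial theorem.

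One small remark: by the paper's own definition \eqref{eq-def-low-deg-adv}, $\chi^2_{\leq D}(\Pb;\Qb)$ is the supremum itself, which by your Cauchy--Schwarz step equals $\|L^{\leq D}\|_{\Qb}$, not $\|L^{\leq D}\|_{\Qb}^2$. The right-hand side of \eqref{eq-low-deg-Adv-add-Gaussian} is in fact $\|L^{\leq D}\|_{\Qb}^2$, so the proposition as written is the square of the advantage. This is a harmless notational slip originating in the paper (its parenthetical identification of the unrestricted sup with the classical $\chi^2$ is similarly off by a square and a constant), and everything downstream only needs that the quantity is $O(1)$, so nothing is affected.
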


We now apply these results to our matrix model \eqref{eq-def-correlated-spiked-model-modify}. By taking $X=(\tfrac{\lambda}{\sqrt{2n}} uu^{\top}, \tfrac{\mu}{\sqrt{2n}} vv^{\top})$ and $Z=(\widehat{\bm W}, \widehat{\bm Z})$ in Proposition~\ref{prop-low-deg-Adv-add-Gaussian}, we get that
\begin{align}
    \chi^2_{\leq D}(\widehat\Pb;\widehat\Qb) &= \mathbb E_{ (x,y),(x',y') \sim \pi }\Big[ \exp_{\leq D}\big( \langle \tfrac{\lambda}{\sqrt{2n}} xx^{\top}, \tfrac{\lambda}{\sqrt{2n}} x'(x')^{\top} \rangle + \langle \tfrac{\mu}{\sqrt{2n}} yy^{\top}, \tfrac{\mu}{\sqrt{2n}} y'(y')^{\top} \rangle \big) \Big] \nonumber \\
    &= \mathbb E_{ (x,y),(x',y') \sim \pi }\Big[ \exp_{\leq D}\big( \tfrac{\lambda^2}{2n} \langle x,x' \rangle^2 + \tfrac{\mu^2}{2n} \langle y,y' \rangle^2 \big) \Big] \,. \label{eq-low-deg-Adv-relax-1}
\end{align}

\subsection{Proof of Theorem~\ref{Main-thm-lower-bound}}{\label{subsec:proof-thm-5.4}}

Now we provide the proof of Theorem~\ref{Main-thm-lower-bound}. Clearly, it suffices to show that the right hand side of \eqref{eq-low-deg-Adv-relax-1} is bounded by $O_{\lambda,\mu,\rho,\epsilon}(1)$. Recall Assumption~\ref{assum-lower-bound}. Denote $(X,Y)$ be the law of $(\mathbb X \mathbb X',\mathbb Y \mathbb Y')$ for $(\mathbb X,\mathbb Y),(\mathbb X',\mathbb Y') \sim \pi_*$ and $(\mathbb X,\mathbb Y) \perp (\mathbb X',\mathbb Y')$. Note that if we sample $(X_1,Y_1),\ldots,(X_n,Y_n) \overset{i.i.d.}{\sim} (X,Y)$, then
\begin{align}
    \eqref{eq-low-deg-Adv-relax-1} &= \mathbb E\Bigg\{ \exp_{\leq D} \Bigg( \frac{\lambda^2}{2n} \Big( \sum_{j=1}^{n} X_j \Big)^2 + \frac{\mu^2}{2n} \Big( \sum_{j=1}^{n} Y_j \Big)^2 \Bigg) \Bigg\} \nonumber \\
    &\overset{\eqref{eq-exp-leq-D}}{=} \sum_{k=0}^{D} \frac{1}{2^k k!} \sum_{\ell=0}^{k} \binom{k}{\ell} \lambda^{2\ell} \mu^{2(k-\ell)}  \mathbb E\Bigg\{ \Big( \frac{ \sum_{j=1}^{n} X_j }{ \sqrt{n} }  \Big)^{2\ell} \Big( \frac{ \sum_{j=1}^{n} Y_j }{ \sqrt{n} } \Big)^{2(k-\ell)} \Bigg\} \nonumber \\
    &\leq \sum_{k,\ell=0}^{D} \frac{ \lambda^{2k} \mu^{2\ell} }{ 2^{k+\ell} k!\ell! } \mathbb E\Bigg\{ \Big( \frac{ \sum_{j=1}^{n} X_j }{ \sqrt{n} } \Big)^{2k} \Big( \frac{ \sum_{j=1}^{n} Y_j }{ \sqrt{n} } \Big)^{2\ell} \Bigg\} \,.  \label{eq-low-def-Adv-relax-2}
\end{align}
The basic intuition behind our approach of bounding \eqref{eq-low-def-Adv-relax-2} is that according to Assumption~\ref{assum-lower-bound} and central limit theorem, we should expect that 
\begin{align*}
    \Big( \frac{ \sum_{j=1}^{n} X_j }{ \sqrt{n} }, \frac{ \sum_{j=1}^{n} Y_j }{ \sqrt{n} } \Big) \mbox{ behaves like } (U,V) \,,
\end{align*}
where $(U,V)$ is a pair of standard normal variable with correlation $\rho^2$. Thus, the first step is to instead bound the joint moment of $U,V$, which is incorporated in the following lemma.
\begin{lemma}{\label{lem-correlated-Gaussian-moment}}
    Suppose $(\zeta_1,\eta_1), \ldots, (\zeta_n,\eta_n)$ are i.i.d.\ pair of standard normal variables with $\mathbb E[\zeta_i \eta_i]=\rho^2$. In addition, suppose that 
    \begin{equation}{\label{eq-condition-lower-bound}}
        \frac{ \lambda^2 \rho^2 }{ 2(1-\lambda^2+\lambda^2 \rho^2) } + \frac{ \mu^2 \rho^2 }{ 2(1-\mu^2+\mu^2 \rho^2) } < \frac{1-\epsilon}{2}
    \end{equation}
    holds. Then there exists a constant $\delta=\delta(\epsilon,\lambda,\mu,\rho)>0$ such that
    \begin{align*}
        \frac{ \lambda^{2k} \mu^{2\ell} }{ 2^{k+\ell}k!\ell! } \mathbb E\Bigg\{ \Big( \frac{ \sum_{j=1}^{n} \zeta_j }{ \sqrt{n} }  \Big)^{2k} \Big( \frac{ \sum_{j=1}^{n} \eta_j }{ \sqrt{n} } \Big)^{2\ell} \Bigg\} \leq O_{\epsilon}(1) \cdot (1+\delta)^{-2(k+\ell)} \,.
    \end{align*}
\end{lemma}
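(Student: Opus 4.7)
The plan is to exploit the fact that the random vector $(n^{-1/2}\sum_j \zeta_j,\ n^{-1/2}\sum_j \eta_j)$ is distributed \emph{exactly} (for every $n$, with no appeal to a central limit theorem) as a bivariate normal $(U,V)$ with mean zero, unit variances, and correlation $\rho^2$. Thus the expectation in the statement coincides with the genuinely Gaussian moment $\mathbb{E}[U^{2k} V^{2\ell}]$, and the whole estimate reduces to a question about joint moments of a fixed bivariate Gaussian.

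Next, I would invoke the standard Gaussian integral identity: for $a,b \geq 0$ with $a,b < 1$ and $(1-a)(1-b) > ab\rho^4$,
\begin{equation*}
\mathbb{E}\!\left[\exp\!\left(\tfrac{a}{2} U^{2} + \tfrac{b}{2} V^{2}\right)\right]
\;=\; \frac{1}{\sqrt{(1-a)(1-b) - ab\,\rho^{4}}}.
\end{equation*}
A short algebraic manipulation of~\eqref{eq-condition-lower-bound} (clearing denominators and simplifying) shows that the lemma's hypothesis is equivalent to the pair of conditions $\lambda^{2},\mu^{2}<1$ and $(1-\lambda^{2})(1-\mu^{2}) - \lambda^{2}\mu^{2}\rho^{4} \geq c(\epsilon,\lambda,\mu,\rho) > 0$. (Each summand in~\eqref{eq-condition-lower-bound} being less than $\tfrac12$ forces $\lambda^{2},\mu^{2}<1$, after which cross-multiplication turns the displayed inequality into the required determinantal gap.)

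Given this strict gap, I would pick $\delta = \delta(\epsilon,\lambda,\mu,\rho)>0$ small enough that the inflated pair $((1{+}\delta)\lambda,\,(1{+}\delta)\mu)$ still satisfies $(1+\delta)^{2}\lambda^{2},(1+\delta)^{2}\mu^{2}<1$ and $\bigl(1-(1{+}\delta)^{2}\lambda^{2}\bigr)\bigl(1-(1{+}\delta)^{2}\mu^{2}\bigr) > (1{+}\delta)^{4}\lambda^{2}\mu^{2}\rho^{4}$ with positive slack. The Gaussian MGF identity then yields
\begin{equation*}
\mathbb{E}\!\left[\exp\!\left(\tfrac{(1+\delta)^{2}\lambda^{2}}{2}U^{2} + \tfrac{(1+\delta)^{2}\mu^{2}}{2}V^{2}\right)\right] \;\leq\; C(\epsilon,\lambda,\mu,\rho).
\end{equation*}
Expanding the exponential as a power series and using that every term $U^{2k}V^{2\ell}$ is nonnegative (so each term is dominated by the whole sum), one obtains, for all $k,\ell \geq 0$,
\begin{equation*}
(1+\delta)^{2(k+\ell)} \cdot \frac{\lambda^{2k}\mu^{2\ell}}{2^{k+\ell}\,k!\,\ell!}\, \mathbb{E}\bigl[U^{2k} V^{2\ell}\bigr] \;\leq\; C(\epsilon,\lambda,\mu,\rho),
\end{equation*}
which upon dividing by $(1+\delta)^{2(k+\ell)}$ is exactly the desired bound.

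The only nontrivial step is the algebraic reduction of~\eqref{eq-condition-lower-bound} to the determinantal condition $(1-\lambda^{2})(1-\mu^{2}) > \lambda^{2}\mu^{2}\rho^{4}$, together with the quantitative continuity argument producing $\delta$ from the $\epsilon$-gap; both are elementary but require one to track denominators carefully. Everything else is a direct application of the closed-form Gaussian MGF combined with the termwise-nonnegativity trick, so I do not anticipate further technical obstacles.
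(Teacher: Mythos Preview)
Your proposal is correct and follows essentially the same approach as the paper: inflate $(\lambda,\mu)$ by a factor $(1+\delta)$ chosen via continuity, bound the resulting Gaussian MGF $\mathbb{E}[\exp(\tfrac12(\lambda_*^2 U^2+\mu_*^2 V^2))]$, and extract individual moments by termwise nonnegativity. The only cosmetic difference is that you invoke the closed-form determinantal formula $((1-a)(1-b)-ab\rho^4)^{-1/2}$ directly, whereas the paper evaluates the same integral by writing $U=\rho Z+\sqrt{1-\rho^2}\,X$, $V=\rho Z+\sqrt{1-\rho^2}\,Y$ and integrating out $X,Y$ first to reduce to $\mathbb{E}[\exp(cZ^2)]$ with $c$ equal to the left side of~\eqref{eq-condition-lower-bound}; both routes yield the same finiteness condition.
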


The proof of Lemma~\ref{lem-correlated-Gaussian-moment} is postponed to Section~\ref{subsec:proof-lem-5.8} of the appendix. Based on Lemma~\ref{lem-correlated-Gaussian-moment}, it suffices to show that
\begin{align*}
    \frac{ \lambda^{2k} \mu^{2\ell} }{ 2^{k+\ell}k!\ell! } \mathbb E\Bigg\{ \Big( \frac{ \sum_{j=1}^{n} \zeta_j }{ \sqrt{n} } \Big)^{2k} \Big( \frac{ \sum_{j=1}^{n} \eta_j }{ \sqrt{n} } \Big)^{2\ell} \Bigg\} \mbox{ and } \frac{ \lambda^{2k} \mu^{2\ell} }{ 2^{k+\ell}k!\ell! } \mathbb E\Bigg\{ \Big( \frac{ \sum_{j=1}^{n} X_j }{ \sqrt{n} } \Big)^{2k} \Big( \frac{ \sum_{j=1}^{n} Y_j }{ \sqrt{n} } \Big)^{2\ell} \Bigg\} \,.
\end{align*}
are ``close'' in a certain sense, as incorporated in the following lemma.

\begin{lemma}{\label{lem-compare-moments-Gaussian-non-Gaussian}}
    When $k,\ell=n^{o(1)}$, we have
    \begin{align*}
        & \frac{ \lambda^{2k} \mu^{2\ell} }{ 2^{k+\ell}k!\ell! } \Bigg| \mathbb E\Bigg\{ \Big( \frac{ \sum_{j=1}^{n} \zeta_j }{ \sqrt{n} } \Big)^{2k} \Big( \frac{ \sum_{j=1}^{n} \eta_j }{ \sqrt{n} } \Big)^{2\ell} - \Big( \frac{ \sum_{j=1}^{n} X_j }{ \sqrt{n} } \Big)^{2k} \Big( \frac{ \sum_{j=1}^{n} Y_j }{ \sqrt{n} } \Big)^{2\ell} \Bigg\} \Bigg| \\
        =\ & [n^{-\frac{1}{2}+o(1)}] \cdot \frac{ \lambda^{2k} \mu^{2\ell} }{ 2^{k+\ell}k!\ell! } \mathbb E\Bigg\{ \Big( \frac{ \sum_{j=1}^{n} \zeta_j }{ \sqrt{n} } \Big)^{2k} \Big( \frac{ \sum_{j=1}^{n} \eta_j }{ \sqrt{n} } \Big)^{2\ell} \Bigg\} \,.
    \end{align*}
\end{lemma}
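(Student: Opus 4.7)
The key observation is that under Assumption~\ref{assum-lower-bound}, the pair $(X_j, Y_j) = (\mathbb X_j \mathbb X_j',\, \mathbb Y_j \mathbb Y_j')$ shares the same mean vector and covariance matrix as $(\zeta_j, \eta_j)$: both have zero mean, unit variance in each coordinate, and cross-correlation $\rho^2$ (since $\mathbb E[XY] = \mathbb E[\mathbb X \mathbb Y]^2 = \rho^2 = \mathbb E[\zeta\eta]$). This is precisely the setting for a Lindeberg-type interpolation; swapping one summand at a time, the matching of the first two joint moments annihilates the leading Taylor contributions and pushes the error into a third-order remainder. The plan is to execute this swap against the polynomial $f(u,v) = u^{2k}v^{2\ell}$ and track the dependence on $(k,\ell)$ carefully, so that the resulting error is of relative order $n^{-1/2 + o(1)}$.

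Concretely, for $0 \leq m \leq n$ I introduce the hybrid
\[
    \Phi_m := \mathbb E\Big[ f\Big(\tfrac{1}{\sqrt n}\textstyle\sum_{j\leq m}\zeta_j + \tfrac{1}{\sqrt n}\sum_{j>m}X_j,\ \tfrac{1}{\sqrt n}\sum_{j\leq m}\eta_j + \tfrac{1}{\sqrt n}\sum_{j>m}Y_j\Big)\Big],
\]
so that $\Phi_0$ and $\Phi_n$ are precisely the two expectations compared in the lemma, and telescope: $|\Phi_0 - \Phi_n| \leq \sum_{m=1}^n |\Phi_m - \Phi_{m-1}|$. For each $m$, the argument of $f$ inside $\Phi_m$ (respectively $\Phi_{m-1}$) can be written as $(U_m + X_m/\sqrt n,\, V_m + Y_m/\sqrt n)$ (respectively with $(\zeta_m, \eta_m)$ in place of $(X_m, Y_m)$), where $(U_m, V_m)$ is independent of the swapped pair and has the same law in both expressions. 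A second-order Taylor expansion of $f$ around $(U_m, V_m)$, combined with the coincidence of the first two joint moments, cancels the constant, linear, and quadratic contributions and leaves only a third-order remainder. Subgaussianity of $\mathbb X, \mathbb Y$ (Assumption~\ref{assum-lower-bound}(2)) bounds $\mathbb E[\lvert X_m\rvert^a \lvert Y_m\rvert^b]$ and $\mathbb E[\lvert \zeta_m\rvert^a \lvert \eta_m\rvert^b]$ with $a+b=3$ by absolute constants, while $\partial_u^a \partial_v^b f$ is a monomial $C_{a,b,k,\ell}\, u^{2k-a} v^{2\ell-b}$ with $|C_{a,b,k,\ell}| = O((k+\ell)^3)$. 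Together these yield a per-swap bound
\[
    |\Phi_m - \Phi_{m-1}| \leq O((k+\ell)^3) \cdot n^{-3/2} \cdot \max_{a+b=3} \mathbb E\big[\lvert U_m^*\rvert^{2k-a} \lvert V_m^*\rvert^{2\ell-b}\big],
\]
where $(U_m^*, V_m^*)$ is an intermediate point from the Taylor mean-value theorem. Telescoping over $m=1,\ldots,n$ contributes a factor $n$, giving a total of $O((k+\ell)^3 n^{-1/2}) = n^{-1/2 + o(1)}$ times such a hybrid moment, since $k+\ell = n^{o(1)}$.

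The main obstacle is converting the hybrid moments $\mathbb E[\lvert U_m^*\rvert^{2k-a} \lvert V_m^*\rvert^{2\ell-b}]$ into the fully Gaussian moment $\mathbb E\big[(\sum_j \zeta_j/\sqrt n)^{2k} (\sum_j \eta_j/\sqrt n)^{2\ell}\big]$ appearing on the right-hand side of the lemma. My approach would be twofold: first, re-apply the Lindeberg swap to the hybrid moments themselves, which (by the same argument) compare to the fully Gaussian ones up to the same relative $n^{-1/2 + o(1)}$ factor, so iterating introduces only subleading corrections; second, absorb the missing cubic exponents via a H\"older-type comparison between $\mathbb E[\lvert \zeta\rvert^{2k-a} \lvert \eta\rvert^{2\ell-b}]$ and $\mathbb E[\zeta^{2k} \eta^{2\ell}]$, which for jointly Gaussian $(\zeta, \eta)$ with correlation $\rho^2$ differ only by a factor polynomial in $(k,\ell)$ and hence $n^{o(1)}$. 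Assumption~\ref{assum-lower-bound}(3) plays the role of ensuring the analogous domination on the non-Gaussian side whenever the interpolation produces a cross-moment rather than a marginal one. The conceptual step — that matching mean and covariance alone buys an $n^{-1/2}$ Lindeberg gain per variable — is standard; the work lies in bookkeeping the polynomial growth $(k+\ell)^3$ against the reference moment and verifying that the comparison between intermediate and Gaussian moments loses at most a factor $n^{o(1)}$, which is precisely the ``delicate Lindeberg's interpolation argument'' the introduction advertises.
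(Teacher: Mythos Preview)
Your strategy is the same Lindeberg interpolation that the paper uses, but two of your technical choices create real gaps that the paper's execution avoids.

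\textbf{The intermediate point is not independent of the swapped pair.} When you invoke the Taylor mean-value remainder, the point $(U_m^*,V_m^*)$ lies on the segment between $(U_m,V_m)$ and $(U_m+X_m/\sqrt n,\,V_m+Y_m/\sqrt n)$, so it depends on $(X_m,Y_m)$. You therefore cannot factor $\mathbb E\big[\lvert U_m^*\rvert^{2k-a}\lvert V_m^*\rvert^{2\ell-b}\cdot X_m^{a}Y_m^{b}\big]$ as a product of two expectations, which is what your displayed bound implicitly does. The paper sidesteps this entirely: since $f(u,v)=u^{2k}v^{2\ell}$ is a polynomial, it expands exactly by the binomial theorem around the ``leave-one-out'' point $(U_{t,*},V_{t,*})$ (your $U_m,V_m$). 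Every term is then a product $\mathbb E[U_{t,*}^{2k-a}V_{t,*}^{2\ell-b}]\cdot\mathbb E[X_t^aY_t^b]$ with no intermediate point; the terms with $a+b\le 2$ cancel by moment matching and the survivors have $a+b\ge 3$.

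\textbf{Your comparison of hybrid moments to the Gaussian reference is circular.} You propose to ``re-apply the Lindeberg swap to the hybrid moments themselves'' to relate $\mathbb E[\lvert U_m\rvert^{2k-a}\lvert V_m\rvert^{2\ell-b}]$ to the pure-Gaussian moment. But that is the very statement you are trying to prove, and the error terms in the iteration are not a~priori controlled until the lemma is established. The paper instead proves a direct \emph{internal} comparison (its Claim~\ref{claim-moment-U-t-*-V-t-*}): using Assumption~\ref{assum-lower-bound}(3) it shows that $U_{t,*}^{a}$ and $V_{t,*}^{b}$ are positively correlated for all $a,b\in\mathbb N$, hence
\[
    \mathbb E\big[U_{t,*}^{2k-2}V_{t,*}^{2\ell}\big]\;\le\;\frac{1}{\mathbb E[U_{t,*}^{2}]}\,\mathbb E\big[U_{t,*}^{2k}V_{t,*}^{2\ell}\big]\;=\;(1+o(1))\,\mathbb E\big[U_{t,*}^{2k}V_{t,*}^{2\ell}\big],
\]
and iterating peels off any even deficit in the exponents (odd exponents are reduced to even via $x^{2m+1}\le\tfrac12(x^{2m}+x^{2m+2})$). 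This lets the paper bound each swap multiplicatively, $\mathbb E[U_{t+1}^{2k}V_{t+1}^{2\ell}]=(1+n^{-3/2+o(1)})\mathbb E[U_t^{2k}V_t^{2\ell}]$, so the telescoping product over $n$ swaps gives $(1+n^{-3/2+o(1)})^n=1+n^{-1/2+o(1)}$ directly, with no need to re-compare to a fixed Gaussian anchor along the way. Your invocation of Assumption~\ref{assum-lower-bound}(3) gestures at this, but the actual mechanism---positive correlation of powers of the partial sums, used to absorb the exponent deficit---is the non-obvious step you are missing.
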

The proof of Lemma~\ref{lem-compare-moments-Gaussian-non-Gaussian} is postponed to Section~\ref{subsec:proof-lem-5.9} of the appendix. We finish this section by pointing out that plugging Lemma~\ref{lem-correlated-Gaussian-moment} and Lemma~\ref{lem-compare-moments-Gaussian-non-Gaussian} into \eqref{eq-low-def-Adv-relax-2} yields Theorem~\ref{Main-thm-lower-bound}.

\subsection{Gaussian covariance model}{\label{subsec:cov-Gaussian-model}}

In this subsection we will derive a tractable bound for $\chi^2_{\leq D}(\overline\Pb;\overline\Qb)$ when $\overline\Pb,\overline\Qb$ is defined in Definition~\ref{def-correlated-spike-covariance-specific}. Although it is possible to again use the result for Gaussian additive models, here we adopt an alternative method motivated by \cite{BKW20} which yields a somewhat simpler bound, which only involves the ``replica overlap'' $(\langle x,x' \rangle,\langle y,y' \rangle)$ where $(x,y),(x',y') \sim \pi$ and $(x,y) \perp (x',y')$.

\begin{lemma}{\label{lem-bound-Adv-cov-relax-1}}
    Suppose $\overline\Pb,\overline\Qb$ is defined in Definition~\ref{def-correlated-spike-covariance-specific}. Then
    \begin{equation}{\label{eq-bound-Adv-cov-relax-1}}
        \chi^2_{\leq D}(\overline\Pb;\overline\Qb) \leq \mathbb E_{(x,y),(x',y')\sim\pi}\Big[ \varphi_{D}\big( \tfrac{\lambda^2 \langle x,x' \rangle^2 }{4n^2} \big) \varphi_{D}\big( \tfrac{\mu^2 \langle y,y' \rangle^2 }{4n^2} \big) \Big] \,.
    \end{equation}
    Here
    \begin{equation}{\label{eq-def-varphi-leq-D}}
        \varphi_D(t) := \sum_{0 \leq k \leq D} t^k \sum_{ k_1+\ldots+k_N=k } \prod_{1 \leq i \leq N} \binom{2k_i}{k_i} \,.
    \end{equation}
\end{lemma}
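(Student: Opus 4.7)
The plan is to adapt the column-wise Hermite expansion approach for spiked Gaussian covariance models, in the spirit of \cite{BKW20}. Under the convention used in Proposition~\ref{prop-low-deg-Adv-add-Gaussian}, $\chi^2_{\leq D}(\overline\Pb;\overline\Qb)=\|L^{\leq D}\|_{L^2(\overline\Qb)}^2$, where $L=d\overline\Pb/d\overline\Qb$ and $L^{\leq D}$ is its $L^2(\overline\Qb)$-projection onto polynomials of degree at most $D$. Writing $L=\mathbb E_{(x,y)\sim\pi}[L_{x,y}]$ for the conditional likelihood ratio and using linearity of projection, I would first reduce to
\begin{equation*}
    \chi^2_{\leq D}(\overline\Pb;\overline\Qb)=\mathbb E_{(x,y),(x',y')\sim\pi}\bigl[\langle L_{x,y}^{\leq D},L_{x',y'}^{\leq D}\rangle_{L^2(\overline\Qb)}\bigr],
\end{equation*}
where $(x,y)$ and $(x',y')$ are two independent copies sampled from $\pi$.

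Next I would expand the single-column likelihood ratio in the Hermite basis. By Definition~\ref{def-correlated-spike-covariance-specific}, conditional on $(x,y)$ the columns are independent, so $L_{x,y}=\prod_i L_X(\bm X_i;x)\prod_i L_Y(\bm Y_i;y)$. A Sherman--Morrison calculation yields $L_X(\bm X_i;x)=(1+\lambda\|x\|^2/n)^{-1/2}\exp\bigl(\tfrac{\lambda}{2(n+\lambda\|x\|^2)}(x^\top\bm X_i)^2\bigr)$, which depends on $\bm X_i$ only via $z_i:=(x/\|x\|)^\top\bm X_i$. Setting $a=\lambda\|x\|^2/(n+\lambda\|x\|^2)$, so that $a/(1-a)=\lambda\|x\|^2/n$, the classical Hermite identity $e^{az^2/2}=(1-a)^{-1/2}\sum_{k\geq 0}\tfrac{(a/(1-a))^k}{2^k k!}\,\mathrm{He}_{2k}(z)$ (derived by pairing against the Gaussian moment generating function) gives
\begin{equation*}
    L_X(\bm X_i;x)=\sum_{k\geq 0}\frac{(\lambda\|x\|^2/n)^k}{2^k k!}\,\mathrm{He}_{2k}\bigl((x/\|x\|)^\top\bm X_i\bigr),
\end{equation*}
and analogously for $L_Y$. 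Only even Hermite polynomials appear, and each summand has Hermite degree $2k$ in $\bm X_i$.

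Taking the product over the $2N$ columns, the projection $L_{x,y}^{\leq D}$ retains exactly those multi-indices $(k_1,\ldots,k_N,\ell_1,\ldots,\ell_N)$ with $2\sum_i k_i+2\sum_i\ell_i\leq D$, since total Hermite degree is invariant under rotations of the isotropic measure $\overline\Qb$. Applying the Mehler-type orthogonality $\mathbb E[\mathrm{He}_{2k}(Z_1)\mathrm{He}_{2k}(Z_2)]=(2k)!\,(\mathbb E[Z_1Z_2])^{2k}$ for jointly standard normals $(Z_1,Z_2)$, together with independence across columns, one obtains
\begin{equation*}
    \langle L_{x,y}^{\leq D},L_{x',y'}^{\leq D}\rangle_{L^2(\overline\Qb)}=\sum_{\sum k_i+\sum\ell_i\leq D/2}\prod_i\binom{2k_i}{k_i}\!\Bigl(\tfrac{\lambda^2\langle x,x'\rangle^2}{4n^2}\Bigr)^{k_i}\prod_i\binom{2\ell_i}{\ell_i}\!\Bigl(\tfrac{\mu^2\langle y,y'\rangle^2}{4n^2}\Bigr)^{\ell_i},
\end{equation*}
after using $(2k)!/(2^k k!)^2=\binom{2k}{k}/4^k$. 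Since every summand is nonnegative, enlarging the joint constraint $\sum k_i+\sum\ell_i\leq D/2$ to the separated constraints $\sum k_i\leq D$ and $\sum\ell_i\leq D$ only increases the sum, and the enlarged sum factorizes into $\varphi_D(\lambda^2\langle x,x'\rangle^2/(4n^2))\cdot\varphi_D(\mu^2\langle y,y'\rangle^2/(4n^2))$. Averaging over $(x,y),(x',y')\sim\pi$ then yields~\eqref{eq-bound-Adv-cov-relax-1}.

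The main technical delicacy is in the third step: the expansion of $L_X(\cdot;x)$ in the second step uses an $x$-dependent orthonormal basis (with first direction $x/\|x\|$), whereas the truncation $L_{x,y}^{\leq D}$ is defined via degree with respect to any fixed ambient basis. The reconciliation is that the total Hermite degree is basis-invariant under the isotropic Gaussian measure $\overline\Qb$, so the expansion correctly captures the projection. Once this is secured, the remainder is a routine bookkeeping of bivariate Hermite orthogonality and a nonnegativity-based relaxation of the index set.
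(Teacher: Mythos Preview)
Your proposal is correct and follows essentially the same route as the paper, both deriving the bound via the Hermite expansion technique of \cite{BKW20}. The only difference is cosmetic: the paper expands in the ambient multivariate Hermite basis $\{\mathcal H_{\bm\alpha,\bm\beta}\}$, quotes the \cite{BKW20} formula $\mathbb E_{\overline\Qb}[\mathbb L_x(\bm X)\mathcal H_{\bm\alpha,\emptyset}(\bm X)]=(\lambda/n)^{|\bm\alpha|/2}x^{\alpha_1+\cdots+\alpha_N}\prod_i(|\alpha_i|-1)!!/\alpha_i!$, and then re-sums via another \cite{BKW20} identity to land on $\varphi_D$; you instead rotate each column to the spike direction and use the univariate identity $e^{az^2/2}=(1-a)^{-1/2}\sum_k\frac{(a/(1-a))^k}{2^k k!}\mathrm{He}_{2k}(z)$ together with Mehler orthogonality $\mathbb E[\mathrm{He}_m(Z_1)\mathrm{He}_n(Z_2)]=m!\rho^m\mathbf 1_{m=n}$, which produces the $\binom{2k_i}{k_i}$ factors directly without the intermediate multinomial bookkeeping. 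Both arguments finish with the same nonnegativity-based relaxation of the joint constraint to separate constraints on $\bm\alpha$ and $\bm\beta$. Your handling of the rotated-basis subtlety (degree invariance under orthogonal changes of variable for the isotropic Gaussian) is exactly what is needed.
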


The proof of Lemma~\ref{lem-bound-Adv-cov-relax-1} is postponed to Section~\ref{subsec:proof-lem-5.10} of the appendix.

\subsection{Proof of Theorem~\ref{Main-thm-lower-bound-covariance}}{\label{subsec:proof-thm-5.5}}

In this subsection we give the proof of Theorem~\ref{Main-thm-lower-bound-covariance}. Clear, based on Lemma~\ref{lem-Adv-cov-transform}, it suffices to show the right hand side of \eqref{eq-Adv-cov-transform} is bounded by $O_{\lambda,\mu,\rho,\gamma}(1)$. To this end, we first show the following bound on the coefficient of $\varphi_{D}$.

\begin{lemma}{\label{lem-bound-coeff-varphi-D}}
    For $N=\Theta(n)$ and $k=n^{o(1)}$, we have
    \begin{align*}
        \sum_{ k_1+\ldots+k_N=k } \prod_{1 \leq i \leq N} \binom{2k_i}{k_i} \leq \frac{ [1+o(1)] 2^k N^k }{ k! } \,.
    \end{align*}
\end{lemma}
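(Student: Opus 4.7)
The plan is to recognize the sum as a coefficient extraction in a product of generating functions, thereby reducing the combinatorial sum to a closed-form binomial coefficient which is easy to estimate when $k = n^{o(1)}$ and $N = \Theta(n)$.

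First, I would invoke the classical generating function identity for central binomial coefficients:
\begin{equation*}
    \sum_{j=0}^{\infty} \binom{2j}{j} x^j \;=\; \frac{1}{\sqrt{1-4x}} \,.
\end{equation*}
Since the sum $\sum_{k_1+\ldots+k_N=k}\prod_{i=1}^N \binom{2k_i}{k_i}$ is precisely the coefficient of $x^k$ in the $N$-fold product of the above generating function, we get the exact identity
\begin{equation*}
    \sum_{k_1+\ldots+k_N=k}\prod_{i=1}^N \binom{2k_i}{k_i} \;=\; [x^k]\,(1-4x)^{-N/2} \,.
\end{equation*}

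Next, I expand the right hand side using the generalized binomial series:
\begin{equation*}
    (1-4x)^{-N/2} \;=\; \sum_{k \geq 0} \binom{-N/2}{k} (-4x)^k \;=\; \sum_{k \geq 0} 4^k \binom{N/2+k-1}{k} x^k \,.
\end{equation*}
Therefore the sum in question equals $4^k \binom{N/2+k-1}{k} = 4^k \cdot \frac{(N/2)(N/2+1)\cdots(N/2+k-1)}{k!}$.

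Finally, in the regime $N = \Theta(n)$ and $k = n^{o(1)}$, I estimate each factor in the numerator from above by $N/2 + k - 1$, which yields
\begin{equation*}
    \prod_{i=0}^{k-1}(N/2 + i) \;\leq\; (N/2)^{k} \Bigl(1 + \tfrac{2(k-1)}{N}\Bigr)^{k} \;\leq\; (N/2)^k \cdot \exp\bigl(2k(k-1)/N\bigr) \;=\; [1+o(1)] (N/2)^k \,,
\end{equation*}
using $k^2/N = o(1)$. Combining with $4^k \cdot (N/2)^k = 2^k N^k$ gives the desired bound
\begin{equation*}
    4^k \binom{N/2+k-1}{k} \;\leq\; \frac{[1+o(1)]\, 2^k N^k}{k!} \,.
\end{equation*}
There is no real obstacle here; the only step requiring care is the identification of the generating function, after which the rest is a short calculation.
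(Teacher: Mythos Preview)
Your proof is correct and takes a genuinely different route from the paper. You identify the sum as $[x^k](1-4x)^{-N/2}$ via the central-binomial generating function, extract the exact closed form $4^k\binom{N/2+k-1}{k}$, and then estimate. The paper instead proceeds combinatorially: it splits the tuples $(k_1,\ldots,k_N)$ according to the set $\mathtt I=\{i:k_i\ge 2\}$ and $\mathtt S=\sum_{i\in\mathtt I}k_i$, bounds $\prod_i\binom{2k_i}{k_i}\le 2^{k+\mathtt S}$, counts the tuples with prescribed $(\mathtt I,\mathtt S)$, and shows the contribution from $\mathtt I\ne\emptyset$ is lower order. Your approach is shorter and yields an exact identity before the asymptotic step; the paper's argument is more hands-on but avoids any appeal to generating functions and makes the ``dominant term comes from all $k_i\in\{0,1\}$'' intuition explicit. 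Either way the regime $k^2/N=o(1)$ is what drives the $1+o(1)$ factor.
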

\begin{proof}
    Denote $\mathtt I=\{ 1 \leq i \leq N: k_i \not\in\{ 0,1 \} \}$ and denote $\mathtt S=\sum_{i \in \mathtt I} k_i \geq 2|\mathtt I|$. Clearly, for any fixed $\mathtt I$ and $\mathtt S \geq 2 |\mathtt I|$, we have
    \begin{align*}
        \prod_{1 \leq i \leq N} \binom{2k_i}{k_i} = \prod_{ i \in \mathtt I } \binom{2k_i}{k_i} \prod_{ i \not\in \mathtt I } \binom{2k_i}{k_i} \leq 2^{ \#\{ i \not\in \mathtt I:k_i =1 \} } \prod_{ i \in \mathtt I } 2^{2k_i} = 2^{k-\mathtt S} \cdot 2^{2\mathtt S} = 2^{k+\mathtt S} 
    \end{align*}
    and
    \begin{align*}
        \#\Big\{ \{ k_i:i \in \mathtt I \}: \sum_{ i \in \mathtt I } k_i = \mathtt S \Big\} \leq \mathtt I^{\mathtt S}, \quad \#\Big\{ \{ k_i:i \not\in \mathtt I \}: \sum_{ i \not\in \mathtt I } k_i = k-\mathtt S \Big\}=\binom{N-|\mathtt I|}{k-\mathtt S} \leq \frac{ N^{k-\mathtt S} k^{\mathtt S} }{ k! } \,.
    \end{align*}
    Thus, we have
    \begin{align*}
        \sum_{ k_1+\ldots+k_N=k } \prod_{1 \leq i \leq N} \binom{2k_i}{k_i} &\leq \sum_{ \mathtt I \subset [N], |\mathtt I|\leq k } \sum_{ 2 |\mathtt I| \leq \mathtt S \leq k } \frac{ 2^{k+\mathtt S} \cdot \mathtt I^{\mathtt S} \cdot N^{k-\mathtt S} k^{\mathtt S} }{ k! } \\
        &\leq \frac{ 2^k N^k }{ k! } \sum_{ \mathtt I \subset [N], |\mathtt I|\leq k } \sum_{ 2 |\mathtt I| \leq \mathtt S \leq k } N^{-\mathtt S} 2^{\mathtt S} |\mathtt I|^{\mathtt S} k^{\mathtt S} \\
        &= \frac{ [1+o(1)] 2^k N^k }{ k! } \sum_{ \mathtt I \subset [N], |\mathtt I|\leq k } N^{-2|\mathtt I|} 2^{2|\mathtt I|} |\mathtt I|^{2|\mathtt I|} k^{2|\mathtt I|} \\
        &= \frac{ [1+o(1)] 2^k N^k }{ k! } \sum_{ 0 \leq t \leq k } \binom{N}{t} \cdot N^{-2t} 2^{2t} t^{2t} k^{2t} = \frac{ [1+o(1)] 2^k N^k }{ k! } \,,
    \end{align*}
    where the last equality follows from $k=N^{o(1)}$.
\end{proof}

Using Lemmas~\ref{lem-Adv-cov-transform} and \ref{lem-bound-coeff-varphi-D}, we get that
\begin{align}
    \chi^2_{\leq D}(\overline\Pb;\overline\Qb) &\leq [1+o(1)] \mathbb E_{(x,y),(x',y')\sim\pi}\Bigg\{ \sum_{ \substack{ 0 \leq k \leq D \\ 0 \leq \ell \leq D } } \frac{2^{k+\ell}N^{k+\ell}}{k!\ell!} \Big( \frac{\lambda^2 \langle x,x' \rangle^2 }{4n^2} \Big)^{k} \Big( \frac{\mu^2 \langle y,y' \rangle^2 }{4n^2} \Big)^{\ell} \Bigg\}  \nonumber  \\
    &= [1+o(1)] \sum_{ \substack{ 0 \leq k \leq D \\ 0 \leq \ell \leq D } } \mathbb E_{(x,y),(x',y')\sim\pi}\Bigg\{ \frac{ \lambda^{2k}\mu^{2\ell} }{ 2^{k+\ell}k!\ell! } \Big( \frac{N}{n} \Big)^{k+\ell} \Big( \frac{ \langle x,x' \rangle }{\sqrt{n}} \Big)^{2k} \Big( \frac{ \langle y,y' \rangle }{\sqrt{n}} \Big)^{2\ell} \Bigg\} \,.
\end{align}
Again, let $(U,V)$ be a pair of standard normal variables with correlation $\rho^2$. Using Lemma~\ref{lem-compare-moments-Gaussian-non-Gaussian}, we have
\begin{align*}
    \mathbb E_{(x,y),(x',y')\sim\pi}\Bigg\{ \Big( \frac{ \langle x,x' \rangle }{\sqrt{n}} \Big)^{2k} \Big( \frac{ \langle y,y' \rangle }{\sqrt{n}} \Big)^{2\ell} \Bigg\} = [1+o(n^{-\frac{1}{2}+o(1)})] \cdot \mathbb E\big[ U^{2k} V^{2\ell} \big] \,.
\end{align*}
Thus, it suffices to show the following lemma.

\begin{lemma}{\label{lem-correlated-Gaussian-cov}}
    Suppose that $F(\lambda,\mu,\rho,\gamma)<1-\epsilon$ for some constant $\epsilon>0$, where $\gamma=\tfrac{n}{N}$. Then
    \begin{equation}{\label{eq-correlated-Gaussian-moment-cov}}
        \sum_{ \substack{ 0 \leq k \leq D \\ 0 \leq \ell \leq D } } \frac{ \lambda^{2k} \mu^{2\ell} \gamma^{k+\ell} }{ 2^{k+\ell}k!\ell! } \mathbb E\Big[ U^{2k} V^{2\ell} \Big] = O_{\lambda,\mu,\rho,\gamma}(1) \,.
    \end{equation}
\end{lemma}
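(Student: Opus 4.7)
The plan is to bound the truncated sum by the full (untruncated) series, interpret the full series as a Gaussian exponential moment, and evaluate it in closed form via the standard quadratic-form MGF identity. Since each factor $\mathbb E[U^{2k}V^{2\ell}]\geq 0$ is nonnegative, enlarging the index range from $[0,D]^2$ to $\mathbb N^2$ only increases the sum. Writing $a:=\tfrac{\lambda^2}{2\gamma}$ and $b:=\tfrac{\mu^2}{2\gamma}$ (so that the prefactor matches the $(N/n)^{k+\ell}=\gamma^{-(k+\ell)}$ produced at the top of Section~\ref{subsec:proof-thm-5.5}), Tonelli's theorem gives
\begin{align*}
\sum_{k,\ell\geq 0}\frac{a^kb^\ell}{k!\ell!}\,\mathbb E\bigl[U^{2k}V^{2\ell}\bigr]
&=\mathbb E\Biggl[\sum_{k\geq 0}\frac{(aU^2)^k}{k!}\sum_{\ell\geq 0}\frac{(bV^2)^\ell}{\ell!}\Biggr]\\
&=\mathbb E\bigl[\exp(aU^2+bV^2)\bigr].
\end{align*}

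The second step evaluates this quadratic-form MGF in closed form. With $\mathbf w=(U,V)^\top\sim\mathcal N(0,\Sigma)$, $\Sigma=\bigl(\begin{smallmatrix}1 & \rho^2\\ \rho^2 & 1\end{smallmatrix}\bigr)$, and $M=\operatorname{diag}(a,b)$, the standard identity
$$\mathbb E\bigl[\exp(\mathbf w^\top M\mathbf w)\bigr]=\det(I-2M\Sigma)^{-1/2}$$
applies as long as $I-2M\Sigma\succ 0$. A direct $2\times 2$ computation yields
$$\det(I-2M\Sigma)=\bigl(1-\tfrac{\lambda^2}{\gamma}\bigr)\bigl(1-\tfrac{\mu^2}{\gamma}\bigr)-\tfrac{\lambda^2\mu^2\rho^4}{\gamma^2}=\tfrac{1}{\gamma^2}\bigl[(\gamma-\lambda^2)(\gamma-\mu^2)-\lambda^2\mu^2\rho^4\bigr].$$

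The third step matches positive-definiteness of $I-2M\Sigma$ with the hypothesis $F(\lambda,\mu,\rho,\gamma)<1-\epsilon$. The diagonal conditions $1-2a>0$ and $1-2b>0$ are exactly $\lambda^2<\gamma$ and $\mu^2<\gamma$, which correspond to the first two arguments of the max defining $F$. The remaining condition $(\gamma-\lambda^2)(\gamma-\mu^2)>\lambda^2\mu^2\rho^4$ is, after clearing denominators and regrouping, equivalent to
$$\frac{\lambda^2\rho^2}{\gamma-\lambda^2+\lambda^2\rho^2}+\frac{\mu^2\rho^2}{\gamma-\mu^2+\mu^2\rho^2}<1,$$
which is precisely the third argument of $F$. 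Thus $I-2M\Sigma\succ 0$ is equivalent to $F(\lambda,\mu,\rho,\gamma)<1$, and under the strict bound $F<1-\epsilon$ each of these quantities is bounded away from criticality by a constant depending only on $(\lambda,\mu,\rho,\gamma,\epsilon)$, so $\det(I-2M\Sigma)\geq c(\lambda,\mu,\rho,\gamma,\epsilon)>0$. This gives $\mathbb E[\exp(aU^2+bV^2)]=O_{\lambda,\mu,\rho,\gamma}(1)$ uniformly in $D$, as claimed.

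The only step requiring real care is the algebraic equivalence in the third paragraph between $\det(I-2M\Sigma)>0$ and the third entry of $F$ being $<1$: it reduces to expanding both sides as polynomials in $(\lambda^2,\mu^2,\gamma,\rho^2)$ and noting the common factor $(\gamma-\lambda^2)(\gamma-\mu^2)-\lambda^2\mu^2\rho^4$. This bookkeeping is routine but mildly tedious; once done, no further obstacle remains.
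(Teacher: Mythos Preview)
Your proof is correct and takes essentially the same approach as the paper: bound the truncated sum by the full nonnegative series, identify it as $\mathbb{E}[\exp(aU^2+bV^2)]$, and verify finiteness under $F<1-\epsilon$. The only minor difference is that the paper evaluates this Gaussian moment via the decomposition $U=\rho Z+\sqrt{1-\rho^2}X$, $V=\rho Z+\sqrt{1-\rho^2}Y$ (deferring to the computation in Lemma~\ref{lem-correlated-Gaussian-moment}) whereas you invoke the matrix MGF formula $\det(I-2M\Sigma)^{-1/2}$ and check the resulting determinant condition directly; your correction of the exponent to $\gamma^{-(k+\ell)}$ indeed matches what the surrounding argument in Section~\ref{subsec:proof-thm-5.5} requires.
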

\begin{proof}
    Note that 
    \begin{align*}
        \mathbb E\Bigg[ \sum_{k,\ell=0}^{\infty} \frac{ \lambda^{2k} \mu^{2\ell} \gamma^{k+\ell} }{ 2^{k+\ell}k!\ell! } \cdot U^{2k} V^{2\ell} \Bigg] = \mathbb E\Bigg[ \exp\Big( \frac{\gamma}{2}( \lambda^2 U^2 + \mu^2 V^2 ) \Big) \Bigg] \,.
    \end{align*}
    Similar as the calculation in the proof of Lemma~\ref{lem-correlated-Gaussian-moment}, the above is bounded by $O_{\lambda,\mu,\rho,\gamma}(1)$ when $F(\lambda,\mu,\rho,\gamma)<1-\epsilon$.
\end{proof}

\appendix

\section{Supplementary proofs in Section~\ref{sec:main-alg-results}}{\label{sec:supp-proofs-sec-2}}

\subsection{Proof of Lemma~\ref{lem-bound-beta-mathcal-H}}{\label{subsec:proof-lem-2.2}}

Define $\mathcal W=\mathcal W(\ell)$ to be the set of $\omega=(\omega_1,\ldots,\omega_{\ell}) \in \{ \bullet,\circ \}^{\ell}$. In addition, for $\omega=(\omega_1,\ldots,\omega_{\ell})\in\mathcal W$, we similarly write
\begin{equation}{\label{eq-def-E-i-Dif-string}}
    \begin{aligned} 
        & E_{\bullet}(\omega)=\{ 1\leq i\leq \ell: \omega_i=\bullet \}, \ E_{\circ}(\omega)=\{ 1\leq i\leq \ell: \omega_i=\circ \} \,; \\
        & \mathsf{diff}(\omega)=\#\{ 1 \leq i \leq \ell-1: \omega_i \neq \omega_{i+1} \} \,.
    \end{aligned}
\end{equation}
Define 
\begin{equation}{\label{eq-def-widetilde-mathcal-H}}
    \widetilde{\mathcal H}=\Big\{ (v,[H]): [H] \in \mathcal H, v \in V(H) \Big\} \,.
\end{equation}
Consider the mapping $\varphi:\mathcal H \to \mathcal W$ define as follows: for all $(v,H) \in \widetilde{\mathcal H}$, write $V(H)$ in the counterclockwise order $V(H)=\{ v_1,\ldots,v_{\ell} \}$ with $v_1=v$ and let $\varphi(v,H)=\omega=(\omega_1,\ldots,\omega_\ell)$ with (below we write $v_{\ell+1}=v_1$)
\begin{align*}
    \omega_i = \gamma((v_i,v_{i+1})) = 
    \begin{cases}
        \bullet, & (v_i,v_{i+1}) \in E_{\bullet}(H) \,; \\
        \circ, & (v_i,v_{i+1}) \in E_{\circ}(H) \,.
    \end{cases}
\end{align*}
It is clear that $\varphi$ is a bijection and
\begin{align*}
    |E_{\bullet}(H)|=|E_{\bullet}(\varphi(v,H))|, \ |E_{\circ}(H)|=|E_{\circ}(\varphi(v,H))|, \ |\mathsf{diff}(H)|-1 \leq |\mathsf{diff}(\varphi(v,H))| \leq |\mathsf{diff}(H)| \,.
\end{align*}
In addition, since $1 \leq |\mathsf{Aut}(H)| \leq \ell$ for all $H \in \mathcal H$, we have
\begin{align*}
    \beta_{\mathcal H} &\overset{\eqref{eq-def-beta-mathcal-H}}{=} \sum_{[H] \in \mathcal H} \frac{ \rho^{2|\mathsf{diff}(H)|} \lambda^{2|E_{\bullet}(H)|} \mu^{2|E_{\circ}(H)|} }{ |\mathsf{Aut}(H)| } \leq \sum_{ [H] \in \mathcal H } \rho^{2|\mathsf{diff}(H)|} \lambda^{2|E_{\bullet}(H)|} \mu^{2|E_{\circ}(H)|} \\
    &\leq \sum_{ (v,[H]) \in \widetilde{\mathcal H} } \rho^{2|\mathsf{diff}(H)|} \lambda^{2|E_{\bullet}(H)|} \mu^{2|E_{\circ}(H)|} \leq \sum_{ (v,[H]) \in \widetilde{\mathcal H} } \rho^{2|\mathsf{diff}(\varphi(v,H))|} \lambda^{2|E_{\bullet}(\varphi(v,H))|} \mu^{2|E_{\circ}(\varphi(v,H))|} \\
    &= \sum_{ \omega\in\mathcal W } \rho^{2|\mathsf{diff}(\omega)|} \lambda^{2|E_{\bullet}(\omega)|} \mu^{2|E_{\circ}(\omega)|} 
\end{align*}
and 
\begin{align*}
    \beta_{\mathcal H} &\overset{\eqref{eq-def-beta-mathcal-H}}{=} \sum_{[H] \in \mathcal H} \frac{ \rho^{2|\mathsf{diff}(H)|} \lambda^{2|E_{\bullet}(H)|} \mu^{2|E_{\circ}(H)|} }{ |\mathsf{Aut}(H)| } \geq \frac{1}{\ell} \sum_{ [H] \in \mathcal H } \rho^{2|\mathsf{diff}(H)|} \lambda^{2|E_{\bullet}(H)|} \mu^{2|E_{\circ}(H)|} \\
    &= \frac{1}{\ell^2} \sum_{ (v,[H]) \in \widetilde{\mathcal H} } \rho^{2|\mathsf{diff}(H)|} \lambda^{2|E_{\bullet}(H)|} \mu^{2|E_{\circ}(H)|} \geq \frac{\rho^2}{\ell^2} \sum_{ (v,[H]) \in \widetilde{\mathcal H} } \rho^{2|\mathsf{diff}(\varphi(v,H))|} \lambda^{2|E_{\bullet}(\varphi(v,H))|} \mu^{2|E_{\circ}(\varphi(v,H))|} \\
    &= \frac{\rho^2}{\ell^2} \sum_{ \omega\in\mathcal W } \rho^{2|\mathsf{diff}(\omega)|} \lambda^{2|E_{\bullet}(\omega)|} \mu^{2|E_{\circ}(\omega)|}  \,.
\end{align*}
Thus, it suffices to show that there exists a constant $D=D(\lambda,\mu,\rho,\epsilon)>0$ such that
\begin{equation}{\label{eq-goal-lem-2.1}}
    D^{-1} A_+^{\ell} \leq \sum_{ \omega\in\mathcal W } \rho^{2|\mathsf{diff}(\omega)|} \lambda^{2|E_{\bullet}(\omega)|} \mu^{2|E_{\circ}(\omega)|} \leq D A_+^{\ell} \,.
\end{equation}
We now give an inductive formula of $\sum_{ \omega\in\mathcal W(\ell) } \rho^{2|\mathsf{diff}(\omega)|} \lambda^{2|E_{\bullet}(\omega)|} \mu^{2|E_{\circ}(\omega)|}$. Denote
\begin{equation}{\label{eq-def-mathsf-X-Y}}
\begin{aligned}
    &\mathsf{X}_{\ell} = \sum_{ \omega\in\mathcal W(\ell), \omega_\ell=1 } \rho^{2|\mathsf{diff}(\omega)|} \lambda^{2|E_{\bullet}(\omega)|} \mu^{2|E_{\circ}(\omega)|} \,; \\
    &\mathsf{Y}_{\ell} = \sum_{ \omega\in\mathcal W(\ell), \omega_\ell=2 } \rho^{2|\mathsf{diff}(\omega)|} \lambda^{2|E_{\bullet}(\omega)|} \mu^{2|E_{\circ}(\omega)|} \,.
\end{aligned}
\end{equation}
Since $\omega\in\mathcal W(\ell)$ with $\omega_{\ell}=1$ can be written as $\omega=(\omega',1)$ with $\omega'\in\mathcal W(\ell-1)$, we can check that
\begin{align*}
    \mathsf{X}_{\ell} =\ & \lambda^2 \sum_{ \omega'\in\mathcal W(\ell-1), \omega_{\ell-1}'=1 } \rho^{2|\mathsf{diff}(\omega')|} \lambda^{2|E_{\bullet}(\omega')|} \mu^{2|E_{\circ}(\omega')|} \\
    & + \lambda^2 \rho^2 \sum_{ \omega'\in\mathcal W(\ell-1), \omega_{\ell-1}'=2 } \rho^{2|\mathsf{diff}(\omega')|} \lambda^{2|E_{\bullet}(\omega')|} \mu^{2|E_{\circ}(\omega')|} = \lambda^2 \mathsf{X}_{\ell-1} + \lambda^2 \rho^2 \mathsf{Y}_{\ell-1} \,.
\end{align*}
And similarly
\begin{align*}
    \mathsf{Y}_{\ell} =\ & \mu^2 \sum_{ \omega'\in\mathcal W(\ell-1), \omega_{\ell-1}'=2 } \rho^{2|\mathsf{diff}(\omega')|} \lambda^{2|E_{\bullet}(\omega')|} \mu^{2|E_{\circ}(\omega')|} \\
    & + \mu^2 \rho^2 \sum_{ \omega'\in\mathcal W(\ell-1), \omega_{\ell-1}'=1 } \rho^{2|\mathsf{diff}(\omega')|} \lambda^{2|E_{\bullet}(\omega')|} \mu^{2|E_{\circ}(\omega')|} = \mu^2 \mathsf{Y}_{\ell-1} + \mu^2 \rho^2 \mathsf{X}_{\ell-1} \,.
\end{align*}
Thus, we have
\begin{align*}
    \begin{pmatrix}
        \mathsf{X}_{\ell} \\ \mathsf Y_{\ell}
    \end{pmatrix}
    = 
    \begin{pmatrix}
        \lambda^2 & \lambda^2 \rho^2 \\ \mu^2 \rho^2 & \mu^2 
    \end{pmatrix}
    \begin{pmatrix}
        \mathsf{X}_{\ell-1} \\ \mathsf Y_{\ell-1}
    \end{pmatrix} \,.
\end{align*}
Straightforward calculation yields that $A_+$ and $A_-$ with
\begin{align*}
    A_{ \pm } = \frac{ \lambda^2 + \mu^2 \pm \sqrt{ \lambda^4+\mu^4-(2-4\rho^4) \lambda^2 \mu^2 } }{ 2 } \mbox{ is the eigenvalue of }
    \begin{pmatrix}
        \lambda^2 & \lambda^2 \rho^2 \\ \mu^2 \rho^2 & \mu^2 
    \end{pmatrix} \,,
\end{align*}
where $A_+$ is the same as in \eqref{eq-def-A-+}. Standard results yields that there exists $D_1,D_2,D_3,D_4 \in \mathbb R$ with $D_1,D_3>0$ such that
\begin{align}
    \mathsf{X}_{\ell} = D_1 A_+^{\ell} + D_2 A_{-}^{\ell} \mbox{ and } \mathsf{Y}_{\ell} = D_3 A_+^{\ell} + D_4 A_{-}^{\ell} \,. \label{eq-formula-X-ell-Y-ell}
\end{align}
Since $A_+>A_-$, we immediately have \eqref{eq-goal-lem-2.1} holds. It remains to show that $A_+>\gamma$ provided that $F(\lambda,\mu,\rho,\gamma)>1$ and $\lambda^2,\mu^2 \leq \gamma$, which follows from
\begin{align*}
    F(\lambda,\mu,\rho,\gamma)>1 &\overset{\eqref{eq-def-F(lambda,mu,rho)}}{\Longrightarrow} \tfrac{ \lambda^2 \rho^2 }{ \gamma-\lambda^2+\lambda^2 \rho^2 } + \tfrac{ \mu^2 \rho^2 }{ \gamma-\mu^2+\mu^2 \rho^2 } >1 \\
    &\Longrightarrow \lambda^2 \rho^2 (\gamma-\mu^2+\mu^2 \rho^2) + \mu^2 \rho^2 (\gamma-\lambda^2+\lambda^2 \rho^2) > (\gamma-\lambda^2+\lambda^2 \rho^2) (\gamma-\mu^2+\mu^2 \rho^2) \\
    &\Longrightarrow \lambda^2 \mu^2(1-\rho^4) -\gamma\lambda^2 -\gamma\mu^2 +\gamma^2 <0 \\ & \Longrightarrow (2\gamma-\lambda^2-\mu^2)^2 < (\lambda^2+\mu^2)^2 - 4(1-\rho^4) \lambda^2 \mu^2 \Longrightarrow A_+>\gamma
\end{align*}
provided that $\lambda^2,\mu^2\leq \gamma$. This completes the proof of Lemma~\ref{lem-bound-beta-mathcal-H}.

\subsection{Proof of Lemma~\ref{lem-bound-beta-mathcal-G}}{\label{subsec:proof-lem-2.6}}

Note that for all $[H] \in \mathcal G$, we can write (we use the convention that $v_{\ell+1}=v_1,u_{\ell+1}=u_1$)
\begin{align*}
    & V(H) = \big\{ v_1,u_1,\ldots,v_{\ell},u_{\ell} \big\} \,; \\
    & V^{\mathsf a}(H) = \big\{ v_1,\ldots,v_{\ell} \big\}, \ V^{\mathsf b}(H) = \big\{ u_1,\ldots,u_{\ell} \big\} \,; \\
    & E(H) = \big\{ (v_1,u_1), (v_2,u_1), \ldots, (v_{\ell},u_{\ell}), (v_1,u_{\ell}) \big\} \,.
\end{align*}
Note that $\mathsf{diff}(H) \subset V^{\mathsf a}(H)$, we thus have $\gamma(v_i,u_i)=\gamma(v_{i+1},u_{i})$ for all $1 \leq i \leq \ell$. Consider the following mapping $\psi:\mathcal G(\ell)\to\mathcal H(\ell)$ such that $\psi(H)$ is the graph with
\begin{align*}
    V(\psi(H))= \big\{ v_1,\ldots,v_{\ell} \big\}, \ E(\psi(H))=\big\{ (v_1,v_2), \ldots, (v_{\ell},v_1) \big\}, \ \gamma(v_i,v_{i+1})=\gamma(v_i,u_i) \,.
\end{align*}
It is clear that $\psi$ is a bijection from $\mathcal G(\ell)$ to $\mathcal H(\ell)$ and 
\begin{align*}
    |E_{\bullet}(\psi(H))|=\frac{1}{2}|E_{\bullet}(H)|,\ |E_{\circ}(\psi(H))|=\frac{1}{2}|E_{\circ}(H)|,\ |\mathsf{diff}(\psi(H))|=|\mathsf{diff}(H)| \,.
\end{align*}
Thus, (recalling \eqref{eq-def-Xi-S} and \eqref{eq-def-Upsilon-S}) we have $\Xi(\psi(H))=\Upsilon(H)$. Also, it is clear that $|\mathsf{Aut}(H)|=|\mathsf{Aut}(\psi(H))|$ and thus
\begin{align*}
    \beta_{\mathcal G} = \sum_{ H \in \mathcal G } \frac{ \Upsilon(H)^2 }{ |\mathsf{Aut}(H)| } = \sum_{ H \in \mathcal H } \frac{ \Xi(H)^2 }{ |\mathsf{Aut}(H)| } = \beta_{\mathcal H} \,.
\end{align*}
Lemma~\ref{lem-bound-beta-mathcal-G} now directly follows from Lemma~\ref{lem-bound-beta-mathcal-H}.

\subsection{Proof of Lemma~\ref{lem-bound-beta-mathcal-J}}{\label{subsec:proof-lem-2.10}}

The proof of Lemma~\ref{lem-bound-beta-mathcal-J} is highly similar to the proof of Lemma~\ref{lem-bound-beta-mathcal-H}, so we will only provide an outline with the main differences while adapting arguments from Lemma~\ref{lem-bound-beta-mathcal-H} without presenting full details. Recall that we define $\mathcal W=\mathcal W(\ell)$ to be the set of $\omega=(\omega_1,\ldots,\omega_{\ell}) \in \{ \bullet,\circ \}^{\ell}$. Also denote 
\begin{equation}
    \widetilde{\mathcal J} = \Big\{ (v,[H]): [H] \in \mathcal J, v \in \mathsf L(H) \Big\}, \quad \widetilde{\mathcal J}_{\star} = \Big\{ (v,[H]): [H] \in \mathcal J_{\star}, v \in \mathsf L(H) \Big\} \,.
\end{equation}
It is clear that there exists a bijection 
\begin{align*}
    \psi:\widetilde{\mathcal J} \to \{ \omega \in \mathcal W: \omega_1 = \omega_{\ell}=1 \} \mbox{ and } \psi_{\star}: \widetilde{\mathcal J}_{\star} \to \mathcal W
\end{align*}
that preserves the quantity $|E_{\bullet}(\cdot)|,|E_{\circ}(\cdot)|$ and $|\mathsf{diff}(\cdot)|$. Thus, similarly as \eqref{eq-goal-lem-2.1}, it suffices to show that there exists a constant $D=D(\lambda,\mu,\rho)>0$ such that (recall \eqref{eq-def-Xi-S})
\begin{equation}{\label{eq-goal-lem-2.5}}
    D^{-1} A_+^{\ell} \leq \sum_{ \omega\in\mathcal W: \omega_1 = \omega_\ell=1 } \Xi(\omega)^2 \leq \sum_{ \omega\in\mathcal W } \Xi(\omega)^2 \leq D A_+^{\ell} \,.
\end{equation}
Recall \eqref{eq-def-mathsf-X-Y}. It is straightforward to check that
\begin{align*}
    \rho^4 \lambda^4 (\mathsf X_{\ell-2}+\mathsf{Y_{\ell-2}}) \leq \sum_{ \omega\in\mathcal W: \omega_1 = \omega_\ell=1 } \Xi(\omega)^2 \leq \sum_{ \omega\in\mathcal W } \Xi(\omega)^2 \leq \mathsf{X}_{\ell} + \mathsf Y_{\ell} \,.
\end{align*}
\eqref{eq-goal-lem-2.5} then directly follows from \eqref{eq-formula-X-ell-Y-ell}.

\subsection{Proof of Lemma~\ref{lem-bound-beta-mathcal-I}}{\label{subsec:proof-lem-2.14}}

Clearly, it suffices to show that
\begin{align}
    & \beta_{\mathcal I_{\star\star}} \leq \Theta(1) \cdot A_+^{\ell} \,; \label{eq-beta-I-upper-bound} \\
    & \beta_{\mathcal I} \geq \Theta(1) \cdot A_+^{\ell} \,. \label{eq-beta-I-lower-bound}
\end{align}
We first show \eqref{eq-beta-I-upper-bound}. Note that for all $[H]\in\mathcal I_{\star\star}$, denote $V(H)=\{ v_1,\ldots,v_{m} \}$ where $m \in \{ 2\ell, 2\ell+1 \}$ and denote
\begin{align*}
    \mathtt a = 1+ \mathbf 1_{ \{ v_1 \in V^{\mathsf b}(H) \} }, \mathtt b = m-\mathbf 1_{ \{ v_m \in V^{\mathsf b}(H) \} } \,,
\end{align*}
we then have $H'=\{ v_{\mathtt a}, \ldots, v_{\mathtt b} \} \in \mathcal I_{\star\star}( \frac{ \mathtt b-\mathtt a }{ 2 } )$. Thus, we have
\begin{align*}
    \sum_{ [H] \in \mathcal I_{\star\star}(\ell) } \frac{\Upsilon(H)^2}{|\mathsf{Aut}(H)|} \leq (\lambda+\mu) \Big( \sum_{ [H] \in \mathcal I_{\star}(\ell) } \frac{\Upsilon(H)^2}{|\mathsf{Aut}(H)|} + \sum_{ [H] \in \mathcal I_{\star}(\ell-1) } \frac{\Upsilon(H)^2}{|\mathsf{Aut}(H)|} \Big) \,.
\end{align*}
Thus, to show \eqref{eq-beta-I-upper-bound} suffices to show that 
\begin{align}
    \sum_{ [H] \in \mathcal I_{\star}(\ell) } \frac{\Upsilon(H)^2}{|\mathsf{Aut}(H)|} \leq D \cdot A_+^{\ell} \,. \label{eq-beta-I-upper-bound-relax}
\end{align}
Consider the following mapping $\psi:\mathcal I_{\star}(\ell)\to\mathcal J_{\star}(\ell)$: for all $[H] \in \mathcal I_{\star}(\ell)$, we can write
\begin{align*}
    & V(H) = \big\{ v_1,u_1,\ldots,v_{\ell},u_{\ell},v_{\ell+1} \big\} \,; \\
    & V^{\mathsf a}(H) = \big\{ v_1,\ldots,v_{\ell+1} \big\}, \ V^{\mathsf b}(H) = \big\{ u_1,\ldots,u_{\ell} \big\} \,; \\
    & E(H) = \big\{ (v_1,u_1), (v_2,u_1), \ldots, (v_{\ell},u_{\ell}), (v_{\ell+1},u_{\ell}) \big\} \,.
\end{align*}
Note that $\mathsf{diff}(H) \subset V^{\mathsf a}(H)$, we thus have $\gamma(v_i,u_i)=\gamma(v_{i+1},u_{i})$ for all $1 \leq i \leq \ell$. We let $\psi(H)$ be the graph with
\begin{align*}
    V(\psi(H))= \big\{ v_1,\ldots,v_{\ell+1} \big\}, \ E(\psi(H))=\big\{ (v_1,v_2), \ldots, (v_{\ell},v_{\ell+1}) \big\}, \ \gamma(v_i,v_{i+1})=\gamma(v_i,u_i) \,.
\end{align*}
It is clear that $\psi$ is a bijection from $\mathcal I_{\star}(\ell)$ to $\mathcal J_{\star}(\ell)$ and 
\begin{align*}
    |E_{\bullet}(\psi(H))|=\frac{1}{2}|E_{\bullet}(H)|,\ |E_{\circ}(\psi(H))|=\frac{1}{2}|E_{\circ}(H)|,\ |\mathsf{diff}(\psi(H))|=|\mathsf{diff}(H)| \,.
\end{align*}
Thus, (recalling \eqref{eq-def-Xi-S} and \eqref{eq-def-Upsilon-S}) we have $\Xi(\psi(H))=\Upsilon(H)$. Also, it is clear that $|\mathsf{Aut}(H)|=|\mathsf{Aut}(\psi(H))|$ and thus
\begin{align*}
    \beta_{\mathcal I_{\star}} = \sum_{ H \in \mathcal G } \frac{ \Upsilon(H)^2 }{ |\mathsf{Aut}(H)| } = \sum_{ H \in \mathcal J } \frac{ \Xi(H)^2 }{ |\mathsf{Aut}(H)| } = \beta_{\mathcal J_{\star}} \,.
\end{align*}
\eqref{eq-beta-I-upper-bound} then directly follows from Lemma~\ref{lem-bound-beta-mathcal-J}. We now prove \eqref{eq-beta-I-lower-bound}. It is straightforward to check that the mapping $\psi$ above is also a bijection from $\mathcal I(\ell)$ to $\mathcal J(\ell)$. Thus, we also have 
\begin{align*}
    \beta_{\mathcal I} = \sum_{ H \in \mathcal G } \frac{ \Upsilon(H)^2 }{ |\mathsf{Aut}(H)| } = \sum_{ H \in \mathcal J } \frac{ \Xi(H)^2 }{ |\mathsf{Aut}(H)| } = \beta_{\mathcal J} \,.
\end{align*}
\eqref{eq-beta-I-lower-bound} now also directly follows from Lemma~\ref{lem-bound-beta-mathcal-J}.

\section{Supplementary proofs in Section~\ref{sec:stat-analysis}}{\label{sec:supp-proofs-sec-3}}

\subsection{Proof of Lemma~\ref{lem-est-cov-f-S-f-K}}{\label{subsec:proof-lem-3.3}}

Note that by first averaging over $\bm W$ and $\bm Z$, we have
\begin{align}
    \mathbb E_{\Pb}[ f_S f_{K} ] &\overset{\eqref{eq-def-correlated-spike-specific},\eqref{eq-def-f-mathcal-H}}{=} \mathbb E_{x,y} \mathbb E_{\Pb}\Bigg[ \prod_{ (i,j) \in E_{\bullet}(S) \cap E_{\bullet}(K) } \big( \tfrac{\lambda}{\sqrt{n}} x_i x_j+\bm W_{i,j} \big)^2 \prod_{ (i,j) \in E_{\bullet}(S) \triangle E_{\bullet}(K) } \big( \tfrac{\lambda}{\sqrt{n}} x_i x_j+\bm W_{i,j} \big)  \nonumber \\
    & \prod_{ (i,j) \in E_{\circ}(S) \cap E_{\circ}(K) } \big( \tfrac{\mu}{\sqrt{n}} y_i y_j+\bm Z_{i,j} \big)^2 \prod_{ (i,j) \in E_{\circ}(S) \triangle E_{\circ}(K) } \big( \tfrac{\mu}{\sqrt{n}} y_i y_j+\bm Z_{i,j} \big) \mid x,y \Bigg] \nonumber \\
    &= \mathbb E\Bigg[ \prod_{ (i,j) \in E_{\bullet}(S) \cap E_{\bullet}(K) } \big( \tfrac{\lambda^2 x_i^2 x_j^2}{n}+1 \big) \prod_{ (i,j) \in E_{\bullet}(S) \triangle E_{\bullet}(K) } \big( \tfrac{\lambda}{\sqrt{n}} x_i x_j \big)  \nonumber \\
    & \prod_{ (i,j) \in E_{\circ}(S) \cap E_{\circ}(K) } \big( \tfrac{\mu^2 y_i^2 y_j^2}{n} + 1 \big)^2 \prod_{ (i,j) \in E_{\circ}(S) \triangle E_{\circ}(K) } \big( \tfrac{\mu}{\sqrt{n}} y_i y_j \big) \Bigg] \,. \label{eq-cov-f-S-f-K-simplify-1}
\end{align}
Thus, when $V(S) \cap V(K)=\emptyset$, we have $E_i(S) \cap E_i(K)=\emptyset$ and thus
\begin{align*}
    \eqref{eq-cov-f-S-f-K-simplify-1} &= \mathbb E\Bigg[ \prod_{ (i,j) \in E_{\bullet}(S) \cup E_{\bullet}(K) } \big( \tfrac{\lambda}{\sqrt{n}} x_i x_j \big) \prod_{ (i,j) \in E_{\circ}(S) \cup E_{\circ}(K) } \big( \tfrac{\mu}{\sqrt{n}} y_i y_j \big) \Bigg] \\
    &= \frac{ \lambda^{|E_{\bullet}(S)|+|E_{\bullet}(K)|} \mu^{|E_{\circ}(S)|+|E_{\circ}(K)|} \rho^{|\mathsf{diff}(S)|+|\mathsf{diff}(K)|} }{ n^{\ell} } = \frac{\Xi(S)\Xi(K)}{n^{\ell}} \,.
\end{align*}
Combined with \eqref{eq-mean-Pb-f-S}, this shows that $\operatorname{Cov}_{\Pb}(f_S,f_K)=0$ when $V(S) \cap V(K)=\emptyset$. For the case $V(S) \cap V(K)\neq \emptyset$, since we have
\begin{align*}
    |E_{\bullet}(S) \triangle E_{\bullet}(K)| + |E_{\circ}(S) \triangle E_{\circ}(K)| = 2\ell-2|E_{\bullet}(S) \cap E_{\bullet}(K)| - 2|E_{\circ}(S) \cap E_{\circ}(K)| \,, 
\end{align*}
it suffices to show that
\begin{align}
    & \mathbb E\Bigg[ \prod_{ (i,j) \in E_{\bullet}(S) \cap E_{\bullet}(K) } \big( \tfrac{\lambda^2 x_i^2 x_j^2}{n}+1 \big) \prod_{ (i,j) \in E_{\circ}(S) \cap E_{\circ}(K) } \big( \tfrac{\mu^2 y_i^2 y_j^2}{n} + 1 \big)^2 \nonumber \\
    &\quad \prod_{ (i,j) \in E_{\bullet}(S) \triangle E_{\bullet}(K) } \big( x_i x_j \big) \prod_{ (i,j) \in E_{\circ}(S) \triangle E_{\circ}(K) } \big( y_i y_j \big) \Bigg] \nonumber \\
    \leq\ & \rho^{|\mathsf{diff}(S) \setminus V(K)|+|\mathsf{diff}(K) \setminus V(S)|} C^{|V(S) \cap V(K)|-|V(S_{\bullet} \cap K_{\bullet}) \cup V(S_{\circ} \cap K_{\circ})|} \,.  \label{eq-goal-cov-f-S-f-K}
\end{align}
Note that the left-hand side of \eqref{eq-goal-cov-f-S-f-K} equals
\begin{align*}
    \sum_{ \substack{ \mathtt E \subset E_{\bullet}(S) \triangle E_{\bullet}(K) \\ \mathtt F \subset E_{\circ}(S) \triangle E_{\circ}(K) } } \frac{ \lambda^{2|\mathtt E|} \mu^{2|\mathtt F|} }{ n^{|\mathtt E|+|\mathtt F|} } \mathbb E\Bigg[ \prod_{ (i,j) \in \mathtt E } x_i^2 x_j^2 \prod_{ (i,j) \in \mathtt F } y_i^2 y_j^2 \prod_{ (i,j) \in E_{\bullet}(S) \triangle E_{\bullet}(K) } x_i x_j \prod_{ (i,j) \in E_{\circ}(S) \triangle E_{\circ}(K) } y_i y_j \Bigg] \,,
\end{align*}
which can be re-written as
\begin{align}
    \sum_{ \substack{ \mathtt E \subset E_{\bullet}(S) \triangle E_{\bullet}(K) \\ \mathtt F \subset E_{\circ}(S) \triangle E_{\circ}(K) } } \big( \tfrac{\lambda^2}{n} \big)^{|\mathtt E|} \big( \tfrac{\mu^2}{n} \big)^{|\mathtt F|} \mathbb E\Bigg[ \prod_{ i \in V(S) \cup V(K) } x_i^{ \mathtt h(i;\mathtt E,\mathtt F;S,K) } y_i^{ \mathtt k(i;\mathtt E,\mathtt F;S,K) } \Bigg] \,, \label{eq-goal-cov-f-S-f-K-transfer-1}
\end{align}
where (below we use $i \sim e$ to denote that the vertex $i$ is incident to the edge $e$)
\begin{equation}{\label{eq-def-mathtt-h-k}}
    \begin{aligned}
        & \mathtt h(i;\mathtt E,\mathtt F;S,K) = 2 \#\{ e \in \mathtt E: i \sim e \} + \#\{ e \in E_{\bullet}(S) \triangle E_{\bullet}(K): i \sim e \} \,; \\
        & \mathtt k(i;\mathtt E,\mathtt F;S,K) = 2 \#\{ e \in \mathtt F: i \sim e \} + \#\{ e \in E_{\circ}(S) \triangle E_{\circ}(K): i \sim e \} \,.
    \end{aligned}
\end{equation}
In addition, denote
\begin{align*}
    V_{\mathsf{bad}} = \{ i \in V(S) \triangle V(K): i \sim e \mbox{ for some } e \in \mathtt E \cup \mathtt F \} \,.
\end{align*}
We will calculate $(\mathtt h(i;\mathtt E,\mathtt F;S,K),\mathtt k(i;\mathtt E,\mathtt F;S,K))$ by considering the following cases separately:
\begin{itemize}
    \item If $i \in \mathsf{diff}(S) \setminus V(K)$ and $i \not\in V_{\mathsf{bad}}$, then 
    \begin{align*}
        \Big( \mathtt h(i;\mathtt E,\mathtt F;S,K), \mathtt k(i;\mathtt E,\mathtt F;S,K) \Big) = (1,1) \Longrightarrow \mathbb E\Big[ x_i^{ \mathtt h(i;\mathtt E,\mathtt F;S,K) } y_i^{ \mathtt k(i;\mathtt E,\mathtt F;S,K) } \Big] =\rho \,.
    \end{align*}
    Similarly, if $i \in \mathsf{diff}(K) \setminus V(S)$ and $i \not\in V_{\mathsf{bad}}$, then
    \begin{align*}
        \Big( \mathtt h(i;\mathtt E,\mathtt F;S,K), \mathtt k(i;\mathtt E,\mathtt F;S,K) \Big) = (1,1) \Longrightarrow \mathbb E\Big[ x_i^{ \mathtt h(i;\mathtt E,\mathtt F;S,K) } y_i^{ \mathtt k(i;\mathtt E,\mathtt F;S,K) } \Big] =\rho \,.
    \end{align*}
    \item If $i \in V(S) \setminus (\mathsf{diff}(S) \cup V(K))$ and $i \not\in V_{\mathsf{bad}}$, then
    \begin{align*}
        \Big( \mathtt h(i;\mathtt E,\mathtt F;S,K), \mathtt k(i;\mathtt E,\mathtt F;S,K) \Big) = (2,0) \mbox{ or } (0,2) \Longrightarrow \mathbb E\Big[ x_i^{ \mathtt h(i;\mathtt E,\mathtt F;S,K) } y_i^{ \mathtt k(i;\mathtt E,\mathtt F;S,K) } \Big] =1 \,.
    \end{align*}
    Similarly, if $i \in V(K) \setminus (\mathsf{diff}(K) \cup V(S))$ and $i \not\in V_{\mathsf{bad}}$, then
    \begin{align*}
        \Big( \mathtt h(i;\mathtt E,\mathtt F;S,K), \mathtt k(i;\mathtt E,\mathtt F;S,K) \Big) = (2,0) \mbox{ or } (0,2) \Longrightarrow \mathbb E_{(x,y)\sim\pi}\Big[ x_i^{ \mathtt h(i;\mathtt E,\mathtt F;S,K) } y_i^{ \mathtt k(i;\mathtt E,\mathtt F;S,K) } \Big] =1 \,.
    \end{align*}
    \item If $i \in V(S_{\bullet} \cap K_{\bullet})$ and $i \not\in V_{\mathsf{bad}}$, then
    \begin{align*}
        \Big( \mathtt h(i;\mathtt E,\mathtt F;S,K)+\mathtt k(i;\mathtt E,\mathtt F;S,K) \Big) \leq 2 \Longrightarrow \mathbb E_{(x,y)\sim\pi}\Big[ x_i^{ \mathtt h(i;\mathtt E,\mathtt F;S,K) } y_i^{ \mathtt k(i;\mathtt E,\mathtt F;S,K) } \Big] \leq 1 \,.
    \end{align*}
    Similarly, if $i \in V(S_{\circ} \cap K_{\circ})$ and $i \not\in V_{\mathsf{bad}}$, then
    \begin{align*}
        \Big( \mathtt h(i;\mathtt E,\mathtt F;S,K)+\mathtt k(i;\mathtt E,\mathtt F;S,K) \Big) \leq 2 \Longrightarrow \mathbb E_{(x,y) \sim\mu}\Big[ x_i^{ \mathtt h(i;\mathtt E,\mathtt F;S,K) } y_i^{ \mathtt k(i;\mathtt E,\mathtt F;S,K) } \Big] \leq 1 \,.
    \end{align*} 
    \item If $i \in (V(S) \cap V(K)) \setminus (V(S_{\bullet} \cap K_{\bullet}) \cup V(S_{\circ} \cap K_{\circ}))$ or $i \not\in V_{\mathsf{bad}}$, then since the degree of each $i$ in $S\cup K$ is bounded by $4$, from Assumption~\ref{assum-upper-bound} we have
    \begin{align*}
        \Big( \mathtt h(i;\mathtt E,\mathtt F;S,K)+\mathtt k(i;\mathtt E,\mathtt F;S,K) \Big) \leq 4 \Longrightarrow \mathbb E_{(x,y) \sim\mu}\Big[ x_i^{ \mathtt h(i;\mathtt E,\mathtt F;S,K) } y_i^{ \mathtt k(i;\mathtt E,\mathtt F;S,K) } \Big] \leq C \,.
    \end{align*}
\end{itemize}
Combining the above cases together, we get that
\begin{align}
    \eqref{eq-cov-f-S-f-K-simplify-1} \leq \sum_{ \substack{ \mathtt E \subset E_{\bullet}(S) \triangle E_{\bullet}(K) \\ \mathtt F \subset E_{\circ}(S) \triangle E_{\circ}(K) } } & \big( \tfrac{\lambda^2}{n} \big)^{|\mathtt E|} \big( \tfrac{\mu^2}{n} \big)^{|\mathtt F|} \rho^{ |\mathsf{diff}(S) \setminus V(K)| + |\mathsf{diff}(K) \setminus V(S)| } \nonumber \\
    & \cdot C^{ |V(S) \cap V(K)| -|V(S_{\bullet} \cap K_{\bullet}) \cup V(S_{\circ} \cap K_{\circ})| + |V_{\mathsf{bad}}| } \,. \label{eq-cov-f-S-f-K-simplify-2} 
\end{align}
Finally, note that $|V_{\mathsf{bad}}| \leq 2|\mathtt E|+2|\mathtt F|$, we have 
\begin{align*}
    \eqref{eq-cov-f-S-f-K-simplify-2} &\leq \sum_{ \substack{ \mathtt E \subset E_{\bullet}(S) \triangle E_{\bullet}(K) \\ \mathtt F \subset E_{\circ}(S) \triangle E_{\circ}(K) } } \big( \tfrac{\lambda^2}{n} \big)^{|\mathtt E|} \big( \tfrac{\mu^2}{n} \big)^{|\mathtt F|} \rho^{ |\mathsf{diff}(S) \setminus V(K)| + |\mathsf{diff}(K) \setminus V(S)| }  \\
    & \quad\quad\quad\quad\quad\quad\quad\quad \cdot C^{ |V(S) \cap V(K)| -|V(S_{\bullet} \cap K_{\bullet}) \cup V(S_{\circ} \cap K_{\circ})| + 2|\mathtt E|+2|\mathtt F| }  \\
    &\leq [1+o(1)] \cdot \rho^{ |\mathsf{diff}(S) \setminus V(K)| + |\mathsf{diff}(K) \setminus V(S)| } C^{ |V(S) \cap V(K)| -|V(S_{\bullet} \cap K_{\bullet}) \cup V(S_{\circ} \cap K_{\circ})| } \,,
\end{align*}
leading to the desired result.

\subsection{Proof of Lemma~\ref{lem-detection-most-technical}}{\label{subsec:proof-lem-3.4}}

We first prove \eqref{eq-bound-var-Pb-f-H-relax-3-Part-1}. Note by \eqref{eq-def-mathtt-M} that $\mathtt M(S,S)=1$. Thus, 
\begin{align*}
    \eqref{eq-var-Pb-f-H-relax-3-Part-1} &= \sum_{ S \in \mathsf K_n: [S] \in \mathcal H } \frac{ \Xi(S)^2 }{ n^{\ell} \beta_{\mathcal H}^2 } = \sum_{ [H] \in \mathcal H } \frac{ \Xi(H)^2 }{ n^{\ell} \beta_{\mathcal H}^2 } \cdot \#\{ S \in \mathsf{K}_n: S \cong H \} \\
    &= [1+o(1)] \cdot \sum_{ [H] \in \mathcal H } \frac{ \Xi(H)^2 }{ n^{\ell} \beta_{\mathcal H}^2 } \cdot \frac{ n^{\ell} }{ |\mathsf{Aut}(H)| } \overset{\eqref{eq-def-beta-mathcal-H}}{=} \frac{1+o(1)}{\beta_{\mathcal H}} \overset{\text{Lemma~\ref{lem-bound-beta-mathcal-H}}}{=} o(1) \,,
\end{align*}
leading to \eqref{eq-bound-var-Pb-f-H-relax-3-Part-1}. Now we prove \eqref{eq-bound-var-Pb-f-H-relax-3-Part-2}. Note that
\begin{align}
    \mathtt M(S,K) \overset{\eqref{eq-def-mathtt-M}}{=}\ & \lambda^{ |E_{\bullet}(S)|+|E_{\bullet}(K)|-2|E_{\bullet}(S) \cap E_{\bullet}(K)| } \mu^{ |E_{\circ}(S)|+|E_{\circ}(K)|-2|E_{\circ}(S) \cap E_{\circ}(K)| }  \nonumber \\
    & \cdot \rho^{|\mathsf{diff}(S) \setminus V(K)|+|\mathsf{diff}(K) \setminus V(S)|} C^{|V(S) \cap V(K)|-|V(S_{\bullet} \cap K_{\bullet}) \cup V(S_{\circ} \cap K_{\circ})|}  \nonumber \\
    \leq\ & \lambda^{|E_{\bullet}(S)|+|E_{\bullet}(K)|} \mu^{|E_{\circ}(S)|+|E_{\circ}(K)|} \rho^{|\mathsf{diff}(S)|+|\mathsf{diff}(K)|} \big( C \lambda^{-2} \mu^{-2} \rho^{-2} \big)^{ |V(S) \cap V(K)| } \nonumber \\
    =\ & n^{o(1)} \cdot \lambda^{|E_{\bullet}(S)|+|E_{\bullet}(K)|} \mu^{|E_{\circ}(S)|+|E_{\circ}(K)|} \rho^{|\mathsf{diff}(S)|+|\mathsf{diff}(K)|} \nonumber \\
    \overset{\eqref{eq-def-Xi-S}}{=}\ & n^{o(1)} \cdot \Xi(S) \Xi(K) \,,  \label{eq-trivial-bound-mathtt-M}
\end{align}
where the second equality follows from $|V(S) \cap V(K)| \leq \ell=o(\tfrac{\log n}{\log\log n})$ in \eqref{eq-condition-strong-detection}, and the inequality follows from the assumptions $\lambda,\mu,\rho\in [0,1]$ and 
\begin{align*}
    & |E_{\bullet}(S) \cap E_{\bullet}(K)| \leq |V(S) \cap V(K)|, \quad |\mathsf{diff}(S)|- |\mathsf{diff}(S) \setminus V(K)| \leq |V(S) \cap V(K)| \,; \\
    & |E_{\circ}(S) \cap E_{\circ}(K)| \leq |V(S) \cap V(K)|, \quad |\mathsf{diff}(K)|- |\mathsf{diff}(K) \setminus V(S)| \leq |V(S) \cap V(K)| \,.
\end{align*}
Thus, we have
\begin{align}
    \eqref{eq-var-Pb-f-H-relax-3-Part-2} \leq \sum_{ \substack{ S,K \subset \mathsf K_n: [S],[K] \in \mathcal H \\ V(S) \cap V(K) \neq \emptyset, S \neq K } } \frac{ \Xi(S)^2 \Xi(K)^2 }{ n^{2\ell-|E_{\bullet}(S)\cap E_{\bullet}(K)|-|E_{\circ}(S) \cap E_{\circ}(K)|-o(1)} \beta_{\mathcal H}^2 } \,.  \label{eq-var-Pb-f-H-relax-3-Part-2-simplify-1}
\end{align}
Note that for $S \neq K$ and $V(S) \cap V(K) \neq \emptyset$, we must have
\begin{align*}
    |E_{\bullet}(S) \cap E_{\bullet}(K)|+|E_{\circ}(S) \cap E_{\circ}(K)| \leq |E(S) \cap E(K)| \leq |V(S) \cap V(K)|-1 \,.
\end{align*}
Thus, we have
\begin{align}
    \eqref{eq-var-Pb-f-H-relax-3-Part-2-simplify-1} = \sum_{ \substack{ S \in \mathsf{K}_n: [S] \in \mathcal H \\ 0 \leq p,q \leq \ell \\ p+q+1 \leq t \leq \ell } } \sum_{ [H] \in \mathcal H } \frac{ \Xi(S)^2 \Xi(H)^2 }{ n^{2\ell-p-q-o(1)} \beta_{\mathcal H}^2 } \cdot \mathsf{ENUM}(S,[H];t) \,,  \label{eq-var-Pb-f-H-relax-3-Part-2-simplify-2}
\end{align}
where 
\begin{equation}{\label{eq-def-ENUM-detection}}
    \mathsf{ENUM}(S,[H];t) = \{ K \in \mathsf{K}_n: K \cong H, |V(K) \cap V(S)|=t \}
\end{equation}
Note that we have at most $\binom{\ell}{t}$ ways to choose $V(K) \cap V(S)$ and at most $\binom{n-\ell}{\ell-t}$ ways to choose $V(K) \setminus V(S)$. In addition, given $V(K)$ we have at most $\frac{\ell!}{|\mathsf{Aut}(K)|}$ ways to choose $K$. Thus, 
\begin{align*}
    \mathsf{ENUM}(S,[H];t) \leq \binom{\ell}{t} \binom{n-\ell}{\ell-t} \cdot \frac{\ell!}{|\mathsf{Aut}(K)|} \leq \frac{ \ell^{2t} n^{\ell-t} }{ |\mathsf{Aut}(H)| } \,.
\end{align*}
Plugging this bound into \eqref{eq-var-Pb-f-H-relax-3-Part-2-simplify-2}, we get that
\begin{align}
    \eqref{eq-var-Pb-f-H-relax-3-Part-2-simplify-2} &\leq \sum_{ \substack{ S \in \mathsf{K}_n: [S] \in \mathcal H \\ 0 \leq p,q \leq \ell \\ p+q+1 \leq t \leq \ell } } \sum_{ [H] \in \mathcal H } \frac{ \Xi(S)^2 \Xi(H)^2 }{ n^{2\ell-p-q-o(1)} \beta_{\mathcal H}^2 } \cdot \frac{ \ell^{2t} n^{\ell-t} }{ |\mathsf{Aut}(H)| } \overset{\eqref{eq-def-beta-mathcal-H}}{=} \sum_{ \substack{ S \in \mathsf{K}_n: [S] \in \mathcal H \\ 0 \leq p,q \leq \ell \\ p+q+1 \leq t \leq \ell } } \frac{ \Xi(S)^2 \ell^{2v} n^{\ell-t} }{ n^{2\ell-p-q-o(1)} \beta_{\mathcal H} } \,.  \label{eq-var-Pb-f-H-relax-3-Part-2-simplify-3}
\end{align}
From \eqref{eq-condition-strong-detection} we see that $\ell^{2v}\leq \ell^{2\ell} =n^{o(1)}$, thus
\begin{align*}
    \eqref{eq-var-Pb-f-H-relax-3-Part-2-simplify-3} &= \sum_{ [H] \in \mathcal H } \sum_{ S \in \mathsf{K}_n: S \cong H } \sum_{ \substack{ 0 \leq p,q \leq \ell \\ p+q+1 \leq t \leq \ell } } \frac{ \Xi(S)^2 }{ n^{\ell+t-p-q+o(1)} \beta_{\mathcal H} } \\
    &= \sum_{ [H] \in \mathcal H } \frac{ \Xi(H)^2 }{ n^{\ell+1+o(1)} \beta_{\mathcal H} } \cdot \#\{ S \in \mathsf{K}_n: S \cong H \} \\
    &= \sum_{ [H] \in \mathcal H } \frac{ \Xi(H)^2 }{ n^{\ell+1+o(1)} \beta_{\mathcal H} } \cdot \frac{ n^{\ell} }{ |\mathsf{Aut}(H)| } \overset{\eqref{eq-def-beta-mathcal-H}}{=} n^{-1+o(1)} \,,
\end{align*}
leading to \eqref{eq-bound-var-Pb-f-H-relax-3-Part-2}.

\subsection{Proof of Lemma~\ref{lem-est-cov-f-S-f-K-cov}}{\label{subsec:proof-lem-3.7}}

Recall \eqref{eq-def-correlated-spike-covariance-specific} and \eqref{eq-def-f-mathcal-G}. By first averaging over $\bm W$ and $\bm Z$, we have $\mathbb E_{\overline\Pb}[ h_S h_{K} ]$ equals
\begin{align}
    & \mathbb E_{x,y,\bm u,\bm v} \mathbb E_{\overline\Pb}\Bigg[ \prod_{ (i,j) \in E_{\bullet}(S) \cap E_{\bullet}(K) } \big( \tfrac{\sqrt{\lambda}}{\sqrt{n}} x_i \bm u_j+\bm W_{i,j} \big)^2 \prod_{ (i,j) \in E_{\bullet}(S) \triangle E_{\bullet}(K) } \big( \tfrac{\sqrt{\lambda}}{\sqrt{n}} x_i \bm u_j+\bm W_{i,j} \big)  \nonumber \\
    & \quad\quad\quad\quad\quad \prod_{ (i,j) \in E_{\circ}(S) \cap E_{\circ}(K) } \big( \tfrac{\sqrt{\mu}}{\sqrt{n}} y_i \bm v_j+\bm Z_{i,j} \big)^2 \prod_{ (i,j) \in E_{\circ}(S) \triangle E_{\circ}(K) } \big( \tfrac{\sqrt{\mu}}{\sqrt{n}} y_i \bm v_j+\bm Z_{i,j} \big) \mid x,y,\bm u,\bm v \Bigg] \nonumber \\
    =\ & \mathbb E\Bigg[ \prod_{ (i,j) \in E_{\bullet}(S) \cap E_{\bullet}(K) } \big( \tfrac{\lambda x_i^2 \bm u_j^2}{n}+1 \big) \prod_{ (i,j) \in E_{\bullet}(S) \triangle E_{\bullet}(K) } \big( \tfrac{\sqrt{\lambda}}{\sqrt{n}} x_i \bm u_j \big)  \nonumber \\
    & \quad \prod_{ (i,j) \in E_{\circ}(S) \cap E_{\circ}(K) } \big( \tfrac{\mu y_i^2 \bm v_j^2}{n} + 1 \big)^2 \prod_{ (i,j) \in E_{\circ}(S) \triangle E_{\circ}(K) } \big( \tfrac{\sqrt{\mu}}{\sqrt{n}} y_i \bm v_j \big) \Bigg] \,. \label{eq-cov-f-S-f-K-cov-simplify-1}
\end{align}
Thus, when $V(S) \cap V(K)=\emptyset$, we have $E_i(S) \cap E_i(K)=\emptyset$ and thus
\begin{align*}
    \eqref{eq-cov-f-S-f-K-cov-simplify-1} &= \mathbb E\Bigg[ \prod_{ (i,j) \in E_{\bullet}(S) \cup E_{\bullet}(K) } \big( \tfrac{\sqrt{\lambda}}{\sqrt{n}} x_i \bm u_j \big) \prod_{ (i,j) \in E_{\circ}(S) \cup E_{\circ}(K) } \big( \tfrac{\sqrt{\mu}}{\sqrt{n}} y_i \bm v_j \big) \Bigg] \\
    &= \frac{ \lambda^{\frac{1}{2}(|E_{\bullet}(S)|+|E_{\bullet}(K)|)} \mu^{\frac{1}{2}(|E_{\circ}(S)|+|E_{\circ}(K)|)} \rho^{|\mathsf{diff}(S)|+|\mathsf{diff}(K)|} }{ n^{2\ell} } = \frac{\Upsilon(S)\Upsilon(K)}{n^{2\ell}} \,.
\end{align*}
Combined with \eqref{eq-mean-Pb-f-S-cov}, this shows that $\operatorname{Cov}_{\overline\Pb}(h_S,h_K)=0$ when $V(S) \cap V(K)=\emptyset$. For the case $V(S) \cap V(K)\neq \emptyset$, since we have
\begin{align*}
    |E_{\bullet}(S) \triangle E_{\bullet}(K)| + |E_{\circ}(S) \triangle E_{\circ}(K)| = 4\ell-2|E_{\bullet}(S) \cap E_{\bullet}(K)| - 2|E_{\circ}(S) \cap E_{\circ}(K)| \,, 
\end{align*}
it suffices to show that
\begin{align}
    & \mathbb E\Bigg[ \prod_{ (i,j) \in E_{\bullet}(S) \cap E_{\bullet}(K) } \big( \tfrac{\lambda x_i^2 \bm u_j^2}{n}+1 \big) \prod_{ (i,j) \in E_{\circ}(S) \cap E_{\circ}(K) } \big( \tfrac{\mu y_i^2 \bm v_j^2}{n} + 1 \big)^2 \nonumber \\
    &\quad \prod_{ (i,j) \in E_{\bullet}(S) \triangle E_{\bullet}(K) } \big( x_i \bm u_j \big) \prod_{ (i,j) \in E_{\circ}(S) \triangle E_{\circ}(K) } \big( y_i \bm v_j \big) \Bigg] \nonumber \\
    \leq\ & \rho^{|\mathsf{diff}(S) \setminus V(K)|+|\mathsf{diff}(K) \setminus V(S)|} (2C)^{|V(S) \cap V(K)|-|V(S_{\bullet} \cap K_{\bullet}) \cup V(S_{\circ} \cap K_{\circ})|} \,.  \label{eq-goal-cov-f-S-f-K-cov}
\end{align}
Note that the left-hand side of \eqref{eq-goal-cov-f-S-f-K-cov} equals
\begin{align*}
    \sum_{ \substack{ \mathtt E \subset E_{\bullet}(S) \triangle E_{\bullet}(K) \\ \mathtt F \subset E_{\circ}(S) \triangle E_{\circ}(K) } } \frac{ \lambda^{2|\mathtt E|} \mu^{2|\mathtt F|} }{ n^{|\mathtt E|+|\mathtt F|} } \mathbb E\Bigg[ & \prod_{ (i,j) \in \mathtt E } x_i^2 \prod_{ (i,j) \in \mathtt F } y_i^2 \prod_{ (i,j) \in E_{\bullet}(S) \triangle E_{\bullet}(K) } x_i \prod_{ (i,j) \in E_{\circ}(S) \triangle E_{\circ}(K) } y_i \\
    & \prod_{ (i,j) \in \mathtt E } \bm u_j^2 \prod_{ (i,j) \in \mathtt F } \bm v_j^2 \prod_{ (i,j) \in E_{\bullet}(S) \triangle E_{\bullet}(K) } \bm u_j \prod_{ (i,j) \in E_{\circ}(S) \triangle E_{\circ}(K) } \bm v_j \Bigg] \,,
\end{align*}
which can be re-written as
\begin{align}
    \sum_{ \substack{ \mathtt E \subset E_{\bullet}(S) \triangle E_{\bullet}(K) \\ \mathtt F \subset E_{\circ}(S) \triangle E_{\circ}(K) } } \big( \tfrac{\lambda^2}{n} \big)^{|\mathtt E|} \big( \tfrac{\mu^2}{n} \big)^{|\mathtt F|} & \mathbb E\Bigg[ \prod_{ i \in V^{\mathsf a}(S) \cup V^{\mathsf a}(K) } x_i^{ \mathtt h(i;\mathtt E,\mathtt F;S,K) } y_i^{ \mathtt k(i;\mathtt E,\mathtt F;S,K) } \Bigg] \nonumber \\
    & \mathbb E\Bigg[ \prod_{ j \in V^{\mathsf b}(S) \cup V^{\mathsf b}(K) } \bm u_j^{ \mathtt h(j;\mathtt E,\mathtt F;S,K) } \bm v_j^{ \mathtt k(i;\mathtt E,\mathtt F;S,K) } \Bigg] \,, \label{eq-goal-cov-f-S-f-K-cov-transfer-1}
\end{align}
where $\mathtt h(i;\mathtt E,\mathtt F;S,K),\mathtt k(i;\mathtt E,\mathtt F;S,K)$ is defined in \eqref{eq-def-mathtt-h-k}. Following the same calculation as in Section~\ref{subsec:proof-lem-3.3}, we see that
\begin{align*}
    & \mathbb E\Bigg[ \prod_{ i \in V^{\mathsf a}(S) \cup V^{\mathsf a}(K) } x_i^{ \mathtt h(i;\mathtt E,\mathtt F;S,K) } y_i^{ \mathtt k(i;\mathtt E,\mathtt F;S,K) } \Bigg] \\
    \leq\ & \rho^{ |\mathsf{diff}(S) \setminus V(K)| + |\mathsf{diff}(K) \setminus V(S)| } C^{ |V^{\mathsf a}(S) \cap V^{\mathsf a}(K)| -|V^{\mathsf a}(S_{\bullet} \cap K_{\bullet}) \cup V^{\mathsf a}(S_{\circ} \cap K_{\circ})| + 2|\mathtt E|+2|\mathtt F| } 
\end{align*}
and (note that $V^{\mathsf b}(S) \cap \mathsf{diff}(S)=V^{\mathsf b}(K) \cap \mathsf{diff}(K)=\emptyset$)
\begin{align*}
    & \mathbb E\Bigg[ \prod_{ j \in V^{\mathsf b}(S) \cup V^{\mathsf b}(K) } \bm u_j^{ \mathtt h(j;\mathtt E,\mathtt F;S,K) } \bm v_j^{ \mathtt k(i;\mathtt E,\mathtt F;S,K) } \Bigg] \\
    \leq\ & 2^{ |V^{\mathsf b}(S) \cap V^{\mathsf b}(K)| -|V^{\mathsf b}(S_{\bullet} \cap K_{\bullet}) \cup V^{\mathsf b}(S_{\circ} \cap K_{\circ})| + 2|\mathtt E|+2|\mathtt F| }  \,.
\end{align*}
Thus, we have that
\begin{align*}
    \eqref{eq-goal-cov-f-S-f-K-cov-transfer-1} &= \sum_{ \substack{ \mathtt E \subset E_{\bullet}(S) \triangle E_{\bullet}(K) \\ \mathtt F \subset E_{\circ}(S) \triangle E_{\circ}(K) } } \big( \tfrac{\lambda^2}{n} \big)^{|\mathtt E|} \big( \tfrac{\mu^2}{n} \big)^{|\mathtt F|} \rho^{ |\mathsf{diff}(S) \setminus V(K)| + |\mathsf{diff}(K) \setminus V(S)| } \\
    & \quad\quad\quad\quad\quad\quad\quad \cdot (2C)^{ |V(S) \cap V(K)| -|V(S_{\bullet} \cap K_{\bullet}) \cup V(S_{\circ} \cap K_{\circ})| + 2|\mathtt E|+2|\mathtt F| }  \\
    &= [1+o(1)] \rho^{ |\mathsf{diff}(S) \setminus V(K)| + |\mathsf{diff}(K) \setminus V(S)| } (2C)^{ |V(S) \cap V(K)| -|V(S_{\bullet} \cap K_{\bullet}) \cup V(S_{\circ} \cap K_{\circ})| }  \,,
\end{align*}
leading to \eqref{eq-goal-cov-f-S-f-K-cov} and thus finishing the proof of Lemma~\ref{lem-est-cov-f-S-f-K-cov}.

\subsection{Proof of Lemma~\ref{lem-detection-cov-most-technical}}{\label{subsec:proof-lem-3.8}}

We first prove \eqref{eq-bound-var-Pb-f-mathcal-G-relax-3-Part-1}. Note by \eqref{eq-def-mathtt-P} that $\mathtt P(S,S)=1$. Thus, 
\begin{align*}
    \eqref{eq-var-Pb-f-mathcal-G-relax-3-Part-1} &= \sum_{ S \in \mathsf K_{N,n}: [S] \in \mathcal G } \frac{ \Upsilon(S)^2 }{ \gamma^{\ell} N^{\ell} n^{\ell} \beta_{\mathcal G}^2 } = \sum_{ [H] \in \mathcal G } \frac{ \Upsilon(H)^2 }{ \gamma^{-\ell} N^{\ell} n^{\ell} \beta_{\mathcal G}^2 } \cdot \#\{ S \in \mathsf{K}_{n,N}: S \cong H \} \\
    &= [1+o(1)] \cdot \sum_{ [H] \in \mathcal G } \frac{ \Upsilon(H)^2 }{ \gamma^{-\ell} N^{\ell} n^{\ell} \beta_{\mathcal G}^2 } \cdot \frac{ N^{\ell} n^{\ell} }{ |\mathsf{Aut}(H)| } \overset{\eqref{eq-def-beta-mathcal-G}}{=} \frac{1+o(1)}{\gamma^{-\ell}\beta_{\mathcal G}} \overset{\text{Lemma~\ref{lem-bound-beta-mathcal-G}}}{=} o(1) \,,
\end{align*}
leading to \eqref{eq-bound-var-Pb-f-mathcal-G-relax-3-Part-1}. Now we prove \eqref{eq-bound-var-Pb-f-mathcal-G-relax-3-Part-2}. Similarly as in \eqref{eq-trivial-bound-mathtt-M}, we can show that 
\begin{align*}
    \mathtt P(S,K)\leq n^{o(1)} \cdot \Upsilon(S)\Upsilon(K) \,.
\end{align*}
Thus, we have
\begin{align}
    \eqref{eq-var-Pb-f-mathcal-G-relax-3-Part-2} \leq \sum_{ \substack{ S,K \subset \mathsf K_{n,N}: [S],[K] \in \mathcal H \\ V(S) \cap V(K) \neq \emptyset, S \neq K } } \frac{ \Upsilon(S)^2 \Upsilon(K)^2 }{ \gamma^{-\ell} N^{\ell} n^{3\ell-|E_{\bullet}(S)\cap E_{\bullet}(K)|-|E_{\circ}(S) \cap E_{\circ}(K)|-o(1)} \beta_{\mathcal G}^2 } \,.  \label{eq-var-Pb-f-mathcal-G-relax-3-Part-2-simplify-1}
\end{align}
Note that for $S \neq K$ and $V(S) \cap V(K) \neq \emptyset$, we must have
\begin{align*}
    & |E_{\bullet}(S) \cap E_{\bullet}(K)|+|E_{\circ}(S) \cap E_{\circ}(K)| \\
    \leq\ & |E(S) \cap E(K)| \leq \min\big\{ |V^{\mathsf a}(S) \cap V^{\mathsf a}(K)|, |V^{\mathsf b}(S) \cap V^{\mathsf b}(K)| \big\} \,.
\end{align*}
Thus, we have
\begin{align}
    \eqref{eq-var-Pb-f-mathcal-G-relax-3-Part-2-simplify-1} = \sum_{ \substack{ S \in \mathsf{K}_{n,N}: [S] \in \mathcal G \\ 0 \leq p,q \leq \ell, 1\leq r+t \leq \ell \\ p+q \leq \min\{ r,t \} } } \sum_{ [H] \in \mathcal H } \frac{ \Upsilon(S)^2 \Upsilon(H)^2 }{ \gamma^{-\ell} N^{\ell} n^{3\ell-p-q-o(1)} \beta_{\mathcal G}^2 } \cdot \mathsf{ENUM}(S,[H];r,t) \,,  \label{eq-var-Pb-f-mathcal-G-relax-3-Part-2-simplify-2}
\end{align}
where 
\begin{equation}{\label{eq-def-ENUM-detection-cov}}
    \mathsf{ENUM}(S,[H];r,t) = \#\{ K \in \mathsf{K}_n: K \cong H, |V^{\mathsf a}(S) \cap V^{\mathsf a}(K)|=r, |V^{\mathsf b}(S) \cap V^{\mathsf b}(K)|=t \} \,.
\end{equation}
Note that we have at most $\binom{\ell}{r}\binom{\ell}{t}$ ways to choose $V^{\mathsf a}(K) \cap V^{\mathsf a}(S),V^{\mathsf b}(K) \cap V^{\mathsf b}(S)$ and at most $\binom{n-\ell}{\ell-r}\binom{N-\ell}{\ell-t}$ ways to choose $V^{\mathsf a}(K) \setminus V^{\mathsf a}(S),V^{\mathsf b}(K) \setminus V^{\mathsf b}(S)$. In addition, given $V(K)$ we have at most $\frac{(\ell!)^2}{|\mathsf{Aut}(H)|}$ ways to choose $K \cong H$. Thus, 
\begin{align*}
    \mathsf{ENUM}(S,[H];r,t) \leq \binom{\ell}{r}\binom{\ell}{t} \binom{n-\ell}{\ell-r}\binom{N-\ell}{\ell-t} \cdot \frac{(\ell!)^2}{|\mathsf{Aut}(H)|} \leq \frac{ \ell^{2t+2r} n^{\ell-r} N^{\ell-t} }{ |\mathsf{Aut}(H)| } \,.
\end{align*}
Plugging this bound into \eqref{eq-var-Pb-f-mathcal-G-relax-3-Part-2-simplify-2}, we get that
\begin{align}
    \eqref{eq-var-Pb-f-mathcal-G-relax-3-Part-2-simplify-2} &\leq \sum_{ \substack{ S \in \mathsf{K}_{n,N}: [S] \in \mathcal G \\ 0 \leq p,q \leq \ell, 1\leq r+t \leq \ell \\ p+q \leq \min\{ r,t \} } } \sum_{ [H] \in \mathcal G } \frac{ \Upsilon(S)^2 \Upsilon(H)^2 }{ \gamma^{-\ell} N^{\ell} n^{3\ell-p-q-o(1)} \beta_{\mathcal G}^2 } \cdot \frac{ \ell^{2t+2r} n^{\ell-r} N^{\ell-t} }{ |\mathsf{Aut}(H)| }  \nonumber \\
    &\overset{\eqref{eq-def-beta-mathcal-G}}{=} \sum_{ \substack{ S \in \mathsf{K}_{n,N}: [S] \in \mathcal G \\ 0 \leq p,q \leq \ell, 1\leq r+t \leq \ell \\ p+q \leq \min\{ r,t \} } } \frac{ \Upsilon(S)^2 \ell^{2r+2t} \gamma^{t} n^{-r-t}  }{ N^{\ell} n^{\ell} n^{-p-q-o(1)} \beta_{\mathcal G} } \,.  \label{eq-var-Pb-f-mathcal-G-relax-3-Part-2-simplify-3}
\end{align}
From \eqref{eq-condition-strong-detection} we see that $\ell^{2r+2t} \gamma^{t} \leq \ell^{4\ell} \gamma^{\ell} =n^{o(1)}$, thus
\begin{align*}
    \eqref{eq-var-Pb-f-mathcal-G-relax-3-Part-2-simplify-3} &= \sum_{ [H] \in \mathcal G } \frac{ \Upsilon(S)^2 }{ N^{\ell} n^{\ell} \beta_{\mathcal G} } \sum_{ S \in \mathsf{K}_{n,N}: S \cong H } \sum_{ \substack{ 0 \leq p,q \leq \ell, 1 \leq r+t \leq 2\ell \\ p+q \leq \min\{ r,t \} } } \frac{ 1 }{ n^{t+r-p-q+o(1)} } \\
    &= \sum_{ [H] \in \mathcal G } \frac{ \Upsilon(H)^2 }{ n^{\ell+1+o(1)} \beta_{\mathcal H} } \cdot \#\{ S \in \mathsf{K}_n: S \cong H \} = \sum_{ [H] \in \mathcal G } \frac{ \Xi(H)^2 }{ N^{\ell} n^{\ell+1-o(1)} \beta_{\mathcal G} } \cdot \frac{ N^{\ell} n^{\ell} }{ |\mathsf{Aut}(H)| } \overset{\eqref{eq-def-beta-mathcal-G}}{=} n^{-1+o(1)} \,,
\end{align*}
leading to \eqref{eq-bound-var-Pb-f-mathcal-G-relax-3-Part-2}.

\subsection{Proof of Lemma~\ref{lem-est-cov-f-S-f-K-chain-case}}{\label{subsec:proof-lem-3.11}}

The proof of Lemma~\ref{lem-est-cov-f-S-f-K-chain-case} is highly similar to the proof of Lemma~\ref{lem-est-cov-f-S-f-K}, so we will only highlight provide an outline with the main differences while adapting arguments from Lemma~\ref{lem-est-cov-f-S-f-K} without presenting full details. Similarly as in \eqref{eq-cov-f-S-f-K-simplify-1}, we have
\begin{align}
    \mathbb E_{\Pb}[ h_S h_K x_u^2 x_v^2 ] =\ & \mathbb E\Bigg[ x_u^2 x_v^2 \cdot \prod_{ (i,j) \in E_{\bullet}(S) \cap E_{\bullet}(K) } \big( \tfrac{\lambda^2 x_i^2 x_j^2}{n}+1 \big) \prod_{ (i,j) \in E_{\bullet}(S) \triangle E_{\bullet}(K) } \big( \tfrac{\lambda}{\sqrt{n}} x_i x_j \big)  \nonumber \\
    & \prod_{ (i,j) \in E_{\circ}(S) \cap E_{\circ}(K) } \big( \tfrac{\mu^2 y_i^2 y_j^2}{n} + 1 \big)^2 \prod_{ (i,j) \in E_{\circ}(S) \triangle E_{\circ}(K) } \big( \tfrac{\mu}{\sqrt{n}} y_i y_j \big) \Bigg] \,. \label{eq-cov-f-S-f-K-chain-case-simplify-1}
\end{align}
Using similar arguments as in \eqref{eq-goal-cov-f-S-f-K-transfer-1}, we can write \eqref{eq-cov-f-S-f-K-chain-case-simplify-1} into
\begin{align}
    \sum_{ \substack{ \mathtt E \subset E_{\bullet}(S) \triangle E_{\bullet}(K) \\ \mathtt F \subset E_{\circ}(S) \triangle E_{\circ}(K) } } \big( \tfrac{\lambda^2}{n} \big)^{|\mathtt E|} \big( \tfrac{\mu^2}{n} \big)^{|\mathtt F|} \mathbb E\Bigg[ \prod_{ i \in V(S) \cup V(K) } x_i^{ \mathtt h(i;\mathtt E,\mathtt F;S,K) } y_i^{ \mathtt k(i;\mathtt E,\mathtt F;S,K) } \Bigg] \,, \label{eq-goal-cov-f-S-f-K-chain-case-transfer-1}
\end{align}
where (below we use $i \sim e$ to denote that the vertex $i$ is incident to the edge $e$)
\begin{align*}
    & \mathtt h(i;\mathtt E,\mathtt F;S,K) = 2 \#\{ e \in \mathtt E: i \sim e \} + \#\{ e \in E_{\bullet}(S) \triangle E_{\bullet}(K): i \sim e \} + 2\cdot \mathbf 1_{i\in\{u,v \}}  \,; \\
    & \mathtt k(i;\mathtt E,\mathtt F;S,K) = 2 \#\{ e \in \mathtt F: i \sim e \} + \#\{ e \in E_{\circ}(S) \triangle E_{\circ}(K): i \sim e \} \,.
\end{align*}
In addition, denote
\begin{align*}
    V_{\mathsf{bad}} = \{ u,v \} \cap \{ i \in V(S) \triangle V(K): i \sim e \mbox{ for some } e \in \mathtt E \cup \mathtt F \} \,.
\end{align*}
We can bound \eqref{eq-goal-cov-f-S-f-K-chain-case-transfer-1} exactly by dividing to the different cases as in the proof of Lemma~\ref{lem-est-cov-f-S-f-K}. The only difference is that here for $i\in\{ u,v \}$, the degree of $i$ in $S \cup K$ equals $2$ (rather than $4$ in the proof of Lemma~\ref{lem-est-cov-f-S-f-K}), so from Assumption~\ref{assum-upper-bound} we still have 
\begin{align*}
    \Big( \mathtt h(i;\mathtt E,\mathtt F;S,K)+\mathtt k(i;\mathtt E,\mathtt F;S,K) \Big) \leq 4 \Longrightarrow \mathbb E\Big[ x_i^{ \mathtt h(i;\mathtt E,\mathtt F;S,K) } y_i^{ \mathtt k(i;\mathtt E,\mathtt F;S,K) } \Big] \leq C \,.
\end{align*}

\subsection{Proof of Lemma~\ref{lem-bound-Xi-mathtt-M}}{\label{subsec:proof-lem-3.12}}

Recall \eqref{eq-def-Xi-S}. It is straightforward to check that 
\begin{align}
    & |E_{\bullet}(S)| = \sum_{ 1 \leq \mathtt t \leq \mathtt T } |E_{\bullet}(S_{[\mathtt p_{\mathtt t},\mathtt q_{\mathtt t}]})| + \sum_{ 1 \leq \mathtt t \leq \mathtt T-1 } |E_{\bullet}(S_{[\mathtt q_{\mathtt t},\mathtt p_{\mathtt t+1}]})| \,, \label{eq-equality-E_1(S)} \\
    & |E_{\circ}(S)| = \sum_{ 1 \leq \mathtt t \leq \mathtt T } |E_{\circ}(S_{[\mathtt p_{\mathtt t},\mathtt q_{\mathtt t}]})| + \sum_{ 1 \leq \mathtt t \leq \mathtt T-1 } |E_{\circ}(S_{[\mathtt q_{\mathtt t},\mathtt p_{\mathtt t+1}]})| \,.  \label{eq-equality-E_2(S)}
\end{align}
In addition, for all $1 \leq \mathtt k \leq \ell+1$ we have 
\begin{align*}
    \sum_{ 1 \leq \mathtt t \leq \mathtt T } \mathbf 1(v_{\mathtt k} \in \mathsf{diff}(S_{[\mathtt p_{\mathtt t},\mathtt q_{\mathtt t}]})) + \sum_{ 1 \leq \mathtt t \leq \mathtt T-1 } \mathbf 1(v_{\mathtt k} \in S_{[\mathtt q_{\mathtt t},\mathtt p_{\mathtt t+1}]}) \leq \mathbf 1(v_{\mathtt k} \in \mathsf{diff}(S)) + 1 \,,
\end{align*}
and the equality can only holds if $\mathtt k \in \{ \mathtt p_1, \mathtt q_1, \ldots, \mathtt p_{\mathtt T}, \mathtt q_{\mathtt T} \}$. Thus, we have
\begin{align}
    |\mathsf{diff}(S)| &= \sum_{ 1 \leq \mathtt k \leq \ell+1 } \mathbf 1(v_{\mathtt k} \in \mathsf{diff}(S)) \nonumber \\
    &\geq \sum_{ 1 \leq \mathtt k \leq \ell+1 } \Big( \sum_{ 1 \leq \mathtt t \leq \mathtt T } \mathbf 1(v_{\mathtt k} \in \mathsf{diff}(S_{[\mathtt p_{\mathtt t},\mathtt q_{\mathtt t}]})) + \sum_{ 1 \leq \mathtt t \leq \mathtt T-1 } \mathbf 1(v_{\mathtt k} \in \mathsf{diff}(S_{[\mathtt q_{\mathtt t},\mathtt p_{\mathtt t+1}]}) ) \Big) - 2 \mathtt T \nonumber \\
    &= \sum_{ 1 \leq \mathtt t \leq \mathtt T } |\mathsf{diff}(S_{[\mathtt p_{\mathtt t},\mathtt q_{\mathtt t}]}))| + \sum_{ 1 \leq \mathtt t \leq \mathtt T-1 } |\mathsf{diff}(S_{[\mathtt q_{\mathtt t},\mathtt p_{\mathtt t+1}]})| - 2 \mathtt T \,. \label{eq-inequality-Dis(S)}
\end{align}
Plugging \eqref{eq-equality-E_1(S)}--\eqref{eq-inequality-Dis(S)} into \eqref{eq-def-Xi-S} yields \eqref{eq-bound-Xi(S)}. Similarly we can show \eqref{eq-bound-Xi(K)}.

Now we turn to \eqref{eq-bound-mathtt-M(S,K)}. Note that
\begin{align*}
    E_{\bullet}( S_{[\mathtt q_{\mathtt t},\mathtt p_{\mathtt t+1}]} ), E_{\bullet}( K_{[\mathtt q_{\mathtt t}',\mathtt p_{\mathtt t+1}']} ) \subset E_{\bullet}(S) \triangle E_{\bullet}(K) \mbox{ and } E_{\bullet}( S_{[\mathtt q_{\mathtt t},\mathtt p_{\mathtt t+1}]} ), E_{\bullet}( K_{[\mathtt q_{\mathtt t}',\mathtt p_{\mathtt t+1}']} ) \mbox{ are disjoint} \,,
\end{align*}
we have
\begin{align}
    |E_{\bullet}(S) \triangle E_{\bullet}(K)| \geq \sum_{1 \leq \mathtt t \leq \mathtt T-1} |E_{\bullet}( S_{[\mathtt q_{\mathtt t},\mathtt p_{\mathtt t+1}]} )| + \sum_{1 \leq \mathtt t \leq \mathtt T-1} |E_{\bullet}( K_{[\mathtt q_{\mathtt t}', \mathtt p_{\mathtt t+1}']} )| \,.  \label{eq-inequality-E_1(S)-triangle-E_1(K)}
\end{align}
Similarly, we have
\begin{align}
    |E_{\circ}(S) \triangle E_{\circ}(K)| \geq \sum_{1 \leq \mathtt t \leq \mathtt T-1} |E_{\circ}( S_{[\mathtt q_{\mathtt t},\mathtt p_{\mathtt t+1}]} )| + \sum_{1 \leq \mathtt t \leq \mathtt T-1} |E_{\circ}( K_{[\mathtt q_{\mathtt t}', \mathtt p_{\mathtt t+1}']} )| \,.  \label{eq-inequality-E_2(S)-triangle-E_2(K)}
\end{align}
In addition, note that $\mathsf{diff}( S_{[\mathtt q_{\mathtt t},\mathtt p_{\mathtt t+1}]} ) \setminus \{ v_{\mathtt q_{\mathtt t}}, v_{\mathtt p_{\mathtt t+1}} \} \subset \mathsf{diff}(S) \setminus V(K)$ are disjoint (and similarly for $\mathsf{diff}(K) \setminus V(S)$), we have
\begin{align}
    & |\mathsf{diff}(S) \setminus V(K)| \geq \sum_{1 \leq \mathtt t \leq \mathtt T-1} |\mathsf{diff}( S_{[\mathtt q_{\mathtt t},\mathtt p_{\mathtt t+1}]} )|-2\mathtt T \,; \label{eq-inequality-dif(S)-setminus-V(K)} \\
    & |\mathsf{diff}(K) \setminus V(S)| \geq \sum_{1 \leq \mathtt t \leq \mathtt T-1} |\mathsf{diff}( K_{[\mathtt q_{\mathtt t}',\mathtt p_{\mathtt t+1}']} )|-2\mathtt T \,. \label{eq-inequality-dif(K)-setminus-V(S)}
\end{align}
Finally, recall \eqref{eq-def-SEQ(S,K)} and note that $\sigma(\mathtt i)=1$ implies that $v_{\mathtt i} \in V(S_{\bullet} \cap K_{\bullet}) \cup V(S_{\circ} \cap K_{\circ})$, thus
\begin{align}
    & |V(S) \cap V(K)| - |V(S_{\bullet} \cap K_{\bullet}) \cup V(S_{\circ} \cap K_{\circ})| \nonumber \\
    =\ & \sum_{ 1 \leq \mathtt i \leq \ell } \mathbf 1( v_{\mathtt i} \in V(S) \cap V(K), v_{\mathtt i} \not\in V(S_{\bullet} \cap K_{\bullet}) \cup V(S_{\circ} \cap K_{\circ}) ) \nonumber \\
    \leq\ & \sum_{ 1 \leq \mathtt t \leq \mathtt T } \sum_{ \mathtt p_{\mathtt t} \leq \mathtt i \leq \mathtt q_{\mathtt t} } 1-\mathbf 1(\sigma(\mathtt i)=1) \overset{\eqref{eq-def-aleph(sigma)}}{\leq} \aleph(\sigma)+\mathtt T \,. \label{eq-inequality-V(K)-cap-V(S)-setminus-V(S_i-cap-K_i)}
\end{align}
Plugging \eqref{eq-inequality-dif(S)-setminus-V(K)}--\eqref{eq-inequality-V(K)-cap-V(S)-setminus-V(S_i-cap-K_i)} into \eqref{eq-def-mathtt-M} yields \eqref{eq-bound-mathtt-M(S,K)}.

\subsection{Proof of Lemma~\ref{lem-recovery-most-technical}}{\label{subsec:proof-lem-3.13}}

Before proving Lemma~\ref{lem-recovery-most-technical}, we first show the following bound which will be used repeatedly in the later proof.
\begin{lemma}{\label{lem-contribution-sub-chain}}
    For all $\mathtt b \geq \mathtt a$ and given $v_{\mathtt a},v_{\mathtt b}$, we have for some constant $D=D(\lambda,\mu,\rho)$
    \begin{align*}
        \sum_{ S_{[\mathtt a,\mathtt b]}: \mathsf{L}(S_{[\mathtt a,\mathtt b]}) } \Xi(S_{[\mathtt a,\mathtt b]})^2 \leq D n^{\mathtt b-\mathtt a-1+\mathbf 1_{\mathtt b=\mathtt a}} A_+^{\mathtt b-\mathtt a} \,.
    \end{align*}
\end{lemma}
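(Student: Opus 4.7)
The plan is to factor the sum by first choosing the intermediate vertices and then summing over decoration patterns, reducing to the transfer-matrix calculation already carried out in the proof of Lemma~\ref{lem-bound-beta-mathcal-H} (Section~\ref{subsec:proof-lem-2.2}).

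Handle the degenerate case $\mathtt b=\mathtt a$ first. In this case $S_{[\mathtt a,\mathtt b]}$ is just the single vertex $v_{\mathtt a}$ with no edges, so $\Xi(S_{[\mathtt a,\mathtt b]})=1$ and the sum equals $1$, which is bounded by $D\cdot n^{0}\cdot A_+^{0}=D$ as soon as $D\geq 1$.

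For $\mathtt b>\mathtt a$, note that a decorated path $S_{[\mathtt a,\mathtt b]}\subset\mathsf K_n$ with specified endpoints $v_{\mathtt a},v_{\mathtt b}$ is uniquely determined by the ordered tuple of intermediate vertices $(v_{\mathtt a+1},\ldots,v_{\mathtt b-1})\in[n]\setminus\{v_{\mathtt a},v_{\mathtt b}\}$ together with a decoration $\omega\in\{\bullet,\circ\}^{\mathtt b-\mathtt a}$ of its $\mathtt b-\mathtt a$ edges. Crucially, recall from \eqref{eq-def-Xi-S} that
\begin{equation*}
    \Xi(S_{[\mathtt a,\mathtt b]}) = \lambda^{|E_{\bullet}(S_{[\mathtt a,\mathtt b]})|}\mu^{|E_{\circ}(S_{[\mathtt a,\mathtt b]})|}\rho^{|\mathsf{diff}(S_{[\mathtt a,\mathtt b]})|}
\end{equation*}
depends only on the decoration pattern $\omega$ and not on the choice of intermediate vertices. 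Writing $\Xi(\omega)$ for this common value (with $|E_{\bullet}(\omega)|,|E_{\circ}(\omega)|,|\mathsf{diff}(\omega)|$ as in \eqref{eq-def-E-i-Dif-string}), the sum therefore factors as
\begin{equation*}
    \sum_{S_{[\mathtt a,\mathtt b]}:\mathsf L(S_{[\mathtt a,\mathtt b]})=\{v_{\mathtt a},v_{\mathtt b}\}}\Xi(S_{[\mathtt a,\mathtt b]})^{2}\;\leq\;n^{\mathtt b-\mathtt a-1}\cdot\sum_{\omega\in\mathcal W(\mathtt b-\mathtt a)}\Xi(\omega)^{2}\,,
\end{equation*}
where the combinatorial factor $n^{\mathtt b-\mathtt a-1}$ simply overcounts the number of injective embeddings of the $\mathtt b-\mathtt a-1$ intermediate vertices into $[n]$.

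It remains to bound the inner sum. But $\sum_{\omega\in\mathcal W(\mathtt b-\mathtt a)}\Xi(\omega)^{2}$ is precisely the quantity treated in \eqref{eq-goal-lem-2.1}: by the transfer-matrix identity from Section~\ref{subsec:proof-lem-2.2} it equals $\mathsf X_{\mathtt b-\mathtt a}+\mathsf Y_{\mathtt b-\mathtt a}$, which by \eqref{eq-formula-X-ell-Y-ell} is at most $D'\cdot A_+^{\mathtt b-\mathtt a}$ for some constant $D'=D'(\lambda,\mu,\rho)$. Combining these two bounds yields the lemma with a suitable $D=D(\lambda,\mu,\rho)$. There is no real obstacle here; the only point worth flagging is the observation that $\Xi(S_{[\mathtt a,\mathtt b]})$ is a function of the decoration pattern alone, which is what allows the clean factorization into a vertex-count and a decoration-sum.
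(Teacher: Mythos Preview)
Your proof is correct and follows essentially the same route as the paper: both separate the vertex-embedding count from the decoration sum and then bound the latter by the transfer-matrix computation underlying \eqref{eq-formula-X-ell-Y-ell}. The only cosmetic difference is that the paper first groups paths by isomorphism class in $\mathcal J_{\star}(\mathtt b-\mathtt a)$ and invokes Lemma~\ref{lem-bound-beta-mathcal-J} on $\beta_{\mathcal J_{\star}}$, whereas you parameterize directly by (ordered intermediate vertices, decoration) and appeal to $\sum_{\omega}\Xi(\omega)^2=\mathsf X_{\mathtt b-\mathtt a}+\mathsf Y_{\mathtt b-\mathtt a}$; since Lemma~\ref{lem-bound-beta-mathcal-J} is itself proved by reducing to exactly that decoration sum, the two arguments are equivalent.
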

\begin{proof}
    Note that we must have $S_{[\mathtt a,\mathtt b]} \in \mathcal J_{\star}(\mathtt b-\mathtt a)$. Thus, we have
    \begin{align}
        \sum_{ S_{[\mathtt a,\mathtt b]}: \mathsf{L}(S_{[\mathtt a,\mathtt b]}) } \Xi(S_{[\mathtt a,\mathtt b]})^2 = \sum_{ [J] \in \mathcal J_{\star}(\mathtt b-\mathtt a) } \Xi(J)^2 \cdot \#\Big\{ S \in \mathsf K_n: S \cong J, \mathsf{L}(S)=\{ v_{\mathtt a},v_{\mathtt b} \} \Big\} \,.  \label{eq-contribution-sub-chain-relax}
    \end{align}
    Note that to choose $S \in \mathsf K_n$ with $S \cong J$ and $\mathsf{L}(S)=\{ v_{\mathtt a},v_{\mathtt b} \}$, we have at most $\binom{n}{\mathtt b-\mathtt a-1+\mathbf 1_{\mathtt b=\mathtt a}}$ ways to choose $V(S) \setminus \mathsf{L}(S)$; in addition, given $V(S)$ we have at most $\frac{ (\mathtt b-\mathtt a-1+\mathbf 1_{\mathtt b=\mathtt a})! }{ |\mathsf{Aut}(J)| }$ ways to choose $S$. Thus, we have
    \begin{align*}
        \#\Big\{ S \in \mathsf K_n: S \cong J, \mathsf{L}(S)=\{ v_{\mathtt a},v_{\mathtt b} \} \Big\} &= \binom{n}{\mathtt b-\mathtt a-1+\mathbf 1_{\mathtt b=\mathtt a}} \cdot \frac{ (\mathtt b-\mathtt a-1+\mathbf 1_{\mathtt b=\mathtt a})! }{ |\mathsf{Aut}(J)| } \\
        &= \frac{ [1+o(1)] n^{\mathtt b-\mathtt a-1+\mathbf 1_{\mathtt b=\mathtt a}} }{ |\mathsf{Aut}(J)| } \,.
    \end{align*}
    Plugging this estimation into \eqref{eq-contribution-sub-chain-relax}, we get that
    \begin{align*}
        \eqref{eq-contribution-sub-chain-relax} = [1+o(1)] n^{\mathtt b-\mathtt a-1+\mathbf 1_{\mathbb b=\mathtt a}} \cdot \sum_{ [J] \in \mathcal J_{\star}(\mathtt b-\mathtt a) } \frac{ \Xi(J)^2 }{ |\mathsf{Aut}(J)| } \leq D n^{\mathtt b-\mathtt a-1+\mathbf 1_{\mathtt b=\mathtt a}} A_+^{\mathtt b-\mathtt a} \,,
    \end{align*}
    where the last inequality follows from Lemma~\ref{lem-bound-beta-mathcal-J}.
\end{proof}

Now we return to the proof of Lemma~\ref{lem-recovery-most-technical}. We will split \eqref{eq-var-Phi-i,j-relax-3} into several parts and bound each part separately.

\noindent {\bf Part 1: the case $\mathtt T=1$.} In this case, we have $V(S)=V(K)$ and thus
\begin{align*}
    \mathsf{SEQ}=\overline{\mathsf{SEQ}}=(\mathtt p_1,\mathtt q_1)=(1,\ell+1), \quad S_{\setminus}=K_{\setminus}=\emptyset, \quad S_{\cap}=S \,.
\end{align*}
Thus, the contribution of this part in \eqref{eq-var-Phi-i,j-relax-3} is bounded by
\begin{align}
    & \sum_{ [S] \in \mathcal J: \mathsf{L}(S)=\{ i,j \} } \frac{ (\rho^{-8}C) n^{\ell} \Xi(S)^2 }{ n^{2\ell-2} \beta_{\mathcal J}^2 } \leq \frac{ (32\rho^{-8}C) }{ n^{\ell-2} \beta_{\mathcal J}^2 } \sum_{ [S] \in \mathcal J_{\star}(\ell): \mathsf{L}(S)=\{ i,j \} } \Xi(S)^2 \nonumber \\
    \leq\ & \frac{ (\rho^{-8}C) }{ n^{\ell-2} \beta_{\mathcal J}^2 } \cdot D n^{\ell-1} A_{+}^{\ell} \leq \rho^{-8}CD^3 n \cdot A_{+}^{\ell} \overset{\eqref{eq-condition-weak-recovery},\text{Lemma~\ref{lem-bound-beta-mathcal-J}}}{=} o(1) \,,  \label{eq-bound-part-1-contribution-var-Phi-i,j}
\end{align}
where the first inequality follows from $\mathcal J \subset \mathcal J_{\star}$, the second inequality follows from taking $(\mathtt a,\mathtt b)=(1,\ell+1)$ in Lemma~\ref{lem-contribution-sub-chain} and the third inequality follows from Lemma~\ref{lem-bound-beta-mathcal-J}.

\noindent {\bf Part 2: the case $\mathtt T=2$.} In this case, we have
\begin{align*}
    & \mathsf{SEQ}=\overline{\mathsf{SEQ}}=( \mathtt p_1,\mathtt q_1,\mathtt p_2,\mathtt q_2 ) = (1,\mathtt q,\mathtt p,\ell+1) \,; \\
    & S_{\cap} = S_{[1,\mathtt q]} \cup S_{[\mathtt p,\ell+1]}, \quad S_{\setminus}=S_{[\mathtt q,\mathtt p]}, \quad K_{\setminus}=K_{[\mathtt q,\mathtt p]} \,. 
\end{align*}
Thus, the contribution of this part in \eqref{eq-var-Phi-i,j-relax-3} is bounded by
\begin{align}
    & \sum_{ \mathtt q<\mathtt p } \sum_{ \substack{ S_{[1,\mathtt q]}, S_{[\mathtt p,\ell+1]}, S_{[\mathtt q,\mathtt p]},  K_{[\mathtt q,\mathtt p]} } } \frac{ (\rho^{-8}C)^2 n^{(\mathtt q-1)+(\ell+1-\mathtt p)} }{ n^{2\ell-2} \beta_{\mathcal J}^2 } \cdot \Xi(S_{[1,\mathtt q]})^2 \Xi(S_{[\mathtt p,\ell+1]})^2 \Xi(S_{[\mathtt q,\mathtt p]})^2 \Xi(K_{[\mathtt q,\mathtt p]})^2 \nonumber \\
    =\ & \sum_{ 1 \leq \mathtt q<\mathtt p \leq \ell+1 } \sum_{ \substack{ S_{[1,\mathtt q]}, S_{[\mathtt p,\ell+1]}, S_{[\mathtt q,\mathtt p]},  K_{[\mathtt q,\mathtt p]} } } \frac{ (\rho^{-8}C)^2 }{ n^{\ell-2+\mathtt p-\mathtt q} \beta_{\mathcal J}^2 } \cdot \Xi(S_{[1,\mathtt q]})^2 \Xi(S_{[\mathtt p,\ell+1]})^2 \Xi(S_{[\mathtt q,\mathtt p]})^2 \Xi(K_{[\mathtt q,\mathtt p]})^2 \,. \label{eq-bound-part-2-contribution-var-Phi-i,j-simplify-1}
\end{align}
Note that there are at most $n^{ 2-\mathbf 1_{\mathtt q=1}-\mathbf 1_{\mathtt p=\ell+1} }$ ways to choose $v_{\mathtt q},v_{\mathtt p}$. In addition, given $(v_{\mathtt q},v_{\mathtt p})$ we have (note that $v_{1}=i,v_{\ell+1}=j$)
\begin{align*}
    &\sum_{ \substack{ \mathsf{L}(S_{[1,\mathtt q]})=\{ v_1,v_{\mathtt q} \} \\ \mathsf{L}(S_{[\mathtt p,\ell+1]}) =\{ v_{\mathtt p},v_{\ell+1} \} \\ \mathsf{L}(S_{[\mathtt q,\mathtt p]}) = \{ v_{\mathtt q}, v_{\mathtt p} \} \\ \mathsf{L}(K_{[\mathtt q,\mathtt p]})= \{ v_{\mathtt q}, v_{\mathtt p} \} }  } \Xi((S_{[1,\mathtt q]}))^2 \Xi(S_{[\mathtt p,\ell+1]})^2 \Xi(S_{[\mathtt q,\mathtt p]})^2 \Xi(K_{[\mathtt q,\mathtt p]})^2  \\
    \overset{\text{Lemma~\ref{lem-contribution-sub-chain}}}{\leq}\ & D^4 \cdot n^{\mathtt q-2+\mathbf 1_{\mathtt q=1}} A_+^{\mathtt q-1} \cdot n^{\ell-\mathtt p+\mathbf 1_{\mathtt p=\ell+1}} A_+^{\ell+1-\mathtt p} \cdot n^{\mathtt p-\mathtt q-1} A_+^{\mathtt p-\mathtt q} \cdot n^{\mathtt p-\mathtt q-1} A_+^{\mathtt p-\mathtt q}  \\
    \leq\ & D^4 n^{\ell+\mathtt p-\mathtt q-4+\mathbf 1_{\mathtt q=1}+\mathbf 1_{\mathtt p=\ell+1}} A_+^{\ell+1+\mathtt p-\mathtt q}
\end{align*}
Thus, we see that
\begin{align}
    \eqref{eq-bound-part-2-contribution-var-Phi-i,j-simplify-1} &\leq (\rho^{-8}C)^2 \sum_{1 \leq \mathtt q<\mathtt p\leq \ell+1} \frac{ n^{ 2-\mathbf 1_{\mathtt q=1}-\mathbf 1_{\mathtt p=\ell+1} } \cdot D^4 n^{\ell+\mathtt p-\mathtt q-4+\mathbf 1_{\mathtt q=1}+\mathbf 1_{\mathtt p=\ell+1}} A_+^{\ell+1+\mathtt p-\mathtt q} }{ n^{\ell-2+\mathtt p-\mathtt q} \beta_{\mathcal J}^2 } \nonumber \\
    &\leq D^4 (\rho^{-8}C)^2 \sum_{1 \leq \mathtt q<\mathtt p\leq \ell+1} \frac{ A_+^{\ell+1+\mathtt p-\mathtt q} }{ \beta_{\mathcal J}^2 } \overset{\text{Lemma~\ref{lem-bound-beta-mathcal-J}}}{\leq} R^4 (\rho^{-8}C)^2 \sum_{1 \leq \mathtt q<\mathtt p\leq \ell+1} A_+^{ -(\mathtt q-1) - (\ell+1-\mathtt p) } \nonumber \\
    &\leq D^4 (\rho^{-8}C)^2 \frac{ A_+^2 }{ (A_+ -1)^{2} } \overset{\text{Lemma~\ref{lem-bound-beta-mathcal-J}}}{\leq} O_{\lambda,\mu,\rho,\delta}(1) \,.  \label{eq-bound-part-2-contribution-var-Phi-i,j}
\end{align}

\noindent{\bf Part 3: the case $\mathtt T \geq 3$.} In this case we have
\begin{align*}
    & \mathsf{SEQ}=( \mathtt p_1,\mathtt q_1,\ldots,\mathtt p_{\mathtt T}, \mathtt q_{\mathtt T} ), \ \overline{\mathsf{SEQ}}=( \mathtt p_1',\mathtt q_1',\ldots,\mathtt p_{\mathtt T}', \mathtt q_{\mathtt T}' ) \mbox{ with } \mathtt p_1=\mathtt p_1'=1, \mathtt q_{\mathtt T}=\mathtt q_{\mathtt T}'=\ell+1 \,; \\
    & S_{\cap} = \cup_{1 \leq \mathtt t \leq \mathtt T} S_{[\mathtt p_{\mathtt t},\mathtt q_{\mathtt t}]}, \quad S_{\setminus} = \cup_{1 \leq \mathtt t \leq \mathtt T-1} S_{[\mathtt q_{\mathtt t},\mathtt p_{\mathtt t+1}]}, \quad \cup_{1 \leq \mathtt t \leq \mathtt T-1} K_{[\mathtt q_{\mathtt t}',\mathtt p_{\mathtt t+1}']} \,.
\end{align*}
Thus, the contribution of this part in \eqref{eq-var-Phi-i,j-relax-3} is bounded by 
\begin{align}
    \sum_{\mathtt T \geq 3} \sum_{ \substack{ \mathsf{SEQ}, \overline{\mathsf{SEQ}} \\ S_{\cap}, S_{\setminus}, K_{\setminus} } } \frac{ (\rho^{-8}C)^{\mathtt T} n^{ \sum_{ 1 \leq \mathtt t \leq \mathtt T } (\mathtt q_{\mathtt t}-\mathtt p_{\mathtt t}) } }{ n^{2\ell-2} \beta_{\mathcal J}^2 } \cdot \Xi(S_{\cap})^2 \Xi(S_{\setminus})^2 \Xi(K_{\setminus})^2 \,.   \label{eq-bound-part-3-contribution-var-Phi-i,j-simplify-1}
\end{align}
Note that the possible choice of $\mathsf{SEQ},\overline{\mathsf{SEQ}}$ is bounded by 
\begin{align*}
    \ell^{2(\mathtt T-1)}
\end{align*}
and the possible choice of $\mathtt V=(v_{\mathtt p_1}, v_{\mathtt q_1},\ldots, v_{\mathtt p_{\mathtt T}}, v_{\mathtt q_{\mathtt T}})$ is bounded by
\begin{align*}
    n^{ 2(\mathtt T-1) - \#\{ 1 \leq \mathtt t \leq \mathtt T: \mathtt q_{\mathtt t} =\mathtt p_{\mathtt t} \} } \,.
\end{align*}
In addition, given $\mathsf{SEQ},\overline{\mathsf{SEQ}}$ and $(v_{\mathtt p_1}, v_{\mathtt q_1},\ldots, v_{\mathtt p_{\mathtt T}}, v_{\mathtt q_{\mathtt T}})$ we have $v_{\mathtt p_{\mathtt t}'}'=v_{\mathtt p_{\mathtt t}}$ and $v_{\mathtt q_{\mathtt t}'}'=v_{\mathtt q_{\mathtt t}}$. Thus, applying Lemma~\ref{lem-contribution-sub-chain} we see that given $\mathsf{SEQ},\overline{\mathsf{SEQ}}$ and $\mathtt V$, 
\begin{align*}
    & \sum_{ S_{\cap}, S_{\setminus}, K_{\setminus} \text{ consistent with } \mathtt V } \Xi(S_{\cap})^2 \Xi(S_{\setminus})^2 \Xi(K_{\setminus})^2 \\
    \leq\ & \prod_{1 \leq \mathtt t \leq \mathtt T} \big( D n^{\mathtt q_{\mathtt t}-\mathtt p_{\mathtt t}-1+\mathbf 1_{ \mathtt p_{\mathtt t}=\mathtt q_{\mathtt t} }} A_+^{\mathtt q_{\mathtt t}-\mathtt p_{\mathtt t}} \big) \prod_{1 \leq \mathtt t \leq \mathtt T-1} \big( D n^{\mathtt p_{\mathtt t+1}-\mathtt q_{\mathtt t}-1 } A_+^{\mathtt p_{\mathtt t+1}-\mathtt q_{\mathtt t}} \big) \prod_{1 \leq \mathtt t \leq \mathtt T-1} \big( D n^{\mathtt p_{\mathtt t+1}'-\mathtt q_{\mathtt t}'-1 } A_+^{\mathtt p_{\mathtt t+1}-\mathtt q_{\mathtt t}} \big)  \\
    =\ & D^{3\mathtt T-2} A_+^{2\ell} \cdot n^{ -\mathtt T+\#\{ 1 \leq \mathtt t \leq \mathtt T: \mathtt p_{\mathtt t}=\mathtt q_{\mathtt t} \} } \cdot n^{-2(\mathtt T-1)} \cdot \prod_{1 \leq \mathtt t \leq \mathtt T} n^{\mathtt q_{\mathtt t}-\mathtt p_{\mathtt t}} \prod_{1 \leq \mathtt t \leq \mathtt T-1} n^{\mathtt p_{\mathtt t+1}-\mathtt q_{\mathtt t}} \prod_{1 \leq \mathtt t \leq \mathtt T-1} n^{\mathtt p_{\mathtt t+1}'-\mathtt q_{\mathtt t}'} \\
    =\ & D^{3\mathtt T-2} A_+^{2\ell} \cdot n^{ -\mathtt T+\#\{ 1 \leq \mathtt t \leq \mathtt T: \mathtt p_{\mathtt t}=\mathtt q_{\mathtt t} \} } \cdot n^{-2(\mathtt T-1)} \cdot n^{ 2\ell-\sum_{1 \leq \mathtt t \leq \mathtt T}(\mathtt q_{\mathtt t}-\mathtt p_{\mathtt t}) } \,,
\end{align*}
where the last equality follows from 
\begin{align*}
    \{ \mathtt q_1' - \mathtt p_1', \ldots, \mathtt q_{\mathtt T}'-\mathtt p_{\mathtt T}' \} = \{ \mathtt q_1 - \mathtt p_1, \ldots, \mathtt q_{\mathtt T}-p_{\mathtt T} \}
\end{align*}
in \eqref{eq-def-overline-SEQ(S,K)}. Thus, we have
\begin{align}
    \eqref{eq-bound-part-3-contribution-var-Phi-i,j-simplify-1} \leq\ & \sum_{\mathtt T \geq 3} \sum_{ \mathsf{SEQ}, \overline{\mathsf{SEQ}} } \frac{ (\rho^{-8}C)^{\mathtt T} n^{ \sum_{ 1 \leq \mathtt t \leq \mathtt T } (\mathtt q_{\mathtt t}-\mathtt p_{\mathtt t}) } }{ n^{2\ell-2} \beta_{\mathcal J}^2 } \cdot n^{ 2(\mathtt T-1) - \#\{ 1 \leq \mathtt t \leq \mathtt T: \mathtt q_{\mathtt t} =\mathtt p_{\mathtt t} \} } \nonumber \\
    & \cdot D^{3\mathtt T-2} A_+^{2\ell} \cdot n^{ -\mathtt T+\#\{ 1 \leq \mathtt t \leq \mathtt T: \mathtt p_{\mathtt t}=\mathtt q_{\mathtt t} \} } \cdot n^{-2(\mathtt T-1)} \cdot n^{ 2\ell-\sum_{1 \leq \mathtt t \leq \mathtt T}(\mathtt q_{\mathtt t}-\mathtt p_{\mathtt t}) } \nonumber \\
    =\ & \sum_{\mathtt T \geq 3} \sum_{ \mathsf{SEQ}, \overline{\mathsf{SEQ}} } \frac{ D^{3\mathtt T-2} (\rho^{-8}C)^{\mathtt T} A_+^{2\ell} }{ n^{\mathtt T-2} \beta_{\mathcal J}^2 } \overset{\text{Lemma~\ref{lem-bound-beta-mathcal-J}}}{\leq} \sum_{\mathtt T \geq 3} \frac{ D^{3\mathtt T} \ell^{2(\mathtt T-1)} (\rho^{-8}C)^{\mathtt T}  }{ n^{\mathtt T-2} } = n^{-1+o(1)} \,.  \label{eq-bound-part-3-contribution-var-Phi-i,j}
\end{align}
Combining \eqref{eq-bound-part-1-contribution-var-Phi-i,j}, \eqref{eq-bound-part-2-contribution-var-Phi-i,j} and \eqref{eq-bound-part-3-contribution-var-Phi-i,j} leads to Lemma~\ref{lem-recovery-most-technical}.

\subsection{Proof of Lemma~\ref{lem-est-cov-f-S-f-K-chain-case-cov}}{\label{subsec:proof-lem-3.16}}

The proof of Lemma~\ref{lem-est-cov-f-S-f-K-chain-case-cov} is highly similar to the proof of Lemma~\ref{lem-est-cov-f-S-f-K-cov}, so we will only highlight provide an outline with the main differences while adapting arguments from Lemma~\ref{lem-est-cov-f-S-f-K-cov} without presenting full details. Similarly as in \eqref{eq-cov-f-S-f-K-cov-simplify-1}, we have
\begin{align}
    \mathbb E_{\Pb}[ h_S h_K x_u^2 x_v^2 ] =\ & \mathbb E\Bigg[ x_u^2 x_v^2 \cdot \prod_{ (i,j) \in E_{\bullet}(S) \cap E_{\bullet}(K) } \big( \tfrac{\lambda x_i^2 \bm u_j^2}{n}+1 \big) \prod_{ (i,j) \in E_{\bullet}(S) \triangle E_{\bullet}(K) } \big( \tfrac{\sqrt{\lambda}}{\sqrt{n}} x_i \bm u_j \big)  \nonumber \\
    & \prod_{ (i,j) \in E_{\circ}(S) \cap E_{\circ}(K) } \big( \tfrac{\mu y_i^2 \bm v_j^2}{n} + 1 \big)^2 \prod_{ (i,j) \in E_{\circ}(S) \triangle E_{\circ}(K) } \big( \tfrac{\sqrt{\mu}}{\sqrt{n}} y_i \bm v_j \big) \Bigg] \,. \label{eq-cov-f-S-f-K-chain-case-cov-simplify-1}
\end{align}
Using similar arguments as in \eqref{eq-goal-cov-f-S-f-K-cov-transfer-1}, we can write \eqref{eq-cov-f-S-f-K-chain-case-cov-simplify-1} into
\begin{align}
    \sum_{ \substack{ \mathtt E \subset E_{\bullet}(S) \triangle E_{\bullet}(K) \\ \mathtt F \subset E_{\circ}(S) \triangle E_{\circ}(K) } } \big( \tfrac{\lambda^2}{n} \big)^{|\mathtt E|} \big( \tfrac{\mu^2}{n} \big)^{|\mathtt F|} & \mathbb E_{(x,y)}\Bigg[ \prod_{ i \in V^{\mathsf a}(S) \cup V^{\mathsf a}(K) } x_i^{ \mathtt h(i;\mathtt E,\mathtt F;S,K) } y_i^{ \mathtt k(i;\mathtt E,\mathtt F;S,K) } \Bigg] \nonumber \\
    & \mathbb E_{(\bm u,\bm v)}\Bigg[ \prod_{ j \in V^{\mathsf b}(S) \cup V^{\mathsf b}(K) } \bm u_j^{ \mathtt h(j;\mathtt E,\mathtt F;S,K) } \bm v_j^{ \mathtt k(i;\mathtt E,\mathtt F;S,K) } \Bigg] \,, \label{eq-goal-cov-f-S-f-K-chain-case-cov-transfer-1}
\end{align}
where (below we use $i \sim e$ to denote that the vertex $i$ is incident to the edge $e$)
\begin{align*}
    & \mathtt h(i;\mathtt E,\mathtt F;S,K) = 2 \#\{ e \in \mathtt E: i \sim e \} + \#\{ e \in E_{\bullet}(S) \triangle E_{\bullet}(K): i \sim e \} + 2\cdot \mathbf 1_{i\in\{u,v \}}  \,; \\
    & \mathtt k(i;\mathtt E,\mathtt F;S,K) = 2 \#\{ e \in \mathtt F: i \sim e \} + \#\{ e \in E_{\circ}(S) \triangle E_{\circ}(K): i \sim e \} \,.
\end{align*}
We can bound \eqref{eq-goal-cov-f-S-f-K-chain-case-transfer-1} exactly by dividing to the different cases as in the proof of Lemma~\ref{lem-est-cov-f-S-f-K}. The only difference is that here for $i\in\{ u,v \}$, the degree of $i$ in $S \cup K$ equals $2$ (rather than $4$ in the proof of Lemma~\ref{lem-est-cov-f-S-f-K-cov}), so from Assumption~\ref{assum-upper-bound} we still have 
\begin{align*}
    \Big( \mathtt h(i;\mathtt E,\mathtt F;S,K)+\mathtt k(i;\mathtt E,\mathtt F;S,K) \Big) \leq 4 \Longrightarrow \mathbb E_{(x,y) \sim\mu}\Big[ x_i^{ \mathtt h(i;\mathtt E,\mathtt F;S,K) } y_i^{ \mathtt k(i;\mathtt E,\mathtt F;S,K) } \Big] \leq C \,.
\end{align*}

\subsection{Proof of Lemma~\ref{lem-recovery-most-technical-cov}}{\label{subsec:proof-lem-3.18}}

Before proving Lemma~\ref{lem-recovery-most-technical-cov}, we first show the following bound which will be used repeatedly in the later proof.

\begin{lemma}{\label{lem-contribution-sub-chain-cov}}
    There exists a constant $D'=D'(\lambda,\mu,\rho,\gamma)$ such that for all $\mathtt b \geq \mathtt a$ and given $v_{\mathtt a},v_{\mathtt b}$, we have
    \begin{align*}
        \sum_{ \mathsf{L}(S_{[\mathtt a,\mathtt b]})=v_{\mathtt a},v_{\mathtt b} } \Xi(S_{[\mathtt a,\mathtt b]})^2 \leq R' n^{\mathtt b-\mathtt a-1+\mathbf 1_{\mathtt b=\mathtt a}} (A_+/\gamma)^{ \frac{\mathtt b-\mathtt a}{2} } \,.
    \end{align*}
\end{lemma}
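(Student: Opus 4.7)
The plan is to mirror the proof of Lemma~\ref{lem-contribution-sub-chain} from the Wigner setting, with the only genuine modification being to account for the bipartite vertex structure when counting embeddings into $\mathsf K_{n,N}$, which is precisely what creates the $\gamma$ factor. First I would observe that any $S_{[\mathtt a,\mathtt b]}$ with $\mathsf L(S_{[\mathtt a,\mathtt b]})=\{v_{\mathtt a},v_{\mathtt b}\}$ is a decorated bipartite path with $\mathtt b-\mathtt a$ edges and $\mathsf{diff}\subset V^{\mathsf a}$, hence it belongs to the family $\mathcal I_{\star\star}(\lceil(\mathtt b-\mathtt a)/2\rceil)$. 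The case $\mathtt b=\mathtt a$ is trivial since then $S_{[\mathtt a,\mathtt b]}$ is a single vertex, $\Xi(S_{[\mathtt a,\mathtt b]})=1$, and both sides equal $1$.

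For $\mathtt b>\mathtt a$, I would write
\begin{align*}
    \sum_{ \mathsf{L}(S_{[\mathtt a,\mathtt b]})=\{v_{\mathtt a},v_{\mathtt b}\} } \Xi(S_{[\mathtt a,\mathtt b]})^2
    = \sum_{ [J] } \Xi(J)^2 \cdot \#\Big\{ S \subset \mathsf K_{n,N}: S \cong J,\ \mathsf L(S)=\{v_{\mathtt a},v_{\mathtt b}\} \Big\} \,,
\end{align*}
where the outer sum ranges over isomorphism classes of decorated bipartite paths of length $\mathtt b-\mathtt a$ with $\mathsf{diff}\subset V^{\mathsf a}$.

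The main (and easy) step is the embedding count. Since $v_{\mathtt a}$ is fixed in the bipartition, alternating structure determines which side each internal vertex lies in. If $\mathtt b-\mathtt a=2m$ (both endpoints on the same side), the path has $m-1$ free internal vertices in $V^{\mathsf a}$ and $m$ free internal vertices in $V^{\mathsf b}$, giving
\begin{align*}
    \#\Big\{ S: S \cong J,\ \mathsf L(S)=\{v_{\mathtt a},v_{\mathtt b}\} \Big\} = [1+o(1)] \frac{ n^{m-1} N^{m} }{ |\mathsf{Aut}(J)| } = [1+o(1)] \frac{ n^{\mathtt b-\mathtt a-1}\gamma^{-m} }{ |\mathsf{Aut}(J)| } \,,
\end{align*}
and the odd case $\mathtt b-\mathtt a=2m+1$ gives $n^m N^m/|\mathsf{Aut}(J)|$, which yields the same bound up to a factor of $\gamma^{1/2}$ absorbed into the constant $D'$. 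Plugging this into the display and applying the upper bound $\beta_{\mathcal I_{\star\star}}\leq D A_+^{\lceil(\mathtt b-\mathtt a)/2\rceil}$ from Lemma~\ref{lem-bound-beta-mathcal-I} produces the claimed bound.

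There is no substantive obstacle here; the only bookkeeping issue is the parity split for $\mathtt b-\mathtt a$, which just contributes an additional constant factor absorbed into $D'$. I would simply handle both parities at once by using $A_+^{\lceil(\mathtt b-\mathtt a)/2\rceil}\leq A_+^{1/2}\cdot A_+^{(\mathtt b-\mathtt a)/2}$ and $\gamma^{-\lceil(\mathtt b-\mathtt a)/2\rceil}\leq \gamma^{-1/2}\cdot\gamma^{-(\mathtt b-\mathtt a)/2}$ (or $\leq$ constant times these, depending on whether $\gamma<1$ or not), and noting that any constants depending only on $\lambda,\mu,\rho,\gamma$ are permitted in $D'$. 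This completes the proof analogously to Lemma~\ref{lem-contribution-sub-chain}.
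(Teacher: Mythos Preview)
Your proposal is correct and follows essentially the same approach as the paper's proof: decompose the sum over isomorphism classes in $\mathcal I_{\star\star}$, count bipartite embeddings with the two endpoints fixed (splitting internal vertices between $[n]$ and $[N]$), and then invoke Lemma~\ref{lem-bound-beta-mathcal-I}. The paper handles both parities simultaneously via auxiliary counts $m_{\mathsf{odd}},m_{\mathsf{even}}$ rather than your explicit case split, but this is purely cosmetic; your parity bookkeeping is in fact slightly cleaner (for instance, your use of $\lceil(\mathtt b-\mathtt a)/2\rceil$ for the index in $\mathcal I_{\star\star}$ is correct where the paper's floor appears to be a typo in the odd case).
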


Once Lemma~\ref{lem-contribution-sub-chain-cov} has been established, we can complete the proof of Lemma~\ref{lem-recovery-most-technical-cov} in the same manner as we derived Lemma~\ref{lem-recovery-most-technical} from Lemma~\ref{lem-contribution-sub-chain}. The only difference is that we will replace all $A_+$ with $\sqrt{A_+/\gamma}$ so we omit further details here. Now we present the proof of Lemma~\ref{lem-contribution-sub-chain-cov}.

\begin{proof}[Proof of Lemma~\ref{lem-contribution-sub-chain-cov}]
    Denote $\mathtt L=\lfloor \frac{\mathtt b-\mathtt a}{2} \rfloor$. Note that we must have $S_{[\mathtt a,\mathtt b]} \in \mathcal I_{\star\star}(\mathtt L)$. Thus, we have
    \begin{align}
        \sum_{ S_{[\mathtt a,\mathtt b]}: \mathsf{L}(S_{[\mathtt a,\mathtt b]}) } \Xi(S_{[\mathtt a,\mathtt b]})^2 = \sum_{ [H] \in \mathcal I_{\star\star}(\mathtt L) } \Xi(H)^2 \cdot \#\Big\{ S \in \mathsf K_{n,N}: S \cong H, \mathsf{L}(S)=\{ v_{\mathtt a},v_{\mathtt b} \} \Big\} \,.  \label{eq-contribution-sub-chain-cov-relax}
    \end{align}
    Denote
    \begin{align*}
        m_{\mathsf{odd}} = \frac{ \mathtt b-\mathtt a - \mathbf 1_{ \{ \mathtt b \neq \mathtt a \} } + \mathbf 1_{ \{ \mathtt b,\mathtt a \text{ is even} \} } }{ 2 }, \ m_{\mathsf{even}}= \frac{ \mathtt b-\mathtt a - \mathbf 1_{ \{ \mathtt b \neq \mathtt a \} } + \mathbf 1_{ \{ \mathtt b,\mathtt a \text{ is odd} \} } }{ 2 } \,.
    \end{align*}
    Note that to choose $S \in \mathsf K_{n,N}$ with $S \cong H$ and $\mathsf{L}(S)=\{ v_{\mathtt a},v_{\mathtt b} \}$, we have at most 
    \begin{align*}
        \binom{n}{m_{\mathsf{odd}}} \binom{N}{m_{\mathsf{even}}} = \Theta(1) \cdot \frac{ \gamma^{ \frac{\mathtt b-\mathtt a}{2} } n^{ \mathtt b-\mathtt a-1+\mathbf 1_{\mathtt b=\mathtt a} } }{ m_{\mathsf{odd}}! m_{\mathsf{even}}! }  
    \end{align*}
    ways to choose $V(S) \setminus \mathsf{L}(S)$; in addition, given $V(S)$ we have at most $\frac{ m_{\mathsf{odd}}! m_{\mathsf{even}}! }{ |\mathsf{Aut}(J)| }$ ways to choose $S$. Thus, we have
    \begin{align*}
        \#\Big\{ S \in \mathsf K_{n,N}: S \cong H, \mathsf{L}(S)=\{ v_{\mathtt a},v_{\mathtt b} \} \Big\} &= \Theta(1) \cdot \frac{ \gamma^{ \frac{\mathtt b-\mathtt a}{2} } n^{ \mathtt b-\mathtt a-1+\mathbf 1_{\mathtt b=\mathtt a} } }{ m_{\mathsf{odd}}! m_{\mathsf{even}}! } \cdot \frac{ m_{\mathsf{odd}}! m_{\mathsf{even}}! }{ |\mathsf{Aut}(H)| } \\
        &= \frac{ \Theta(1) \cdot n^{\mathtt b-\mathtt a-1+\mathbf 1_{\mathtt b=\mathtt a}} }{ |\mathsf{Aut}(H)| } \,.
    \end{align*}
    Plugging this estimation into \eqref{eq-contribution-sub-chain-cov-relax}, we get that
    \begin{align*}
        \eqref{eq-contribution-sub-chain-cov-relax} = \Theta(1) \cdot n^{\mathtt b-\mathtt a-1+\mathbf 1_{\mathbb b=\mathtt a}} \cdot \sum_{ [H] \in \mathcal I_{\star}(\frac{\mathtt b-\mathtt a}{2}) } \frac{ \Xi(H)^2 }{ |\mathsf{Aut}(H)| } \leq D' n^{\mathtt b-\mathtt a-1+\mathbf 1_{\mathtt b=\mathtt a}} (A_+/\gamma)^{\frac{\mathtt b-\mathtt a}{2}} \,,
    \end{align*}
    where the last inequality follows from Lemma~\ref{lem-bound-beta-mathcal-I}.
\end{proof}

\section{Supplementary proofs in Section~\ref{sec:hypergraph-color-coding}}{\label{sec:supp-proofs-sec-4}}

\subsection{Proof of Proposition~\ref{prop-approximate-detection-statistics}}{\label{subsec:proof-prop-4.1}}

Recall \eqref{eq-def-f-mathcal-H} and \eqref{eq-def-widetilde-f-H}. We then have
\begin{align*}
    &\Big( \widetilde{f}_{\mathcal H} - f_{\mathcal H} \Big)^2 = \Bigg( \frac{1}{\sqrt{ \beta_{\mathcal H} n^{\ell} }} \sum_{ [H] \in \mathcal H } \Xi(H) \sum_{ S \cong H } f_{S} \cdot \Big( \frac{1}{t} \sum_{i=1}^{t} \frac{1}{r} \chi_{\tau_i}(V(S))-1 \Big) \Bigg)^2 \\
    =\ & \frac{ 1 }{ \beta_{\mathcal H} n^{\ell} } \sum_{ [H],[I] \in \mathcal H } \sum_{ \substack{ S \cong H \\ K \cong I } } \Xi(S) \Xi(K) f_{S} f_{K} \cdot \Big( \frac{1}{t} \sum_{\mathtt k=1}^{t} \frac{1}{r} \chi_{\tau_{\mathtt k}}(V(S))-1 \Big) \Big( \frac{1}{t} \sum_{\mathtt h=1}^{t} \frac{1}{r} \chi_{\tau_{\mathtt h}}(V(K))-1 \Big) \,.
\end{align*}
Note that by averaging over the randomness over $\{ \tau_{\mathtt k}:1 \leq \mathtt k \leq t \}$ we have
\begin{align*}
    \mathbb E\Big[ \Big( \frac{1}{t} \sum_{\mathtt k=1}^{t} \frac{1}{r} \chi_{\tau_{\mathtt k}}(V(S))-1 \Big) \Big( \frac{1}{t} \sum_{\mathtt h=1}^{t} \frac{1}{r} \chi_{\tau_{\mathtt h}}(V(K))-1 \Big) \Big]=0 \mbox{ if } V(S) \cap V(K)=\emptyset
\end{align*}
and 
\begin{align*}
    &\mathbb E\Big[ \Big( \frac{1}{t} \sum_{\mathtt k=1}^{t} \frac{1}{r} \chi_{\tau_{\mathtt k}}(V(S))-1 \Big) \Big( \frac{1}{t} \sum_{\mathtt h=1}^{t} \frac{1}{r} \chi_{\tau_{\mathtt h}}(V(K))-1 \Big) \Big] \\
    \leq\ & \mathbb E\Big[  \frac{1}{t^2} \sum_{\mathtt k=1}^{t} \frac{1}{r^2} \chi_{\tau_{\mathtt k}}(V(S)) \chi_{\tau_{\mathtt k}}(V(K)) \Big] \leq \frac{1}{tr} = 1 \,.
\end{align*}
Combining the fact that $\mathbb E_{\Pb}[f_{S}(\bm Y) f_{K}(\bm Y)], \mathbb E_{\Qb}[f_{S}(\bm Y) f_{K}(\bm Y)] \geq 0$, we get that
\begin{align}{\label{eq-approx-error-Qb}}
    \mathbb E_{\Qb}\Big[ ( \widetilde{f}_{\mathcal H} - f_{\mathcal H})^2 \Big] \leq \frac{ 1 }{ n^{\ell}\beta_{\mathcal H} } \sum_{ [H],[I] \in \mathcal H } \sum_{ \substack{ S \cong H, K \cong I \\ V(S) \cap V(K) \neq \emptyset } } \Xi(S) \Xi(K) \mathbb E_{\Qb}[f_{S}(\bm Y) f_{K}(\bm Y)] \,.
\end{align}
and 
\begin{align}{\label{eq-approx-error-Pb}}
    \mathbb E_{\Pb}\Big[ ( \widetilde{f}_{\mathcal H} - f_{\mathcal H})^2 \Big] \leq \frac{ 1 }{ n^{\ell}\beta_{\mathcal H} } \sum_{ [H],[I] \in \mathcal H } \sum_{ \substack{ S \cong H, K \cong I \\ V(S) \cap V(K) \neq \emptyset } } \Xi(S) \Xi(K) \mathbb E_{\Pb}[f_{S}(\bm Y) f_{K}(\bm Y)] \,.
\end{align}
We first bound \eqref{eq-approx-error-Qb}. By \eqref{eq-standard-orthogonal} we have
\begin{align*}
    \eqref{eq-approx-error-Qb} &= \frac{ 1 }{ n^{\ell}\beta_{\mathcal H} } \sum_{ [H],[I] \in \mathcal H } \sum_{ \substack{ S \cong H, K \cong I \\ V(S) \cap V(K) \neq \emptyset } } \Xi(S) \Xi(K) \mathbf 1_{ \{ S=K \} } \\
    &= \frac{ 1 }{ n^{\ell}\beta_{\mathcal H} } \sum_{ [H] \in \mathcal H } \Xi(H)^2 \#\big\{ S \subset \mathsf K_n: S \cong H \big\} = \frac{ 1+o(1) }{ n^{\ell}\beta_{\mathcal H} } \sum_{ [H] \in \mathcal H } \frac{\Xi(H)^2 n^{\ell}}{|\mathsf{Aut}(H)|} \\
    & \overset{\eqref{eq-def-beta-mathcal-H}}{=} 1+o(1) \overset{\text{Lemma~\ref{lem-mean-var-f-H-part-1}}}{=} o(1) \cdot \mathbb E_{\Pb}[f]^2 \,.
\end{align*}
Now we bound \eqref{eq-approx-error-Pb}. By the proof of Lemma~\ref{lem-est-cov-f-S-f-K}, we get that
\begin{align*}
    \mathbb E_{\Pb}\big[ f_{S}(\bm Y) f_{K}(\bm Y) \big] \leq n^{ -\ell+|E_{\bullet}(S)\cap E_{\bullet}(K)| +|E_{\circ}(S) \cap E_{\circ}(K)| }  \mathtt M(S,K)  \,.
\end{align*}
Thus, we have
\begin{align*}
    \eqref{eq-approx-error-Pb} &\leq \frac{ 1 }{ \beta_{\mathcal H} n^{\ell} } \sum_{ [H],[I] \in \mathcal H } \sum_{ \substack{ S \cong H, K \cong I \\ V(S) \cap V(K) \neq \emptyset } } \frac{ \Xi(S) \Xi(K) \mathtt M(S,K) }{ n^{ \ell-|E_{\bullet}(S)\cap E_{\bullet}(K)|-|E_{\circ}(S) \cap E_{\circ}(K)| } } = \eqref{eq-var-Pb-f-H-relax-2} \,, 
\end{align*}
which is already shown to be bounded by $o(1)$ in Section~\ref{subsec:proof-prop-2.3}. This completes the proof of Proposition~\ref{prop-approximate-detection-statistics}.

\subsection{Proof of Lemma~\ref{lem-color-coding}}{\label{subsec:proof-lem-4.2}}

In this subsection, we show the approximate graph counts $\mathfrak F_{H}(\bm X, \bm Y,\tau)$ can be computed efficiently via the following algorithm.
\begin{breakablealgorithm}{\label{alg:dynamic-programming}}
    \caption{Computation of $\mathfrak F_{H}(\bm X,\bm Y,\tau)$}
    \begin{algorithmic}[1]
    \STATE \textbf{Input}: Two symmetric matrices $\bm X,\bm Y$ on $[n]$ with its vertices colored by $\tau$, and an element $[H] \in \mathcal H$. 
    \STATE List the vertices in $E(H)$ in counterclockwise order $e_1,\ldots,e_{\ell}$ with $e_i=(v_i,v_{i+1})$ (we may arbitrarily choose $e_1$). Recall that we use $\gamma(H)$ to denote a mapping $\gamma:E(H) \to \{ \bullet,\circ \}$. Let $\gamma_i = \gamma(e_i)$.
    \STATE Denote $\widehat{H}=e_1 \cup \ldots \cup e_{\ell-1}$ such that $\mathsf{L}(\widehat{H})=\{ v_1,v_\ell \}$.
    \STATE For every color $c_1,c_2 \in [\ell]$, every color subsets $C \subset [\ell]$ such that $|C|=2$, every decoration $\gamma \in\{ \bullet,\circ \}$, and every distinct $u,v \in [n]$, compute $Y_{1}(u,v;c_1,c_2;C)= \mathbf 1_{C=\{ c_1,c_2 \}} \cdot \Lambda(u,v;c_1,c_2;\gamma_1)$, where 
    \begin{align*}
        \Lambda(u,v;c_1,c_2;\gamma) =
        \begin{cases}
            \bm X_{u,v} \cdot \mathbf 1_{ \{ \tau(x)=c_1, \tau(y)=c_2 \} },  & \gamma = \bullet \,; \\
            \bm Y_{u,v} \cdot \mathbf 1_{ \{ \tau(x)=c_1, \tau(y)=c_2 \} },  & \gamma = \circ \,;
        \end{cases}
    \end{align*}
    \STATE For every $2 \leq k \leq \ell-1$ and $c_1,c_2 \in [\ell]$, and for every $C \subset [\ell]$ with $|C|=k+1$ and $c_1,c_2 \in C$, compute recursively   
    \begin{align*}
       Y_{k}(x,y;c_1,c_2;C) = \sum_{c \in C} \sum_{ \substack{ z \in [n] \\ z \neq x,y } } \Lambda(z,y;c,c_2;\gamma_k) Y_{k-1}(x,z;c_1,c;C \setminus \{ c_2 \} ) \,.
    \end{align*}
    \STATE Compute
    \begin{align*}
        \mathfrak F_H(\bm X,\bm Y;\tau) =\ & \frac{1}{|\mathsf{Aut}(H)|} \sum_{ \substack{ c_1,c_2 \in [\ell] \\ c_1 \neq c_2 } } \sum_{ \substack{ x,y \in [n] \\ x \neq y } } \Lambda(x,y;c_1,c_2;\gamma_{\ell}) Y_{\ell-1}(y,x;c_2,c_1;[\ell]) \,.
    \end{align*}
    \textbf{Output}: $\mathfrak F_H(\bm X,\bm Y;\tau)$.
    \end{algorithmic}
\end{breakablealgorithm}
Now we can finish the proof of Lemma~\ref{lem-color-coding}. We first bound the total time complexity of Algorithm~\ref{alg:dynamic-programming}. Fixing a color set $C$ with $|C|=k+1$ and $c \in C$, the total number of $\{ c_1,c_2 \}$ with $c_1,c_2\in C$ is bounded by $k(k+1)$. Thus according to Algorithm~\ref{alg:dynamic-programming}, the total time complexity of computing $Y_{\ell-k}(x,y;c_1,c_2;C)$ is bounded by
\begin{align*}
    \sum_{1 \leq k \leq \ell-1} k(k+1) \cdot n^2 \overset{\eqref{eq-condition-strong-detection}}{=} n^{2+o(1)} \,.
\end{align*}
Finally, computing $\mathfrak F_H(\bm X,\bm Y,\tau)$ according to $Y_{2}(y,x;c_2,c_1;U_2 \cup \ldots \cup U_{\ell})$ takes time
\begin{align*}
    2^{2\ell} \cdot n^2 \overset{\eqref{eq-condition-strong-detection}}{=} n^{2+o(1)} \,.
\end{align*}
Thus, the total running time of Algorithm~\ref{alg:dynamic-programming} is $O(n^{2+o(1)})$.

Next, we prove the correctness of Algorithm~\ref{alg:dynamic-programming}. For any $1 \leq k \leq \ell-1$ and $|C|=k+1$, define
\begin{align*}
    & Z_{k}(x,y;c_1,c_2,C) \\
    =\ & \sum_{ \substack{ \psi: \{ v_{1}, \ldots v_{k+1} \} \to [n] \\ \psi(v_{1})=x, \psi(v_{k+1})=y }  } \mathbf 1_{ \{ \tau(v_{1} \cup \ldots \cup v_{k+1})=C, \tau(x)=c_1,\tau(y)=c_2 \} } \prod_{1 \leq i \leq k} \big( \mathbf 1_{\gamma_i=\bullet} \bm X_{v_i,v_{i+1}} + \mathbf 1_{\gamma_i=\circ} \bm Y_{v_i,v_{i+1}} \big) \,.
\end{align*}
It is clear that
\begin{align*}
    Z_{1}(x,y;c_1,c_2,C)=\mathbf 1_{C=\{c_1,c_2\}} \Lambda(x,y;c_1,c_2;\gamma_1)
\end{align*}
and 
\begin{align*}
    Z_{k}(x,y;c_1,c_2,C) =  \sum_{c \in C} \sum_{ \substack{ z \in [n] \\ z \neq x,y } } \Lambda(z,y;c,c_2;\gamma_k) Z_{k-1}(x,z;c_1,c,C \setminus \{ c_2 \}) \,.
\end{align*}
Thus, we have $Z_{k}(x,y;c_1,c_2;C)=Y_{k}(x,y;c_1,c_2;C)$ for all $1 \leq k \leq \ell-1$. This yields that
\begin{align*}
    & \sum_{ \substack{ c_1,c_2 \in [\ell] \\ c_1 \neq c_2 } } \sum_{ \substack{ x,y \in [n] \\ x \neq y } } \Lambda(x,y;c_1,c_2;\gamma_{\ell}) Y_{\ell-1}(y,x;c_2,c_1;[\ell]) \\
    =\ & \sum_{ \substack{ \psi: \{ v_{1}, \ldots v_{\ell} \} \to [n] } } \chi(\psi(v_1),\ldots,\psi(v_{\ell})) \prod_{1 \leq i \leq \ell} \big( \mathbf 1_{\gamma_i=\bullet} \bm X_{v_i,v_{i+1}} + \mathbf 1_{\gamma_i=\circ} \bm Y_{v_i,v_{i+1}} \big) = |\mathsf{Aut}(H)| \cdot \mathfrak F_H(\bm X,\bm Y,\tau) \,.
\end{align*}

\subsection{Proof of Proposition~\ref{prop-approximate-detection-statistics-cov}}{\label{subsec:proof-prop-4.4}}

Recall \eqref{eq-def-f-mathcal-G} and \eqref{eq-def-widetilde-f-mathcal-G}. We then have
\begin{align*}
    &\Big( \widetilde{h}_{\mathcal G} - h_{\mathcal G} \Big)^2 = \Bigg( \frac{1}{\sqrt{ \beta_{\mathcal H} N^{\ell} n^{\ell} }} \sum_{ [H] \in \mathcal G } \Upsilon(H) \sum_{ S \cong H } h_{S} \cdot \Big( \frac{1}{t} \sum_{i=1}^{t} \frac{1}{r} \chi_{\tau_i}(V(S))-1 \Big) \Bigg)^2 \\
    =\ & \frac{ 1 }{ \beta_{\mathcal H} n^{\ell} N^{\ell} } \sum_{ [H],[I] \in \mathcal G } \sum_{ \substack{ S \cong H \\ K \cong I } } \Upsilon(S) \Upsilon(K) h_{S} h_{K} \cdot \Big( \frac{1}{t} \sum_{\mathtt k=1}^{t} \frac{1}{r} \chi_{\tau_{\mathtt k}}(V(S))-1 \Big) \Big( \frac{1}{t} \sum_{\mathtt h=1}^{t} \frac{1}{r} \chi_{\tau_{\mathtt h}}(V(K))-1 \Big) \,.
\end{align*}
Note that by averaging over the randomness over $\{ \tau_{\mathtt k},\tau_{\mathtt k}':1 \leq \mathtt k \leq t \}$ we have
\begin{align*}
    \mathbb E\Big[ \Big( \frac{1}{t} \sum_{\mathtt k=1}^{t} \frac{1}{r} \chi_{\tau_{\mathtt k}}(V(S))-1 \Big) \Big( \frac{1}{t} \sum_{\mathtt h=1}^{t} \frac{1}{r} \chi_{\tau_{\mathtt h}}(V(K))-1 \Big) \Big]=0 \mbox{ if } V(S) \cap V(K)=\emptyset
\end{align*}
and 
\begin{align*}
    &\mathbb E\Big[ \Big( \frac{1}{t} \sum_{\mathtt k=1}^{t} \frac{1}{r} \chi_{\tau_{\mathtt k}}(V(S))-1 \Big) \Big( \frac{1}{t} \sum_{\mathtt h=1}^{t} \frac{1}{r} \chi_{\tau_{\mathtt h}}(V(K))-1 \Big) \Big] \\
    \leq\ & \mathbb E\Big[  \frac{1}{t^2} \sum_{\mathtt k=1}^{t} \frac{1}{r^2} \chi_{\tau_{\mathtt k}}(V(S)) \chi_{\tau_{\mathtt k}}(V(K)) \Big] \leq \frac{1}{tr} = 1 \,.
\end{align*}
Thus, similar as in \eqref{eq-approx-error-Qb} and \eqref{eq-approx-error-Pb}, we have 
\begin{align*}
    \mathbb E_{\overline\Qb}\Big[ ( \widetilde{h}_{\mathcal G} - h_{\mathcal G})^2 \Big] \leq 1+o(1) \mbox{ and } \mathbb E_{\overline\Pb}\Big[ ( \widetilde{h}_{\mathcal G} - h_{\mathcal G})^2 \Big] \leq \eqref{eq-var-Pb-f-mathcal-G-relax-2} = o(1) \cdot \mathbb E_{\overline\Pb}\big[ h_{\mathcal G} \big]^2 \,,
\end{align*}
Completing the proof of Proposition~\ref{prop-approximate-detection-statistics-cov}.

\subsection{Proof of Lemma~\ref{lem-color-coding-cov}}{\label{subsec:proof-lem-4.5}}

In this subsection, we show the approximate hypergraph counts $\mathfrak G_{H}(\bm X, \bm Y,\tau)$ can be computed efficiently via the following algorithm.
\begin{breakablealgorithm}{\label{alg:dynamic-programming-cov}}
    \caption{Computation of $\mathfrak G_{H}(\bm X,\bm Y,\tau)$}
    \begin{algorithmic}[1]
    \STATE \textbf{Input}: Two $n*N$ matrices $\bm X,\bm Y$ with its vertices colored by $\tau$, and an element $[H] \in \mathcal G$. 
    \STATE List the vertices in $E(H)$ in counterclockwise order $e_1,\ldots,e_{2\ell}$ with $e_{2i-1}=(v_i,u_i),e_{2i}=(v_{i+1},u_i)$ (we may arbitrarily choose $e_1$). Recall that we use $\gamma(H)$ to denote a mapping $\gamma:E(H) \to \{ \bullet,\circ \}$. Let $\gamma_i = \gamma(e_{2i-1})=\gamma(e_{2i})$.
    \STATE Denote $\widehat{H}=e_1 \cup \ldots \cup e_{2\ell-2}$ such that $\mathsf{L}(\widehat{H})=\{ v_1,v_\ell \}$.
    \STATE For every color $c_1,c_2 \in [2\ell]$, every decoration $\gamma \in\{ \bullet,\circ \}$, and every distinct $u \in [n], v \in [N]$, compute  
    \begin{align*}
        \Lambda(u,v;c_1,c_2;\gamma) =
        \begin{cases}
            \bm X_{u,v} \cdot \mathbf 1_{ \{ \tau(x)=c_1, \tau(y)=c_2 \} },  & \gamma = \bullet \,; \\
            \bm Y_{u,v} \cdot \mathbf 1_{ \{ \tau(x)=c_1, \tau(y)=c_2 \} },  & \gamma = \circ \,.
        \end{cases}
    \end{align*}
    \STATE For every color $c_1,c_2 \in [2\ell]$, every color subsets $C \subset [2\ell]$ such that $C=\{ c_1,c_2,c_3 \}$, and every distinct $x,y \in [n]$,  
    \begin{align*}
        Y_1(x,y;c_1,c_2;C) = \sum_{ u \in [N] } \Lambda(x,u;c_1,c_3;\gamma_1) \Lambda(y,u;c_2,c_3;\gamma_1) \,.
    \end{align*}
    \STATE For every $2 \leq k \leq \ell-1$ and $c_1,c_2 \in [\ell]$, and for every $C \subset [\ell]$ with $|C|=2k+1$ and $c_1,c_2 \in C$, compute recursively   
    \begin{align*}
       & Y_{k}(x,y;c_1,c_2;C) \\
       =\ & \sum_{ c,c' \in C \setminus \{ c_1,c_2 \} } \sum_{ w \in [N] } \sum_{ \substack{ z \in [n] \\ z \neq x,y } } \Lambda(y,w;c_2,c';\gamma_k) \Lambda(z,w;c,c';\gamma_k) Y_{k-1}(x,z;c_1,c;C \setminus \{ c',c_2 \} ) \,.
    \end{align*}
    \STATE Compute
    \begin{align*}
        & \mathfrak G_H(\bm X,\bm Y;\tau) \\
        =\ & \frac{1}{|\mathsf{Aut}(H)|} \sum_{ \substack{ c_1,c_2,c_3 \in [2\ell] } } \sum_{ \substack{ x,y \in [n] \\ x \neq y } } \sum_{ z \in [N] }  \Lambda(y,w;c_2,c;\gamma_\ell) \Lambda(z,w;c,c';\gamma_\ell) Y_{\ell-1}(y,x;c_2,c_1;[\ell]) \,.
    \end{align*}
    \textbf{Output}: $\mathfrak G_H(\bm X,\bm Y;\tau)$.
    \end{algorithmic}
\end{breakablealgorithm}

The running time analysis and the correctness of Algorithm~\ref{alg:dynamic-programming-cov} is almost identical to that of Algorithm~\ref{alg:dynamic-programming}, so we will omit further details here.

\subsection{Proof of Proposition~\ref{prop-approximate-recovery-statistics}}{\label{subsec:proof-prop-4.7}}

Recall \eqref{eq-averaging-over-coloring-recovery}, by first averaging over the random coloring $\{ \xi_{\mathtt k}: 1 \leq \mathtt k \leq t\}$ we have
\begin{align*}
    \mathbb E\Big[ \widetilde{\Phi}_{u,v}^{\mathcal J} \mid \bm X,\bm Y \Big] &\overset{\eqref{eq-def-widetilde-Phi-H}}{=} \frac{1}{n^{\frac{\ell}{2}-1} \beta_{\mathcal J}} \sum_{[J] \in \mathcal J} \Xi(J) \Big( \frac{1}{t} \sum_{ \mathtt k=1 }^{t} \sum_{ S \cong J,\mathsf{L}(S)=\{ u,v \} } f_S(\bm X,\bm Y) \Big) \\
    &= \frac{1}{n^{\frac{\ell}{2}-1} \beta_{\mathcal J}} \sum_{[J] \in \mathcal J} \Xi(J) \sum_{ S \cong J,\mathsf{L}(S)=\{ u,v \} } f_S(\bm X,\bm Y) = \Phi_{u,v}^{\mathcal J} \,.
\end{align*}
Thus, we have
\begin{align}
    \mathbb E\Big[ \widetilde{\Phi}_{u,v}^{\mathcal J} \cdot x_u x_v \Big] = \mathbb E\Big[ x_u x_v \cdot \mathbb E\big[ \widetilde{\Phi}_{u,v}^{\mathcal J} \mid \bm X, \bm Y \big] \Big] = \mathbb E\Big[ \Phi_{u,v}^{\mathcal J} \cdot x_u x_v \Big] = 1+o(1) \,, \label{eq-prop-approx-Phi-part-1}
\end{align}
where the last equality follows from Proposition~\ref{main-prop-recovery}. In addition, by \eqref{eq-def-widetilde-Phi-H} we then have
\begin{align*}
    & \Big( \widetilde{\Phi}^{\mathcal J}_{u,v} \Big)^2 = \Bigg( \frac{ 1 }{ \beta_{\mathcal J} n^{\frac{\ell}{2}-1} } \sum_{ \substack{ S \subset \mathsf K_n: [S] \in \mathcal J \\ \mathsf{L}(S)=\{ u,v \} } } \Xi(S) f_{S}(\bm X,\bm Y) \cdot \Big( \frac{1}{t} \sum_{\mathtt k=1}^{t} \frac{1}{\varkappa} \chi_{\xi_{\mathtt k}}(V(S)) \Big)  \Bigg)^2 \\
    =\ & \sum_{ \substack{ S,K \subset \mathsf K_n: [S],[K] \in \mathcal J \\ \mathsf{L}(S)=\mathsf{L}(K)=\{ u,v \} } } \frac{ \Xi(S) \Xi(K) f_{S}(\bm X,\bm Y) f_{K}(\bm X,\bm Y) }{ \beta_{\mathcal J}^2 n^{2\ell-2} }  \Big( \frac{1}{t} \sum_{\mathtt k=1}^{t} \frac{1}{\varkappa} \chi_{\xi_{\mathtt k}}(V(S)) \Big) \Big( \frac{1}{t} \sum_{\mathtt k=1}^{t} \frac{1}{\varkappa} \chi_{\xi_{\mathtt k}}(V(K)) \Big) \,.
\end{align*}
Note that 
\begin{align*}
    &\mathbb E\Big[ \Big( \frac{1}{t} \sum_{\mathtt k=1}^{t} \frac{1}{r} \chi_{\xi_{\mathtt k}}(V(S))-1 \Big) \Big( \frac{1}{t} \sum_{\mathtt h=1}^{t} \frac{1}{r} \chi_{\xi_{\mathtt h}}(V(K))-1 \Big) \Big] \\
    \leq\ & \mathbb E\Big[  \frac{1}{t^2} \sum_{\mathtt k=1}^{t} \frac{1}{r^2} \chi_{\xi_{\mathtt k}}(V(S)) \chi_{\xi_{\mathtt k}}(V(K)) \Big] \leq \frac{1}{tr} = 1 \,.
\end{align*}
We then have
\begin{align}
    \mathbb E\Big[ \big( \widetilde{\Phi}^{\mathcal J}_{u,v} \big)^2 \Big] \leq \sum_{ \substack{ S,K \subset \mathsf K_n: [S],[K] \in \mathcal J \\ \mathsf{L}(S)=\mathsf{L}(K)=\{ u,v \} } } \frac{ \Xi(S) \Xi(K)  }{ \beta_{\mathcal J}^2 n^{2\ell-2} } \cdot \Big| \mathbb E_{\Pb}\big[ f_{S}(\bm X,\bm Y) f_{K}(\bm X,\bm Y) \big] \Big| \leq R \,,  \label{eq-prop-approx-Phi-part-2}
\end{align}
where the second inequality holds since the middle term is exactly \eqref{eq-var-Phi-i,j-relax-1}, which is shown to be bounded by some $R=O_{\lambda,\mu,\rho,\delta}(1)$ in Section~\ref{subsec:proof-prop-2.7}. Similarly we can show that
\begin{align}
    \mathbb E\Big[ \big( \widetilde{\Phi}^{\mathcal J}_{u,v} \big)^2 \cdot x_u^2 x_v^2 \Big] \leq R \,.  \label{eq-prop-approx-Phi-part-3}
\end{align}
Combined with \eqref{eq-prop-approx-Phi-part-2} and \eqref{eq-prop-approx-Phi-part-1}, this completes the proof of Proposition~\ref{prop-approximate-recovery-statistics}.

\subsection{Proof of Lemma~\ref{lem-color-coding-recovery}}{\label{subsec:proof-lem-4.8}}

In this subsection, we show the approximate hypergraph counts $\mathfrak L_{H}(\bm X,\bm Y,\xi)$ can be computed efficiently via the following algorithm.
\begin{breakablealgorithm}{\label{alg:dynamic-programming-recovery}}
    \caption{Computation of $\mathfrak L_{H}(\bm X,\bm Y,\xi)$}
    \begin{algorithmic}[1]
    \STATE \textbf{Input}: Symmetric matrices $\bm X,\bm Y$ on $[n]$ with its vertices colored by $\xi$, and an element $[H] \in \mathcal J$.  
    \STATE List the vertices in $E(H)$ in counterclockwise order $e_1,\ldots,e_{\ell}$ with $e_i=(v_i,v_{i+1})$ such that $v_1=i$ and $v_{\ell+1}=j$. Recall that we use $\gamma(H)$ to denote a mapping $\gamma:E(H) \to \{ \bullet,\circ \}$. Let $\gamma_i = \gamma(e_i)$.
    \STATE For every color $c_1,c_2 \in [\ell]$, every color subsets $C \subset [\ell]$ such that $|C|=2$, every decoration $\gamma \in\{ \bullet,\circ \}$, and every distinct $u,v \in [n]$, compute $Y_{1}(u,v;c_1,c_2;C)= \mathbf 1_{C=\{ c_1,c_2 \}} \cdot \Lambda(u,v;c_1,c_2;\gamma_1)$, where 
    \begin{align*}
        \Lambda(u,v;c_1,c_2;\gamma) =
        \begin{cases}
            \bm X_{u,v} \cdot \mathbf 1_{ \{ \xi(x)=c_1, \xi(y)=c_2 \} },  & \gamma = \bullet \,; \\
            \bm Y_{u,v} \cdot \mathbf 1_{ \{ \xi(x)=c_1, \xi(y)=c_2 \} },  & \gamma = \circ \,;
        \end{cases}
    \end{align*}
    \STATE For every $2 \leq k \leq \ell-1$ and $c_1,c_2 \in [\ell]$, and for every $C \subset [\ell]$ with $|C|=k+1$ and $c_1,c_2 \in C$, compute recursively   
    \begin{align*}
       Y_{k}(x,y;c_1,c_2;C) = \sum_{c \in C} \sum_{ \substack{ z \in [n] \\ z \neq x,y } } \Lambda(z,y;c,c_2;\gamma_k) Y_{k-1}(x,z;c_1,c;C \setminus \{ c_2 \} ) \,.
    \end{align*}
    \STATE Compute
    \begin{align*}
        \mathfrak L_H(\bm X,\bm Y,\xi) = \frac{1}{|\mathsf{Aut}(H)|} \sum_{ c_1\neq c_2 } Y_{1}(u,v;c_1,c_2,C)
    \end{align*}
    \textbf{Output}: $\mathfrak L_H(\bm X,\bm Y,\xi)$.
    \end{algorithmic}
\end{breakablealgorithm}
Now we can finish the proof of Lemma~\ref{lem-color-coding-recovery}. We first bound the total time complexity of Algorithm~\ref{alg:dynamic-programming-recovery}. Fixing a color set $C$ with $|C|=k+1$ and $c \in C$, the total number of $C_1,C_2$ with $C_1 \cup C_2=C$ and $C_1 \cap C_2 = \{ c \}$ is bounded by $2^{k+1}$. Thus according to Algorithm~\ref{alg:dynamic-programming-recovery}, the total time complexity of computing $Y_{\ell-k}(x,y;c_1,c_2;C)$ is bounded by
\begin{align*}
    \sum_{1 \leq k \leq \ell-1} 2^{k+1} \cdot n^2 \overset{\eqref{eq-condition-weak-recovery}}{=} n^{T+o(1)} \,.
\end{align*}
Thus, the total running time of Algorithm~\ref{alg:dynamic-programming} is $O(n^{T+o(1)})$. The correctness of Algorithm~\ref{alg:dynamic-programming-recovery} is almost identical to the correctness of Algorithm~\ref{alg:dynamic-programming}, so we omit further details here for simplicity.

\subsection{Proof of Proposition~\ref{prop-approximate-recovery-statistics-cov}}{\label{subsec:proof-prop-4.10}}

Recall \eqref{eq-averaging-over-coloring-recovery-cov}, by first averaging over the random coloring $\{ \xi_{\mathtt k}: 1 \leq \mathtt k \leq t\}$ we have
\begin{align*}
    \mathbb E\Big[ \widetilde{\Phi}_{u,v}^{\mathcal I} \mid \bm X,\bm Y \Big] &\overset{\eqref{eq-def-widetilde-Phi-mathcal-J}}{=} \frac{1}{ N^{\ell} n^{-1} \beta_{\mathcal I}} \sum_{[H] \in \mathcal I} \Upsilon(H) \Big( \frac{1}{t} \sum_{ \mathtt k=1 }^{t} \sum_{ S \cong H,\mathsf{L}(S)=\{ u,v \} } h_S(\bm X,\bm Y) \Big) \\
    &= \frac{1}{N^{\ell}n^{-1} \beta_{\mathcal I}} \sum_{[H] \in \mathcal I} \Upsilon(H) \sum_{ S \cong H,\mathsf{L}(S)=\{ u,v \} } h_S(\bm X,\bm Y) = \Phi_{u,v}^{\mathcal I} \,.
\end{align*}
Thus, we have
\begin{align}
    \mathbb E\Big[ \widetilde{\Phi}_{u,v}^{\mathcal I} \cdot x_u x_v \Big] = \mathbb E\Big[ x_u x_v \cdot \mathbb E\big[ \widetilde{\Phi}_{u,v}^{\mathcal I} \mid \bm X, \bm Y \big] \Big] = \mathbb E\Big[ \Phi_{u,v}^{\mathcal I} \cdot x_u x_v \Big] = 1+o(1) \,, \label{eq-prop-approx-Phi-mathcal-I-part-1}
\end{align}
where the last equality follows from Proposition~\ref{main-prop-recovery-cov}. In addition, by \eqref{eq-def-widetilde-Phi-mathcal-J} we then have
\begin{align*}
    & \Big( \widetilde{\Phi}^{\mathcal I}_{u,v} \Big)^2 = \Bigg( \frac{ 1 }{ N^{\ell}n^{-1} \beta_{\mathcal I} } \sum_{ \substack{ S \subset \mathsf K_n: [S] \in \mathcal I \\ \mathsf{L}(S)=\{ u,v \} } } \Upsilon(S) h_{S}(\bm X,\bm Y) \cdot \Big( \frac{1}{t} \sum_{\mathtt k=1}^{t} \frac{1}{\varkappa} \chi_{\xi_{\mathtt k}}(V(S)) \Big)  \Bigg)^2 \\
    =\ & \sum_{ \substack{ S,K \subset \mathsf K_n: [S],[K] \in \mathcal I \\ \mathsf{L}(S)=\mathsf{L}(K)=\{ u,v \} } } \frac{ \Upsilon(S) \Upsilon(K) h_{S}(\bm X,\bm Y) h_{K}(\bm X,\bm Y) }{ \beta_{\mathcal I}^2 N^{2\ell}n^{-2} }  \Big( \frac{1}{t} \sum_{\mathtt k=1}^{t} \frac{1}{\varkappa} \chi_{\xi_{\mathtt k}}(V(S)) \Big) \Big( \frac{1}{t} \sum_{\mathtt k=1}^{t} \frac{1}{\varkappa} \chi_{\xi_{\mathtt k}}(V(K)) \Big) \,.
\end{align*}
Similarly as in \eqref{eq-prop-approx-Phi-part-2} and \eqref{eq-prop-approx-Phi-part-3}, we can show that
\begin{align*}
    \mathbb E\Big[ \big( \widetilde{\Phi}^{\mathcal I}_{u,v} \big)^2 \cdot x_u^2 x_v^2 \Big],\ \mathbb E\Big[ \big( \widetilde{\Phi}^{\mathcal I}_{u,v} \big)^2 \Big] \leq R \,,  
\end{align*}
completing the proof of Proposition~\ref{prop-approximate-recovery-statistics-cov}.

\subsection{Proof of Lemma~\ref{lem-color-coding-recovery-cov}}{\label{subsec:proof-lem-4.11}}

In this subsection, we show the approximate hypergraph counts $\mathfrak R_{H}(\bm X,\bm Y,\xi)$ can be computed efficiently via the following algorithm.
\begin{breakablealgorithm}{\label{alg:dynamic-programming-recovery-cov}}
    \caption{Computation of $\mathfrak R_{H}(\bm X,\bm Y,\xi)$}
    \begin{algorithmic}[1]
    \STATE \textbf{Input}: Two $n*N$ matrices $\bm X,\bm Y$ with its vertices colored by $\tau$, and an element $[H] \in \mathcal I$. 
    \STATE List the vertices in $E(H)$ in counterclockwise order $e_1,\ldots,e_{2\ell}$ with $e_{2i-1}=(v_i,u_i),e_{2i}=(v_{i+1},u_i)$ (we may arbitrarily choose $e_1$). Recall that we use $\gamma(H)$ to denote a mapping $\gamma:E(H) \to \{ \bullet,\circ \}$. Let $\gamma_i = \gamma(e_{2i-1})=\gamma(e_{2i})$.
    \STATE For every color $c_1,c_2 \in [2\ell]$, every decoration $\gamma \in\{ \bullet,\circ \}$, and every distinct $u \in [n], v \in [N]$, compute  
    \begin{align*}
        \Lambda(u,v;c_1,c_2;\gamma) =
        \begin{cases}
            \bm X_{u,v} \cdot \mathbf 1_{ \{ \xi(x)=c_1, \xi(y)=c_2 \} },  & \gamma = \bullet \,; \\
            \bm Y_{u,v} \cdot \mathbf 1_{ \{ \xi(x)=c_1, \xi(y)=c_2 \} },  & \gamma = \circ \,.
        \end{cases}
    \end{align*}
    \STATE For every color $c_1,c_2 \in [2\ell]$, every color subsets $C \subset [2\ell]$ such that $C=\{ c_1,c_2,c_3 \}$, and every distinct $u,v \in [n]$,  
    \begin{align*}
        Y_1(u,v;c_1,c_2;C) = \sum_{ z \in [N] } \Lambda(x,z;c_1,c_3;\gamma_1) \Lambda(y,z;c_2,c_3;\gamma_1) \,.
    \end{align*}
    \STATE For every $2 \leq k \leq \ell-1$ and $c_1,c_2 \in [\ell]$, and for every $C \subset [\ell]$ with $|C|=2k+1$ and $c_1,c_2 \in C$, compute recursively   
    \begin{align*}
       & Y_{k}(x,y;c_1,c_2;C) \\
       =\ & \sum_{ c,c' \in C \setminus \{ c_1,c_2 \} } \sum_{ w \in [N] } \sum_{ \substack{ z \in [n] \\ z \neq x,y } } \Lambda(y,w;c_2,c';\gamma_k) \Lambda(z,w;c,c';\gamma_k) Y_{k-1}(x,z;c_1,c;C \setminus \{ c',c_2 \} ) \,.
    \end{align*}
    \STATE Compute
    \begin{align*}
        \mathfrak R_H(\bm X,\bm Y,\xi) = \frac{1}{|\mathsf{Aut}(H)|} \sum_{ c_1\neq c_2 } Y_{1}(u,v;c_1,c_2,[2\ell+1])
    \end{align*}
    \textbf{Output}: $\mathfrak R_H(\bm X,\bm Y,\xi)$.
    \end{algorithmic}
\end{breakablealgorithm}

The running time analysis and the correctness of Algorithm~\ref{alg:dynamic-programming-recovery-cov} is almost identical to that of Algorithm~\ref{alg:dynamic-programming-recovery}, so we will omit further details here.

\section{Supplementary proofs in Section~\ref{sec:lower-bound}}{\label{sec:supp-proofs-sec-5}}

\subsection{Proof of Lemma~\ref{lem-correlated-Gaussian-moment}}{\label{subsec:proof-lem-5.8}}

Denote $\lambda_*=(1+\delta)\lambda$ and $\mu_*=(1+\delta)\mu$. Note that since \eqref{eq-condition-lower-bound} holds, we can choose a sufficiently small $\delta>0$ such that
    \begin{equation}{\label{eq-condition-lower-bound-relax}}
        \frac{ \lambda_*^2 \rho^2 }{ 2(1-\lambda_*^2+\lambda_*^2 \rho^2) } + \frac{ \mu_*^2 \rho^2 }{ 2(1-\mu_*^2+\mu_*^2 \rho^2) } < \frac{1-0.5\epsilon}{2} \,.
    \end{equation}
    Thus, we have 
    \begin{align*}
        & \sum_{ k,\ell=0 }^{D} \frac{ \lambda_*^{2k} \mu_*^{2\ell} }{ 2^{k+\ell}k!\ell! } \mathbb E\Bigg\{ \Big( \frac{ \sum_{j=1}^{n} \zeta_j }{ \sqrt{n} } \Big)^{2k} \Big( \frac{ \sum_{j=1}^{n} \eta_j }{ \sqrt{n} } \Big)^{2\ell} \Bigg\}  \\
        \leq\ & \mathbb E\Bigg\{ \exp \Bigg( \frac{\lambda_*^2}{2} \Big( \frac{ \sum_{j=1}^{n} \zeta_j }{ \sqrt{n} } \Big)^2 + \frac{\mu_*^2}{2} \Big( \frac{ \sum_{j=1}^{n} \eta_j }{ \sqrt{n} } \Big)^2 \Bigg\} = \mathbb E\Big\{ e^{ \frac{1}{2} (\lambda_*^2 U^2 + \mu_*^2 V^2) }  \Big\} \,. 
    \end{align*}
    Since we can write $U=\rho Z+ \sqrt{1-\rho^2} X, V = \rho Z+\sqrt{1-\rho^2} Y$ where $X,Y,Z$ are independent standard normal variables, thus
    \begin{align*}
        &\mathbb E\Big\{ e^{ \frac{1}{2} (\lambda_*^2 U^2 + \mu_*^2 V^2) } \Big\} = \mathbb E\Big\{ \exp\big( \tfrac{1}{2} \lambda_*^2(\rho Z+ \sqrt{1-\rho^2} X)^2 + \tfrac{1}{2} \mu_*^2 (\rho Z+\sqrt{1-\rho^2} Y)^2 \big) \Big\} \\
        =\ & \mathbb E\Big\{ \exp\big( \tfrac{(\lambda_*^2+\mu_*^2)\rho^2}{2} Z^2 + \lambda_*^2 \rho\sqrt{1-\rho^2} ZX + \mu_*^2 \rho\sqrt{1-\rho^2} ZY + \tfrac{\lambda_*^2(1-\rho^2)}{2} X^2 + \tfrac{\mu_*^2(1-\rho^2)}{2} Y^2 \big) \Big\} \\
        =\ & \mathbb E\Big\{ \exp\big( \tfrac{(\lambda_*^2+\mu_*^2)\rho^2}{2} Z^2 + \tfrac{ \lambda_*^4 \rho^2(1-\rho^2) }{ 2(1-\lambda_*^2+\lambda_*^2 \rho^2) } Z^2 + \tfrac{ \mu_*^4 \rho^2(1-\rho^2) }{ 2(1-\mu_*^2+\mu_*^2 \rho^2) } Z^2 \big) \Big\} \\
        =\ & \mathbb E\Big\{ \exp\big( ( \tfrac{ \lambda_*^2 \rho^2 }{ 2(1-\lambda_*^2+\lambda_*^2 \rho^2) }  + \tfrac{ \mu_*^2 \rho^2 }{ 2(1-\mu_*^2+\mu_*^2 \rho^2) } ) Z^2 \big) \Big\} \overset{\eqref{eq-condition-lower-bound-relax}}{=} O_{\epsilon}(1) \,.
    \end{align*}
    Thus, we have
    \begin{align*}
        \frac{ \lambda^{2k} \mu^{2\ell} }{ 2^{k+\ell}k!\ell! } \mathbb E\Bigg\{ \Big( \frac{ \sum_{j=1}^{n} \zeta_j }{ \sqrt{n} } \Big)^{2k} \Big( \frac{ \sum_{j=1}^{n} \eta_j }{ \sqrt{n} } \Big)^{2\ell} \Bigg\} &\leq (1+\delta)^{-2(k+\ell)} \mathbb E\Big\{ e^{ \frac{1}{2} (\lambda_*^2 U^2 + \mu_*^2 V^2) } \Big\} \\
        &= O_{\epsilon}(1) \cdot (1+\delta)^{-2(k+\ell)} \,.  
    \end{align*}

\subsection{Proof of Lemma~\ref{lem-compare-moments-Gaussian-non-Gaussian}}{\label{subsec:proof-lem-5.9}}

We will use Lindeberg's interpolation to bound the difference between
\begin{align*}
    \frac{ \lambda^{2k} \mu^{2\ell} }{ 2^{k+\ell}k!\ell! } \mathbb E\Bigg\{ \Big( \frac{ \sum_{j=1}^{n} \zeta_j }{ \sqrt{n} } \Big)^{2k} \Big( \frac{ \sum_{j=1}^{n} \eta_j }{ \sqrt{n} } \Big)^{2\ell} \Bigg\} \mbox{ and } \frac{ \lambda^{2k} \mu^{2\ell} }{ 2^{k+\ell}k!\ell! } \mathbb E\Bigg\{ \Big( \frac{ \sum_{j=1}^{n} X_j }{ \sqrt{n} } \Big)^{2k} \Big( \frac{ \sum_{j=1}^{n} Y_j }{ \sqrt{n} } \Big)^{2\ell} \Bigg\} \,.
\end{align*}
To this end, define 
\begin{align}
    U=U_0=\frac{ \sum_{j=1}^{n} \zeta_j }{ \sqrt{n} } \mbox{ and } V=V_1= \frac{ \sum_{j=1}^{n} \eta_j }{ \sqrt{n} } \,.  \label{eq-def-U-0V-0}
\end{align}
In addition, for each $1 \leq t \leq n$ define
\begin{align}
    U_t = \frac{1}{\sqrt{n}}\Big( \sum_{j=1}^{t} X_j + \sum_{j=t+1}^{n} \zeta_j \Big) \mbox{ and } V_t = \frac{1}{\sqrt{n}}\Big( \sum_{j=1}^{t} Y_j + \sum_{j=t+1}^{n} \eta_j \Big) \,.  \label{eq-def-U-t-V-t}
\end{align}
Finally, define
\begin{align}
    U_{t,*} = \frac{1}{\sqrt{n}}\Big( \sum_{j=1}^{t-1} X_j + \sum_{j=t+1}^{n} \zeta_j \Big) \mbox{ and } V_{t,*} = \frac{1}{\sqrt{n}}\Big( \sum_{j=1}^{t-1} Y_j + \sum_{j=t+1}^{n} \eta_j \Big) \,.  \label{eq-def-U-t-*-V-t-*}
\end{align}
Then, we have (note that $\{ X_j,Y_j \}$ is independent with $\{ X_i,Y_i: i \neq j \}$)
\begin{align}
    \mathbb E\Big[ U_t^{2k} V_t^{2\ell} \Big] &= \mathbb E\Big[ (U_{t,*}+\tfrac{X_t}{\sqrt{n}})^{2k} (V_{t,*}+\tfrac{Y_t}{\sqrt{n}})^{2\ell} \Big] = \sum_{a=0}^{2k} \sum_{b=0}^{2\ell} \binom{2k}{a} \binom{2\ell}{b} \frac{1}{n^{(a+b)/2}} \mathbb E\Big[ U_{t,*}^{2k-a} V_{t,*}^{2\ell-b} X_t^{a} Y_t^{b} \Big] \nonumber \\
    &= \sum_{a=0}^{2k} \sum_{b=0}^{2\ell} \binom{2k}{a} \binom{2\ell}{b} \frac{1}{n^{(a+b)/2}} \mathbb E\big[ X_t^{a} Y_t^{b} \big] \mathbb E\Big[ U_{t,*}^{2k-a} V_{t,*}^{2\ell-b} \Big] \,.  \label{eq-moment-U-V}
\end{align}
Now we make several claims regarding the joint moments of $X_t,Y_t$ and $U_{t,*},V_{t,*}$.

\begin{claim}{\label{claim-moment-u-t-v-t}}
    There exists an absolute constant $C''>0$ such that
    \begin{align*}
        \mathbb E\big[ X_t^{a} Y_t^{b} \big] \leq (C'')^{a+b} (2a+2b)! \,.
    \end{align*}
\end{claim}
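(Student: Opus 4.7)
The plan is to reduce the joint moment bound on $(X_t, Y_t)$ to a pair of single-variable subgaussian moment bounds on $(\mathbb X, \mathbb Y)$, exploiting the product structure imposed by the construction $X_t = \mathbb X \mathbb X'$, $Y_t = \mathbb Y \mathbb Y'$ with $(\mathbb X', \mathbb Y')$ an independent copy of $(\mathbb X, \mathbb Y) \sim \pi_*$. First I will use this independence to factor
\[
\mathbb E\big[X_t^{a} Y_t^{b}\big] \;=\; \mathbb E\big[\mathbb X^{a}\mathbb Y^{b}\big]\,\mathbb E\big[(\mathbb X')^{a}(\mathbb Y')^{b}\big] \;=\; \mathbb E\big[\mathbb X^{a}\mathbb Y^{b}\big]^{2}.
\]
Since the right-hand side is automatically nonnegative, this also conveniently removes the need for an absolute value.

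Next I would apply Cauchy--Schwarz to decouple the two marginals, obtaining
\[
\mathbb E\big[\mathbb X^{a}\mathbb Y^{b}\big]^{2} \;\leq\; \mathbb E\big[\mathbb X^{2a}\big]\,\mathbb E\big[\mathbb Y^{2b}\big].
\]
Item~(2) of Assumption~\ref{assum-lower-bound} guarantees that $\mathbb X$ and $\mathbb Y$ are $C'$-subgaussian, so the standard moment bound for subgaussian variables yields $\mathbb E[\mathbb X^{2a}] \leq \widetilde{C}^{a}(2a)!$ and $\mathbb E[\mathbb Y^{2b}] \leq \widetilde{C}^{b}(2b)!$ for some constant $\widetilde{C}=\widetilde{C}(C')$. (The cleanest route is via the identity $\mathbb E[\mathbb X^{2a}] = \int_0^\infty 2a\,t^{2a-1}\mathbb P(|\mathbb X|>t)\,dt$ together with the subgaussian tail $\mathbb P(|\mathbb X|>t) \leq 2e^{-t^2/(2(C')^2)}$; the $(2a)!$ factor emerges from the gamma integral and Stirling.)

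Combining the two estimates gives
\[
\mathbb E\big[X_t^{a} Y_t^{b}\big] \;\leq\; \widetilde{C}^{a+b}\,(2a)!\,(2b)!\;\leq\; \widetilde{C}^{a+b}\,(2a+2b)!,
\]
where the last inequality is the trivial combinatorial fact $\binom{2a+2b}{2a}\geq 1$. Setting $C'' := \widetilde{C}$ (or any larger absolute constant) then yields the claim. I do not anticipate any real obstacle here: the only non-trivial ingredient is the subgaussian moment bound, which is standard, and the factorization in the first step is automatic from the definition of $(X_t, Y_t)$ so no finer structure of $\pi_*$ (such as Item~(3) of Assumption~\ref{assum-lower-bound}) is needed.
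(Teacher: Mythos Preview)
Your proof is correct. The paper's argument is slightly different: it works at the level of $X_t,Y_t$ themselves, invoking a moment bound for products of subgaussian variables (cited from \cite{DK25+}) to get $\mathbb E[|X_t|^{a}]\leq (4C')^{2a}(2a)!$, and then applies H\"older with exponents $\tfrac{a+b}{a},\tfrac{a+b}{b}$ to decouple $X_t^{a}$ from $Y_t^{b}$. You instead exploit the factorization $X_t=\mathbb X\mathbb X'$, $Y_t=\mathbb Y\mathbb Y'$ at the outset, reducing immediately to moments of the underlying subgaussian marginals $\mathbb X,\mathbb Y$ via Cauchy--Schwarz. Your route is a bit more self-contained (no external citation needed, just the standard subgaussian tail-to-moment computation), while the paper's route would generalize more readily to situations where $X_t,Y_t$ are sub-exponential but not literally products. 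Either way the final bound $(C'')^{a+b}(2a+2b)!$ falls out with the same combinatorial slack $(2a)!(2b)!\leq(2a+2b)!$.
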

\begin{proof}
    Note that by our assumption we have $X_t$ are products of two $C'$-subgaussian variables (and so does $Y_t$), thus using \cite[Proposition~3.14]{DK25+} we have $\mathbb E[|X_t|^{a}],\mathbb E[|Y_t|^a] \leq (4C')^{2a} \cdot (2a)!$ for all $a \geq 0$. Thus, using Holder's inequality we have
    \begin{equation*}
        \mathbb E\big[ X_t^{a} Y_t^{b} \big] \leq \mathbb E\big[ |X_t|^{a+b} \big]^{ \frac{a}{a+b} } \cdot \mathbb E\big[ |Y_t|^{a+b} \big]^{\frac{b}{a+b}} \leq (4C')^{a+b} (2a+2b)! \leq (C'')^{a+b} (2a+2b)! \,. \qedhere
    \end{equation*}
\end{proof}
\begin{claim}{\label{claim-moment-U-t-*-V-t-*}}
    For $k,\ell=n^{o(1)}$ we have
    \begin{align*}
        \mathbb E\Big[ U_{t,*}^{2k-a} V_{t,*}^{2\ell-b} \Big] \leq (1-o(1)) \mathbb E\Big[ U_{t,*}^{2k} V_{t,*}^{2\ell} \Big] \,.
    \end{align*}
\end{claim}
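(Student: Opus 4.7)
The strategy is to reduce the comparison of joint moments of $(U_{t,*}, V_{t,*})$ to their Gaussian analogues via a quantitative moment CLT, and then to carry out the inequality explicitly in the Gaussian regime. Since $U_{t,*}$ and $V_{t,*}$ are normalized sums of $n-1$ i.i.d.\ centered unit-variance pairs $(X_j, Y_j)_{j \neq t}$ with cross-correlation $\rho^2$, their joint distribution should be well approximated by a standard bivariate Gaussian $(\tilde U, \tilde V)$ with correlation $\rho^2$.

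First, I would establish a quantitative moment CLT of the form $\mathbb E[U_{t,*}^{2p} V_{t,*}^{2q}] = (1+o(1))\, \mathbb E[\tilde U^{2p} \tilde V^{2q}]$ uniformly for $p, q = n^{o(1)}$. Assumption~\ref{assum-lower-bound}(2) gives that $\mathbb X, \mathbb Y$ are $C'$-subgaussian, so the products $X_j = \mathbb X_j \mathbb X_j'$ and $Y_j = \mathbb Y_j \mathbb Y_j'$ are subexponential with $\mathbb E|X_j|^r, \mathbb E|Y_j|^r \leq (Cr)^r$ (as in Claim~\ref{claim-moment-u-t-v-t}); a cumulant expansion then shows that the cumulants of order $\geq 3$ of $(U_{t,*}, V_{t,*})$ decay like $n^{-(k-2)/2}$, producing a relative matching error $O((p+q)^{3/2}/\sqrt n) = o(1)$ in our regime. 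Equivalently, one may run a Lindeberg-style interpolation (mimicking the proof of Lemma~\ref{lem-compare-moments-Gaussian-non-Gaussian}) directly on $\mathbb E[U_{t,*}^{2p} V_{t,*}^{2q}]$. Then in the Gaussian case, writing $\tilde V = \rho^2 \tilde U + \sqrt{1-\rho^4}\,W$ with $W \perp \tilde U$ standard normal and expanding by the binomial theorem, Isserlis' theorem produces an explicit double-factorial expansion from which one reads off $\mathbb E[\tilde U^{2k-a}\tilde V^{2\ell-b}] / \mathbb E[\tilde U^{2k}\tilde V^{2\ell}] = O((k+\ell)^{-(a+b)/2})$ up to a $\rho$-dependent constant; this is $\leq 1 - \Omega(1)$ whenever $(a,b) \neq (0,0)$ and $k, \ell$ are large, while equality holds trivially at $(0,0)$. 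Combining the two steps produces the claimed bound.

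The main obstacle is uniformity of the moment CLT across all $p, q = n^{o(1)}$: a standard CLT matches moments at each fixed degree, whereas here we need relative accuracy $o(1)$ simultaneously across a slowly growing range. This is precisely why Assumption~\ref{assum-lower-bound}(2) requires $C'$-subgaussianity rather than just a finite moment hypothesis: the Marcinkiewicz--Zygmund-type bound $\mathbb E |U_{t,*}|^{2p} \leq (Cp)^p$ that subgaussianity yields is comfortably dominated by $n^{p/2}$ throughout the range, which makes the cumulant expansion converge absolutely and makes the error estimate hold uniformly in $(p,q)$.
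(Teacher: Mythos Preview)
Your approach is genuinely different from the paper's, and there is a circularity issue you should be aware of.

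The paper's proof is a short, direct argument that uses Assumption~\ref{assum-lower-bound}(3) in an essential way. Since every mixed moment $\mathbb E[X_j^{a}Y_j^{b}]$ (and its Gaussian analogue $\mathbb E[\zeta_j^{a}\eta_j^{b}]$) is nonnegative and dominates the product $\mathbb E[X_j^{a}]\mathbb E[Y_j^{b}]$, the paper deduces a positive-association inequality
\[
\mathbb E\big[U_{t,*}^{2m+2}V_{t,*}^{2n}\big]\ \ge\ \mathbb E\big[U_{t,*}^{2m}V_{t,*}^{2n}\big]\,\mathbb E\big[U_{t,*}^{2}\big]\ =\ \Big(1-\tfrac{1}{n}\Big)\,\mathbb E\big[U_{t,*}^{2m}V_{t,*}^{2n}\big],
\]
and then iterates to get the bound with a factor $(1-1/n)^{-(a+b)}=1+o(1)$ (odd exponents are handled by the AM--GM step $U^{2r+1}\le\tfrac12(U^{2r}+U^{2r+2})$). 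No CLT is invoked at all; the positive-correlation hypothesis~(3), which you did not use, is introduced precisely for this step (cf.\ Remark~5.2).

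Your route --- pass to Gaussians via a quantitative moment CLT and then compute --- has a structural problem: the paper's proof of the moment CLT (Lemma~\ref{lem-compare-moments-Gaussian-non-Gaussian}) \emph{uses} Claim~\ref{claim-moment-U-t-*-V-t-*} in the Lindeberg swap, so ``mimicking'' that argument here is circular. You would need an independent proof of the CLT uniform over $p,q=n^{o(1)}$; your cumulant sketch is plausible, but the quoted error $O((p+q)^{3/2}/\sqrt n)$ only accounts for the first Edgeworth correction, and you would still need to control all higher-order terms uniformly in this slowly growing range. If you can close that gap, your argument would have the merit of dispensing with Assumption~(3) entirely and thus strengthening the result --- but as written the key step is not yet justified, whereas the paper's association argument is a few lines.
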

\begin{proof}
    It suffices to show the case that $a,b$ are even, i.e.,
    \begin{align*}
        \mathbb E\Big[ U_{t,*}^{2k-2a} V_{t,*}^{2\ell-2b} \Big] \leq (1-o(1)) \mathbb E\Big[ U_{t,*}^{2k} V_{t,*}^{2\ell} \Big] \,,
    \end{align*}
    since we can deal with the odd case using
    \begin{align*}
        U_{t,*}^{2k-2a+1} \leq \frac{1}{2}\big( U_{t,*}^{2k-2a+2} + U_{t,*}^{2k-2a} \big) \mbox{ and } V_{t,*}^{2k-2a+1} \leq \frac{1}{2} \big( V_{t,*}^{2k-2a+2} + V_{t,*}^{2k-2a} \big) \,.
    \end{align*}
    Note that $\mathbb E[U_{t,*}^2]=\mathbb E[V_{t,*}^2]=\frac{n-1}{n}$. In addition, from Assumption~\ref{assum-lower-bound} we have
    \begin{align*}
        \mathbb E[X_i^a Y_i^b] \geq \mathbb E[X_i^a] \mathbb E[Y_i^b] \geq 0 \mbox{ for all } 1 \leq i \leq n \mbox{ and } a,b \in \mathbb N \,.
    \end{align*}
    Also, it is clear that
    \begin{align*}
        \mathbb E[\zeta_i^a \eta_i^b] \geq \mathbb E[\zeta_i^a] \mathbb E[\eta_i^b] \geq 0 \mbox{ for all } 1 \leq i \leq n \mbox{ and } a,b \in \mathbb N \,.
    \end{align*}
    Thus, we see that $U_{t,*}^a,V_{t,*}^b$ are positively correlated for all $a,b\in\mathbb N$. This yields that
    \begin{align*}
        \mathbb E\big[ U_{t,*}^{2k-2a+2} V_{t,*}^{2\ell-2b} \big] \geq \mathbb E\big[ U_{t,*}^{2k-2a} V_{t,*}^{2\ell-2b} \big] \mathbb E\big[ U_{t,*}^{2} \big] = (1-\tfrac{1}{n}) \mathbb E\big[ U_{t,*}^{2k-2a} V_{t,*}^{2\ell-2b} \big] \,.
    \end{align*}
    A simple induction argument then yields the desired bound.
\end{proof}
To this end, based on Claims~\ref{claim-moment-u-t-v-t} and \ref{claim-moment-U-t-*-V-t-*}, we then have
\begin{align*}
    & \sum_{ \substack{ (a,b) \neq (0,0) \\ a \leq 2k, b \leq 2\ell } } \binom{2k}{a} \binom{2\ell}{b} \frac{1}{n^{(a+b)/2}} \mathbb E\big[ X_t^{a} Y_t^{b} \big] \mathbb E\Big[ U_{t,*}^{2k-a} V_{t,*}^{2\ell-b} \Big]  \\
    \leq\ & [1-o(1)] \cdot \sum_{ \substack{ (a,b) \neq (0,0) \\ a \leq 2k, b \leq 2\ell } } \binom{2k}{a} \binom{2\ell}{b} \frac{ (2a+2b)! (C'')^{a+b} }{n^{(a+b)/2}} \mathbb E\Big[ U_{t,*}^{2k} V_{t,*}^{2\ell} \Big] = n^{-\frac{1}{2}+o(1)} \cdot \mathbb E\Big[ U_{t,*}^{2k} V_{t,*}^{2\ell} \Big] 
\end{align*}
when $k,\ell=n^{o(1)}$. Thus, we have
\begin{align}
    \mathbb E\Big[ U_t^{2k} V_t^{2\ell} \Big] = (1+o(1)) \mathbb E\Big[ U_{t,*}^{2k} V_{t,*}^{2\ell} \Big] \,.  \label{eq-compare-U-t-*-V-t-*-and-U-t-V-t}
\end{align}
Similarly, we have
\begin{align*}
    \mathbb E\Big[ U_{t+1}^{2k} V_{t+1}^{2\ell} \Big] &= \mathbb E\Big[ (U_{t,*}+\tfrac{\zeta_t}{\sqrt{n}})^{2k} (V_{t,*}+\tfrac{\eta_t}{\sqrt{n}})^{2\ell} \Big] = \sum_{a=0}^{2k} \sum_{b=0}^{2\ell} \binom{2k}{a} \binom{2\ell}{b} \frac{1}{n^{(a+b)/2}} \mathbb E\Big[ U_{t,*}^{2k-a} V_{t,*}^{2\ell-b} \zeta_t^{a} \eta_t^{b} \Big] \\
    &= \sum_{a=0}^{2k} \sum_{b=0}^{2\ell} \binom{2k}{a} \binom{2\ell}{b} \frac{1}{n^{(a+b)/2}} \mathbb E\big[ \zeta_t^{a} \eta_t^{b} \big] \mathbb E\Big[ U_{t,*}^{2k-a} V_{t,*}^{2\ell-b} \Big] \,.
\end{align*}
Thus, we have (recall that $\mathbb E[X_t^a Y_t^b]=\mathbb E[\zeta_t^a \eta_t^b]$ for $a+b \leq 2$)
\begin{align*}
    & \mathbb E\big[ U_{t+1}^{2k} V_{t+1}^{2\ell} \big] - \mathbb E\big[ U_{t}^{2k} V_{t}^{2\ell} \big] \\
    =\ & \sum_{a=0}^{2k} \sum_{b=0}^{2\ell} \binom{2k}{a} \binom{2\ell}{b} \frac{1}{n^{(a+b)/2}} \Big( \mathbb E\big[ \zeta_t^{a}\eta_t^{b} \big] - \mathbb E\big[ X_t^{a} Y_t^{b} \big] \Big) \mathbb E\Big[ U_{t,*}^{2k-a} V_{t,*}^{2\ell-b} \Big] \\
    =\ & \sum_{ \substack{ a \leq 2k, b \leq 2\ell \\ a+b \geq 3 } } \binom{2k}{a} \binom{2\ell}{b} \frac{1}{n^{(a+b)/2}} \Big( \mathbb E\big[ \zeta_t^{a}\eta_t^{b} \big] - \mathbb E\big[ X_t^{a} Y_t^{b} \big] \Big) \mathbb E\Big[ U_{t,*}^{2k-a} V_{t,*}^{2\ell-b} \Big] \\
    \leq\ & \sum_{ \substack{ a \leq 2k, b \leq 2\ell \\ a+b \geq 3 } } \binom{2k}{a} \binom{2\ell}{b} \frac{(2a+2b)!(C'')^{a+b}}{n^{(a+b)/2}} \mathbb E\Big[ U_{t,*}^{2k} V_{t,*}^{2\ell} \Big] \leq n^{-\frac{3}{2}+o(1)} \mathbb E\Big[ U_{t}^{2k} V_{t}^{2\ell} \Big]  \,,
\end{align*}
where the first inequality follows from Claims~\ref{claim-moment-u-t-v-t} and \ref{claim-moment-U-t-*-V-t-*}, and the second inequality follows from \eqref{eq-compare-U-t-*-V-t-*-and-U-t-V-t}. In conclusion, we have
\begin{align*}
    \mathbb E\big[ U_{t+1}^{2k} V_{t+1}^{2\ell} \big] = (1+n^{-\frac{3}{2}+o(1)}) \mathbb E\big[ U_{t}^{2k} V_{t}^{2\ell} \big] \,.
\end{align*}
Thus, we have
\begin{align*}
    \mathbb E\big[ U_{n}^{2k} V_{n}^{2\ell} \big] = (1+n^{-\frac{3}{2}+o(1)})^n \mathbb E\big[ U_{0}^{2k} V_{0}^{2\ell} \big] = (1+n^{-\frac{1}{2}+o(1)}) \mathbb E\big[ U_{0}^{2k} V_{0}^{2\ell} \big] \,.
\end{align*}
Recall \eqref{eq-def-U-0V-0} and \eqref{eq-def-U-t-V-t}, this completes the proof of Lemma~\ref{lem-compare-moments-Gaussian-non-Gaussian}.

\subsection{Proof of Lemma~\ref{lem-bound-Adv-cov-relax-1}}{\label{subsec:proof-lem-5.10}}

The goal of this section is to prove Lemma~\ref{lem-bound-Adv-cov-relax-1}. The following polynomials will play a fundamental role in our analysis.

\begin{DEF}{\label{def-Hermite-poly}}
    For all $m \in \mathbb N$, define the Hermite polynomials by
    \begin{equation}{\label{eq-def-Hermite-poly}}
        H_0(z)=1 \,, \quad H_1(z)=z \,, \quad H_{m+1}(z)= zH_m(z)-mH_{m-1}(z) \,.
    \end{equation}
    In addition, define $\mathcal H_m(z)=\frac{1}{\sqrt{m!}} H_m(z)$, and for all $x \in \mathbb R^n$ and $\alpha\in \mathbb N^{n}$ define
    \begin{equation}{\label{eq-def-mathcal-H}}
        \mathcal H_{\alpha}(x) := \prod_{i=1}^{d} \mathcal H_{\alpha_i}(x_i) \,.
    \end{equation}
    For all $\bm{\alpha}=(\alpha_i: i \in [N]) \in (\mathbb N^n)^{\otimes N}$ and $\bm{\beta}=(\beta_j:j \in [N]) \in (\mathbb N^n)^{\otimes N}$, define
    \begin{equation}{\label{eq-def-phi-alpha,beta}}
        \mathcal H_{\bm{\alpha},\bm{\beta}}(\bm X,\bm Y) = \prod_{1\leq i \leq N} \mathcal H_{\alpha_i}(\bm X_i) \prod_{1 \leq j \leq N} \mathcal H_{\beta_j}(\bm Y_j) \,. 
    \end{equation}
\end{DEF}

It is well known (see, e.g., \cite{Sze39}) that $\{ \phi_{\bm{\alpha},\bm \beta}: |\bm\alpha|+|\bm\beta| \leq D \}$ forms a standard orthogonal basis of $\mathbb R[X,Y]_{\leq D}$ under the measure $\overline\Qb$, i.e., we have
\begin{equation}{\label{eq-standard-orthogonal-basis}}
    \mathbb E_{\overline\Qb}\big[ \phi_{\bm \alpha,\bm \beta} \phi_{\bm \alpha',\bm \beta'} \big] = \mathbf 1_{ \{ (\bm\alpha,\bm\beta)=(\bm\alpha',\bm\beta') \} } \,.
\end{equation}

\begin{lemma}{\label{lem-Adv-cov-transform}}
    For any $n,N,D\geq 1$, it holds that
    \begin{equation}{\label{eq-Adv-cov-transform}}
        \mathsf{Adv}_{\leq D}(\overline\Pb,\overline\Qb) = \Bigg( \sum_{ (\bm \alpha,\bm \beta): |\bm \alpha|+|\bm \beta|\leq D } \mathbb E_{\overline\Pb}\Big[ \phi_{\bm \alpha,\bm \beta}(X,Y) \Big]^2 \Bigg)^{1/2} \,.
    \end{equation}
\end{lemma}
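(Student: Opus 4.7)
The plan is to reduce the variational problem defining $\chi^2_{\leq D}(\overline\Pb;\overline\Qb)$ to a finite-dimensional optimization by expanding an arbitrary degree-$D$ polynomial in the Hermite basis and then applying Cauchy--Schwarz. Since under $\overline\Qb$ all entries of $(\bm X,\bm Y)$ are i.i.d.\ standard normal, the family $\{\phi_{\bm\alpha,\bm\beta}\}$ introduced in Definition~\ref{def-Hermite-poly} is an orthonormal basis of $L^2(\overline\Qb)$ with respect to the grading by total degree; in particular, restricted to multi-indices with $|\bm\alpha|+|\bm\beta|\leq D$ it spans $\mathbb R[\bm X,\bm Y]_{\leq D}$ and satisfies \eqref{eq-standard-orthogonal-basis}. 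This is the only nontrivial analytic input; the remainder is algebra.

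First I would take an arbitrary $f \in \mathbb R[\bm X,\bm Y]_{\leq D}$ and expand it uniquely as
\[
f = \sum_{|\bm\alpha|+|\bm\beta|\leq D} c_{\bm\alpha,\bm\beta}\, \phi_{\bm\alpha,\bm\beta}(\bm X,\bm Y),
\qquad c_{\bm\alpha,\bm\beta}\in\mathbb R.
\]
By orthonormality under $\overline\Qb$,
\[
\mathbb E_{\overline\Qb}[f^2] = \sum_{|\bm\alpha|+|\bm\beta|\leq D} c_{\bm\alpha,\bm\beta}^{\,2},
\]
while linearity of expectation gives
\[
\mathbb E_{\overline\Pb}[f] = \sum_{|\bm\alpha|+|\bm\beta|\leq D} c_{\bm\alpha,\bm\beta}\, \mathbb E_{\overline\Pb}\big[\phi_{\bm\alpha,\bm\beta}(\bm X,\bm Y)\big].
\]

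Next I would apply the Cauchy--Schwarz inequality to the sum representing $\mathbb E_{\overline\Pb}[f]$, yielding
\[
\frac{\mathbb E_{\overline\Pb}[f]}{\sqrt{\mathbb E_{\overline\Qb}[f^2]}}
\leq
\Bigg( \sum_{|\bm\alpha|+|\bm\beta|\leq D} \mathbb E_{\overline\Pb}\big[\phi_{\bm\alpha,\bm\beta}\big]^2 \Bigg)^{1/2}.
\]
Taking the supremum over $f$ (recalling the definition \eqref{eq-def-low-deg-adv} of $\chi^2_{\leq D}$) gives the upper bound on $\mathsf{Adv}_{\leq D}(\overline\Pb,\overline\Qb)$.

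Finally, to see that this bound is tight, I would exhibit the extremizer $c_{\bm\alpha,\bm\beta} = \mathbb E_{\overline\Pb}[\phi_{\bm\alpha,\bm\beta}]$, for which both sides of the Cauchy--Schwarz inequality coincide. One minor technical point worth verifying carefully, which I view as the only real obstacle, is ensuring that the indexing set matches the degree truncation: each $\phi_{\bm\alpha,\bm\beta}$ is a polynomial of total degree exactly $|\bm\alpha|+|\bm\beta|$ in the entries of $(\bm X,\bm Y)$, so the span of $\{\phi_{\bm\alpha,\bm\beta}:|\bm\alpha|+|\bm\beta|\leq D\}$ coincides with $\mathbb R[\bm X,\bm Y]_{\leq D}$. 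Once this grading is confirmed, the identity \eqref{eq-Adv-cov-transform} follows directly from the two-line Cauchy--Schwarz computation above.
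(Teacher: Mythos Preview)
Your proposal is correct and follows essentially the same approach as the paper: expand $f$ in the orthonormal Hermite basis $\{\phi_{\bm\alpha,\bm\beta}\}$, use orthonormality under $\overline\Qb$ to compute $\mathbb E_{\overline\Qb}[f^2]$, apply Cauchy--Schwarz, and observe equality when $c_{\bm\alpha,\bm\beta}\propto \mathbb E_{\overline\Pb}[\phi_{\bm\alpha,\bm\beta}]$. The paper's proof is the same two-line computation, without the explicit discussion of the grading you flag as the only subtle point.
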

\begin{proof}
    For any $f \in \mathbb R[\bm X,\bm Y]_{\leq D}$, it can be uniquely expressed as
    \begin{align*}
        f=\sum_{ (\bm \alpha,\bm \beta): |\bm \alpha|+|\bm \beta|\leq D } C_{\bm \alpha,\bm \beta} \phi_{\bm \alpha,\bm \beta} \,,
    \end{align*}
    where $C_{\bm \alpha,\bm \beta}$'s are real constants. Applying Cauchy-Schwartz inequality one gets
    \begin{align*}
        \frac{ \mathbb{E}_{\overline\Pb}[f] }{ \sqrt{\mathbb{E}_{\overline\Qb}[f^2]} } = \frac{ \sum_{ (\bm \alpha,\bm \beta): |\bm \alpha|+|\bm \beta|\leq D } C_{\bm \alpha,\bm \beta} \mathbb{E}_{\overline\Pb}[\phi_{\bm \alpha,\bm \beta}] }{ \sqrt{ \sum_{ (\bm \alpha,\bm \beta): |\bm \alpha|+|\bm \beta|\leq D } C_{\bm \alpha,\bm \beta}^2} } \leq \Bigg( \sum_{ (\bm \alpha,\bm \beta): |\bm \alpha|+|\bm \beta|\leq D }\big(\mathbb E_{\overline\Pb} [\phi_{\bm \alpha,\bm \beta}]\big)^2 \Bigg)^{1/2} \,,
    \end{align*}
    with equality holds if and only if $C_{\bm \alpha,\bm \beta} \propto \mathbb{E}_{\overline\Pb}[\phi_{\bm \alpha,\bm \beta}]$. 
\end{proof}

Now we can finish the proof of Lemma~\ref{lem-bound-Adv-cov-relax-1}.

\begin{proof}
    Denote $\overline\Pb_{x,y}=\overline\Pb(\cdot\mid x,y)$. Note that
    \begin{align*}
        \mathbb L(\bm X,\bm Y)= \frac{ \mathrm{d}\overline\Pb(\bm X,\bm Y) }{ \mathrm{d}\overline\Qb } = \mathbb E_{(x,y) \sim\pi}\Bigg[ \frac{ \mathrm{d}\overline\Pb_{x,y}(\bm X,\bm Y) }{ \mathrm{d}\overline\Qb } \Bigg] = \mathbb E_{(x,y) \sim\pi}\Big[ \mathbb L_x(\bm X) \mathbb L_y(\bm Y) \Big] \,,
    \end{align*}
    where 
    \begin{align*}
        &\mathbb L_x(\bm X) = \frac{ 1 }{ (1+\frac{\lambda}{n}\|x\|^2) } \exp\Big( \frac{ \frac{\lambda}{n}( \langle x,\bm X_1 \rangle^2 + \ldots + \langle x,\bm X_N \rangle^2 ) }{ 2(1+\frac{\lambda}{n}\|x\|^2) } \Big) \,; \\
        &\mathbb L_y(\bm Y) = \frac{ 1 }{ (1+\frac{\lambda}{n}\|y\|^2) } \exp\Big( \frac{ \frac{\lambda}{n}( \langle y,\bm Y_1 \rangle^2 + \ldots + \langle y,\bm Y_N \rangle^2 ) }{ 2(1+\frac{\lambda}{n}\|y\|^2) } \Big) \,.
    \end{align*}
    Thus, we have
    \begin{align}
        \mathbb E_{\overline\Pb}\big[ \mathcal H_{\bm\alpha,\bm\beta}(\bm X,\bm Y) \big] &= \mathbb E_{\overline\Qb}\Big[ \mathbb L(\bm X,\bm Y) \cdot \mathcal H_{\bm\alpha,\bm\beta}(\bm X,\bm Y) \Big] \nonumber \\
        &= \mathbb E_{ (x,y)\sim\pi } \Bigg\{ \mathbb E_{\overline\Qb}\Big[ \mathbb L_x(\bm X) \mathcal H_{\bm\alpha,\emptyset}(\bm X) \Big] \mathbb E_{\overline\Qb}\Big[ \mathbb L_y(\bm Y) \mathcal H_{\emptyset,\bm\beta}(\bm Y) \Big] \Bigg\} \,.  \label{eq-separate-X,Y-in-Adv-cov}
    \end{align}
    In addition, following the calculation in \cite[Section~A.2]{BKW20}, we get that
    \begin{align*}
        &\mathbb E_{\overline\Qb}\Big[ \mathbb L_x(\bm X) \mathcal H_{\bm\alpha,\emptyset}(\bm X) \Big] = \mathbf 1_{ \{ |\alpha_i| \text{ is even for all } i \} } \big(\tfrac{\lambda}{n}\big)^{|\bm \alpha|/2} x^{\alpha_1+\ldots+\alpha_N} \prod_{i=1}^{N} \frac{ (|\alpha_i|-1)!! }{ \alpha_i! } \,;  \\
        &\mathbb E_{\overline\Qb}\Big[ \mathbb L_y(\bm Y) \mathcal H_{\emptyset,\bm\beta}(\bm Y) \Big] = \mathbf 1_{ \{ |\beta_i| \text{ is even for all } i \} } \big(\tfrac{\mu}{n}\big)^{|\bm \beta|/2} y^{\beta_1+\ldots+\beta_N} \prod_{i=1}^{N} \frac{ (|\beta_i|-1)!! }{ \beta_i! } \,.
    \end{align*}
    Thus, we have $\mathbb E_{\overline\Pb}[\mathcal H_{\bm\alpha,\bm\beta}(\bm X,\bm Y)]$ equals
    \begin{align*}
         \mathbb E_{(x,y)\sim\pi}\Bigg\{ \mathbf 1_{ \{ |\alpha_i|,|\beta_i| \text{ is even for all } i \} } \big(\tfrac{\lambda}{n}\big)^{|\bm \alpha|/2} \big(\tfrac{\mu}{n}\big)^{|\bm \beta|/2} x^{\alpha_1+\ldots+\alpha_N} y^{\beta_1+\ldots+\beta_N} \prod_{i=1}^{N} \frac{ (|\alpha_i|-1)!!(|\beta_i|-1)!! }{ \alpha_i!\beta_i! } \Bigg\} \,.
    \end{align*}
    Plugging this equation into Lemma~\ref{lem-Adv-cov-transform}, we get that $\chi^2_{\leq D}(\overline\Pb;\overline\Qb)$ is bounded by
    \begin{align}
        & \sum_{ \substack{ |\bm\alpha|,|\bm\beta|\leq D \\ |\alpha_i|,|\beta_i| \text{ is even} } } \mathbb E_{(x,y)\sim\pi}\Bigg\{ \Big(\frac{\lambda}{n}\Big)^{|\bm \alpha|/2} \Big(\frac{\mu}{n}\Big)^{|\bm \beta|/2} x^{\alpha_1+\ldots+\alpha_N} y^{\beta_1+\ldots+\beta_N} \prod_{i=1}^{N} \frac{ (|\alpha_i|-1)!!(|\beta_i|-1)!! }{ \alpha_i!\beta_i! } \Bigg\}^2 \nonumber \\
        =\ & \mathbb E_{(x,y),(x',y')\sim\pi}\Bigg\{ \sum_{ \substack{ |\bm\alpha|,|\bm\beta|\leq D \\ |\alpha_i|,|\beta_i| \text{ is even} } } \Big(\frac{\lambda}{n}\Big)^{|\bm \alpha|/2} \Big(\frac{\mu}{n}\Big)^{|\bm \beta|/2} \prod_{i=1}^{N} \frac{ ((|\alpha_i|-1)!!)^2 ((|\beta_i|-1)!!)^2 }{ (\alpha_i!)^2 (\beta_i!)^2 } \nonumber \\
        & \quad\quad\quad\quad\quad\quad\quad\quad x^{\alpha_1+\ldots+\alpha_N} y^{\beta_1+\ldots+\beta_N} (x')^{\alpha_1+\ldots+\alpha_N} (y')^{\beta_1+\ldots+\beta_N} \Bigg\} \,.  \label{eq-replica-form-Adv-cov}
    \end{align}
    Again, following the calculation in \cite[Section~A.2]{BKW20}, we have that
    \begin{equation}{\label{eq-combinatorial-identity-varphi-D}}
        \begin{aligned}
            & \sum_{ \substack{ |\bm\alpha| \leq D \\ |\alpha_i| \text{ is even} } } \Big(\frac{\lambda}{n}\Big)^{|\bm \alpha|} x^{\alpha_1+\ldots+\alpha_N} (x')^{\alpha_1+\ldots+\alpha_N} \prod_{i=1}^{N} \frac{ ((|\alpha_i|-1)!!)^2 }{ (\alpha_i!)^2 } = \varphi_{D}\big( \tfrac{\lambda^2 \langle x,x' \rangle^2 }{4n^2} \big) \,;  \\
            & \sum_{ \substack{ |\bm\beta| \leq D \\ |\beta_i| \text{ is even} } } \Big(\frac{\mu}{n}\Big)^{|\bm \beta|} y^{\beta_1+\ldots+\beta_N} (y')^{\beta_1+\ldots+\beta_N} \prod_{i=1}^{N} \frac{ ((|\beta_i|-1)!!)^2 }{ (\beta_i!)^2 } = \varphi_{D}\big( \tfrac{\lambda^2 \langle x,x' \rangle^2 }{4n^2} \big) \,;
        \end{aligned}
    \end{equation}
    Plugging \eqref{eq-combinatorial-identity-varphi-D} into \eqref{eq-replica-form-Adv-cov} yields Lemma~\ref{lem-Adv-cov-transform}.
\end{proof}

\bibliographystyle{alpha}
\small

\end{document}